\numberwithin{equation}{section}
\newtheorem*{statement}{Theorem}
\newtheorem{thm}{Theorem}[section]
\newtheorem{cor}[thm]{Corollary}
\newtheorem{lem}[thm]{Lemma}
\newtheorem{prop}[thm]{Proposition}
\theoremstyle{definition}
\newtheorem{definition}[thm]{Definition}
\newtheorem{remark}[thm]{Remark}
\newtheorem{example}[thm]{Example}
\newtheorem{conjecture}[thm]{Conjecture}
\newcommand\remind[1]{{\tt *** #1 ***}}
\def\sign{{\rm sign}}
\def\mS{{\mathfrak{S}}}
\def\mult{{\rm mult}}
\def\aa{{\mathbf{a}}} 
\def\bb{{\mathbf{b}}}
\def\jdt{{\mathrm{jdt}}}
\def\oV{{\mathring {\mathcal V}}}
\def \tv{{\tilde v}}
\def \tl{{a}}
\def\Z{{\mathbb Z}}
\def\R{{\mathbb R}}
\def\C{{\mathbb C}}
\def\P{{\mathbb P}}
\def\L{{\mathcal L}}
\def\D{{\mathcal D}}
\def\Pic{{\mathrm{Pic}}}
\def\tW{{\tilde W}}
\def\mM{{\mathcal M}}
\def\mL{{\mathcal L}}
\def\C{{\mathbb C}}
\def\Z{{\mathbb Z}}
\def\D{{\mathcal{D}}}
\def\OO{{\mathcal{O}}}
\def\be{{\mathbf{e}}}
\def\p{{\mathbf{p}}}
\def\wt{{\rm wt}}
\def\a{{\mathfrak{a}}}
\def\b{{\mathfrak{b}}}
\def\c{{\mathfrak{c}}}
\def\d{{\mathfrak{d}}}
\begin{document}

\baselineskip 16pt

\title[Generalized discrete Toda lattices]
{Toric networks, geometric $R$-matrices and generalized discrete Toda lattices}

\author[Rei Inoue]{Rei Inoue}
\address{Rei Inoue, Department of Mathematics and Informatics,
   Faculty of Science, Chiba University,
   Chiba 263-8522, Japan.}
\email{reiiy@math.s.chiba-u.ac.jp}
\thanks{R.I. was partially supported by JSPS KAKENHI Grant Number
26400037.}

\author[Thomas Lam]{Thomas Lam}
\address{Thomas Lam, Department of Mathematics, 
University of Michigan, Ann Arbor, MI 48109, USA.}
\email{tfylam@umich.edu}
\thanks{T.L. was partially supported by NSF grants DMS-1160726, DMS-1464693, and a Simons Fellowship.}

\author[Pavlo Pylyavskyy]{Pavlo Pylyavskyy} 
\address{Pavlo Pylyavskyy, School of Mathematics, University of Minnesota, 
Minneapolis, MN 55414, USA.}
\email{ppylyavs@umn.edu}
\thanks{P. P. was partially supported by NSF grants DMS-1148634, DMS-1351590, and Sloan Fellowship.}

\date{August 31, 2015 (Final version: June 19, 2016)}

\subjclass[2000]{}

\keywords{}

\begin{abstract}
We use the combinatorics of toric networks and the double affine geometric $R$-matrix to define a three-parameter family of generalizations of the discrete Toda lattice.  We construct the integrals of motion and a spectral map for this system.  The family of commuting time evolutions arising from the action of the $R$-matrix is explicitly linearized on the Jacobian of the spectral curve.  The solution to the initial value problem is constructed using Riemann theta functions.
\end{abstract}

\maketitle

\section{Introduction}

We study a generalization of the discrete Toda lattice parametrized by a triple of integers $(n,m,k)$,
which corresponds to a network on a torus with
$n$ horizontal wires, $m$ vertical wires and $k$ {\em shifts} at the horizontal boundary.  An example of the kind of toric network that we consider is given in Figure \ref{fig:toda3}.

\begin{figure}[h!]
    \begin{center}
    \scalebox{0.8}{\input{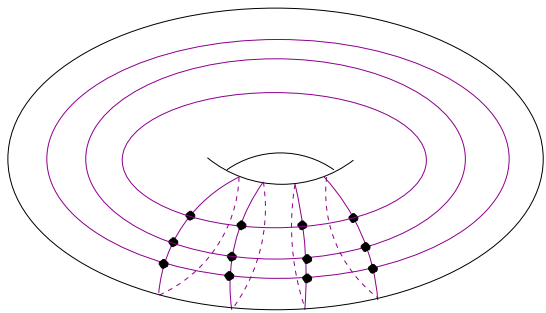}}
    \end{center}
    \caption{The toric network for $n=3$, $m=4$ and $k=0$.}
    \label{fig:toda3}
\end{figure}

The phase space $\mM$ of our system is the space of parameters $q_{ij} \in \C$ assigned to each of the crossings of the wires (a $3 \times 4 = 12$ dimensional space for the example in Figure \ref{fig:toda3}). 
We construct two families of commuting discrete time evolutions acting on the parameters $q_{ij}$, together generating an action of $\Z^m \times \Z^N$, where $N := \gcd(n,k)$.  These time evolutions act as birational transformations of the phase space.  The purpose of this paper is to study the algebro-geometrical structure of these maps and to solve the corresponding initial value problem.

\medskip

The rational transformations of the phase space come from the {\it {affine geometric $R$-matrix}}, acting on the parameters of two adjacent parallel wires, either horizontal or vertical.  The affine geometric $R$-matrix arises in the theory of affine geometric crystals \cite{BK, KNO1,KNO2}, being a birational lift of the {\it {combinatorial $R$-matrix}} of certain tensor products of Kirillov-Reshetikhin crystals for $U_q({\widehat{\mathfrak{sl}}_n})$ \cite{KKMMNN}. Geometric $R$-matrices also arises independently in the study of Painlev\'e equations \cite{KNY} and total positivity \cite{LP}; see also \cite{Ki,Et}.

The geometric $R$-matrix satisfies the Yang-Baxter relation and we show in Theorem \ref{thm:dynamics} that the action of the geometric $R$-matrix generates commuting birational actions of two affine Weyl groups $W$ and $\tW$: one swapping horizontal wires, and one swapping vertical wires.  The commutativity was proved in \cite{KNY} for the case $k=0$; we extend it here to arbitrary $k$.

The $\Z^m \times \Z^N$ time-evolutions of our dynamical system come from the subgroup of translation elements in the corresponding affine Weyl group.  In the case $(n,m,k) = (n,2,n-1)$, a rational map given by one of the $\Z^{m=2}$ actions corresponds to the discretization of the well-known $n$-periodic Toda lattice equation, studied in \cite{HTI}.  

The dynamics that we study come from positive birational maps, and they tropicalize to the box-ball system \cite{TS} and the combinatorics of jeu-de-taquin (see \S \ref{sec:tab}).

\medskip
To study the dynamics of our generalized discrete Toda lattice, we construct a spectral map
\begin{align*}
\phi:\mM &\longrightarrow \{\text{plane algebraic curve}\} \times \Pic^g(C_f) \times \mathcal{S}_f \times R_O \times R_A \\
(q_{ij}) &\longmapsto (C_f, \D, (c_1,\ldots,c_M), O , A)
\end{align*}
where $C_f$ is a spectral curve, $\D \in \Pic^g(C_f)$ is degree $g$ divisor, and the remaining data is explained in \S \ref{sec:eigenvector}.  We show that when appropriately restricted, the spectral map is an injection.  Such spectral data is frequently encountered in the theory of integrable systems, and our approach follows that of van Moerbeke and Mumford \cite{vMM}.  Van Moerbeke and Mumford used similar spectral data to study periodic difference operators.

The heart of our work is the calculation of the double affine geometric $R$-matrix action in terms of the spectral data: we show that the translation subgroup $\Z^m \times \Z^N$ acts as constant motions on the Jacobian of $C_f$, while the symmetric subgroups of $W$ and of $\tW$ act by permuting the additional data $R_O$ and $R_A$ (which are certain special points on $C_f$).  

We explicitly invert (for $N = 1$) the spectral map $\phi$ using Riemann theta functions, and give a solution to the initial value problem.  This extends work of Iwao \cite{Iwao07,Iwao10}, who studied the initial value problem of the $\Z^m$ action in the $(n,m,n-1)$ and $(n,m,0)$ cases.  We relate our theta function solutions to the octahedron recurrence \cite{Spe} via Fay's trisecant identity\cite{Fay73}.  We also give an interpretation of our dynamics in terms of dimer model transformations \cite{GonchaKenyon13}.

\medskip
\noindent {\bf Outline.}
The outline of this paper is as follows:
in \S \ref{sec:netw}, we introduce a family of commuting actions 
on the toric network, generalizating the 
discrete Toda lattice.  
We briefly summarize the main results of this paper in \S \ref{subsec:main}, and give some examples and applications in \S \ref{subsec:example}.
In \S \ref{sec:Lax}, we introduce a space $\mL$ of Lax matrices for our system, and construct the spectral curves $C_f$, where $f \in \C[x,y]$.  We also study certain special points on our spectral curves, and analyze some of the singular points.
In \S \ref{sec:eigenvector}, we study the spectral map which sends a point in the phase space to the spectral curve $C_f$, a divisor $\D$ on $C_f$, and some additional data.  We modify the strategy in \cite{vMM} to fit our situation.  In \S \ref{sec:proofLax} and \S \ref{sec:proofeigenvector}
we present the proofs of results in \S \ref{sec:Lax} and 
\S \ref{sec:eigenvector} respectively.  In particular, the coefficients of the spectral curves (which are integrals of motion) are described explicitly in terms of the combinatorics of these networks.
In \S \ref{sec:actions}, we study the vertical $\Z^m$ actions, the horizontal $\Z^N$ actions, and the snake path actions.
We show that all the actions preserve the spectral curve, 
and the commuting $m+N$ actions induce constant motions on 
the Picard group of $C_f$, through the map $\phi$. 
In \S \ref{sec:theta}, we solve, for the case of $N=1$, the initial value problem 
for the commuting actions by constructing the inverse of $\phi$ 
in terms of the Riemann theta function.
Our method extends the strategy in \cite{Iwao10}.  For $N > 1$, a solution is given that relies on a technical condition.
In \S \ref{sec:fay}, we show that the theta function solution for the system satisfies the octahedron recurrence,
by specializing Fay's trisecant identity for the Riemann theta function.  In \S \ref{sec:transpose}, we study the symmetry between
the network and its transposition obtained by swapping the roles of the
vertical and the horizontal wires. 
In \S \ref{sec:cluster}, we realize the $R$-matrix transformation
on a toric network in terms of transformations of the honeycomb dimer model on a torus~\cite{GonchaKenyon13}.  An explicit interpretation of the $R$-matrix as a cluster transformation will be the subject of future work \cite{ILP}, so we do not elaborate on the relation to cluster structures here.

\medskip
\noindent
{\bf Future directions.}
There are several systematic ways to construct integrable rational maps 
using combinatorial objects on surfaces, such as directed networks, electrical networks, and bipartite graphs~\cite{LP,GonchaKenyon13,GSTV}.  In particular, the $R$-matrix of the present work has an ``electrical" analogue \cite{LPElec}.  It would be interesting to study these discrete-time dynamical systems from the view point of algebraic geometry (Cf.~\cite{FockMarsha14}).
In all these cases, the rational transformations are additionally positive, and can be tropicalized to piecewise-linear maps that generate a discrete-time and discrete-space dynamical system.  It is natural to consider the tropical counterpart of our work using tropical geometry as in\cite{InoueTakenawa}, where
tropical curves and tropical theta functions are used to study 
the piecewise-linear map arising from the $(n,m,k) = (n,2,n-1)$ case.

\medskip
\noindent
{\bf Acknowledgments.}  We thank Rick Kenyon for helpful discussions.
We also thank the anonymous referee for kind comments which improved 
the manuscript.

\section{Dynamical system from a toric network} 
\label{sec:netw}

We study the action of the geometric $R$-matrix on an array of variables.  The geometric $R$-matrix generates an action of a product of commuting (extended) affine symmetric groups, acting as birational transformations.

\subsection{The affine geometric $R$-matrix} \label{sec:Rmatrix}
For a vector $\aa = (a_1,\ldots,a_n)$, let $\aa^{(i)}:= (a_{i+1},a_{i+2},\ldots,a_n,a_1,\ldots,a_{i})$.
Let $\aa = (a_1,\ldots,a_n)$ and $\bb= (b_1,\ldots,b_n)$ be two vectors.  Define the {\it {energy}} $E(\aa,\bb)$ to be
$$E(\aa,\bb) = \sum_{i=0}^{n-1} \left(\prod_{j=1}^i b_{j} \prod_{j=i+2}^{n} a_{j} \right).$$
Define the affine geometric $R$-matrix to be the transformation $R:(\aa,\bb) \mapsto (\bb',\aa')$, given by
$$b_{i}' = b_{i}\frac{E(\aa^{(i)}, \bb^{(i)})}{E(\aa^{(i-1)}, \bb^{(i-1)})}   \qquad  \text{and} \qquad
a_{i}' = a_{i}\frac{E(\aa^{(i-1)}, \bb^{(i-1)})}{E(\aa^{(i)}, \bb^{(i)})}.$$
We will usually think of $R$ as a birational transformation of $\C^n \times \C^n$.

It is easy to see 
\begin{align}\label{eq:q1q2}
  \prod_{i=1}^n a_{i} = \prod_{i=1}^n a_{i}'
  \qquad  \text{and} \qquad
  \prod_{i=1}^n b_{i} = \prod_{i=1}^n b_{i}'.
\end{align}

\subsection{Description of dynamics}\label{sec:dynamics}
The $R$-matrix can be used to act on a rectangular array of variables, by acting on consecutive pairs of rows or of columns.  

Let $\a,\b,\c$ be three positive integers, and let $0 \leq \d <\c$ be a nonnegative integer satisfying $\gcd(\c,\d)=1$. Note that we allow $\d=0$ if and only if $\c=1$. 
Let $\d^{-1}$ be the unique number in the range $0 \leq \d^{-1} <\c$ such that $\d\d^{-1}=1 \mod \c$.  For a positive integer $a$, we define $[a] := \{1,2,\ldots,a\}$.

We consider an array of $\a \times \b \times \c$ variables $\{q_{a,b,c}\}_{a \in [\a], b \in [\b], c \in [\c]}$. We consider two distinct ways to arrange them in a two-dimensional rectangular array: 
\begin{align}\label{eq:q-barq}
q_{i,j} = q_{a,b,c} \text { for } i=a, \; j=b+\b(c-1)
\end{align}
and 
\begin{align}\label{eq:q-tildeq}
\tilde q_{i,j} = q_{a,b,c} \text { for } i=\b+1-b, \; j=\a \d^{-1} c - a +1 \mod \a\c.
\end{align}
By convention, $q_{i,j}$ denotes the entry in the {\it $i$-th column} and {\it $j$-th row}. 
We call these arrays $Q$
and $\tilde Q$.  The first array $Q$ has dimensions $\b\c \times \a$ and the second array $\tilde Q$ has dimensions $\a\c \times \b$. 
Note that changing $\a \mapsto \b$, $\d \mapsto \d^{-1}$, $q_{a,b,c} \mapsto q_{1-b,1-a,\d^{-1}c}$ swaps $Q$ and $\tilde Q$.

\begin{example}\label{ex:2221}
 Let $(\a,\b,\c,\d)= (2,2,2,1)$. The two arrays are as follows.
 \begin{align}
Q := \left(\begin{array}{cc} 
q_{1,1,1} & q_{2,1,1} \\
q_{1,2,1} & q_{2,2,1} \\
q_{1,1,2} & q_{2,1,2}\\
q_{1,2,2} & q_{2,2,2} 
\end{array} \right), 
\qquad
\tilde Q := \left(\begin{array}{cc} 
q_{2,2,1} & q_{2,1,1}\\
q_{1,2,1} & q_{1,1,1}\\
q_{2,2,2} & q_{2,1,2}\\
q_{1,2,2} & q_{1,1,2}
\end{array} \right).
\end{align}
\end{example}

\begin{example}\label{ex:3232}
 Let $(\a,\b,\c,\d)= (3,2,3,2)$. The two arrays are as follows.
 \begin{align}
Q := \left(\begin{array}{ccc} 
q_{1,1,1} & q_{2,1,1} & q_{3,1,1}\\
q_{1,2,1} & q_{2,2,1} & q_{3,2,1}\\
q_{1,1,2} & q_{2,1,2} & q_{3,1,2}\\
q_{1,2,2} & q_{2,2,2} & q_{3,2,2}\\
q_{1,1,3} & q_{2,1,3} & q_{3,1,3}\\
q_{1,2,3} & q_{2,2,3} & q_{3,2,3}
\end{array} \right), 
\qquad 
\tilde Q := \left(\begin{array}{cc} 
q_{3,2,1} & q_{3,1,1}\\
q_{2,2,1} & q_{2,1,1}\\
q_{1,2,1} & q_{1,1,1}\\
q_{3,2,3} & q_{3,1,3}\\
q_{2,2,3} & q_{2,1,3}\\
q_{1,2,3} & q_{1,1,3}\\
q_{3,2,2} & q_{3,1,2}\\
q_{2,2,2} & q_{2,1,2}\\
q_{1,2,2} & q_{1,1,2}
\end{array} \right).
\end{align}
\end{example}

The arrays $Q$ and $\tilde Q$ correspond to a network on a torus in 
the following way. 
Let $G$ a network embedded into a torus, as illustrated in Figure~\ref{fig:networkG}. 
Each rectangle of red lines denotes the fundamental domain of the torus.
There are $\a$ vertical ``wires" (directed upwards), forming $\a$ simple curves in the torus with the same homology class, and $\b \c$ horizontal wires (directed to the right), which form $\b$ simple curves on the torus with another homology class: the top $\b \d$ right ends of horizontal wires cross the top edge and come out on the other side. 
We set $q_{ij}$ at the crossing of the $i$-th vertical and the $j$-th horizontal wires, then $Q$ corresponds to the configuration of the $q_{ij}$ on $G$.
When we see $G$ from the inside of the torus, the roles of the 
vertical and horizontal wires are interchanged;
there are $\b$ vertical wires forming $\b$ simple curves, and 
$\a \c$ horizontal wires forming $\a$ simple curves. 
The top $\a \d^{-1}$ ends of horizontal wires cross the top edge 
and come out on the other side. 
The array $\tilde Q$ corresponds to this ``from-inside" configuration,
where $\tilde q_{ij}$ is placed at 
the crossing of the $i$-th vertical and the $j$-th horizontal wires.

See Figure~\ref{fig:3232} for the network in the case of Example~\ref{ex:3232},
where the lines with the same color in the two pictures are identical
simple curves in $G$.

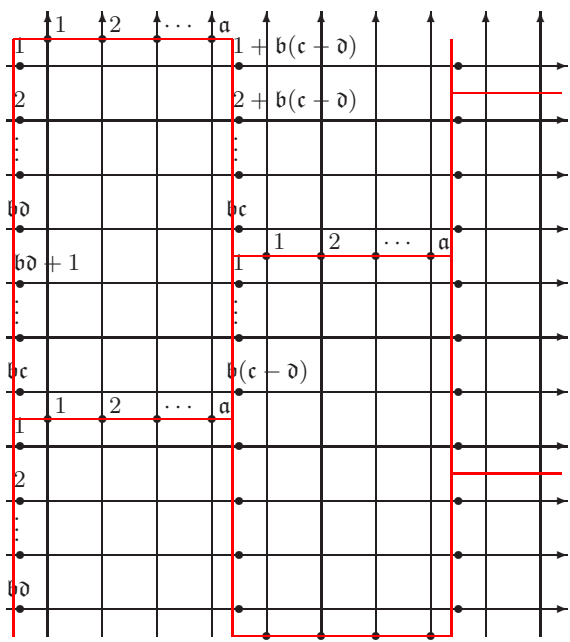
\begin{figure}[h]
\unitlength=0.9mm
\begin{picture}(100,95)(0,-8)

\multiput(10,-8)(8,0){10}{\vector(0,1){92}}
\multiput(4,76)(0,-8){11}{\vector(1,0){82}}

\multiput(10,80)(8,0){4}{\circle*{1}}
\multiput(10,24)(8,0){4}{\circle*{1}}
\multiput(42,48)(8,0){4}{\circle*{1}}
\multiput(42,-8)(8,0){4}{\circle*{1}}
\multiput(6,76)(0,-8){11}{\circle*{1}}
\multiput(38,76)(0,-8){11}{\circle*{1}}
\multiput(70,76)(0,-8){11}{\circle*{1}}

\put(11,81){\scriptsize $1$}
\put(19,81){\scriptsize $2$}
\put(27,81){\scriptsize $\cdots$}
\put(35,81){\scriptsize $\a$}

\put(11,25){\scriptsize $1$}
\put(19,25){\scriptsize $2$}
\put(27,25){\scriptsize $\cdots$}
\put(35,25){\scriptsize $\a$}

\put(43,49){\scriptsize $1$}
\put(51,49){\scriptsize $2$}
\put(59,49){\scriptsize $\cdots$}
\put(67,49){\scriptsize $\a$}

\put(5,78){\scriptsize $1$}
\put(5,70){\scriptsize $2$}
\put(5,62){\scriptsize $\vdots$}
\put(4,54){\scriptsize $\b \d$}
\put(5,46){\scriptsize $\b \d+1$}
\put(5,38){\scriptsize $\vdots$}
\put(4,30){\scriptsize $\b \c$}
\put(5,22){\scriptsize $1$}
\put(5,14){\scriptsize $2$}
\put(5,6){\scriptsize $\vdots$}
\put(4,-2){\scriptsize $\b \d$}

\put(37,78){\scriptsize $1+\b(\c-\d)$}
\put(37,70){\scriptsize $2+\b(\c-\d) $}
\put(37,62){\scriptsize $\vdots$}
\put(36,54){\scriptsize $\b \c$}
\put(37,46){\scriptsize $1$}
\put(37,38){\scriptsize $\vdots$}
\put(36,30){\scriptsize $\b (\c-\d)$}

{\color{red} 
\multiput(5,80)(0,-56){2}{\line(1,0){32}}
\multiput(37,48)(0,-56){2}{\line(1,0){32}}
\multiput(69,72)(0,-56){2}{\line(1,0){16}}
\multiput(5,80)(32,0){3}{\line(0,-1){88}}

}
\end{picture}
    \caption{Toric network}
    \label{fig:networkG}
\end{figure}

\begin{figure}[h]
\unitlength=0.9mm
\begin{picture}(100,85)(0,5)

\put(-8,80){$Q:$}

\multiput(10,30)(8,0){3}{\vector(0,1){54}}
\multiput(4,76)(0,-8){6}{\vector(1,0){30}}

\multiput(10,80)(8,0){3}{\circle*{1}}
\multiput(10,32)(8,0){3}{\circle*{1}}
\multiput(6,76)(0,-8){6}{\circle*{1}}
\multiput(30,76)(0,-8){6}{\circle*{1}}

\multiput(11,81)(0,-48){2}{\scriptsize $1$}
\multiput(19,81)(0,-48){2}{\scriptsize $2$}
\multiput(27,81)(0,-48){2}{\scriptsize $3$}

\put(5,78){\scriptsize $1$}
\put(5,70){\scriptsize $2$}
\put(5,62){\scriptsize $3$}
\put(5,54){\scriptsize $4$}
\put(5,46){\scriptsize $5$}
\put(5,38){\scriptsize $6$}

\put(29,78){\scriptsize $3$}
\put(29,70){\scriptsize $4$}
\put(29,62){\scriptsize $5$}
\put(29,54){\scriptsize $6$}
\put(29,46){\scriptsize $1$}
\put(29,38){\scriptsize $2$}


\put(50,80){$\tilde Q:$}

\multiput(68,6)(8,0){2}{\vector(0,1){78}}
\multiput(62,76)(0,-8){9}{\vector(1,0){22}}

\multiput(68,80)(8,0){2}{\circle*{1}}
\multiput(68,8)(8,0){2}{\circle*{1}}
\multiput(64,76)(0,-8){9}{\circle*{1}}
\multiput(80,76)(0,-8){9}{\circle*{1}}

\multiput(69,81)(0,-72){2}{\scriptsize $1$}
\multiput(77,81)(0,-72){2}{\scriptsize $2$}

\put(63,78){\scriptsize $1$}
\put(63,70){\scriptsize $2$}
\put(63,62){\scriptsize $3$}
\put(63,54){\scriptsize $4$}
\put(63,46){\scriptsize $5$}
\put(63,38){\scriptsize $6$}
\put(63,30){\scriptsize $7$}
\put(63,22){\scriptsize $8$}
\put(63,14){\scriptsize $9$}

\put(79,78){\scriptsize $4$}
\put(79,70){\scriptsize $5$}
\put(79,62){\scriptsize $6$}
\put(79,54){\scriptsize $7$}
\put(79,46){\scriptsize $8$}
\put(79,38){\scriptsize $9$}
\put(79,30){\scriptsize $1$}
\put(79,22){\scriptsize $2$}
\put(79,14){\scriptsize $3$}

{\color{green}
\multiput(4,75)(0,-16){3}{\line(1,0){30}}
\put(77,6){\line(0,1){78}}
}

{\color{red}
\put(9,30){\line(0,1){54}}
\multiput(60,59)(0,-24){3}{\line(1,0){22}}
}
\end{picture}
    \caption{Network for the case of $(3,2,3,2)$}
    \label{fig:3232}
\end{figure}
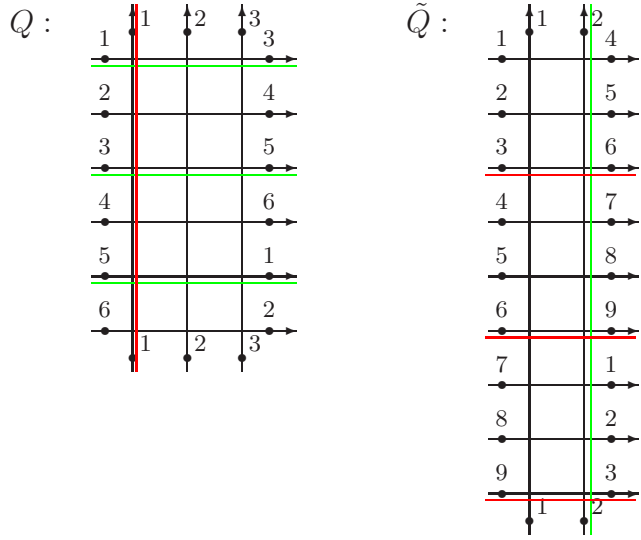

\begin{definition}\label{def:action}
Let $A = \{a_{i,j}\}_{i \in [s], j \in [r]}$ be an $r \times s$ array.
Let $\aa_i = (a_{i,j})_{j \in [r]}$,
and write $\aa_i^T$ for the transpose of $\aa_i$.
For $1 \leq \ell \leq s-1$, define the array $s_\ell(A)$ by 
\begin{align}\label{eq:s-ell}
s_{\ell}(A) = \left(\aa_1^T, \ldots, \aa_{\ell-1}^T, (\aa'_{\ell+1})^T,
 (\aa'_{\ell})^T,\aa_{\ell+2}^T, \ldots, \aa_s^T \right)
\end{align}
if $A = (\aa_1^T, \ldots, \aa_{\ell-1}^T, \aa_{\ell}^T, \aa_{\ell+1}^T, \aa_{\ell+2}^T, \ldots, \aa_s^T)$,
and $R(\aa_\ell,\aa_{\ell+1}) = (\aa'_{\ell+1},\a'_{\ell})$.
Also define an array $\pi(A)$ by the formula
\begin{align}\label{eq:pi}
\pi(A)_{i,j} = a_{i,j-1}.
\end{align}
Here we consider the first index modulo $r$ and the second index modulo $s$.  
\end{definition}

Let $\mM \simeq \C^{\a \b \c}$ be the phase space of our dynamical system
and regard $(q_{a,b,c})_{a \in [\a], b \in [\b], c \in [\c]}$ as 
coordinates on $\mM$. Applying the above definition to the array $Q$, we obtain operators $s_{1}, \ldots, s_{\a-1}$ and $\tilde \pi$ acting on $\mM$.  
Similarly, applying this definition to the array $\tilde Q$, we obtain operators $\tilde s_{1}, \ldots, \tilde s_{\b-1}$ and $\pi$ acting on $\mM$.  We emphasize that \eqref{eq:pi} for $Q$ gives the operation $\tilde \pi$, while $\eqref{eq:pi}$ for $\tilde Q$ gives the operation $\pi$.

The following result generalizes a result of Kajiwara, Noumi and Yamada \cite{KNY}.
\begin{thm}\label{thm:dynamics}
The operators $s_{\ell}$ and $\pi$ generate an action of an extended affine symmetric group $W = (\Z/\a\c\Z) \ltimes \widehat{\mathfrak S}_{\a}$ on $\mM$. Here $\Z/\a\c\Z$ is the cyclic group of order $\a \c$ and $\widehat{\mathfrak S}_{\a}$ is the affine symmetric group (the Coxeter group) of type $\tilde A_{\a-1}$.  Similarly, the operators $\tilde s_{\ell}$ and $\pi$ form an action of an extended affine symmetric group $\tilde W = (\Z/\b\c\Z) \ltimes \widehat{\mathfrak S}_{\b}$ on $\mM$, where $(\Z/\b\c\Z)$ is the cyclic group of order  $\b \c$. Furthermore, these two actions commute.
Precisely, on $\mM$ the following relations hold.  
\begin{align*}
  & s_{\ell}  s_{\ell+1}  s_{\ell}
  = s_{\ell+1}  s_{\ell}  s_{\ell+1},
  \qquad 
   s_{j}  s_{\ell} =  s_{\ell}  s_{j}
  \quad (|j-\ell| > 1),  \qquad s_\ell^2 = 1,
  \\
  & \pi  s_{\ell+1} =  s_{\ell} \pi, \qquad
\pi^{\a \c} = 1,
\end{align*}
\begin{align*}
    &\tilde s_{\ell} \tilde s_{\ell+1} \tilde s_{\ell}
  =\tilde s_{\ell+1} \tilde s_{\ell} \tilde s_{\ell+1},
  \qquad 
  \tilde s_{j} \tilde s_{\ell} = \tilde s_{\ell} \tilde s_{j}
  \quad (|j-\ell| > 1), \qquad \tilde s_\ell^2 = 1,
  \\
  & \tilde \pi \tilde s_{\ell+1} = \tilde s_{\ell}  \tilde \pi, \qquad
 {\tilde \pi}^{\b \c} = 1, 
\end{align*}
\begin{align*}
  & s_{j} \tilde s_{\ell} = \tilde s_{\ell}  s_{j}, 
  \qquad
   \tilde \pi  s_{\ell} =  s_{\ell}  \tilde \pi,
  \qquad 
   \pi \tilde s_{\ell} = \tilde s_{\ell} \pi,
  \qquad
  \tilde \pi \pi = \pi \tilde \pi.
\end{align*}  
In the above formulae, the index of $s_\ell$ is taken modulo $\a$, and $s_0$ is defined by the equation $ \pi s_{1} = s_{0} \pi$.  Similarly, the index of $\tilde s_\ell$ is taken modulo $\b$, and $\tilde s_0$ is defined by the equation $\tilde \pi \tilde s_{1} = \tilde s_{0} \tilde \pi$.
\end{thm}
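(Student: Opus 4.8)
The plan is to reduce everything to the single case of the affine geometric $R$-matrix acting on $\C^n\times\C^n$, where the defining relations are the Yang--Baxter relation, the involutivity $R^2=1$, the ``unitarity''/inversion property, and the compatibility with cyclic shifts. Concretely, I would first record the three algebraic identities for $R$ that I need: (i) $R^2=\mathrm{id}$ on $\C^n\times\C^n$; (ii) the Yang--Baxter relation $R_{12}R_{23}R_{12}=R_{23}R_{12}R_{23}$ on $\C^n\times\C^n\times\C^n$; and (iii) equivariance under the simultaneous cyclic shift $\aa^{(1)}$ on both factors, i.e. the energy function satisfies $E(\aa^{(i)},\bb^{(i)})$-cyclicity built into its definition, so that conjugating $R$ by the shift permutes the role of the index appropriately. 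Identities (i) and (ii) are the statements that the geometric $R$-matrix is a solution of the set-theoretical Yang--Baxter equation with the unitarity condition; (iii) is immediate from the cyclic form of $E$. Given these, the braid relations $s_\ell s_{\ell+1}s_\ell=s_{\ell+1}s_\ell s_{\ell+1}$, the commuting-generators relation $s_js_\ell=s_\ell s_j$ for $|j-\ell|>1$, and $s_\ell^2=1$ follow verbatim from (i), (ii) and the fact that $s_j$ and $s_\ell$ act on disjoint pairs of adjacent rows when $|j-\ell|>1$.

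Next I would handle the cyclic operator. The operator $\pi$ (resp.\ $\tilde\pi$) is literally a relabeling of rows (resp.\ the appropriate index) of $\tilde Q$ (resp.\ $Q$): by \eqref{eq:pi} it just shifts the second index by one. The relation $\pi s_{\ell+1}=s_\ell\pi$ is then a bookkeeping identity: shifting all rows by one and then swapping rows $\ell,\ell+1$ equals swapping rows $\ell-1,\ell$ (after the shift, the old rows $\ell,\ell+1$ sit in positions $\ell+1,\ell+2$, but we are using the convention that $s_0$ is \emph{defined} by $\pi s_1=s_0\pi$, so there is literally nothing to prove for the $\ell=0$ case — it is a definition). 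The only genuine content is $\pi^{\a\c}=1$ (resp.\ $\tilde\pi^{\b\c}=1$): after $\a\c$ applications of the shift on $\tilde Q$ — which has $\a\c$ rows — one has cyclically returned every entry to its starting place, hence the identity map on coordinates; I would verify this is consistent with the identification \eqref{eq:q-tildeq}, i.e.\ that shifting the row index of $\tilde q_{ij}$ by $\a\c$ and unwinding through \eqref{eq:q-tildeq} gives back $(q_{a,b,c})$. This is a short modular-arithmetic check using $\gcd(\c,\d)=1$ and the definition of $\d^{-1}$. This establishes that $s_1,\dots,s_{\a-1},\pi$ give an action of $W=(\Z/\a\c\Z)\ltimes\widehat{\mathfrak S}_\a$, and symmetrically for $\tilde W$.

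The heart of the theorem — and the step I expect to be the main obstacle — is the commutation of the two families: $s_j\tilde s_\ell=\tilde s_\ell s_j$, $\tilde\pi s_\ell=s_\ell\tilde\pi$, $\pi\tilde s_\ell=\tilde s_\ell\pi$, and $\tilde\pi\pi=\pi\tilde\pi$. For $k=0$ this is exactly the Kajiwara--Noumi--Yamada result \cite{KNY}; the new content is arbitrary $\c,\d$. The subtlety is that $s_j$ acts naturally on the array $Q$ while $\tilde s_\ell$ acts naturally on the array $\tilde Q$, and these are two genuinely different rectangular arrangements of the same set of variables related by the nonlinear reindexing \eqref{eq:q-barq}--\eqref{eq:q-tildeq}. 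The clean way to see the commutation is to pass to the toric network $G$ of Figure~\ref{fig:networkG}: $s_j$ is the local move swapping two adjacent vertical wires (a ``sliding'' of a crossing past another, i.e.\ the geometric $R$-matrix applied along the horizontal wires), while $\tilde s_\ell$ swaps two adjacent horizontal wires; and $\pi,\tilde\pi$ are the deck transformations of the torus shifting the whole picture by one wire. Two moves involving vertical wires only and horizontal wires only, respectively, are supported on genuinely transverse collections of crossings on the torus, so they commute provided the $R$-matrix transformation is a well-defined local operation — and here one invokes, exactly as in \cite{KNY}, the Yang--Baxter relation to guarantee that the result of pushing a vertical wire across the whole torus (which is what $\tilde\pi$, say, does when read on $Q$) is independent of the order, hence commutes with an unrelated horizontal move. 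I would make this rigorous by an induction on the number of crossings the moving wire passes, the inductive step being a single instance of the Yang--Baxter relation or of the trivial commutation of $R$-matrices acting on disjoint pairs of wires; the only place $\c,\d$ enter is in tracking which wires get identified around the torus, and \eqref{eq:q-tildeq} is precisely the formula that records this, so no new identity beyond Yang--Baxter is needed. The last relation $\tilde\pi\pi=\pi\tilde\pi$ is then the statement that the two commuting deck transformations of the torus commute, which is immediate once one checks on coordinates via \eqref{eq:q-barq}--\eqref{eq:q-tildeq} that both composites shift $q_{a,b,c}$ the same way.
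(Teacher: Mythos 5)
The routine parts of your proposal are fine and match the paper: the Coxeter/braid relations for the $s_\ell$ (and $\tilde s_\ell$) are imported from the known properties of the geometric $R$-matrix (in the paper via \cite[Theorem 6.6]{LP}), the relations $\pi s_{\ell+1}=s_\ell\pi$, $\pi^{\a\c}=1$, $\tilde\pi\pi=\pi\tilde\pi$ are indeed reindexing checks, and $\tilde\pi s_\ell=s_\ell\tilde\pi$, $\pi\tilde s_\ell=\tilde s_\ell\pi$ follow from the cyclic equivariance of $R$. The genuine gap is in your argument for the key relation $s_j\tilde s_\ell=\tilde s_\ell s_j$. Your claim that the two moves are ``supported on genuinely transverse collections of crossings'' and hence commute is not correct: the $R$-matrix move on two adjacent vertical wires changes the weights at \emph{every} crossing on those two wires, in particular at all crossings they share with the two horizontal wires involved in $\tilde s_\ell$, and vice versa; the supports overlap precisely where the interaction happens, so no disjointness argument is available. (A side confusion: by \eqref{eq:pi}, $\tilde\pi$ read on $Q$ is a rigid cyclic relabeling of the horizontal wires, not a wire being pushed through crossings by Yang--Baxter moves; no Yang--Baxter input is needed there, and conversely the Yang--Baxter relation alone is not what gives the commutativity in \cite{KNY} or here.)

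What is missing is the mechanism the paper actually uses. Each $R$-move is realized as a sequence of local moves by inserting an auxiliary pair of crossings of weights $p,-p$ on the two parallel wires and pushing one of them around the cylinder by Yang--Baxter moves, where $p=\bigl(\prod_i y_i-\prod_i x_i\bigr)/E(\boldsymbol{x}^{(j)},\boldsymbol{y}^{(j)})$ is the unique nonzero weight allowing the pair to cancel at the end; the whirl move is \emph{not} by itself a composition of Yang--Baxter moves on the given wires, so your proposed induction ``one Yang--Baxter step or one disjoint commutation at a time'' cannot even be set up without this auxiliary-crossing realization. Granting it, the commutation of $s_j$ with $\tilde s_\ell$ still requires two checks that your sketch omits: (i) performing one of the two local-move sequences does not change the value of $p$ needed for the other, because each of $\prod_i x_i$, $\prod_i y_i$ and the energy $E(\boldsymbol{x}^{(j)},\boldsymbol{y}^{(j)})$ is preserved; and (ii) once the two parameters are fixed, the two sequences of local moves commute, which is \cite[Proposition 3.4]{LP}. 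The case where the two families of wires intersect more than once (any $\d\neq 1$) is exactly why \cite[Theorem 12.2]{LP} cannot simply be quoted and is the new content beyond the $k=0$ case of \cite{KNY}; your proposal asserts that ``no new identity beyond Yang--Baxter is needed'' but does not supply the invariance-of-$p$ and local-commutation checks that make this true.
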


The proof of Theorem \ref{thm:dynamics} is delayed to \S \ref{sec:dynamics_proof}.

\begin{example}
 In Example~\ref{ex:2221} we have
$$ s_1(Q)_{1,1,1}= q_{2,1,1} \frac{q_{1,1,2}q_{1,2,2}q_{1,1,1} + q_{2,2,1}q_{1,2,2}q_{1,1,1} +q_{2,2,1}q_{2,1,2}q_{1,1,1} +q_{2,2,1}q_{2,1,2}q_{2,2,2}}
 {q_{1,2,1}q_{1,1,2}q_{1,2,2} + q_{2,1,1}q_{1,1,2}q_{1,2,2} +q_{2,1,1}q_{2,2,1}q_{1,2,2} + 
 q_{2,1,1}q_{2,2,1}q_{2,1,2}},$$
$$\tilde s_1(\tilde Q)_{1,1,1}= q_{1,2,1} \frac{q_{1,1,1}q_{2,1,2}q_{1,1,2}+q_{1,1,1}q_{2,1,2}q_{2,2,1} + q_{1,1,1}q_{1,2,2}q_{2,2,1} + 
q_{2,2,2}q_{1,2,2}q_{2,2,1}}
{q_{2,1,2}q_{1,1,2}q_{2,1,1}+q_{2,1,2}q_{1,1,2}q_{1,2,1} + q_{2,1,2}q_{2,2,1}q_{1,2,1} + 
q_{1,2,2}q_{2,2,1}q_{1,2,1}},$$ 
$$ \tilde \pi(Q)_{1,1,1} = q_{1,2,1}, \qquad  \pi(\tilde Q)_{1,1,1} = q_{2,1,2}.$$
We leave it for the reader to verify that $s_1$ and $\pi$ commute with $\tilde s_1$ and $\tilde \pi$, as an easy computational exercise. 
\end{example}

We can present the extended affine symmetric groups $W$ and $\tilde W$ as follows. The finite symmetric group $\mS_{\a}$ acts naturally on the lattice $\Z^{\a}$, fixing the subgroup generated by the vector $(1,1,\ldots,1)$.  Thus we have an action of $\mS_\a$ on the quotient $\Z^{\a}/\Z(\c,\c,\ldots,\c)$.  Then we have that $W = \mS_{\a} \ltimes (\Z^{\a}/\Z(\c,\c,\ldots,\c))$.  The commutative normal subgroup $\Z^{\a}/\Z(\c,\c,\ldots,\c)$ is generated by $e_u$ for $1 \leq u \leq \a$:
\begin{align}\label{eq:Za-action}
e_u = (s_u \cdots s_{\a-1})(s_{u-1} \cdots s_{\a-2})
        \cdots (s_1 \cdots  s_{\a-u}) {\pi}^u.
\end{align}
The element $e_u$ is identified with the vector $\be_u = (1,\ldots,1,0,\ldots,0) \in \Z^a$ with $u$ $1$-s.  Note that $e_\a = \tilde \pi^\a$ satisfies $e_\a^\c = 1$ agreeing with the fact that $(\c,\c,\ldots,\c) = 0$ in $\Z^{\a}/\Z(\c,\c,\ldots,\c)$.
Similarly, we have $\tilde W = \mS_\b\ltimes (\Z^{\b}/\Z(\c,\c,\ldots,\c))$,
where the commutative normal subgroup $\Z^{\b}/\Z(\c,\c,\ldots,\c)$ is generated by 
$\tilde e_u$ for $1 \leq u \leq \b$:
\begin{align}\label{eq:Zm-action}
\tilde e_u = (\tilde s_u \cdots \tilde s_{\b-1})
        (\tilde s_{u-1} \cdots \tilde s_{\b-2})
        \cdots (\tilde s_1 \cdots \tilde s_{\b-u}) {\tilde \pi}^u.
\end{align}

Now, the operators $e_u$ and $\tilde e_u$ all commute, and thus give an action of $\Z^{\a}/\Z(\c,\c,\ldots,\c) \times \Z^{\b}/\Z(\c,\c,\ldots,\c)$ on $\mM$!  We will think of this as a discrete dynamical system with $\a + \b$ different time evolutions.  We shall find a complete set of integrals of motion for this system, and study the initial value problem.

\subsection{Change of indexing}

Instead of the quadruple $(\a,\b,\c,\d)$, we shall also index our systems with triples $(n,m,k)$, given by
$$n=\b \c, \qquad  m=\a, \qquad k = \b \d.$$
We shall also set 
$$
N := \gcd(n,k) = \b, \qquad n^\prime := n/N = \c, \qquad k^\prime := k / N = \d,  \qquad M:= \gcd(n,m+k).$$ 

The quadruple $(\a,\b,\c,\d)$ can be recovered via
$$\a=m, \qquad \b = \gcd(n,k), \qquad \c=n/\gcd(n,k), \qquad \d = k/\gcd(n,k).$$The involution $Q \longmapsto \tilde Q$ is associated with the following changes of indices:
$$(\a,\b,\c,\d) \longmapsto (\b,\a,\c,\d^{-1}),$$  
$$(n,m,k) \longmapsto (m n^\prime,N, \bar k^\prime m),$$
where $\bar k' := (k^\prime)^{-1}$ is taken modulo $n^\prime$.

Through \eqref{eq:q-barq} or \eqref{eq:q-tildeq}
we identify $q = (q_{a,b,c}) \in \mM$ with 
$Q = (q_{i,j}) \in \mathrm{Mat}_{n,m}(\C)$ or 
$\tilde{Q} = (\tilde q_{i,j}) \in \mathrm{Mat}_{m n^\prime,N}(\C)$. 
Correspondingly, the network $G$ has $m$ vertical wires (directed upwards) 
which form $m$ simple curves on the torus with the same homology class, and  
$n$ horizontal wires (directed to the right) which form $N$ 
simple curves on the torus with another homology class.
 

\subsection{Main results}\label{subsec:main}

Let $\mathcal{M} \simeq \C^{\a \b \c} = \C^{mn}$ be the phase space 
where the ring $\mathcal{O}(\mM)$ of regular functions on $\mM$ is 
generated by $q_{i,j} ~(i \in [m], j \in [n])$.  In \S\ref{sec:Lax}, we shall define a map $\psi: \mathcal{M} \to \C[x,y]$.  Every coefficient of $f(x,y) = \psi(q)$ is a regular function on $\mM$.  We then have (see Corollary \ref{cor:L-WW}) the following result.

\begin{statement}
The actions of the affine symmetric groups $W$ and $\tilde W$ on $\mathcal{M}$ preserve 
each fiber $\psi^{-1}(f)$ for $f \in \psi(\mM)$.  In particular, the coefficients of $f(x,y)$ are integrals of motion of the commuting $\Z^m$ and $\Z^N$ actions.
\end{statement}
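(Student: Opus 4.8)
The plan is to reduce the statement to a single invariance property of the geometric $R$-matrix, namely that the spectral data --- or rather the polynomial $f(x,y) = \psi(q)$ --- is unchanged under the elementary move $R:(\aa,\bb) \mapsto (\bb',\aa')$ applied to a pair of adjacent wires. Since $W$ is generated by the $s_\ell$ together with $\pi$ (and likewise $\tilde W$ by the $\tilde s_\ell$ and $\tilde\pi$), and since the cyclic generator $\pi$ merely relabels wires cyclically --- an operation under which the construction of $\psi$ in \S\ref{sec:Lax} is manifestly equivariant, because the Lax matrix is built cyclically around the torus --- it suffices to prove $\psi \circ s_\ell = \psi$ and $\psi \circ \tilde s_\ell = \psi$ for every $\ell$. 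By the cyclic symmetry just mentioned, we may further assume $\ell$ is whichever single value is most convenient, so the whole theorem comes down to: \emph{applying the geometric $R$-matrix to two consecutive parallel wires of the toric network leaves $f(x,y)$ invariant.}

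First I would recall, from \S\ref{sec:Lax}, the precise definition of the Lax matrix $L(x,y)$ associated to $Q$ and hence of $f(x,y) = \det(\text{something}) $ (the characteristic polynomial / spectral curve equation). The key structural point to extract is that $L(x,y)$ factors as a product of local matrices, one factor $L_j(x,y)$ for each horizontal wire $j$ (or a transfer-matrix-style product along the wires), with the entries of $L_j$ depending only on the parameters $q_{\bullet,j}$ on that wire and on the spectral variables. For swapping two \emph{horizontal} wires $j=\ell,\ell+1$, the relevant fact is the \textbf{local} intertwining relation for the geometric $R$-matrix: there is an invertible matrix $M = M(x,y;\aa,\bb)$ such that
\begin{align*}
L_{\ell}(\aa)\, L_{\ell+1}(\bb) \, M = M \, L_{\ell+1}(\bb')\, L_{\ell}(\aa'),
\end{align*}
i.e. the geometric $R$-matrix is exactly the transformation of parameters that makes the ordered product of the two local Lax factors conjugate (the conjugator $M$ telescoping harmlessly into the rest of the product around the cycle). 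This is the birational/geometric analogue of the $RLL=LLR$ relation, and it is really the content of why the $R$-matrix was the right object to use. Granting this, $f = \det(L - \text{const})$ or the appropriate spectral determinant is visibly unchanged under $s_\ell$, because conjugation does not change the characteristic polynomial and the product structure around the torus absorbs $M$ and $M^{-1}$. The same argument, applied to the array $\tilde Q$ and its Lax matrix (whose existence and compatibility come from the symmetry $Q \leftrightarrow \tilde Q$ noted after Definition~\ref{def:action} and in \S\ref{sec:transpose}), handles the $\tilde s_\ell$; here one uses that $\psi$ can equally be computed from $\tilde Q$, or that the $\tilde W$-action is literally the $W$-action for the transposed network.

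The key steps, in order, are therefore: (i) fix the description of $\psi$ from \S\ref{sec:Lax} as a spectral determinant of a product of per-wire local factors on the torus; (ii) prove the local $RLL$-type intertwiner for the geometric $R$-matrix on two adjacent factors, identifying the conjugating matrix $M$ explicitly (or quoting it if \S\ref{sec:Lax}/\S\ref{sec:proofLax} already supplies it); (iii) conclude $\psi \circ s_\ell = \psi$ by conjugation-invariance of the spectral curve, and $\psi \circ \pi = \psi$ by cyclic relabeling-invariance; (iv) invoke Theorem~\ref{thm:dynamics} to get that $s_\ell,\pi$ generate $W$, so $W$ preserves each fiber $\psi^{-1}(f)$; (v) repeat (iii)--(iv) for $\tilde s_\ell, \tilde\pi$ and $\tilde W$, using the $Q \leftrightarrow \tilde Q$ symmetry; (vi) since the $\Z^m$ and $\Z^N$ actions are generated by the translation elements $e_u \in W$ and $\tilde e_u \in \tilde W$ (formulas~\eqref{eq:Za-action},~\eqref{eq:Zm-action}), they too preserve $\psi^{-1}(f)$, so all coefficients of $f$ are common integrals of motion.

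The main obstacle is step (ii): establishing the explicit local intertwining identity for the \emph{geometric} (birational) $R$-matrix, as opposed to its classical $q$-deformed or crystal counterpart. The energy functions $E(\aa,\bb)$ and the resulting rational formulas for $\aa',\bb'$ are somewhat involved, and one must exhibit the conjugator $M$ and verify the matrix identity entrywise (or recognize it as a known identity from the geometric-crystal literature \cite{KNY, LP}); additional care is needed because of the boundary shift $k$ (the $\b\d$ wires crossing the top edge), which is precisely the case not covered by \cite{KNY} and which forces the conjugation to be tracked around a nontrivial cycle on the torus rather than on a strip. I expect that \S\ref{sec:Lax} sets up the Lax matrix so that this intertwiner holds essentially by construction, in which case the present theorem follows as stated (it is labeled Corollary~\ref{cor:L-WW}), and the real work is merely bookkeeping of indices modulo $\a\c$ and $\b\c$ together with the $\pi$-equivariance.
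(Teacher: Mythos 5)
Your overall skeleton (reduce to the generators $s_\ell,\pi,\tilde s_\ell,\tilde\pi$, then use invariance of the spectral determinant) is the right one, but the mechanism you propose in step (ii) for the $s_\ell$ does not close as stated. If all you have is a local intertwiner $Q_\ell(x)Q_{\ell+1}(x)\,M = M\,Q'_{\ell+1}(x)Q'_\ell(x)$, then substituting into $L(x)=Q_1\cdots Q_m P(x)^k$ leaves $M$ and $M^{-1}$ stranded in the \emph{interior} of the product: for a single swap there is nothing to telescope, the two insertions do not cancel around the torus, and $L$ is not conjugated by any global matrix, so $\det(L(x)-y)$ is not obviously preserved. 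What rescues the argument, and what the paper actually uses, is the stronger fact that the geometric $R$-matrix preserves the product of the two local Lax factors \emph{exactly}: $Q_\ell(x)Q_{\ell+1}(x)=Q'_{\ell+1}(x)Q'_\ell(x)$ (Lemma \ref{lem:R-matrix}, quoted from \cite{LP}), so each $s_\ell$ fixes the Lax matrix $L(x)$ itself (Lemma \ref{lem:vertical-L}(i)); your conjugator $M$ is the identity. The cyclic generators are unproblematic, as you say: $\pi$ and $\tilde\pi$ induce $L\mapsto Q_1^{-1}LQ_1$ and $L\mapsto P(x)LP(x)^{-1}$, and conjugation (equivalently, cyclic rotation of factors) preserves the characteristic polynomial.

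For the $\tilde s_r$ your proposal also diverges from the paper, and here it has an unproven dependency. The paper does not pass to the transposed array: it keeps $L(x)$ and uses \cite[Theorem 6.2]{LP}, by which $\tilde s_r$ acts as $Q_s\mapsto (B_{s,r})^{-1}Q_s B_{s+1,r}$; these conjugators genuinely telescope, and the twisted periodicity $B_{s+m,r}=P(x)^k B_{s,r}P(x)^{-k}$ closes the cycle, giving the single global conjugation $L\mapsto (B_{0,r})^{-1}L\,B_{0,r}$ (Lemma \ref{lem:vertical-L}(ii)) --- this is where the ``tracking a conjugation around the nontrivial cycle with shift $k$'' that you anticipated actually occurs, but for the $\tilde W$ swaps, not the $W$ swaps. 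Your alternative --- run the product-preservation argument on $\tilde L(x)$ built from $\tilde Q$ and transfer back --- is workable, but the transfer step ``$\psi$ can equally be computed from $\tilde Q$'' is not free: it is Proposition \ref{prop:two-polys} (that $\psi(q)$ and $\tilde\psi(q)$ coincide up to a fixed monomial change of variables), a separate nontrivial result proved in \S\ref{sec:transpose} by a path-complementation bijection, and it is not available ``by construction.'' Either supply that proposition or use the $B_{s,r}$-conjugation argument. Your concluding steps (iv)--(vi), passing from the generators to $W\times\tilde W$ and then to the translation elements $e_u,\tilde e_u$, agree with how the paper deduces Corollary \ref{cor:L-WW}.
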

In \S \ref{sec:spectral}, we give a combinatorial description of every coefficient of $f$, as generating functions of path families on a network on the torus.

Now fix a generic $f \in \psi(\mM)$ such that the affine plane curve  
$\{(x,y) \mid f(x,y)=0\} \subset \C^2$ is smooth (except for $(0,0)$), 
and let $C_f$ be the smooth completion of the affine curve.
We shall call $C_f$ the {\it spectral curve}.  In \S\ref{sec:Lax}, we define distinguished {\em special points},  $P$, $A_u ~(u \in [m])$ and $O_u~(u \in [N])$ on $C_f$.  We apply the results and techniques of van Moerbeke and Mumford \cite{vMM} to establish the following; see Theorem \ref{thm:phi}.

\begin{statement}
Fix a generic $f \in \psi(\mM)$.  There is an injection
$$
\phi : \psi^{-1}(f) \hookrightarrow \Pic^{g}(C_f) \times \mathcal{S}_f \times R_O \times R_A,
$$
where 
\begin{enumerate}
\item
$g$ is the genus of $C_f$, and $\Pic^{g}(C_f)$ is the Picard group of $C_f$,
of degree $g$;
\item
$\mathcal{S}_f \simeq (\C^\ast)^{M-1} \subset (\C^\ast)^M$;
\item
$R_O$ is a finite set of cardinality $N!$, identified with the $N!$ orderings of $\{O_1,O_2,\ldots,O_N\}$; and
\item
$R_A$ is a finite set of cardinality $m!$, identified with the $m!$ orderings of $\{A_1,A_2,\ldots,A_m\}$.
\end{enumerate}
\end{statement}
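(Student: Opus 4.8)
The plan is to adapt the eigenvector/spectral-data construction of van Moerbeke and Mumford \cite{vMM} to the toric network. Fix a generic $f\in\psi(\mM)$ and a point $q\in\psi^{-1}(f)$. From \S\ref{sec:Lax} we have a Lax matrix $L=L_q(x,y)$, Laurent-polynomial in $x$ and $y$, whose spectral locus $\{\det L=0\}$ is the affine curve $f(x,y)=0$, and the distinguished points $P$, $O_1,\ldots,O_N$, $A_1,\ldots,A_m$ of $C_f$ are the points over the punctures, read off from the asymptotics of $L$. First I would build the eigenvector data: over a generic $(x,y)\in C_f$ the kernel of $L_q(x,y)$ is one-dimensional, and fixing one chosen coordinate to equal $1$ makes the normalized kernel a meromorphic vector-valued function $v$ on $C_f$; its pole divisor $\D$ is the desired element of $\Pic^g(C_f)$. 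That $\deg\D=g$ with simple poles will follow from the smoothness of $C_f$ away from $(0,0)$ and from matching the pole count with the genus computed from the Newton polygon of $f$ — this is where genericity of $f$ is used. The regularized values of the remaining coordinates of $v$ at the special points $O_u$ and $A_u$ give a point of $(\C^\ast)^M$; a single multiplicative relation among them (coming from the structure of $L_q$, e.g.\ the product identities of the type \eqref{eq:q1q2}) cuts this down to $\mathcal{S}_f\simeq(\C^\ast)^{M-1}$. Finally, $L_q$ is only defined once we cyclically order the $N$ horizontal cycles and the $m$ vertical cycles of the torus, equivalently once we order $\{O_1,\ldots,O_N\}$ and $\{A_1,\ldots,A_m\}$; recording this choice gives the $R_O\times R_A$ factor. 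These assemble into $\phi$.

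For injectivity I would reverse the construction. Given $(\D,(c_1,\ldots,c_M),O,A)$ with $\D$ of degree $g$ on the smooth curve $C_f$, Riemann--Roch together with genericity forces $h^0(C_f,\OO(\D))=1$, so the normalized coordinate of $v$ is recovered up to a scalar as the unique meromorphic function on $C_f$ with pole divisor $\le\D$; the remaining coordinates of $v$ are then determined by the minors of $L_q$, and the residual scalars are pinned down by the prescribed values $(c_1,\ldots,c_M)$ at the $M$ special points. The orderings $O$ and $A$ then dictate how the reconstructed eigenvector distributes along the wires of the network, hence recover all the $q_{i,j}$. Thus $q$ is determined by $\phi(q)$ and $\phi$ is an injection.

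The main obstacle I anticipate is the analysis at the degenerate locus: controlling the behavior of $v$ at the singular point $(0,0)$ of the affine curve and at the points over $x=0,\infty$ and $y=0,\infty$, ruling out spurious base points of $\OO(\D)$, and proving the naive pole count is exactly $g$ with $h^0=1$. A secondary delicate point is showing that the scalars $(c_1,\ldots,c_M)$ satisfy exactly one relation, so that the image is all of $(\C^\ast)^{M-1}$ and not something smaller, and that the relevant number of special points is precisely $M=\gcd(n,m+k)$; both rely on careful use of the genericity hypothesis on $f$ and on the combinatorial description of the coefficients of $f$ from \S\ref{sec:spectral}.
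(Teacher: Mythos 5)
There is a genuine gap, and it sits exactly at the step your sketch treats as bookkeeping. You write that $L_q$ ``is only defined once we order'' the horizontal and vertical cycles, ``equivalently once we order $\{O_1,\ldots,O_N\}$ and $\{A_1,\ldots,A_m\}$,'' so that $R_O\times R_A$ just records a labeling choice. That is not what these factors do, and without the correct role of $R_A$ the injectivity of $\phi$ is not proved. The Lax matrix $L(x)=Q_1\cdots Q_mP(x)^k$ does not remember its factors: the map $\alpha:\mM\to\mL$ is generically $m!$ to $1$ (\cite{LPgeom}), and by Lemma \ref{lem:R-matrix} the whole $\mS_m$-orbit of $Q$ under the $R$-matrix action has the same Lax matrix, hence the same curve, divisor $\D$, $c$, and $O$. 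So an eigenvector-type argument can at best show that the Lax matrix is determined by $([\D],c,O)$ (this is the content of Theorem \ref{thm:eta}); it cannot separate the $m!$ points of $\psi^{-1}(f)$ lying over one $L(x)$. The paper's proof of Theorem \ref{thm:phi} closes precisely this gap: for $f\in\oV$ the quantities $\epsilon_1(q),\ldots,\epsilon_m(q)$ are distinct, the $\mS_m$-action permutes them (by \eqref{eq:q1q2}), so the fiber $\alpha^{-1}(L)$ (restricted to $\mathring\mM$, where the $m!$-to-$1$ property holds) is exactly one $\mS_m$-orbit and the ordering $A=(A_1,\ldots,A_m)$, i.e.\ which vertical wire carries which root of $f(x,0)$, distinguishes its $m!$ elements. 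Your closing sentence (``the orderings $O$ and $A$ then dictate how the reconstructed eigenvector distributes along the wires, hence recover all the $q_{i,j}$'') asserts this recovery but gives no mechanism, and no mechanism exists at the level of the eigenvector alone, since the eigenvector only sees $L$.

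Two further inaccuracies would need repair even for the Lax-level part. First, the component $\mathcal{S}_f$ is not obtained from ``regularized values of $v$ at the special points $O_u$ and $A_u$'': in the paper $c_\ell$ is the coefficient of the top power of $x$ in the $(\ell,\ell+1)$ entry of $L(x)^{n/M}$, equivalently the ratio of leading coefficients of consecutive eigenvector components at the single point $P$ at infinity, and the one relation cutting $(\C^\ast)^M$ down to $\mathcal{S}_f$ is $\prod_\ell c_\ell=f_c$ (Lemma \ref{lem:c-f}), a coefficient of $f$ — not a relation of the type \eqref{eq:q1q2}; there are also no ``$M$ special points.'' Second, in your reconstruction you determine the remaining coordinates of $v$ ``by the minors of $L_q$,'' but $L_q$ is the unknown; the paper instead reconstructs the matrix entries from the unique expansion of $y\,h_i$ in the functions $h_{i+j}$ furnished by Lemma \ref{lem:D}, with the $c_\ell$ fixing the normalization at $P$. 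Also note that the ordering $O$ is itself output of the construction (it is pinned down by where $g_j/g_{j+1}$ vanishes, Proposition \ref{prop:Q}), not an input choice, and that the paper must restrict to $\psi^{-1}(f)\cap\mathring\mM$ for the fiber-counting argument to apply.
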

In fact, our Theorem \ref{thm:phi} identifies the image of $\phi$.

Define the divisors $\mathcal{A}_u := uP -\sum_{i=1}^u A_i$ and $\mathcal{O}_u := uP -\sum_{j=N-u+1}^N O_j$.
The commuting time evolutions $e_u$ \eqref{eq:Za-action} 
and $\tilde e_u$ \eqref{eq:Zm-action} can be described as follows
(Theorem \ref{thm:finite} and Theorem \ref{thm:commuting-actions}).

\begin{statement}
Suppose $\phi(q) = ([\mathcal{D}], (c_1,\ldots,c_M),O,A)$.  Then we have 
\begin{align*}
  &\phi(e_u (q)) = ([\mathcal{D} - \mathcal{A}_u], 
  (c_{u+1},\ldots,c_M,c_1,\ldots,c_u),O,A) \quad \text{for $u=1,\ldots,m$},
  \\ 
  &\phi(\tilde e_u (q)) = ([\mathcal{D} + \mathcal{O}_u],
  (c_{M-u+1},\ldots,c_M,c_1,\ldots,c_{M-u}),O,A) \quad \text{for $u=1,\ldots,N$}.
\end{align*}
Furthermore, the finite symmetric subgroups $\mS_N \subset \tilde W$ and $\mS_m \subset W$ act naturally on $R_O$ and $R_A$ respectively and do not affect the rest of the spectral data.
\end{statement}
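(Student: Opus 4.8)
The plan is to handle the finite generators $s_\ell,\tilde s_\ell$ and the translation elements $e_u,\tilde e_u$ separately, tracking in each case how the data produced by the spectral map of \S\ref{sec:eigenvector} transforms. Recall from \S\ref{sec:Lax}--\S\ref{sec:eigenvector} that $q\in\psi^{-1}(f)$ determines a Lax matrix; its eigenvector, regarded as a rational section of a line bundle on $C_f$, has divisor of zeros representing $\mathcal{D}\in\Pic^{g}(C_f)$; its leading Laurent coefficients at the $M=\gcd(n,m+k)$ points of $C_f$ lying over the relevant edge of the Newton polygon of $f$ give the framing $(c_1,\dots,c_M)\in\mathcal{S}_f$; and the correspondence between the wire-factors of the Lax matrix and the special points $A_1,\dots,A_m$, $O_1,\dots,O_N$ gives the orderings $A,O$. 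By Corollary \ref{cor:L-WW} all the operators preserve $C_f$; what remains is to follow the eigenvector, its zero divisor, the framing, and the labelings.

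\textbf{The finite subgroups.} The geometric $R$-matrix of \S\ref{sec:Rmatrix} is exactly the refactorization move for the local $L$-operators out of which the Lax matrix is assembled, so each $s_\ell$ with $1\le\ell\le m-1$ --- which applies the $R$-matrix to two consecutive vertical wire-factors --- leaves the Lax matrix itself unchanged; hence $C_f$, the eigenvector, $\mathcal{D}$ and the framing $(c_1,\dots,c_M)$ are all literally unchanged. On the other hand, by \eqref{eq:q1q2} the $R$-matrix preserves the product of the parameters along each of the two wires it acts on, merely transposing these two products; since by \S\ref{sec:Lax} the point $A_i$ depends on $q\in\psi^{-1}(f)$ only through the product along the $i$-th vertical wire, $s_\ell$ transposes $A_\ell\leftrightarrow A_{\ell+1}$ in the ordering $A$ and does nothing else. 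Thus $\mS_m=\langle s_1,\dots,s_{m-1}\rangle$ acts on $R_A$ by the natural action on orderings of $\{A_1,\dots,A_m\}$, trivially on the rest; the identical argument for the horizontal wire-factorization of the Lax matrix shows $\tilde s_\ell$ transposes $O_\ell\leftrightarrow O_{\ell+1}$, so $\mS_N=\langle\tilde s_1,\dots,\tilde s_{N-1}\rangle$ acts on $R_O$ in the same way. Finally, $e_u$ and $\tilde e_u$ are translation elements, hence project trivially to $\mS_m$ and $\mS_N$ and fix the orderings $A$ and $O$ respectively; and since the $O_j$ are indexed by the $N$ horizontal simple curves, which no generator of $W$ permutes, $e_u$ fixes $R_O$ as well, and dually $\tilde e_u$ fixes $R_A$.

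\textbf{The translation subgroups.} By contrast $\pi$ (and $\tilde\pi$), being the cyclic shift \eqref{eq:pi}, does not leave the Lax matrix fixed: it conjugates it (or refactors it nontrivially), so $C_f$ is preserved but the eigenvector genuinely moves. I would compute $e_u$ directly: the translation by $\be_u=(1,\dots,1,0,\dots,0)$ corresponds, on the level of Lax matrices, to a refactorization in which the contributions of the first $u$ vertical wires are peeled off and reattached, through a dressing matrix that is regular and invertible on $C_f$ away from the special points $P$ and $A_1,\dots,A_u$. The core computation is the local analysis at those points: one checks that the dressing changes the valuation of the eigenvector exactly so that its zero divisor moves by $\sum_{i=1}^{u}A_i-uP=-\mathcal{A}_u$ --- the $A_i$ with $i\le u$ occurring because they are attached to the first $u$ wires, and $P$ occurring with multiplicity $u$, one copy per peeled factor. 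Since $P$ and the $A_i$ depend only on $f$, and the dressing is invertible elsewhere, this degree-$0$ shift is independent of $q\in\psi^{-1}(f)$, which is the linearity assertion. Running the same analysis on the array $\tilde Q$ --- where the relevant edge of the Newton polygon, and the boundary-shift convention of \eqref{eq:q-tildeq}, are the opposite ones --- produces the shift $uP-\sum_{j=N-u+1}^{N}O_j=\mathcal{O}_u$ for $\tilde e_u$. As for the framing, the $M$ points of $C_f$ over the relevant edge carry a natural cyclic labeling that $\pi$ rotates by one step, so $\pi^{u}$ sends $(c_1,\dots,c_M)$ to $(c_{u+1},\dots,c_M,c_1,\dots,c_u)$; this rotation preserves the single relation cutting $\mathcal{S}_f$ out of $(\C^\ast)^M$, hence is well defined, and the $s$-word in \eqref{eq:Za-action} leaves the framing alone. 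For $\tilde e_u$ the rotation runs the other way, because of the orientation reversal built into \eqref{eq:q-tildeq}.

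\textbf{Main obstacle.} The crux is the divisor computation just sketched: establishing not merely that $e_u$ shifts $[\mathcal{D}]$ by \emph{some} fixed degree-$0$ divisor but that it is exactly $-\mathcal{A}_u$ requires a careful valuation analysis of the dressing matrix near each of the points $P$ and $A_1,\dots,A_u$, matched against the explicit definitions of these points in \S\ref{sec:Lax}; this is precisely where the $k=\b\d$ shifts at the horizontal boundary make the bookkeeping delicate, and it is the ingredient absent from the $k=0$ case treated in \cite{KNY} (and absent when $\d\neq 0$ is not allowed). Once this is in hand, commutativity of all the operators (Theorem \ref{thm:dynamics}) forces the formulas for $e_u$, $\tilde e_u$ and the $\mS$-actions to be mutually consistent, and injectivity of $\phi$ (Theorem \ref{thm:phi}) guarantees that the listed spectral data determines the maps uniquely, completing the proof.
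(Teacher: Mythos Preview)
Your overall architecture---handle the finite generators and then the translations---matches the paper, and your treatment of $s_\ell$ for $1\le\ell\le m-1$ is correct and essentially the paper's argument (Lemma~\ref{lem:R-matrix} plus \eqref{eq:q1q2}). But two points deserve comment.

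\textbf{A genuine gap for $\tilde s_\ell$.} You write that ``the identical argument for the horizontal wire-factorization of the Lax matrix'' handles $\tilde s_\ell$. It does not. The spectral map $\phi$ is defined through the \emph{vertical} Lax matrix $L(x)=Q_1\cdots Q_mP(x)^k$, and $\tilde s_r$ does \emph{not} leave $L(x)$ fixed: by Lemma~\ref{lem:vertical-L}(ii) it conjugates $L(x)$ by a nontrivial unipotent matrix $B_{0,r}$. So the eigenvector genuinely changes, and one must verify separately that $[\mathcal{D}]$ is preserved. The paper does this in \S\ref{sec:prooffinite}(ii) by checking that the pole divisors $(h_j')_\infty$ of the transformed eigenvector coincide with $(h_j)_\infty$, using the explicit form of $B_{0,r}$ and the divisor formulae of Theorem~\ref{thm:double}. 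Without this step your argument for $\mS_N$ is incomplete. (Also, the paper finds that $\tilde s_r$ swaps $O_{N-r}\leftrightarrow O_{N-r+1}$, not $O_r\leftrightarrow O_{r+1}$; the indexing convention \eqref{eq:q-tildeq} introduces a reversal.)

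\textbf{A different route for the translations.} You propose to attack $e_u$ directly via a $u$-factor dressing. The paper instead reduces everything to the single-step case: it proves (Lemma~\ref{lem:time}) that $\pi$, which conjugates $L(x)$ by the single factor $Q_1$, shifts $[\mathcal{D}]$ by exactly $[A_1-P]$, and then obtains the formula for $e_u$ by composing $\pi^u$ with the already-understood $s_\ell$'s via \eqref{eq:Za-action}, tracking how the $A$-ordering rotates under each $\pi$. The heart of Lemma~\ref{lem:time} is not just a valuation count at $P$: one must also pin down the support of $D_+$ away from $P$, and the paper does this by introducing an auxiliary $(k+m)\times(k+m)$ matrix $M(y)$ with $\det M(y)=\pm y$ (Lemma~\ref{L:A}), so that the extra zero is forced to lie at $\{y=0\}\cap\{\det Q_1^T=0\}=\{A_1\}$. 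Your direct approach would in principle work, but the bookkeeping scales with $u$; reducing to $u=1$ is both cleaner and what the paper does. A parallel reduction handles $\tilde e_u$: the single step is $\tilde\pi$, which on $L(x)$ is the shift $\varsigma^*$ of Proposition~\ref{prop:shift}, giving $[\mathcal{D}]\mapsto[\mathcal{D}+P-O_N]$.

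Minor: your description of $(c_1,\dots,c_M)$ as values at ``$M$ points of $C_f$'' is off---there is a single point $P$ at infinity (Lemma~\ref{lem:inf}); the $c_\ell$ are defined in \S\ref{subsec:eigen} as leading coefficients of specific entries of $L(x)^{n/M}$, and their behavior under $\pi,\tilde\pi,s_i,\tilde s_i$ is the content of Lemma~\ref{lem:pi-c}.
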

In other words, the time evolutions $e_u$ and $\tilde e_u$ are linearized on $\Pic^{g}(C_f)$.  Our approach to Theorem \ref{thm:commuting-actions} is similar to that of Iwao \cite{Iwao07}.

When $N = 1$, we give in Theorem \ref{thm:N=1} an explicit formula for the inverse to the map $\phi$, which also explicitly solves the initial value problem for our dynamics.  For $t \in \Z^m$, let $(q^t_{j,i})$ denote the point in the phase space after time evolution in the direction $t$.
\begin{statement}
When $N=1$, we have a formula
$$  
  q^t_{j,i}
  =
  C \,
  \frac{\theta_{i}^{t+\be_{j}}(P) \, \theta_{i-1}^{t+\be_{j-1}}(P)}
  {\theta_{i}^{t+\be_{j-1}}(P) \, \theta_{i-1}^{t+\be_{j}}(P)},
$$
where $\theta_i^t(P)$ is a particular value of the Riemann theta function, and $C$ is a constant depending only on $(c_1,\ldots,c_M)$, $O$, and $A$.
\end{statement}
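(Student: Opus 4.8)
The plan is to construct the inverse of $\phi$ explicitly for $N=1$: one writes the normalized eigenvector of the Lax matrix along $C_f$ in terms of Riemann theta functions, following the strategy of \cite{Iwao10}, and then reads off $q^t_{j,i}$ from it as a Hirota-type ratio of four theta values. Injectivity of $\phi$ (Theorem \ref{thm:phi}) then guarantees that the family so produced is \emph{the} solution of the initial value problem. First I would recall from \S\ref{sec:eigenvector} the eigenvector $v(z)=(v_i(z))$ of the Lax matrix on $C_f$, normalized by fixing one distinguished component, together with the formula expressing each phase-space coordinate as a ratio of values of the $v_i$ at the special point $P$; this formula is forced by the recurrence the Lax matrix encodes along the wires, and it is exactly what produces a quotient of four quantities. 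I would then use Theorem \ref{thm:commuting-actions}: the time step $e_u$ shifts $[\mathcal D]$ by $-[\mathcal A_u]$, and incrementing the cyclic column index of the eigenvector shifts $[\mathcal D]$ by a further fixed class, so that the line bundle carrying the $i$-th component at time $t$ has divisor class of the form $[\mathcal D_0]+i[\mathcal U]-\sum_u t_u[\mathcal A_u]$ with $[\mathcal U]$ and the $[\mathcal A_u]$ fixed. Under the Abel--Jacobi map $\mathrm{Ab}$ the whole dependence on $(i,t)$ thus becomes an affine-linear shift of the theta argument.

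The analytic heart is the classical theta representation of a meromorphic section with a non-special degree-$g$ pole divisor and prescribed local behaviour at $P,A_1,\dots,A_m$ (the single point $O$ contributing trivially when $N=1$): it gives
\[
v_i^t(z)\;=\;E_i^t(z)\,\theta_i^t(z),\qquad \theta_i^t(z):=\theta\!\bigl(\mathrm{Ab}(z)-\zeta-iU+\textstyle\sum_u t_u V_u\bigr),
\]
where $\zeta$ absorbs the Abel image of $\mathcal D_0$ and the Riemann constant, $U$ and $V_u$ are the Abel images of $[\mathcal U]$ and $[\mathcal A_u]$, and $E_i^t(z)$ is a rational prefactor built from $x,y$ and the special points --- the discrete analogue of the exponential Baker--Akhiezer factor. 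Setting $z=P$ defines $\theta_i^t(P)$. Substituting this into the four-term ratio of the first step,
\[
q^t_{j,i}\;=\;C\,\frac{\theta_i^{t+\be_j}(P)\,\theta_{i-1}^{t+\be_{j-1}}(P)}{\theta_i^{t+\be_{j-1}}(P)\,\theta_{i-1}^{t+\be_j}(P)},
\]
one checks that the prefactors $E_i^t(P)$ telescope in the quotient, leaving a single residual constant $C$ that depends only on the normalization data, i.e.\ on $(c_1,\dots,c_M)\in\mathcal S_f$ and on the labelings $O,A$, and not on $i$, $j$, or $t$. Finally, comparing $z=P$ values at $t=0$ fixes $\zeta$ so that $\phi$ of the constructed $q^0$ equals the prescribed spectral datum, and Theorem \ref{thm:commuting-actions} together with the injectivity of $\phi$ identifies the constructed family with the true orbit.

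The step I expect to be the main obstacle is producing the displayed theta representation: pinning down precisely which line bundle each $v_i^t$ lives in and its exact local normalization at $P$ and the $A_u$, using the local expansions of $x$ and $y$ there, so that (i) the theta argument is genuinely affine-linear in $(i,t)$ with the correct quasi-periods and (ii) the prefactors $E_i^t(P)$ cancel cleanly in the four-term quotient down to an $(i,j,t)$-independent constant. A subsidiary but essential bookkeeping point is that the $\Z^m$ time lattice is generated by the vectors $\be_u=(1,\dots,1,0,\dots,0)$ rather than the standard basis, so that passing from $t$ to $t+\be_j$ is the single elementary move $e_je_{j-1}^{-1}$; and it is exactly the hypothesis $N=1$ that collapses the $\tilde e_u$-directions and the $\mathcal O_u$-shifts, which is why a closed form of precisely this shape exists.
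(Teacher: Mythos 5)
Your overall architecture is the same as the paper's (and Iwao's): use Theorem \ref{thm:commuting-actions} to make the theta argument affine-linear in $(i,t)$, represent the normalized eigenvector components by theta quotients, and extract $q^t_{j,i}$ from the Lax recursion as a four-term theta ratio. But two of your concrete claims are wrong in a way that blocks the execution. First, the prescribed local data of the sections is at $P$ and $O$, not at $P$ and the $A_u$: by Theorem \ref{thm:double} and Definition \ref{def:D} one has $(h_i)=D_i-\D-(n-i)P+\sum_{j=i+1}^n O_j$, so for $N=1$ each $h_i$ has a pole of order $n-i$ at $P$ and a zero of order $n-i$ at $O$ --- far from ``$O$ contributing trivially'' --- while the points $A_u$ never occur in these divisors at all; they enter only through the shift $[\D]\mapsto[\D-\mathcal{A}_u]$ of the divisor class and through the constants. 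Accordingly the paper's $\theta_i^t(P)=\Theta\bigl(\beta^t+\iota(P-O_n-\OO_{n-i})\bigr)$ carries a shift built from $O$ and $P$ (Lemma \ref{lem:psi-h}), and a normalization ``at $P$ and the $A_u$'' would not produce the correct sections.

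Second, the step you compress into ``the prefactors telescope, leaving an $(i,j,t)$-independent constant'' is exactly the hard part, and the claimed independence is false. The paper's derivation needs: the relation $g^{t+\be_{j-1}}=Q_j^t\,g^{t+\be_j}$ together with evaluation of the resulting ratio $\Psi_i^{t+\be_j}$ both at $O$ (using $\theta_i^t(O)=\theta_{i-1}^t(P)$) and at $P$ (Lemma \ref{lem:Psi-OP}); the combinatorial identification \eqref{eq:sneke-d} of the leading coefficients of $h_i^t$ at $P$ with the snake-path invariants $c_\ell$ (via the upper hull of the Newton polygon), which is the only way $(c_1,\ldots,c_M)$ enters $C$ at all; and a quasi-periodicity computation combined with the relation between $x(A_j)$ and the product of the $q^t_{j,i}$ along the $j$-th wire, which shows the residual factor is $t$-independent and equal to $f_c^{-1/M}a_j$. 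The outcome is $C=f_c^{-1/M}\,a_j\,c^t_{i+j-1}$, which genuinely depends on $j$ (through $a_j$, hence on the point $A_j$) and on $i+j$ modulo $M$; so the clean cancellation you anticipate does not occur, and without the ingredients above your plan does not reach the stated formula.
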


\subsection{Examples}
\label{subsec:example}

\subsubsection{Discrete Toda lattice}

Let $(\a, \b, \c, \d)=(2,1,n,n-1)$ (i.e. $(n,m,k) = (n,2,n-1)$).
We set $q_{i,j} = q_{a,1,c}$ for $i=a, ~j=c$, and 
regard $q := (q_{i,j})_{i \in [2], j \in [n]}$ as a coordinate
of $\mM \simeq \C^{2n}$. 
The resulting $\Z^2$-action on $\mM$ 
is generated by ${e}_1$ and ${e}_2$,
where $e_2$ simply acts on $\mM$  as
$ e_2 (q_{i,j}) = q_{i,j+1}$.
As for $e_1$, when we define $q^t :=  e_1^t(q)$ for $q \in \mM$,
the action of $e_1$ is rewritten as a system of 
difference equations:
\begin{align}\label{eq:d-Toda}
  \begin{cases}
  q_{1,j}^{t+1} q_{2,j}^{t+1} = q_{2,j}^t q_{1,j+1}^t,
  \\
  q_{1,j+1}^{t+1} + q_{2,j}^{t+1} = q_{2,j+1}^t +  q_{1,j+1}^t.
  \end{cases}
\end{align}
This is the discretization of $n$-periodic Toda lattice equation 
studied in \cite{HTI}. 
Actually, we recover the original Toda lattice equation
$\frac{d^2}{dt^2}x_j = \mathrm{e}^{x_{j+1}-x_j} - \mathrm{e}^{x_{j}-x_{j-1}}$
by setting $q_{1,j}^t = 1 + \delta \frac{d}{dt}x_j$ and
$q_{2,j}^t = \delta^2 \mathrm{e}^{x_{j+1}-x_j}$, 
and taking the limit $\delta \to 0$.
Here we set $q_{\ast,j}^t = q_{\ast,j}(\delta t)$ for $\ast = 1,2$
and $x_j = x_j(t)$.

A simple generalization of the discrete Toda lattice is
the case of $(\a, \b, \c, \d)=(m,1,n,n-1)$.
Its initial value problem was studied by Iwao \cite{Iwao07,Iwao10} 
by applying \cite{vMM}.
He also studied the similar problem in the case of 
$(\a, \b, \c, \d) = (m,1,n,0)$ in \cite{Iwao9}.

\subsubsection{Tropicalization and tableaux}\label{sec:tab}

Let $T_1$ and $T_2$ be two semistandard tableaux of rectangular shapes. Define $T_1 \otimes T_2$ to be the concatenation of the two into a skew tableaux by placing $T_2$ North-East of 
$T_1$.   Let $\jdt(T_1 \otimes T_2)$ denote the straight shape tableau obtained by performing jeu-de-taquin on $T_1 \otimes T_2$.  The following result is well-known.

\begin{lem}
There is a unique pair of rectangular semistandard tableaux $T_1'$ and $T_2'$ such that $T_i'$ has the same shape as $T_i$ (for $i = 1,2$), and
 $\jdt(T_1 \otimes T_2) = \jdt(T_2' \otimes T_1')$. 
\end{lem}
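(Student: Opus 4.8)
The plan is to deduce this statement from the fact that the geometric $R$-matrix is a birational lift of the combinatorial $R$-matrix, together with the standard tableau-theoretic interpretation of the combinatorial $R$-matrix. Recall that a rectangular semistandard tableau of shape $p \times q$ (with entries in a fixed alphabet $[n]$) is, via the ``tableau product'' / Robinson--Schensted insertion, the same data as an element of a tensor product of Kirillov--Reshetikhin crystals for $U_q(\widehat{\mathfrak{sl}}_n)$, and $\jdt(T_1 \otimes T_2)$ is precisely the rectification; the combinatorial $R$-matrix is the unique crystal isomorphism $B_1 \otimes B_2 \xrightarrow{\sim} B_2 \otimes B_1$, and its defining property is exactly that it preserves rectification, i.e. $\jdt(T_1 \otimes T_2) = \jdt(T_2' \otimes T_1')$ where $(T_2', T_1')$ is the image of $(T_1, T_2)$. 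So first I would record these identifications and reduce the lemma to the two assertions: (i) \emph{existence} of such a pair $(T_1', T_2')$ with $\operatorname{shape} T_i' = \operatorname{shape} T_i$, and (ii) \emph{uniqueness}.

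For existence, I would invoke that jeu-de-taquin is a confluent, shape-preserving process: starting from the skew tableau $T_1 \otimes T_2$, rectifying and then ``un-rectifying'' in the opposite order (equivalently, running reverse jeu-de-taquin slides into the complementary inner corners that correspond to placing a $\operatorname{shape} T_2$-rectangle to the South-West) produces a skew tableau of the shape $T_2' \otimes T_1'$; a standard argument (e.g. via the plactic monoid, or via the fact that a skew semistandard tableau of a given shape whose rectification is a fixed straight tableau exists for \emph{every} shape that admits one) shows the two rectangular pieces $T_1', T_2'$ can be extracted and have the prescribed rectangular shapes. Alternatively, and perhaps more cleanly, I would phrase existence using the geometric $R$-matrix: apply $R$ to a generic point $(\aa, \bb)$ and tropicalize; Theorem~\ref{thm:dynamics} (Yang--Baxter and the group relations) guarantees $R$ is a well-defined birational involution-like map interchanging the two factors, and its tropicalization is a piecewise-linear bijection on the relevant lattice points, which are exactly pairs of rectangular tableaux of the two fixed shapes. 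This gives the pair $(T_1', T_2')$ together with the shape constraint automatically.

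For uniqueness, the key point is that the map $T \mapsto (\text{rectification of } T,\ \text{recording data})$ is injective on skew tableaux of a \emph{fixed} shape — more precisely, a straight-shape tableau $U$ together with the shape $\lambda$ of a skew diagram $\mu/\nu$ determines at most one semistandard $T$ of shape $\mu/\nu$ with $\jdt(T) = U$ (this is the reversibility of jeu-de-taquin once the outer and inner shapes are fixed). Applying this with $\mu/\nu$ the shape obtained by stacking $\operatorname{shape} T_2'$ North-East of $\operatorname{shape} T_1'$, and noting both $\operatorname{shape} T_1' = \operatorname{shape} T_1$ and $\operatorname{shape} T_2' = \operatorname{shape} T_2$ are forced, we get that $T_1' \otimes T_2'$ is uniquely determined by $\jdt(T_1 \otimes T_2)$, hence so are $T_1'$ and $T_2'$ individually.

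The main obstacle is purely expository: the cited facts (crystal $R$-matrix $=$ rectification-preserving map, reversibility of jeu-de-taquin with fixed inner/outer shape) are ``well-known'' but scattered across \cite{KKMMNN} and the combinatorics literature, and one must be careful that the shape-matching condition $\operatorname{shape} T_i' = \operatorname{shape} T_i$ is genuinely part of the uniqueness assertion rather than an extra hypothesis. I expect the cleanest writeup simply cites the crystal-theoretic statement and adds a one-line remark that the geometric $R$-matrix of \S\ref{sec:Rmatrix} tropicalizes to it, so that this lemma is the combinatorial shadow of Theorem~\ref{thm:dynamics} restricted to a single $R$-matrix move.
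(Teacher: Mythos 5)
The paper gives no proof of this lemma (it is quoted as ``well-known''), so your argument has to stand on its own, and the uniqueness half does not. You assert that a straight-shape tableau $U$ together with the (inner and outer) shape of a skew diagram determines at most one semistandard skew tableau of that shape with $\jdt(T)=U$, calling this the reversibility of jeu-de-taquin. That principle is false in general: jeu-de-taquin is reversible only if one also retains the recording data (the order of the reverse slides), and the number of skew semistandard tableaux of a fixed shape rectifying to a fixed $U$ of shape $\lambda$ is a Littlewood--Richardson coefficient, which can exceed $1$. For example, for the skew shape obtained by placing a $(2,1)$-shape North-East of another $(2,1)$-shape and $\lambda=(3,2,1)$, that number is $c^{(3,2,1)}_{(2,1),(2,1)}=2$. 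What rescues the lemma is exactly the hypothesis your uniqueness step never uses: both shapes are \emph{rectangles}, and the product of two rectangular Schur functions is multiplicity-free, i.e.\ $c^{\lambda}_{\mu\nu}\le 1$ when $\mu,\nu$ are rectangles. With that input the whole lemma is short: $\jdt(T_1\otimes T_2)=U$ of shape $\lambda$ witnesses $c^{\lambda}_{\mu\nu}\ge 1$; the symmetry $c^{\lambda}_{\nu\mu}=c^{\lambda}_{\mu\nu}$ then gives a skew tableau of the swapped concatenated shape rectifying to $U$ (existence, with the two rectangular pieces automatically of the prescribed shapes), and multiplicity-freeness gives uniqueness. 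The crystal-theoretic route you sketch (uniqueness of the affine crystal isomorphism $B_1\otimes B_2\cong B_2\otimes B_1$, cf.\ \cite{KKMMNN}) also works, but there too the multiplicity-free classical decomposition of a tensor product of rectangles is what lets one pass from ``unique crystal isomorphism'' to ``uniquely determined by the rectification''; some form of the rectangularity hypothesis must appear explicitly.

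A secondary problem is your fallback existence argument via tropicalizing the geometric $R$-matrix of \S\ref{sec:Rmatrix}: that map acts on pairs of vectors in $\C^n$, so its tropicalization concerns single-row tableaux only, whereas the lemma is stated for arbitrary rectangular shapes (in the paper's own example $T_1$ has two rows). It also inverts the logical order of the section, since the statement that the birational action tropicalizes to the combinatorial $R$-matrix presupposes that the combinatorial $R$-matrix---i.e.\ this lemma---is already well defined.
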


\begin{example}
Suppose 
$$
T_1 = \tableau[sY]{1&2\\3&3} \qquad \text{and} \qquad T_2 = \tableau[sY]{1&2&3}.$$ 
Then one has 
$$
T_2' = \tableau[sY]{1&3&3}\qquad \text{and} \qquad T_1' = \tableau[sY]{1&2\\2&3}.$$
\end{example}

The transformation $R(T_1,T_2)=(T_2',T_1')$ is called the {\it {combinatorial $R$-matrix}}.  It appears in the theory of crystal graphs as the isomorphism map between tensor products of Kirillov-Reshetikhin 
crystals. The following property is well-known.

\begin{thm}
The combinatorial $R$-matrix is an involution. Furthermore, it satisfies the braid relation:
 $$(R \otimes Id)(Id \otimes R)(R \otimes Id) = (Id \otimes R)(R \otimes Id)(Id \otimes R).$$
\end{thm}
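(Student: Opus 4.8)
The plan is to recognize $R$ as a morphism of crystals and to exploit multiplicity-freeness. For a rectangular partition $\lambda$, the set $B^{\lambda}$ of semistandard Young tableaux of shape $\lambda$ with entries in $\{1,\dots,n\}$ is a connected $\mathfrak{sl}_n$-crystal of highest weight $\lambda$ (write $u_\lambda$ for its highest weight element), and jeu-de-taquin rectification $T\mapsto\jdt(T)$ commutes with the crystal raising and lowering operators; hence rectification is a morphism of crystals $B^{\lambda_1}\otimes B^{\lambda_2}\to\bigsqcup_{\nu}B^{\nu}$, where $B^{\lambda_1}\otimes B^{\lambda_2}$ is realized as skew tableaux by placing the second tableau North-East of the first. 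Since $R(T_1,T_2)=(T_2',T_1')$ is characterized by $\jdt(T_1\otimes T_2)=\jdt(T_2'\otimes T_1')$ together with the shape constraints — which is precisely the content of the Lemma above — it follows that $R\colon B^{\lambda_1}\otimes B^{\lambda_2}\to B^{\lambda_2}\otimes B^{\lambda_1}$ is an isomorphism of crystals. The one genuinely nontrivial input here is that jeu-de-taquin commutes with the crystal operators, a standard fact about Bender--Knuth moves.

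For the involutivity, the key point is that when $\lambda_1$ and $\lambda_2$ are rectangles the Littlewood--Richardson coefficients $c^{\nu}_{\lambda_1\lambda_2}$ are all $0$ or $1$, so $B^{\lambda_1}\otimes B^{\lambda_2}$ is multiplicity-free. A morphism of crystals between multiplicity-free crystals is unique: it must identify the unique copy of each $B^{\nu}$ on one side with the unique copy on the other, and a connected crystal has no nontrivial automorphism. Hence $R$ is the unique crystal isomorphism $B^{\lambda_1}\otimes B^{\lambda_2}\to B^{\lambda_2}\otimes B^{\lambda_1}$, and $R^{2}$, being a crystal automorphism of the multiplicity-free crystal $B^{\lambda_1}\otimes B^{\lambda_2}$, is the identity.

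For the braid relation, set $R_1=R\otimes\mathrm{Id}$ and $R_2=\mathrm{Id}\otimes R$ on $B^{\lambda_1}\otimes B^{\lambda_2}\otimes B^{\lambda_3}$. Tracking shapes shows that $R_1R_2R_1$ and $R_2R_1R_2$ are both crystal isomorphisms onto $B^{\lambda_3}\otimes B^{\lambda_2}\otimes B^{\lambda_1}$, and since rectification can be performed one $\otimes$-junction at a time, each $R_i$ preserves $\jdt(T_1\otimes T_2\otimes T_3)$; hence the two composites send any $T_1\otimes T_2\otimes T_3$ to triples with the same shapes and the same rectification. Being crystal morphisms, it then suffices to check that they agree on the highest weight element of each connected component. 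This is the main obstacle: a triple of rectangles need not be multiplicity-free (already $B^{(1)}\otimes B^{(1)}\otimes B^{(1)}$ is not), so equality of rectifications no longer forces equality of the two isomorphisms, and one must actually compute the action of $R$ on the pertinent highest weight vectors — for instance from the insertion description, using that $R$ carries $u_\lambda\otimes u_\mu$ to $u_\mu\otimes u_\lambda$ and analysing a general highest weight vector of a component combinatorially. The cleanest way to sidestep this is to pass to the affine $\widehat{\mathfrak{sl}}_n$-crystal structure carried by these Kirillov--Reshetikhin crystals: tensor products of Kirillov--Reshetikhin crystals are connected as affine crystals, so there is a \emph{unique} affine crystal isomorphism between $B^{\lambda_1}\otimes B^{\lambda_2}\otimes B^{\lambda_3}$ and $B^{\lambda_3}\otimes B^{\lambda_2}\otimes B^{\lambda_1}$ (and likewise a unique affine automorphism of $B^{\lambda_1}\otimes B^{\lambda_2}$), whence both the braid relation and the involution are immediate — at the cost of first identifying the jeu-de-taquin $R$ with an affine crystal morphism, the standard but nontrivial input of \cite{KKMMNN}. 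One can alternatively stay entirely within jeu-de-taquin theory, deriving both relations from the confluence of jeu-de-taquin slides together with the recoverability of a tuple of tableaux from its rectification and shapes, but this meets the same multiplicity-related subtlety once three factors are involved.
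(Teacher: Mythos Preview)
The paper does not actually prove this theorem; it is stated as ``well-known'' and left without argument, the implicit reference being the crystal literature (notably \cite{KKMMNN}). So there is no paper proof to compare against, and your proposal stands on its own.

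Your argument is essentially the standard one and is correct in outline. The involutivity step is clean: multiplicity-freeness of a tensor product of two rectangular crystals (Littlewood--Richardson coefficients $c^{\nu}_{\lambda_1\lambda_2}\in\{0,1\}$) plus rigidity of connected crystals forces $R^2=\mathrm{Id}$. For the braid relation you correctly identify the genuine subtlety---triple tensor products are not multiplicity-free---and you correctly name the fix: pass to the affine $\widehat{\mathfrak{sl}}_n$ (Kirillov--Reshetikhin) crystal structure, where tensor products are connected and the isomorphism is unique. That is exactly the argument implicit in the paper's citation of \cite{KKMMNN}.

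Two comments on presentation. First, your write-up reads more like a survey of possible approaches than a proof: phrases like ``this is the main obstacle'' and ``one can alternatively\ldots but this meets the same subtlety'' belong in a discussion, not in the body of an argument. Pick one route (the affine one) and commit to it. Second, the step ``tensor products of KR crystals are connected as affine crystals'' is doing all the work and deserves a precise citation rather than being folded into ``standard but nontrivial''; the relevant fact is that KR crystals are perfect (or, more directly, that the combinatorial $R$-matrix is the unique affine crystal isomorphism, which is what \cite{KKMMNN} establishes). Once that is cited, both claims are one line each.
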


Now, let $\{q_{a,b,c}\}_{a \in [\a], b \in [\b], c \in [\c]}$ be an array of nonnegative integers.  Define $Q = (q_{i,j})$ and $\tilde Q = (\tilde q_{i,j})$ as before:
$$q_{i,j} = q_{a,b,c} \text { for } i=a, \; j=b+\b(c-1)$$ and $$\tilde q_{ij} = q_{a,b,c} \text { for } i=\b+1-b, \; j=\a \d^{-1} c - a +1.$$
We create single-row tableaux from $Q$ and $\tilde Q$ as follows. 
For each $i \in [\a]$, let $T_i$ be the single-row tableau with $q_{i,j}$ $j$-s.  For each $i \in [\b]$, let $\tilde T_i$ be the single-row tableau with $\tilde q_{i,j}$ $j$-s.  We view $T =  T_1 \otimes \dotsc \otimes T_\a$ and $\tilde T = \tilde T_1 \otimes \dotsc \otimes \tilde T_\b$ as tropical analogues of $Q$ and $\tilde Q$.

Let us define an action of $W$ on $q_{a,b,c}$ as follows. For $T = T_1 \otimes \dotsc \otimes T_{\a}$ and $1 \leq \ell \leq \a-1$ let
$$s_{\ell}(T) = T_1 \otimes \dotsc \otimes T_{\ell+1}' \otimes T_{\ell}' \otimes \dotsc \otimes T_{\a}.$$  
That is, we apply the combinatorial $R$-matrix to the $\ell$-th and $(\ell+1)$-st factors. In addition, let $\tilde \pi$ act on $T$ by applying Schutzenberger's {\it {promotion}} operator \cite{Sch} 
to each factor $T_i$.  Similarly, we define $\tilde s_\ell$ and $\pi$ acting on $\tilde T$.

\begin{thm}
The operators $s_{\ell}$ and $\pi$ form an action of the extended affine symmetric group $W$ on $q_{a,b,c}$. Similarly,
operators $\tilde s_{\ell}$ and $\tilde \pi$ form an action of the extended affine symmetric group $\tilde W$ on $q_{a,b,c}$. These two actions commute.
These two operations are the tropicalizations of the rational actions of Theorem \ref{thm:dynamics}.
\end{thm}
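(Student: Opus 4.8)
The plan is to establish the tropical theorem by showing that the piecewise-linear operators on single-row tableaux are literally the tropicalizations of the birational operators $s_\ell$, $\tilde s_\ell$, $\pi$, $\tilde\pi$ of Theorem~\ref{thm:dynamics}, and then to transport the braid, commutation, and order relations from the rational setting to the tropical one by the standard fact that tropicalization is a functor from positive rational maps to piecewise-linear maps. The first and main step is to identify the geometric $R$-matrix $R:(\aa,\bb)\mapsto(\bb',\aa')$, when tropicalized, with the combinatorial $R$-matrix on a pair of single-row tableaux. Concretely, one replaces $+$ by $\min$ (or $\max$) and $\times$ by $+$ in the energy function $E(\aa,\bb)=\sum_{i=0}^{n-1}\bigl(\prod_{j=1}^i b_j\prod_{j=i+2}^n a_j\bigr)$ and in the formulas $b_i' = b_i\,E(\aa^{(i)},\bb^{(i)})/E(\aa^{(i-1)},\bb^{(i-1)})$, $a_i' = a_i\,E(\aa^{(i-1)},\bb^{(i-1)})/E(\aa^{(i)},\bb^{(i)})$; the resulting piecewise-linear map on $\Z^n\times\Z^n$ must be shown to coincide with the content-vector description of $\jdt$ on a two-row rectangular-times-rectangular (here $1\times p$ tensor $1\times q$) skew shape. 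This is a known identity — it is precisely the statement that the geometric $R$-matrix of \cite{BK,KNO1,KNO2,KKMMNN} tropicalizes to the combinatorial $R$-matrix of Kirillov–Reshetikhin crystals — so I would cite it and verify it on the generators, using that a single-row semistandard tableau is recorded by its content vector and that $\jdt$ of $T_1\otimes T_2$ with $T_i$ single rows is governed by the classical $\min$-plus formula for the energy/coenergy statistics.

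Second, I would check that the promotion operator $\tilde\pi$ tropicalizes the cyclic shift $\pi$ from \eqref{eq:pi}, i.e. $\pi(A)_{i,j}=a_{i,j-1}$. Since the array $Q$ records, row by row, content vectors of the single rows $T_i$, the operation \eqref{eq:pi} on $Q$ cyclically permutes the rows of $Q$; translated through the bijection ``row of $Q \leftrightarrow$ content vector of $T_i$'', a cyclic shift of the entry-labels $1,2,\dots,n'$ in each single-row tableau is exactly Schützenberger promotion on a single row. One checks this directly: promotion on a single row with content $(c_1,\dots,c_{n'})$ deletes all $n'$'s, shifts all labels up by one, and fills the vacated cells with $1$'s, which on the level of content vectors is the cyclic shift $(c_{n'},c_1,\dots,c_{n'-1})$ — matching \eqref{eq:pi} verbatim. (One must be careful with the conventions: \eqref{eq:pi} applied to $Q$ yields $\tilde\pi$, and applied to $\tilde Q$ yields $\pi$, so the same single verification handles both.)

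Third, with the generators identified, the relations follow formally. Tropicalization sends a subtraction-free rational identity between positive maps to a piecewise-linear identity; concretely, if $F=G$ holds as an identity of subtraction-free rational functions (as all the relations in Theorem~\ref{thm:dynamics} do, since $s_\ell$, $\pi$ and their tildes are subtraction-free in the $q_{a,b,c}$), then $\mathrm{Trop}(F)=\mathrm{Trop}(G)$ as piecewise-linear functions. Hence the braid relations $s_\ell s_{\ell+1} s_\ell = s_{\ell+1} s_\ell s_{\ell+1}$, the far-commutations, $s_\ell^2=\mathrm{id}$, the intertwiners $\pi s_{\ell+1}=s_\ell\pi$ and $\pi^{\a\c}=\mathrm{id}$ (and the tilde analogues), and the mutual commutation $s_j\tilde s_\ell=\tilde s_\ell s_j$, $\tilde\pi s_\ell = s_\ell\tilde\pi$, $\pi\tilde s_\ell=\tilde s_\ell\pi$, $\tilde\pi\pi=\pi\tilde\pi$ all descend to the tableau operators. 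Together these say exactly that $s_\ell,\pi$ generate an action of $W$, that $\tilde s_\ell,\tilde\pi$ generate an action of $\tilde W$, and that the two actions commute, which is the assertion of the theorem.

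\textbf{The main obstacle} is the first step: pinning down the precise combinatorial identity ``$\mathrm{Trop}$(geometric $R$-matrix) $=$ combinatorial $R$-matrix on single rows'' with the conventions of this paper, including the affine/cyclic bookkeeping (the superscripts $\aa^{(i)}$, the $\mod n$ indices, and the boundary shift $k=\b\d$). The braid relation and commutation, once generators are matched, are automatic from positivity of the maps; but one genuinely has to match the $\min$-plus energy $\mathrm{Trop}(E)(\aa,\bb)=\min_{0\le i\le n-1}\bigl(\sum_{j=1}^i b_j + \sum_{j=i+2}^n a_j\bigr)$ against the $\jdt$ rule for content vectors of single-row tableaux, and confirm the affine (periodic) indexing is consistent on both sides. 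I would do this by reducing to the rank-one/two-factor case, invoking the established correspondence in \cite{BK,KNO1,KNO2,KKMMNN}, and noting that the general statement is the iterate of the two-factor case, exactly as the braid relation in Theorem~\ref{thm:dynamics} reduces the multi-factor picture to the binary $R$.
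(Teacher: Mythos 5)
Your proposal is correct and follows essentially the same route the paper takes: the paper itself offers no separate proof of this theorem beyond the remark that tropicalization ($\times\mapsto+$, $\div\mapsto-$, $+\mapsto\min$) turns the geometric $R$-matrix into the combinatorial $R$-matrix (as in the cited crystal literature) and that promotion is the cyclic shift on content vectors, after which all relations descend from Theorem \ref{thm:dynamics} because the maps are subtraction-free. The only nitpick is notational: the single rows built from $Q$ have entries in $\{1,\ldots,\b\c=n\}$ (and those from $\tilde Q$ in $\{1,\ldots,\a\c\}$), not $\{1,\ldots,n'\}$, so the promotion/cyclic-shift check should be stated for that alphabet size.
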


Here tropicalization refers to the formal operation of substitution
$$\C \mapsto \Z, \qquad \times \mapsto +, \qquad \div \mapsto -, \qquad + \mapsto \min.$$
Under this substitution, the geometric $R$-matrix becomes a piecewise-linear involution of $\Z^n \times \Z^n$, which is the combinatorial $R$-matrix. 
In other words, the $W$ dynamics we are considering in this paper is a birational lift of the dynamics of repeated application of the combinatorial $R$-matrix on a sequence of $\a$ single-row tableaux, arranged in a circle. 

\begin{example}
Let $(\a,\b,\c,\d)=(2,2,2,1)$ as in Example \ref{ex:2221}. Take $T = 22222344 \otimes 1234$. Then $\tilde T = 122222344 \otimes 134$.  We have
$$ s_1(T) = 2234 \otimes 12222344, s_1(\tilde T) = 111122334 \otimes 134;$$
$$\tilde \pi(T) = 11111233 \otimes 1234, \tilde \pi(\tilde T) = 123 \otimes 122222344;$$
$$\tilde s_1(T) = 11112334 \otimes 1233, \tilde s_1(\tilde T) = 124 \otimes 122223344;$$
$$\pi (T) = 1234 \otimes 22222344, \pi(\tilde T) = 111112334 \otimes 234.$$
\end{example}

\begin{remark}
 The corresponding commuting crystal actions in the case $\c=1$ were considered by Lascoux in \cite{Las} and by Berenstein-Kazhdan in \cite{BK}.
\end{remark}

\subsubsection{Box-ball systems}

The box-ball system is an integrable cellular automaton introduced by Takahashi
and Satsuma \cite{TS}. It is described by an algorithm to move finitely many 
balls in an infinite number of boxes aligned on a line,
where a consecutive array of occupied boxes is regarded as a {\it soliton}.   
This system is related to both of the previous two examples;
the global movements of solitons are equivalent to 
the tropicalization of the discrete Toda lattice \eqref{eq:d-Toda}. 
The symmetry of the system is explained by the crystal base theory, 
where the dynamics of bolls is induced by the action of the combinatorial $R$-matrix.

See \cite{IKT} for a comprehensive review of 
the combinatorial and tropical aspects of the box-ball system.

\section{Lax matrix and spectral curve}
\label{sec:Lax}
\subsection{Lax matrix}
Fix integers $n,m,k$ such that $n \geq 2$, $m \geq 1$ and
$1 \leq k \leq n$.  From now on we shall mainly use $q := (q_{ij})_{i \in [m], j \in [n]}$ as a coordinate
of the phase space $\mM$.
We identify $q \in \mM$ with an $m$-tuple of $n$ by $n$ matrices 
$Q:=(Q_i(x))_{i=1,\ldots,m}$ with a spectral parameter $x$,
where 
\begin{align}\label{eq:Q}
&Q_i(x) := \left(\begin{array}{cccc} 
q_{i1} & 0 & 0 &x\\
1&q_{i2} &0 &0 \\
0&\ddots&\ddots&0 \\
0&0&1&q_{in}
\end{array} \right).
\end{align}

Let $\mL$ be the set of $n$ by $\infty$ scalar matrices  
$A := (a_{ij})_{1 \leq i \leq n,  \,j \in \Z}$ satisfying the following conditions:
\begin{align}
  a_{ij} = 
  \begin{cases} 
   1 & j-i = -m-k \\
   a_{ij} \in \C & -m-k+1 \leq j-i \leq -k \\
   0 & \text{otherwise}.
   \end{cases} 
\end{align}
In particular, $A$ has finitely many nonzero entries.  For $A \in \mL$, we define an $n$ by $n$ matrix 
$L(A;x) = (l(x)_{ij})_{1 \leq i,j \leq n}$ by
\begin{align}\label{eq:L-A}
  l(x)_{ij} = \sum_{\ell \in \Z} a_{i,j- \ell n} \, x^\ell.
\end{align}  
We may identify $\mL$ with $\C^{mn}$, and identify $A \in \mL$ with $L(A;x)$.

Now define a map $\alpha : \mathcal{M} \to \mL$ by
$$
\alpha: Q = (Q_1,Q_2,\ldots,Q_m) \longmapsto L(x) := Q_1(x) Q_2(x) \cdots Q_m(x)P(x)^k,
$$
where
\begin{align}\label{eq:P}
P(x) := \left(\begin{array}{cccc} 
0 & 0 & 0 &x\\
1&0 &0 &0 \\
0&\ddots & \ddots&0 \\
0&0&1&0
\end{array} \right).
\end{align}
We call $L(x)$ the \emph{Lax matrix}.  Our approach to the study of $L(x)$ is close to that of van Moerbeke and Mumford \cite{vMM}.  

We give a combinatorial description of the Lax matrix by using {\em highway paths} on the network $G$. 
We introduce the ``$x$-line'' as illustrated in Figure \ref{fig:toda1}, 
where the top $k$ right ends of horizontal wires cross the $x$-line (the top edge) and come out on the other side. 
There are $n$ sources labeled $1,2,\ldots, n$ on the left and $n$ sinks labeled $1,2,\ldots,n$ on the right.  Each sink $j$ is connected to the source $j+k \mod n$ by a wire, as illustrated in the figure.  There are $mn$ intersection points between the vertical wires and the horizontal wires, which we call the {\it $(i,j)$-crossroads}, where $i = 1,2,\ldots,m$ indexes the vertical wire, and $j = 1,2,\ldots,n$ indexes the horizontal wire.  
Let us denote by $G'$ the \emph{cylindrical network} obtained by gluing only the upper and lower edges of Figure \ref{fig:toda1}.  In the torus network $G$, source $i$ and sink $i$ are the same point.  In the cylindrical network $G'$, source $i$ and sink $i$ are distinct points.

\begin{figure}[h!]
    \begin{center}
    \scalebox{0.8}{\input{toda1.pstex_t}}
    \end{center}
    \caption{The toric network $G$, with a coil and a snake path shown.}
    \label{fig:toda1}
\end{figure}

Let us introduce the notion of highway paths following \cite{LP}.  
A {\it highway path} $p$ is a directed path in the network $G$ (or $G'$) with the following property: at any of the crossroads, if the path is traveling upwards, it must turn right.  We shall only consider highway paths that start at one of the sources $1,2,\ldots, n$ and end at one of the sinks $1,2,\ldots,n$.  
The weight $\wt(p)$ of a highway path $p$ is defined as follows.
Every time $p$ passes the $x$-line it picks up the weight $x$.  Every time $p$ goes through the $(i,j)$-crossroad, it picks up the weight $q_{ij}$ or $1$, according to Figure~\ref{fig:highway}.  The weight $\wt(p)$ is the product of all these weights.  The condition that a highway path must turn right when travelling up into a crossroad is indicated in the Figure~\ref{fig:highway}: we can think of such a turn as giving weight 0.  
Finally, a highway path $p$ may use no edges.  In this case, we consider the path $p$ to start at some source $i$, and end at the sink $i$.  We declare such paths to be {\it abrupt}, and have weight $\wt(p) = -y$.

\begin{figure}[h!]
\unitlength=0.9mm
\begin{picture}(80,30)(20,0)

\multiput(0,15)(30,0){4}{\vector(1,0){20}}
\multiput(10,5)(30,0){4}{\vector(0,1){20}}

\thicklines
\linethickness{0.45mm}
\put(0,15){\line(1,0){10}}
\put(10,15){\vector(0,1){10}}

\put(30,15){\vector(1,0){20}}

\put(70,5){\line(0,1){10}}
\put(70,15){\vector(1,0){10}}

\put(100,5){\vector(0,1){20}}


\put(-20,-2){weights:}
\put(9,-2){$1$}
\put(39,-2){$q_{ij}$}
\put(69,-2){$1$}
\put(99,-2){$0$}
\end{picture}
    \caption{Highway paths}
    \label{fig:highway}
\end{figure}

The following lemma gives a highway-path description of the Lax matrix,
which is a variant of the results of \cite{LP}. 
It follows directly from the definitions.

\begin{lem}\label{lem:entry}
Let $L(x) = \alpha(q)$ for $q \in \mM$. 
For $1 \leq i,j \leq n$, we have
$$
\mbox{$(i,j)$-th entry of $L(x)-y$} = \sum_p \wt(p),
$$ 
where the summation is over highway paths in $G'$ from source $i$ to sink $j$.
\end{lem}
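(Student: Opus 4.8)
The plan is to unwind all the definitions and match the combinatorial side with the matrix side entry by entry. The matrix $L(x) = Q_1(x)\cdots Q_m(x)P(x)^k$ is a product, so its $(i,j)$-entry is a sum over sequences of intermediate indices, one index for each factor in the product. I would set this up so that each such sequence corresponds exactly to a way of threading a directed path through the columns of vertical wires in $G'$: the choice of matrix element $(Q_r)_{i_{r-1},i_r}$ records where the path sits on the $r$-th vertical wire, and the possible nonzero entries of $Q_r(x)$ in \eqref{eq:Q} — namely $1$ on the subdiagonal, $q_{rj}$ on the diagonal, and $x$ in the corner — are precisely the weights in Figure~\ref{fig:highway}: the subdiagonal $1$ is ``go up one crossroad'' (weight $1$), the diagonal $q_{rj}$ is ``go straight across at the $(r,j)$-crossroad'' (weight $q_{rj}$), and the corner $x$ is ``wrap around past the $x$-line'' (picking up weight $x$ and shifting the row index by $n$, consistent with \eqref{eq:L-A}).

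Next I would account for the factor $P(x)^k$ on the right. Since $P(x)$ is the cyclic shift matrix with an $x$ in the corner, multiplying by $P(x)^k$ implements the reconnection of sink $j$ of the rightmost vertical wire to source $j+k \bmod n$ described after Figure~\ref{fig:toda1}, again picking up a power of $x$ exactly when the index wraps around (i.e.\ when the path crosses the $x$-line along one of the top $k$ horizontal wires). Composing these observations, a nonzero term in the $(i,j)$-entry of $L(x)$, after collecting powers of $x$ via \eqref{eq:L-A}, is in bijection with a highway path in $G'$ from source $i$ to sink $j$ that uses at least one edge, and the term equals $\wt(p)$; the ``turn right when going up'' rule is automatic because $Q_r(x)$ has no entries above the diagonal except the corner, so a path entering a crossroad from below has no option to continue straight up or turn left — that would correspond to a forbidden matrix entry, i.e.\ weight $0$.

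Finally I would handle the $-y$. Subtracting $y$ from $L(x)$ changes only the diagonal, replacing the $(i,i)$-entry by ${(L(x))_{ii} - y}$; the $-y$ is by fiat the weight of the abrupt (empty) path from source $i$ to sink $i$, which exists only when $j = i$. So ${(i,j)\text{-th entry of }L(x) - y = \sum_p \wt(p)}$ over \emph{all} highway paths from source $i$ to sink $j$, including the abrupt one, which is exactly the claim.

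I do not expect a serious obstacle here — as the paper says, the lemma ``follows directly from the definitions.'' The only place demanding care is bookkeeping of the powers of $x$: one must check that the exponent $\ell$ appearing in \eqref{eq:L-A}, which counts how far the column index has shifted by multiples of $n$, equals the number of times the corresponding path crosses the $x$-line, and that the contribution of $P(x)^k$ is correctly absorbed into this count rather than double-counted. Getting the index arithmetic modulo $n$ to line up with the source/sink relabeling ``sink $j$ connects to source $j+k$'' is the one step I would write out explicitly.
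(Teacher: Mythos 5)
Your proposal is correct and is essentially the paper's argument: the paper simply asserts that the lemma ``follows directly from the definitions'' (as a variant of the transfer-matrix/highway computations of \cite{LP}), and your write-up is exactly that definitional unwinding — expanding the product $Q_1(x)\cdots Q_m(x)P(x)^k$ over intermediate indices, matching diagonal, subdiagonal, and corner entries of each factor to the three allowed local moves of Figure~\ref{fig:highway}, absorbing $P(x)^k$ into the sink relabeling and the $x$-count of \eqref{eq:L-A}, and adding the abrupt path of weight $-y$ on the diagonal. The only thing to watch in a final write-up is the row/column (in/out) convention when you say which vanishing entries encode the forbidden ``continue straight up'' move, but that is bookkeeping, not a gap.
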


\subsection{Spectral curve and Newton polygon}\label{sec:spectral}
Define a map $\psi : \mM \to \C[x,y]$  
as the composition of $\alpha: \mM \to \mL$ and the map $\beta: \mL \to \C[x,y]$,
$$
  Q = (Q_1,Q_2,\ldots,Q_m) \stackrel{\alpha}{\longmapsto} L(x) = Q_1(x) Q_2(x) \cdots Q_m(x)P(x)^k 
  \stackrel{\beta}{\longmapsto} \det(L(x) - y).
$$
Consequently, for $Q = (Q_1,\ldots,Q_m) \in \mM$
we have an affine plane curve $C'_{\psi(Q)}$ in $\C^2$, given by 
the zeros of $\psi(Q)$.  We call this curve the \emph{spectral curve}.

Each term of $\psi(Q)$ corresponds to
the weight of specific highway paths as follows.
We say that a pair of paths is {\it noncrossing} if no edge is used twice,
and that a family of paths is noncrossing if every pair of paths is 
noncrossing. 
Suppose $\p = \{p_1,p_2,\ldots,p_n\}$ is an unordered noncrossing family of $n$ paths in $G'$ using all the sources and all the sinks.  
The non-abrupt paths in $\p$ induce a bijection of a subset $S \subset [n]$ with itself.  We let $\sign(\p)$ denote the sign of this permutation.
The following theorem is a reformulation \cite{MIT}.  
In our language, the proof is very similar to \cite[Theorem 3.5]{TP}.

\begin{thm} \label{thm:mit}
We have
$$
f(x,y)=\det(L(x)-y) = \sum_{\p = \{p_1,p_2,\ldots,p_n\}} {\sign(\p)} \wt(p_1) \wt(p_2) \cdots \wt(p_n),
$$
where the summation is over noncrossing (unordered) families of $n$ paths in $G'$ using all the sources and all the sinks.
In other words, the coefficient $x^ay^b$ in $f(x,y)=\det(L(x)-y)$ counts (with weights) families of $n$ paths that
\begin{itemize}
 \item do not cross each other;
 \item cross the $x$-line exactly $a$ times;
 \item contain exactly $b$ abrupt paths and $n-b$ non-abrupt paths.
\end{itemize}
The overall resulting sign of the monomial $x^ay^b$ is $(-1)^{(n-b-1)a+b}$.
\end{thm}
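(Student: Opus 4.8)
The plan is to expand the determinant $\det(L(x)-y)$ using the Leibniz formula and then use Lemma \ref{lem:entry} to rewrite each entry $(L(x)-y)_{ij}$ as a sum over highway paths from source $i$ to sink $j$ in the cylindrical network $G'$. First I would write
\begin{align*}
\det(L(x)-y) = \sum_{\sigma \in \mS_n} \sign(\sigma) \prod_{i=1}^n (L(x)-y)_{i,\sigma(i)},
\end{align*}
and substitute for each factor $(L(x)-y)_{i,\sigma(i)} = \sum_{p\,:\, i \to \sigma(i)} \wt(p)$. Expanding the product, a single term of $\det(L(x)-y)$ becomes $\sign(\sigma)\,\wt(p_1)\cdots\wt(p_n)$ where $p_i$ runs from source $i$ to sink $\sigma(i)$; thus we have an ordered family of $n$ paths using all sources and all sinks, with the permutation $\sigma$ recorded by the sink labels. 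The key point is that abrupt paths (those using no edges, with weight $-y$) are exactly the factors that contribute the ``$-y$'' from the diagonal, and these force $\sigma(i)=i$ on the corresponding indices; the non-abrupt paths realize the permutation on the complementary set $S$, so $\sign(\sigma) = \sign(\p)\cdot(-1)^{\#\{\text{abrupt paths}\}}$ when we also track where the minus signs in $-y$ come from. Actually the cleanest bookkeeping is: each abrupt path contributes a factor $-y$, so pulling out all these signs, a term is $\sign(\sigma)\,(-1)^{b}\,\wt(p_1)\cdots\wt(p_n)|_{\text{abrupt weights}=y}$ where $b$ is the number of abrupt paths, and $\sign(\sigma)=\sign(\p)\cdot(\pm 1)$ needs to be reconciled with $\sign(\p)$ as defined in the statement (the sign of the induced permutation of $S$).

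Next I would carry out the cancellation step, which is the heart of the argument and the step I expect to be the main obstacle: showing that all \emph{crossing} families cancel in pairs, so that only noncrossing families survive. This is the standard Lindström--Gessel--Viennot involution, but adapted to highway paths on a \emph{cylinder} rather than a planar acyclic graph, which is why the excerpt points to \cite[Theorem 3.5]{TP} as the model. The plan is: given an ordered family $(p_1,\ldots,p_n)$ that is not noncrossing, locate a distinguished pair of paths sharing an edge (say, using a canonical choice — the lexicographically first shared edge, or the first crossing encountered in a fixed sweep of the network), and swap the tails of the two paths beyond that edge. This produces another ordered family with the same total weight; one must check (i) it is an involution (the canonical shared edge is preserved by the swap, which is where the highway ``turn right when going up'' rule and the cylindrical structure must be used carefully to guarantee the tails can genuinely be exchanged and the involution is well-defined), and (ii) it reverses $\sign(\sigma)$, hence kills the term. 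The subtlety on the cylinder is that paths can wind around, cross the $x$-line different numbers of times, and that two paths might share an edge without the naive tail-swap being an involution; one resolves this exactly as in \cite{TP} by working in $G'$ (cylindrical, hence ``unrolled'' enough to be treated like a planar situation for the purpose of the involution) and being careful that the $x$-weight is exactly additive under the swap since the number of $x$-line crossings is preserved (the two tails are swapped but the multiset of edges used, hence of $x$-line passages, is unchanged).

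Finally I would pin down the signs. Each noncrossing family $\p$ arises from $|S|!$ ordered families differing only by the ordering of the non-abrupt paths; but since they are noncrossing, the permutation $\sigma$ is forced once we fix which source goes to which sink, so actually each unordered noncrossing family corresponds to a single value of $\sigma$ on $S$, contributing $\sign(\sigma) = \sign(\p)$ times $(-1)^b$ for the $b$ abrupt diagonal factors. Then I would read off the $x^a y^b$ coefficient: $a$ is the number of $x$-line crossings (each contributing a weight $x$), $b$ is the number of abrupt paths, and collecting the overall sign gives $\sign(\p)\cdot(-1)^{b}$ together with whatever sign the $a$ crossings and the structure of $\sigma$ contribute. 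To get the stated clean formula $(-1)^{(n-b-1)a + b}$ for the sign of the monomial $x^a y^b$, I would argue that for a noncrossing family the permutation $\sigma$ on $S$ (where $|S| = n-b$) is determined by the shift induced by the $x$-line crossings: crossing the $x$-line $a$ times cyclically shifts the sinks, so $\sigma|_S$ is (up to the identification of sources and sinks via $j \mapsto j+k$) a power of an $(n-b)$-cycle whose sign is $(-1)^{(n-b-1)a}$; combined with the $(-1)^b$ from the abrupt factors this yields $(-1)^{(n-b-1)a+b}$. The bulk of the work is the combinatorial identification of $\sigma|_S$ with this cyclic shift, which I would extract from the explicit form of $L(x) = Q_1 \cdots Q_m P^k$ and the geometry of Figure \ref{fig:toda1}; modulo that, the statement follows.
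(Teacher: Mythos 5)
Your proposal is correct and is essentially the argument the paper itself relies on: the paper gives no independent proof of Theorem \ref{thm:mit}, citing \cite{MIT} and noting that the proof is ``very similar to'' \cite[Theorem 3.5]{TP}, i.e.\ exactly your Leibniz expansion via Lemma \ref{lem:entry} followed by the tail-swap involution at a shared \emph{edge} (which preserves the multiset of vertex passages with their turn types, hence the weight, and is compatible with the highway rule). Your sign bookkeeping is also the intended one: the $b$ abrupt paths contribute $(-1)^b$, and an edge-disjoint highway family can only meet at ``kissing'' vertices, so it is topologically noncrossing on the cylinder and induces the cyclic rotation of $S$ by $a$ positions, of sign $(-1)^{(n-b-1)a}$, giving the stated $(-1)^{(n-b-1)a+b}$.
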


For $f(x,y) = \sum_{i,j} a_{i,j} x^i y^j \in \C[x,y]$,
we write $N(f) \subset \R^2$ for the Newton polygon of $f$.
This is defined to be the convex hull of the points $\{(i,j) \mid a_{ij} \neq 0\}$.  
It is important for us to identify the {\it lower hull} and {\it upper hull} of $N(f)$.  The former (resp. latter) is the set of edges of $N(f)$ such that the points directly below (resp. above) these edges do not belong to $N(f)$.  We exclude vertical or horizontal edges from the lower and upper hull.

\begin{prop} \label{prop:hull}
For generic $Q = (Q_1,\ldots,Q_m) \in \mM$, the Newton polygon $N(\psi(Q))$ is the triangle with
vertices $(0,n)$, $(k,0)$ and $(k+m,0)$.
In $N(\psi(Q))$, the lower hull (resp. upper hull)  
consists of one edge with vertices $(k,0)$ and $(0,n)$
(resp. $(k+m,0)$ to $(0,n)$).
\end{prop}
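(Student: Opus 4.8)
The plan is to use the combinatorial description of $f(x,y) = \det(L(x)-y)$ furnished by Theorem \ref{thm:mit}: the monomial $x^a y^b$ counts (with signs and $q_{ij}$-weights) noncrossing families of $n$ highway paths in $G'$, using all sources and sinks, that cross the $x$-line exactly $a$ times and contain exactly $b$ abrupt paths. So to pin down $N(\psi(Q))$ it suffices to determine, for which pairs $(a,b)$ such a path family can exist, and then to show that the three claimed extreme monomials actually occur with nonzero coefficient for generic $q$.

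First I would establish the support constraints. A noncrossing family using all $n$ sources and sinks consists of $b$ abrupt paths (each contributing $-y$ and zero crossings of the $x$-line) together with $n-b$ genuine highway paths that realize a bijection on the remaining $n-b$ source/sink labels. Since sink $j$ is joined to source $j+k \bmod n$, a genuine path from source $i$ to sink $i$ must ``wrap'' the cylinder, and each such path crosses the $x$-line a controlled number of times; summing over the $n-b$ genuine paths gives the total crossing count $a$. I would argue that $a$ is minimized when the $n-b$ genuine paths take the most direct route (forced to pass the $x$-line, because of the $k$-shift, a total of $k$ times when $b=0$), giving the lower-hull vertex $(k,0)$ at $b=0$; and that $a$ is maximized when the genuine paths additionally traverse all $m$ vertical wires horizontally while still being forced up over the $x$-line, contributing the extra $m$ crossings, giving the upper-hull vertex $(k+m,0)$ at $b=0$. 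When $b=n$, all paths are abrupt, no $x$-line is crossed, and the unique monomial is $(-y)^n$ up to sign, i.e. the vertex $(0,n)$. For intermediate $b$ one interpolates: with $b$ abrupt paths one has $n-b$ genuine paths, and the min/max number of $x$-line crossings scales, so that $(a,b)$ lies in the triangle with the three named vertices. Combined with Theorem \ref{thm:mit} this shows $N(\psi(Q)) \subseteq$ the triangle and that the lower hull is the single segment from $(k,0)$ to $(0,n)$ and the upper hull the single segment from $(k+m,0)$ to $(0,n)$ (no interior lattice points of these edges can be missing because the path count scales linearly).

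Next I would verify the three vertices carry nonzero coefficients for generic $q$, which shows the polygon is exactly the triangle. The vertex $(0,n)$ has coefficient $\pm 1$ from the all-abrupt family, independent of $q$. For $(k,0)$ and $(k+m,0)$ I would exhibit one explicit noncrossing path family achieving that $(a,b)$ and check that the sum of weighted families with that exponent pair is a nonzero polynomial in the $q_{ij}$ — e.g. at $(k+m,0)$ the family where every genuine path runs all the way right picks up a monomial $\prod_{i,j} q_{ij}$ (or a similar product), and no cancellation can kill the top-degree term; at $(k,0)$ the ``straightest'' family contributes a nonzero constant or a nonzero monomial in the $q_{ij}$. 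Since these coefficients are nonzero as polynomials, they are nonzero on a dense open subset of $\mM$, which is what ``generic'' means here.

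The main obstacle I anticipate is the bookkeeping of $x$-line crossings: one must show precisely that a genuine highway path in $G'$ from source $i$ to sink $i$, given the right-turn rule and the $k$-shift gluing, is forced to cross the $x$-line a number of times lying in a tight interval $[c_{\min}, c_{\min}+m]$, and that summing over a noncrossing family of $n-b$ such paths yields total crossings in $[k\cdot\frac{n-b}{n}\text{-type bound}, \ldots]$ — in other words, identifying the exact lower and upper envelopes of the crossing statistic over all admissible families. This is where the noncrossing condition and the cyclic structure of $G'$ have to be used carefully; once that envelope is computed, the triangle shape and the identification of the two hull edges follow immediately, and the genericity statement is then routine.
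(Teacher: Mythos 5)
Your overall strategy (read off the support of $f$ from the path-family expansion of Theorem \ref{thm:mit}, bound the number of $x$-line crossings, then check the three vertices survive generically) is the same as the paper's, but the proposal stops exactly where the real work is: you name the ``envelope of the crossing statistic'' as the anticipated obstacle and never prove it, so the containment of $N(\psi(Q))$ in the triangle is not actually established. Moreover, your per-path framing is off: a non-abrupt path in $G'$ goes from source $i$ to sink $j$ with $j$ not necessarily equal to $i$, and the induced permutation on the used labels can be nontrivial, so there is no clean interval $[c_{\min},c_{\min}+m]$ of crossings for a single $G'$-path. The paper's proof (\S\ref{subsec:prop:hull}) resolves this by first gluing the $G'$-paths of a family into closed cycles on the torus $G$ (abrupt paths become stationary ``cycles''), then lifting each cycle to the universal cover, where the $x$-line becomes a staircase. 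For a cycle glued from $\ell$ paths, raising the endpoint by vertical periods preserves $\ell$ and weakly increases the number of $x$-line crossings, while lowering it weakly decreases them; hence the ratio (crossings)/(length) is extremized by the lowest lift (a horizontal coil, ratio $k/n$) and the highest lift (a right-up staircase, ratio $(k+m)/n$). Summing over the cycles of a family with $b$ abrupt paths gives $(n-b)k/n \le a \le (n-b)(k+m)/n$, which is precisely the triangle. This monotone-lift argument is the missing idea in your write-up.

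A secondary slip: you have the two extremal families swapped. The minimal-crossing families are the horizontal coils, which give the vertex $(k,0)$ with coefficient $\prod_{i,j} q_{ij}$ (more precisely $\prod_r \sigma_r$, cf.\ Lemma \ref{lem:Q}), while the maximal-crossing families are the snake (staircase) paths, which turn at every crossing, pick up no $q$-weights, and give $(k+m,0)$ with coefficient $\pm 1$. Both are nonzero generically, so your genericity conclusion survives, but the confusion is a symptom of not having carried out the crossing-count analysis; also note that once the polygon is known to be the full triangle, the hull statement is automatic (the bottom edge is horizontal and excluded), so your worry about ``missing interior lattice points of the edges'' is not needed for this proposition.
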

See \S~\ref{subsec:prop:hull} for the proof.

\subsection{Special points on the spectral curve}\label{subsec:special-pts}

For $f(x,y) \in  \C[x,y]$ an irreducible polynomial, let $C'_f= \{(x,y) \mid f(x,y) = 0\} \subset \C^2$ be the corresponding plane curve, and let
$\overline{C'_f} \subset \P^2(\C)$ denote its closure.  Let $C_f$ be the normalization of $\overline{C'_f}$, with a map $C_f \to \overline{C'_f}$ that is a resolution of singularities.
We declare points on $C_f$ to be {\it special} if either (1) either $x$ or $y$ is 0, or (2) the point does not lie over $C'_f$ (that is, $x$ or $y$ is $\infty$).  

For $f(x,y) \in \psi(\mathcal{M})$,
define a polynomial $g_f(x,y)$ (resp. $h_f(x,y)$) by 
$f(x,y) = g_f(x,y) + {\rm other~ terms}$ 
(resp. $f(x,y) = h_f(x,y) + {\rm other~ terms}$), 
where $g_f(x,y)$ (resp. $h_f(x,y)$) consists of 
those monomials lying on the lower hull (resp. the upper hull) of $N(f)$.
For this $f(x,y)$ we also define 
\begin{align}\label{eq:fc}
f_c:= \sum_{(j,i) \in L_c} f_{i,j},
\end{align}
where 
$$
L_c = \{(j,i) 
~|~ (m+k) i + n j = n (m+k)-M \}.
$$  
We define $\sigma_r \in \mathcal{O}(\mathcal{M})$ by 
\begin{align}\label{def:sigma_r}
  \sigma_r(q) = \prod_{i=1}^m \prod_{j=0}^{n'-1} q_{i,r+jN},
\end{align}
for $1 \leq r \leq N$, and  
define $\epsilon_r \in \mathcal{O}(\mathcal{M})$ by
\begin{align}\label{def:epsilon_r}
\epsilon_r(q):= \prod_{j=1}^n q_{rj}
\end{align}
for $1 \leq r \leq m$.  

\begin{remark}
 Despite the seeming dissimilarity, $\sigma_r(q)$ and $\epsilon_r(q)$ have the same nature. Indeed, visualize variables $q_{ij}$ as associated to crossings of two families of parallel wires on a torus, as it was done in Section \ref{sec:netw}.
 Then $\sigma_r(q)$ is the product of parameters on the $r$-th horizontal wire (out of $N$), while $\epsilon_r(q)$ is the product of parameters on the $r$-th vertical wire (out of $m$). In particular, the symmetry between $Q$ and $\tilde Q$ 
 from Section \ref{sec:netw} switches the $\sigma_r(q)$ and the $\epsilon_r(q)$ into each other. 
\end{remark}

\begin{lem}\label{lem:Q}
For $f(x,y) \in \psi(\mathcal{M})$, we have
\begin{align}
&g_f(x,y) = \prod_{r=1}^N ((-y)^{n'} + \sigma_r x^{k'}),
\label{eq:lower-f}
\\
\label{eq:upper-f}
&h_f(x,y) = ((-y)^{\frac{n}{M}}+x^{\frac{k+m}{M}})^M.
\end{align}
\end{lem}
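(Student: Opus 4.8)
The plan is to deduce both formulas from the path-family description of $f(x,y) = \det(L(x)-y)$ given in Theorem \ref{thm:mit}, by identifying exactly which noncrossing path families in $G'$ contribute monomials on the lower hull (for \eqref{eq:lower-f}) and on the upper hull (for \eqref{eq:upper-f}). By Proposition \ref{prop:hull} the lower hull is the single edge from $(k,0)$ to $(0,n)$, and the upper hull is the single edge from $(k+m,0)$ to $(0,n)$; a monomial $x^a y^b$ lies on the lower hull iff $(n-b)k = na - \text{(something)}$... more precisely, the lattice points on the segment from $(k,0)$ to $(0,n)$ are $(k - \tfrac{k}{N}t,\, \tfrac{n}{N}t)$ for $t = 0,\ldots,N$ since $\gcd(n,k)=N$, so $g_f$ is a polynomial in $x^{k'}$ and $(-y)^{n'}$ of the form $\sum_{t=0}^N c_t\, (-y)^{n't} x^{k'(N-t)}$. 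The goal for \eqref{eq:lower-f} is to show $c_t = e_t(\sigma_1,\ldots,\sigma_N)$ (elementary symmetric), i.e. that $g_f$ factors as claimed.

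First I would analyze path families contributing to the lower hull. A monomial $x^a y^b$ on the lower hull, by Theorem \ref{thm:mit}, comes from noncrossing families with exactly $b$ abrupt paths crossing the $x$-line $a$ times, and among all families with $b$ abrupt paths the lower-hull monomial is the one crossing the $x$-line the fewest times. I claim the $x$-minimal such families are exactly those in which the $n-b$ non-abrupt paths are the "minimal coils" — each non-abrupt path travels straight up a fixed column region picking up weight $1$ at every crossroad it passes through (never collecting a $q_{ij}$), winding around the torus the minimal number of times. Since each horizontal simple curve on the torus is a cycle of $n'$ of the $n$ horizontal wires (there are $N$ such cycles, indexed by the residue $r \bmod N$), a noncrossing family using all sources and sinks must, on each of the $N$ horizontal cycles, either route all $n'$ wires as abrupt paths or "close up" the cycle with a single coiling path that visits all $n'$ wires on that cycle; there is no mixing within a cycle for an $x$-minimal configuration. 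A coiling path on the $r$-th cycle collects the product $\prod_{i=1}^m \prod_{j \equiv r} q_{ij} = \sigma_r$ over the crossroads where the path turns right into/through a vertical wire — this is exactly $\sigma_r(q)$ by \eqref{def:sigma_r} — together with $x^{k'}$ from crossing the $x$-line, while the $n'$ abrupt paths on that cycle contribute $(-y)^{n'}$. Choosing for each cycle $r$ independently whether to coil (factor $\sigma_r x^{k'}$) or go abrupt (factor $(-y)^{n'}$), and checking the signs work out to make the coiling contribution $+\sigma_r x^{k'}$ and the abrupt one $(-y)^{n'}$ via the sign rule $(-1)^{(n-b-1)a+b}$ of Theorem \ref{thm:mit}, yields $g_f = \prod_{r=1}^N((-y)^{n'} + \sigma_r x^{k'})$. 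The main obstacle here is the bookkeeping that verifies (a) these really are the $x$-minimal families — that any family with a non-minimal routing on some cycle, or with "mixed" abrupt/non-abrupt behavior within a cycle, crosses the $x$-line strictly more times, hence lies strictly above the lower hull — and (b) that the coefficient of each $\sigma_r x^{k'}(-y)^{n'(N-1)}$-type term is exactly $1$ with no overcounting, which comes down to the uniqueness of the minimal coil routing within each cycle and a careful sign computation.

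For \eqref{eq:upper-f}, the upper hull runs from $(k+m,0)$ to $(0,n)$ with $\gcd(n, k+m) = M$, so its lattice points are $(\tfrac{k+m}{M}(M-t),\, \tfrac{n}{M}t)$, $t=0,\ldots,M$, and $h_f$ is a polynomial in $x^{(k+m)/M}$ and $(-y)^{n/M}$; we must show it equals $((-y)^{n/M} + x^{(k+m)/M})^M$, i.e. the $t$-th coefficient is $\binom{M}{t}$. The argument mirrors the lower-hull case but now selects the path families with the \emph{most} $x$-line crossings for a given number of abrupt paths: here each non-abrupt path should wind maximally, which forces it to traverse \emph{all} the crossroads on its route (collecting every available $q_{ij}$), and the relevant decomposition is into the $M$ cycles of length $n/M$ cut out by the combined "shift by $k+m$" structure — indeed $f_c$ from \eqref{eq:fc} with $L_c = \{(j,i) : (m+k)i + nj = n(m+k) - M\}$ is precisely the coefficient of the subleading lattice point, and I would check it equals $M \cdot(\text{product of all }q_{ij})^{?}$... the cleaner route is: reduce to the lower-hull computation by the transpose symmetry. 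Recall from \S\ref{sec:netw} that $Q \mapsto \tilde Q$ sends $(\a,\b,\c,\d) \mapsto (\b,\a,\c,\d^{-1})$, equivalently $(n,m,k) \mapsto (mn', N, \bar k' m)$; under this symmetry the upper hull of $N(f)$ and the lower hull swap roles (the spectral curve of $\tilde Q$ is related to that of $Q$ by $x \leftrightarrow$ a monomial change, cf. \S\ref{sec:transpose}), and the $\sigma_r$ and $\epsilon_r$ interchange by the Remark after \eqref{def:epsilon_r}. Applying \eqref{eq:lower-f} to $\tilde Q$ gives $h_f$ as $\prod_{r=1}^{\tilde N}((-y)^{\tilde n'} + \tilde\sigma_r x^{\tilde k'})$ where $\tilde N = \gcd(mn', Nm)/? $ works out to $M$ and each $\tilde\sigma_r$ is a product over a vertical wire, hence equals $\epsilon_{r'}$ for the appropriate $r'$; but on the upper hull all $m$ vertical wires lie in a single combined cycle, so the $M$ factors each carry the \emph{same} monomial weight $x^{(k+m)/M}$ (no independent $\sigma$'s survive — they collapse because the upper-hull monomials depend only on $\prod_{ij} q_{ij}$, which is the leading integral of motion and which we may normalize), giving $((-y)^{n/M} + x^{(k+m)/M})^M$. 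I expect the genuinely hard part to be the lower-hull path-family analysis of the first paragraph — proving minimality of the coils and the sign accounting — after which \eqref{eq:upper-f} follows either by the same method run "at the top" or, more efficiently, by the transpose symmetry; the risk with the symmetry route is making the $x \leftrightarrow y$/monomial dictionary between $C_f$ and $C_{\tilde f}$ precise enough that the coefficient identification is rigorous rather than heuristic.
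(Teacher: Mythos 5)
Your treatment of \eqref{eq:lower-f} follows the paper's route: by the proof of Proposition \ref{prop:hull}, the closed cycles realizing the minimal crossing ratio $k/n$ are the $N$ horizontal coils (one per residue class mod $N$), and for each coil one independently chooses either the coil itself (weight $\sigma_r x^{k'}$) or its $n'$ abrupt paths (weight $(-y)^{n'}$), so the product structure of $g_f$ follows; your items (a) and (b) are exactly the checks the paper absorbs into the proof of Proposition \ref{prop:hull}. Note, however, that your description of a coil as a path that ``travels straight up \dots picking up weight $1$ \dots never collecting a $q_{ij}$'' contradicts Figure \ref{fig:highway} and your own next sentence: a path going straight up has weight $0$ (it is forbidden), and the coil is the \emph{horizontal} path passing straight through every crossroad, which is precisely why it collects all the $q_{ij}$ on its wires.

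The genuine gap is in \eqref{eq:upper-f}. Your direct argument has the weights backwards: the cycles realizing the maximal ratio $(k+m)/n$ are the staircase (snake) paths, which turn at every crossroad and therefore pick up weight $1$ there, not ``every available $q_{ij}$''. If they collected the $q$'s, the upper hull would have $q$-dependent coefficients, contradicting \eqref{eq:upper-f} itself, whose coefficients are constant binomial coefficients; the correct argument is the exact mirror of the coil argument, with $M$ snake paths of weight $x^{(k+m)/M}$, each independently replaced or not by $n/M$ abrupt paths. Your fallback via the transposed network also does not work as stated: under the affine/monomial correspondence \eqref{eq:aff-trans} between $N(\psi(q))$ and $N(\tilde\psi(q))$, the upper hull of $N(f)$ is sent to the upper hull of $N(\tilde f)$ (not to its lower hull), while the lower hull of $N(\tilde f)$ corresponds to the horizontal edge $\{j=0\}$ of $N(f)$, i.e.\ to $f(x,0)$ and the $\epsilon_i$; moreover $\gcd(\tilde n,\tilde k)=\gcd(mn',\bar k'm)=m$, not $M$, and the closing step ``the $\sigma$'s collapse because the upper-hull monomials depend only on $\prod_{ij} q_{ij}$, which we may normalize'' is not an argument (the upper-hull coefficients depend on no $q_{ij}$ at all). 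So as written the proposal does not establish \eqref{eq:upper-f}.
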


See \S~\ref{subsec:lem:Q} for the proof.

In the following, for $f(x,y) = \psi(q), ~q \in \mM$ we study the special points of $C_f$ 
related to polynomials $g_f(x,y)$, $h_f(x,y)$ or $f(x,0)$. 

We have $f(x,0) = \det P(x)^k \prod_{i=1}^m \det Q_i(x)$, 
where $\det Q_i(x) = \prod_{j=1}^n q_{ij} + (-1)^{n-1} x = \epsilon_i(q) + (-1)^{n-1} x$.  Thus the roots of $f(x,0)=0$ are exactly the $(-1)^{n} \epsilon_r(q)$, where $\epsilon_r(q)$ is defined by \eqref{def:epsilon_r}.  It is also clear that $\epsilon_1,\ldots,\epsilon_m$ depend only on $f(x,y)$.
The following lemma is obtained immediately.
 
\begin{lem}\label{lem:A}
Suppose that $\epsilon_1,\ldots,\epsilon_m$ are distinct and nonzero.  Then there are $m$ special points $A_i =  ((-1)^{n} \epsilon_i(q),0)$ on $C_f$ with $y = 0$ and $x$ is nonzero.
\end{lem}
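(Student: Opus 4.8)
The plan is to deduce everything from the already-recorded factorization of $f(x,0)$ together with the standard fact that a plane curve is smooth, hence has a single reduced branch, at a point where one of the partial derivatives does not vanish.

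First I would make the restriction $f(x,0)$ completely explicit. Expanding the determinant of $P(x)$ in \eqref{eq:P} along its first row gives $\det P(x) = (-1)^{n-1}x$, so multiplicativity of the determinant applied to $L(x) = Q_1(x)\cdots Q_m(x)P(x)^k$, together with $\det Q_i(x) = \epsilon_i(q) + (-1)^{n-1}x$ (where $\epsilon_i$ is \eqref{def:epsilon_r}), yields
\[
g(x) \;:=\; f(x,0) \;=\; \det L(x) \;=\; (-1)^{(n-1)k}\, x^{k}\,\prod_{i=1}^{m}\bigl(\epsilon_i(q) + (-1)^{n-1}x\bigr).
\]
Hence the zeros of $g$ are $x=0$ with multiplicity $k$ together with the values $x = (-1)^{n}\epsilon_i(q)$, $1 \le i \le m$. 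Under the hypothesis that the $\epsilon_i(q)$ are distinct and nonzero, each $x_0 := (-1)^{n}\epsilon_i(q)$ is a \emph{simple} root of $g$ and satisfies $x_0 \ne 0$; consequently $\partial f/\partial x\,(x_0,0) = g'(x_0) \ne 0$.

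Next I would argue that $(x_0,0)$ is a smooth point of $\overline{C'_f}$: it is an honest affine point (both coordinates finite), it is not the point $(0,0)$ that the standing genericity assumption allows to be singular (since $x_0 \ne 0$), and the gradient of $f$ there is nonzero because its $x$-component is. Since the normalization map $C_f \to \overline{C'_f}$ restricts to an isomorphism over the smooth locus, $(x_0,0)$ has a unique preimage $A_i \in C_f$; by definition $A_i$ is special (it has $y=0$), and it has $x = x_0 \ne 0$. As the $x_0$ are pairwise distinct, this produces $m$ distinct such points.

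It then remains to check there are no further special points of $C_f$ with $y=0$ and $x$ a nonzero finite value. Any such point maps to an affine point $(x_0,0)$ of $C'_f$, so $g(x_0)=0$ with $x_0 \ne 0$, forcing $x_0 = (-1)^n\epsilon_i(q)$ for some $i$; by the uniqueness of the preimage established above, that point is $A_i$. The only place where something genuinely has to be verified is the local structure of $C_f$ over $(x_0,0)$ — this is precisely where simplicity of the root $x_0$, i.e. $\partial f/\partial x\,(x_0,0)\ne 0$, is used to guarantee a single reduced branch — and the rest is a direct determinant computation, consistent with the paper's remark that the lemma ``is obtained immediately''.
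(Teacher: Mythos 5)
Your proposal is correct and follows essentially the same route as the paper: the factorization $f(x,0)=\det P(x)^k\prod_{i=1}^m\det Q_i(x)$ with $\det Q_i(x)=\epsilon_i(q)+(-1)^{n-1}x$, from which the nonzero roots $(-1)^n\epsilon_i$ are read off and the lemma is declared immediate. Your extra care (simple roots giving $\partial f/\partial x\neq 0$, hence smoothness and a unique preimage under the normalization $C_f\to\overline{C'_f}$) just spells out the ``obtained immediately'' step and is a welcome, accurate elaboration.
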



The point $(0,0)$ on $C'_f$ is usually singular, whenever $k \geq 2$.

\begin{lem}\label{lem:O}
Suppose $\sigma_1,\ldots,\sigma_N$ are distinct and nonzero.  Then there are $N$ points of $C_f$ lying over $(0,0) \in C'_f$.
\end{lem}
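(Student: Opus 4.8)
The plan is to analyze the local structure of the affine curve $C'_f = \{f(x,y) = 0\}$ near the origin $(0,0)$, using the explicit form of the lower-hull polynomial $g_f(x,y) = \prod_{r=1}^N \bigl((-y)^{n'} + \sigma_r x^{k'}\bigr)$ from Lemma \ref{lem:Q}. The points of $C_f$ lying over $(0,0)$ correspond to branches of $\overline{C'_f}$ through the origin, and these branches are governed precisely by the Newton--Puiseux expansion along the lower hull of $N(f)$, which by Proposition \ref{prop:hull} is the single edge from $(k,0)$ to $(0,n)$. So the first step is to invoke Newton--Puiseux / the theory of the Newton polygon: the branches of the curve at $(0,0)$ are in bijection with (suitably counted) the branches of the curve $g_f(x,y)=0$ at the origin, since $g_f$ collects exactly the monomials on the lower hull and the remaining terms of $f$ lie strictly above this edge and hence do not affect the leading asymptotics.

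Next I would use $\gcd(n',k')=1$, which holds because $n' = \c$ and $k' = \d$ with $\gcd(\c,\d)=1$ (from \S\ref{sec:dynamics}). This coprimality means that each factor $(-y)^{n'} + \sigma_r x^{k'}$ of $g_f$ defines, near the origin, a single analytic branch: parametrically $x = t^{n'}$, $y = -\zeta\, \sigma_r^{1/n'} t^{k'}$ (a choice of root), irreducible because $\gcd(n',k')=1$. Thus $g_f$ has exactly $N$ branches through $(0,0)$, one for each $r$. The hypothesis that $\sigma_1,\ldots,\sigma_N$ are \emph{distinct and nonzero} guarantees these $N$ branches are pairwise distinct (they have different tangent cones in the weighted sense, i.e. distinct values of $y^{n'}/x^{k'} \to -\sigma_r$ along the branch) and that none of them is a coordinate axis. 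Since $C_f$ is the normalization of $\overline{C'_f}$, the fiber of $C_f \to \overline{C'_f}$ over $(0,0)$ has cardinality equal to the number of branches of $\overline{C'_f}$ at $(0,0)$, which I claim is exactly $N$.

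The remaining point is to justify that the count of branches of $f$ at the origin equals the count for $g_f$, namely $N$: each factor $(-y)^{n'}+\sigma_r x^{k'}$ of $g_f$ lifts to exactly one branch of $f$. This is the standard statement that for a Newton polygon with a single lower edge of ``slope'' determined by $(n',k')$ and with the edge polynomial having distinct nonzero roots (here the edge polynomial in the variable $u = (-y)^{n'}/x^{k'}$ is $\prod_r (u + \sigma_r)$, whose roots $-\sigma_r$ are distinct and nonzero by hypothesis), the curve has one Puiseux branch per root, each of multiplicity-one type since $\gcd(n',k')=1$. I would cite the Newton polygon / Puiseux theorem (e.g. as in the standard treatment, or as used by van Moerbeke--Mumford \cite{vMM}) to close this.

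The main obstacle I anticipate is bookkeeping the passage from $g_f$ to $f$ cleanly: one must check that the ``other terms'' of $f$ beyond $g_f$ all lie strictly above the lower-hull line $(m+k) \cdot(\text{nothing})$---more precisely, that for $(j,i)$ in the support of $f$ with $(j,i)$ not on the lower edge, the point lies strictly above it, so that these terms are higher-order along each branch $x=t^{n'}$, $y \sim t^{k'}$ and therefore perturb but do not merge or split the $N$ branches. This is exactly the content of Proposition \ref{prop:hull} (the lower hull is the single edge from $(k,0)$ to $(0,n)$), so the obstacle is really just assembling the cited facts carefully rather than proving something genuinely new; the distinctness and nonvanishing of the $\sigma_r$ is precisely what prevents branch collision and tangency to the axes.
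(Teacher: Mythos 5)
Your proposal is correct and follows essentially the same route as the paper: both rest on the lower-hull factorization $g_f(x,y)=\prod_{r=1}^N\bigl((-y)^{n'}+\sigma_r x^{k'}\bigr)$ from Lemma \ref{lem:Q} together with the Newton polygon of Proposition \ref{prop:hull}. The paper merely splits the count into a lower bound (the $N$ distinct limiting values of $y^{n'}/x^{k'}$ at $(0,0)$) and an upper bound (at most $N$ analytic factors of $f$ near the origin), whereas you obtain the exact count in one pass by the standard Newton--Puiseux argument, using $\gcd(n',k')=1$ and the simplicity of the roots $-\sigma_r$ of the edge polynomial.
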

\begin{proof}
By \eqref{eq:lower-f}, the meromorphic function $y^{n'}/x^{k'}$ takes the $N$ distinct values 
$(-1)^{n'+1}\sigma_r$ for $r = 1,2,\ldots,N$ as $(x,y) \to (0,0)$.   
So there are at least $N$ points on $C_f$.  
On the other hand, looking at $N(f)$ we see that analytically near $(0,0)$, the polynomial $f$ can factor into at most $N$ pieces.
\end{proof}
Let $O_1,O_2,\ldots, O_N$ denote the the special points of Lemma \ref{lem:O}.  
Near $O_r$ there is a local coordinate $u$ such that  \begin{equation*}
(x,y) \sim (u^{n'},-(-\sigma_r)^{1/n'}u^{k'}).
\end{equation*}

It turns out that $C_f' \subset \P^2(\C)$ has only one point at $\infty$.  
Due to the polynomial $h_f(x,y)$, in homogeneous coordinates, this point is
$$
P' = \begin{cases} [1:0:0] &\mbox{if  $n > k+m$}, \\
[1:1:0] & \mbox{if $n = k+m$},\\
[0:1:0] & \mbox{if $n < k+m$}.
\end{cases}
$$
To compute this, we first homogenize $f(x,y)$ to get $F(x,y,z) = z^d f(x/z,y/z)$where $d := \max(n,m+k)$. Then we solve $F(x,y,0) = 0$.  
Recall that $f_c$ was defined in \eqref{eq:fc}. 

\begin{lem} \label{lem:inf}
Suppose that $f_c \neq 0$.  Then there is a unique point $P \in C_f$ lying over $P'$.
\end{lem}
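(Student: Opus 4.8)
The plan is to analyze the curve $\overline{C'_f} \subset \P^2(\C)$ near the point $P'$ by passing to a suitable affine chart and performing a Puiseux-type expansion. First I would treat the generic case $n > k+m$, where $P' = [1:0:0]$; the other two cases are handled identically after an obvious coordinate swap. In the chart $x = 1$ with affine coordinates $(w,z) = (y/x, z/x)$, the curve is cut out by $G(w,z) := F(1,w,z) = z^d f(1/z, w/z)$, and $P'$ corresponds to $(w,z) = (0,0)$. The dominant terms of $f$ near $P'$ are exactly those on the upper hull of $N(f)$, which by Lemma \ref{lem:Q} assemble into $h_f(x,y) = ((-y)^{n/M} + x^{(k+m)/M})^M$. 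Because $h_f$ is a perfect $M$-th power, the naive leading behavior does not separate branches, and one must look at the next-order correction: this is precisely where $f_c$, the sum of monomials on the line $L_c = \{(j,i) \mid (m+k)i + nj = n(m+k) - M\}$, enters.

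The key computation is to show that, with the substitution dictated by $h_f$, the equation $G(w,z) = 0$ has a single Puiseux branch through the origin. Concretely, I would set $s := (-w)^{n/M} - (-z^{?})\cdots$ — more carefully, introduce the meromorphic function $t = (-y)^{n/M}/x^{(k+m)/M}$ on the curve, so that $h_f$ vanishing forces $t \to -1$ at $P'$ (with multiplicity $M$), and then expand $f/x^{?}$ in powers of a uniformizer. Writing $f = h_f + f_c \cdot(\text{monomial}) + (\text{lower order})$, the condition $f = 0$ becomes $(t+1)^M = -f_c/(\text{leading coefficient}) \cdot (\text{small parameter}) + \cdots$, so $t + 1$ is, to leading order, a nonzero constant times a fractional power of the local parameter. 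The hypothesis $f_c \neq 0$ guarantees this correction term is genuinely nonzero, which forces all $M$ values of $t+1$ to be conjugate fractional powers — i.e. they are cyclically permuted by monodromy around $P'$, so the $M$ formal branches glue into a \emph{single} branch of the curve. Hence the normalization $C_f \to \overline{C'_f}$ has exactly one point $P$ over $P'$.

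I expect the main obstacle to be the bookkeeping of exponents: one must correctly identify which monomial of $f$ sits "just below" $h_f$ along the direction normal to the upper hull, verify that $L_c$ picks out exactly that next layer (this is where $M = \gcd(n, m+k)$ and the precise equation of $L_c$ must be matched against the geometry of $N(f)$ from Proposition \ref{prop:hull}), and confirm that no other monomial of $f$ contributes at the same or lower order in the local parameter. A clean way to organize this is to parametrize the upper-hull edge from $(k+m,0)$ to $(0,n)$ by its primitive integer direction vector and its inward normal, observe that $h_f$ is constant along lines parallel to the edge while $f_c$ is the leading non-hull contribution, and then the Puiseux/Newton-polygon algorithm applied to $G$ yields a single branch of ramification index $M$ in the variable measuring distance from the edge. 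Once the exponents are pinned down, the irreducibility of the single branch — and therefore the uniqueness of $P$ — follows formally from $f_c \neq 0$ exactly as in the proof of Lemma \ref{lem:O}, where an analogous count bounded the number of local branches by reading off the Newton polygon.
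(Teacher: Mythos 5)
Your route is genuinely different from the paper's, so let me first say what the paper does: for $n=k+m$ it checks directly that $P'$ is a \emph{smooth} point of the curve in the chart $y\neq 0$, the coefficient of $z$ at $P'$ being exactly $f_c$; for $n\neq k+m$ it never expands to second order at all, but argues with valuations on $\C(x)[z]/(\overline{f})$, using only that every non-hull monomial lies strictly above the unique compact edge of the local Newton polygon at $P'$. Your plan instead runs one more step of the Newton--Puiseux algorithm uniformly in all cases, using the layer $L_c$ just below the upper hull. The combinatorial part of that plan is sound: since $\gcd(n,k+m)=M$, the level $n(k+m)-M$ is lattice distance exactly one below the hull, so after the edge substitution the new local Newton polygon runs from $(0,M)$ to an $x$-intercept coprime to $M$, and a nonzero corner coefficient there does force a single totally ramified branch, hence a unique $P$. (Note, by contrast, that your closing appeal to the count in Lemma \ref{lem:O} only bounds the number of branches at $P'$ by $M$, not by $1$, precisely because the edge polynomial is an $M$-th power; the whole content is in the next-order term, as you recognize earlier.)

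The genuine gap is the step ``the hypothesis $f_c\neq 0$ guarantees this correction term is genuinely nonzero.'' The next layer is not $f_c$ times a single monomial; it is $\sum_{(j,i)\in L_c} f_{i,j}x^j y^i$, several monomials of equal weight. Substituting the leading branch behaviour $(-y)^{n/M}\sim -x^{(k+m)/M}$, these collapse to one fractional power of $x$, but with coefficient proportional to $\sum_{\ell}\bigl((-1)^{n/M+1}\bigr)^{\ell}f_{i_\ell,j_\ell}$, where $(j_\ell,i_\ell)$ runs along $L_c$ in steps of $\bigl((k+m)/M,\,-n/M\bigr)$. This is a nonzero multiple of $f_c$ only when $n/M$ is odd --- in particular when $n=k+m$, which is exactly the computation $\overline{f}_1(0)=f_c$ that the paper performs. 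When $M\geq 2$ and $n/M$ is even, the quantity you need to be nonzero is an \emph{alternating} sum over $L_c$, which $f_c\neq0$ does not control. Concretely, for $(n,m,k)=(4,5,1)$ one has $M=2$, $h_f=(y^2+x^3)^2$ and $L_c=\{(4,1),(1,3)\}$; taking $f_{1,4}=f_{3,1}\neq 0$ gives $f_c\neq 0$ while your correction term vanishes identically, and then nothing in your argument prevents the germ at $P'$ from splitting into two analytic branches (indeed a short Weierstrass/discriminant computation shows it does split for suitable choices of the deeper coefficients in $V_{n,m,k}$). So as written the proposal does not establish the lemma outside the $n/M$ odd case. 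To repair it along your lines you must actually carry out the exponent and sign bookkeeping you deferred and show the twisted sum is nonzero in the situation at hand (e.g.\ by exploiting the sign pattern of the genuine spectral-curve coefficients from Theorem \ref{thm:mit}, or a genericity assumption), or else abandon the second-order analysis in the $n\neq k+m$ cases and argue via the single-edge/valuation structure as the paper does.
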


See \S~\ref{subsec:lem:inf} for the proof.

\subsection{A good condition for the spectral curve}
Let $V_{n,m,k}$ be the subspace of $\C[x,y]$ given by
\begin{align}\label{eq:Vnmk}
  V_{n,m,k} 
  = 
  \left\{((-y)^{\frac{n}{M}}+x^{\frac{k+m}{M}})^M 
  + \sum_{i=0}^{n-1} y^{i} f_i(x) ~\Big|~
  f_i(x) = 
  \sum_{(j,i) \in L_{n,m,k}} 
  f_{i,j} x^j \in \C[x] \right\},
\end{align}
where we write $L_{n,m,k}$ for the set of lattice points in the 
convex hull of $\{(k,0),(0,n),(k+m,0) \}$ but not on the 
upper hull.  By Proposition \ref{prop:hull} and Lemma \ref{lem:Q}, 
we have that $\psi(\mM) \subset V_{n,m,k}$. 

\begin{definition}\label{def:P:curve}
Define the subset $\mathcal{V} \subset V_{n,m,k}$ as the set of 
$f(x,y) \in V_{n,m,k}$, satisfying the conditions:
\begin{enumerate}
\item
$f(x,y)$ is irreducible;
\item 
$C'_f$ is smooth; 
\item
$f_c \neq 0$;
\end{enumerate}
and such that the special points on $C_f$ consist exactly of:
\begin{enumerate}
\item[(4)]
$m$ distinct points $A_1,A_2,\ldots,A_m$ where $A_r :=  ((-1)^{n} \epsilon_i,0)$ and $\epsilon_i \neq 0$;
 \item[(5)]
$N$ distinct points $O_1,O_2,\ldots, O_N$ lying over $(0,0)$, where near $O_r$ 
there is a local coordinate $u$ such that  
\begin{align}\label{eq:o_r} 
(x,y) \sim (u^{n'},-(-\sigma_r)^{1/n'}u^{k'})
\end{align}
and $\sigma_r \neq 0$;
\item[(6)]
a single point $P$ lying over the line at infinity $\P^2(\C) \setminus \C^2$.
\end{enumerate}
\end{definition}

For $f \in \mathcal{V}$, the genus $g$ of $C_f$ is given by
\begin{align} \label{eq:genus}
 g = \frac{1}{2}\left( (n-1)m - M -N + 2 \right).
\end{align}
Indeed, it follows from Pick's formula and Proposition \ref{prop:hull} that the number of interior lattice points of $N(f)$ is equal to the right hand side.  This formula for the genus then follows from \cite[Corollary on p.6]{Kho}.  Alternatively, the genus can be computed using the Riemann-Hurwitz formula, as in \cite{vMM}.

\begin{prop}\label{P:curve}
For $f \in \mathcal{V}$, we have that $\beta^{-1}(f) \neq \emptyset$.
Moreover, the set $\oV := \mathcal{V} \cap \psi(\mathcal{M})$
is a Zariski-dense subset of $\psi(\mathcal{M})$.     
\end{prop}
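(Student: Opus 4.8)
The plan is to prove the two assertions separately. For the first assertion, $\beta^{-1}(f)\neq\emptyset$ for $f\in\mathcal V$, I would follow the strategy of van Moerbeke--Mumford \cite{vMM}: reconstruct a point of $\mL$ (equivalently of $\mM$) from the geometry of $C_f$. The idea is that a generic point of $\mL$ is determined by the kernel line bundle of $L(x)-y$, together with normalization data at the special points. Concretely, given $f\in\mathcal V$, the smoothness of $C'_f$ and the control on special points (conditions (1)--(6) in Definition \ref{def:P:curve}) guarantee that $C_f$ is a smooth curve of the expected genus $g$ from \eqref{eq:genus}, carrying the marked points $P$, $A_1,\dots,A_m$, $O_1,\dots,O_N$ with the prescribed local behavior \eqref{eq:o_r}. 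One then chooses a line bundle (or divisor class) $\mathcal D\in\Pic^g(C_f)$ in general position and constructs, via Riemann--Roch, a basis of meromorphic functions on $C_f$ with poles controlled by $\mathcal D$ and the special points; this basis assembles into an eigenvector, and the matrix expressing multiplication by the coordinate functions in this basis is (conjugate to) a Lax matrix $L(x)$ with $\det(L(x)-y)=f(x,y)$. The key point is that the Newton polygon $N(f)$ being exactly the triangle of Proposition \ref{prop:hull}, with lower and upper hulls as described in Lemma \ref{lem:Q}, forces the reconstructed matrix to have precisely the band structure defining $\mL$. I expect this is essentially the content of (the forward direction of) Theorem \ref{thm:phi}, which is stated to ``identify the image of $\phi$,'' so I would phrase this first part as a consequence of, or a step toward, that theorem, citing \cite{vMM} for the analytic reconstruction.

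For the second assertion, that $\oV=\mathcal V\cap\psi(\mM)$ is Zariski-dense in $\psi(\mM)$, the strategy is to show that each of the defining conditions of $\mathcal V$ (irreducibility, smoothness of $C'_f$, $f_c\neq 0$, distinctness and nonvanishing of the $\epsilon_i$ and $\sigma_r$, simplicity of the point at infinity) is a Zariski-open condition on $\psi(\mM)$, and that each is satisfied at \emph{some} point of $\psi(\mM)$, hence on a dense open subset; a finite intersection of dense open sets in an irreducible variety is dense open. Most of these are routine: $f_c\neq 0$ is a single polynomial inequation in the coefficients; $\epsilon_i(q)=\prod_j q_{ij}$ and $\sigma_r(q)$ from \eqref{def:sigma_r}--\eqref{def:epsilon_r} are visibly nonzero and pairwise distinct on a dense open locus (take generic $q_{ij}$); smoothness of $C'_f$ away from $(0,0)$ and simplicity of $P$ are each open. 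The one genuinely substantive point—and the main obstacle—is \textbf{irreducibility} of $f(x,y)=\psi(q)$ for generic $q$, since irreducibility is open but \emph{not} obviously nonempty: one must exhibit at least one $q$ (or argue by a degeneration/monodromy argument) for which $\psi(q)$ is irreducible. Here I would invoke the combinatorial description of the coefficients of $f$ from Theorem \ref{thm:mit} together with the Newton polygon shape (Proposition \ref{prop:hull}): since $N(f)$ is a triangle with no interior lattice points on its boundary edges being forced to factor, a generic such $f$ with the prescribed Newton polygon is irreducible (e.g.\ by the standard fact that polynomials with a fixed Newton polygon that is not a nontrivial Minkowski sum, or more robustly a direct Bertini-type argument, are generically irreducible), and one checks this genericity is compatible with the image $\psi(\mM)$ by producing an explicit point—for instance specializing to the discrete Toda case $(n,2,n-1)$ where irreducibility of the spectral curve is classical \cite{HTI,Iwao07}.

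In summary: part one is a reconstruction theorem in the spirit of \cite{vMM}, reducing $\beta^{-1}(f)\neq\emptyset$ to the already-announced Theorem \ref{thm:phi}; part two is a finite intersection of Zariski-open conditions, nonemptiness of each being either elementary (the $\epsilon$, $\sigma$, $f_c$, smoothness, infinity conditions) or, in the one hard case of irreducibility, handled by the Newton-polygon analysis of Proposition \ref{prop:hull} and Theorem \ref{thm:mit} together with a known special case. The hard part is genuinely the irreducibility of the generic spectral curve; everything else is bookkeeping with open dense loci.
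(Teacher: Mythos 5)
Your first part is essentially the paper's argument: the nonemptiness of $\beta^{-1}(f)$ is obtained there from the van Moerbeke--Mumford-style reconstruction (the paper invokes Theorem \ref{thm:eta}, whose proof builds $L(x)$ from $([\D],c,O)$ via Riemann--Roch at the special points), so deferring to that reconstruction is fine. The problem is in your second part. You reduce density of $\oV$ in $\psi(\mM)$ to openness plus nonemptiness of each condition of Definition \ref{def:P:curve} \emph{inside} $\psi(\mM)$, and you declare the nonemptiness ``routine'' except for irreducibility. This misplaces the difficulty. Irreducibility is actually the easy condition (once Proposition \ref{prop:hull} gives the Newton polygon for generic $q$, the Minkowski-sum argument applies pointwise, exactly as in the paper), whereas the genuinely hard nonemptiness statements are smoothness of $C'_f$ and the requirement that the special points are \emph{exactly} those listed: a priori $\psi(\mM)$ is just a constructible subset of $V_{n,m,k}$ and could be entirely contained in the discriminant locus, and you never produce, for a general triple $(n,m,k)$, a single $q$ whose spectral curve is smooth. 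Your fallback of ``specializing to the discrete Toda case $(n,2,n-1)$'' does not help, since $\psi$ and its image depend on the triple $(n,m,k)$; a known special case for one triple says nothing about another.

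The paper closes this gap by a different route that your proposal never uses: it first shows that $\mathcal{V}$ contains a dense open subset of the affine coefficient space $V_{n,m,k}$ (where genericity is trivial), and then shows that $\psi(\mM)$ is Zariski-dense in $V_{n,m,k}$. That density is deduced from the first statement of the proposition ($\mathcal{V}\subset\beta(\mL)$, i.e.\ the reconstruction) combined with Lemma \ref{L:mMmL}, the fact from \cite{LPgeom} that $\alpha(\mM)$ is Zariski-dense in $\mL$ (the map being generically $m!$-to-$1$ between spaces of dimension $mn$). This is the key missing ingredient in your argument; indeed the authors remark after their proof that they know no direct proof that the coefficients $f_{i,j}$ are algebraically independent on $\mM$, which is a warning that the ``check genericity directly inside $\psi(\mM)$'' strategy is not routine. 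To repair your proposal you would either have to import Lemma \ref{L:mMmL} and argue as the paper does, or exhibit explicitly, for every $(n,m,k)$, a point of $\mM$ satisfying all the conditions of Definition \ref{def:P:curve}, which you have not done.
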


We give the proof in \S~\ref{subsec:P:curve}.
Informally, the last statement of Proposition \ref{P:curve} states that most curves in $\psi(\mathcal{M})$ satisfy the ``niceness" conditions listed in Definition \ref{def:P:curve}.

From Definition~\ref{def:P:curve}, it follows that for $f \in \mathcal{V}$, the meromorphic functions
$x$ and $y$ on $C_f$ satisfy
\begin{align}
  (x) = n^\prime \sum_{i=1}^N O_i - n P, 
  \qquad 
  (y) = k^\prime \sum_{i=1}^N O_i + \sum_{i=1}^m A_i -(m+k) P.
\end{align}

\begin{remark}
The condition that $A_1,\ldots,A_m$ (resp. $O_1,O_2,\ldots, O_N$) are distinct imply that the quantities $\epsilon_1,\ldots,\epsilon_m$ (resp. $\sigma_1,\ldots,\sigma_N$) are distinct.  Many of our main results still apply after a modification even when this condition does not hold.
\end{remark}

\section{Proofs from Sections \ref{sec:netw} and \ref{sec:Lax}}
\label{sec:proofLax}

\subsection{Proof of Proposition \ref{prop:hull}}
\label{subsec:prop:hull}

We use the interpretation of $f(x,y)$ given by Theorem \ref{thm:mit}. Let $\p = (p_1,\ldots,p_n)$ be a family of noncrossing highway paths in $G'$, as in Theorem \ref{thm:mit}, and let $\wt(\p) = \wt(p_1) \cdots \wt(p_n)$.

There are $k+m$ opportunities for a highway path in $\p$ to pick up the weight $x$: from each of the $m$ vertical wires and from the $k$ horizontal wires crossing $x$-line.  For a fixed power $y^b$ (where $b = 0,1,\ldots,n$), or equivalently a fixed number of abrupt paths, we will bound the maximal and the minimal possible value of the exponent $a$ of $x$.

  \begin{figure}[h!]
    \begin{center}
    \scalebox{0.8}{\input{toda2.pstex_t}}
    \end{center}
    \caption{The red path crosses $x$-line less than the green one.}
    \label{fig:toda2}
\end{figure}

The first key observation is that we can consider $\p$ to be a family of noncrossing closed cycles on the toric network $G$.  An abrupt path $p$ is simply the ``cycle" that starts and ends at the vertex $i$ ($=$ source $i$ and sink $i$ identified) in $G$ and does not move.

Lift such a highway cycle $C$ to the universal cover, as shown in Figure \ref{fig:toda2}. We obtain a path that starts and ends at two vertices labeled with the same integer $i \in \{1,2,\ldots,n\}$, but $\ell$ periods (of $m$ vertical lines each) to the right of the original source.  More precisely, if $C$ is obtained by gluing paths $p_{i_1},\ldots,p_{i_\ell}$ in $G'$, then $C$ has length $\ell$.  The $x$-line now has a staircase-like shape as shown in Figure \ref{fig:toda2}.  We claim that if $C$ has length $\ell$, then it crosses the $x$-line at least $\ell k/n$ times and at most $\ell (k+m)/n$ times.

To see this, observe that if we lift the ending vertex several periods (consisting of $n$ horizontal wires) up, we preserve the length and we increase the 
number of crossings with the $x$-line.  Similarly, if we lower the ending vertex several periods down, we preserve the length and we decrease the number of crossings of $x$-line.  Thus, the smallest and the largest ratios of the quantity (crosses of $x$-line/length) is achieved for the lowest and the highest possible highway paths, shown in Figure \ref{fig:toda2} in purple and green.
The former is the horizontal path, while the latter is an alternating right-up staircase path. For these paths, the ratios are exactly $k/n$ and $(k+m)/n$.

\subsection{Proof of Lemma~\ref{lem:Q}}
\label{subsec:lem:Q}

One of the consequences of the proof of Proposition \ref{prop:hull} is that the monomials for which the lower bound $k/n$ of the ratio is reached are 
the ones coming from
horizontal highway paths on the universal cover.   Let us call each such closed cycle a {\it {coil}}. For example, the purple line in Figure \ref{fig:toda2} represents a coil passing through source $i$, as well as passing through the $n'$ other vertices between $1$ and $n$ that have residue $i$ 
modulo $N = \gcd(k,n)$. Thus the terms of $g(x,y)$ are formed in the following way: for each of the $N$ coils we decide whether to include it 
into our family of paths, or to make all paths starting at its sources abrupt. The second choice corresponds to the contribution $(-y)^{n'}$.  The first choice gives 
$(\prod_{i=1}^m \prod_{j=1}^{n'} q_{i,r+jN}) x^{k'}$, which is the weight of that coil. Thus, the $r$-th coil contributes 
the factor of $\left((\prod_{i=1}^m \prod_{j=1}^{n'} q_{i,r+jN}) x^{k'} + (-y)^{n'}\right)$, and \eqref{eq:lower-f} follows. 

Similarly, the monomials for which the upper bound $(k+m)/n$ of the ratio is reached are 
the ones coming from right-up staircase paths on the universal cover.  Let us call each such closed cycle a {\it {snake path}}.  For example, the 
green line in Figure \ref{fig:toda2} represents a snake path passing through source $i$, as well as through all the $n/M$ other vertices between $1$ and $n$ that have residue $i$ 
modulo $M = \gcd(m+k,n)$.  Thus the part contributing to the upper hull of $f(x,y)$ is a product of factors $x^{(m+k)/M}+(-y)^{n/M}$, where 
the term $(-y)^{n/M}$ corresponds to choosing to have abrupt paths, while $x^{(m+k)/M}$ corresponds to choosing to have the snake path. 
Thus we obtain \eqref{eq:upper-f}.

\subsection{Proof of Lemma~\ref{lem:inf}}
\label{subsec:lem:inf}

We analyze the singularity at $P' \in \P^2(\C)$ on the chart 
$y \neq 0$, that is, $\{[x:1:z] ~|~ x,z \in \C\}$.
Let $\tilde{C}_{\overline{f}}$ be the affine curve given by $\overline{f}(x,z) := F(x,1,z) = 0$.    
 
(i) $n < k+m$: using \eqref{eq:upper-f}
we can write $\overline{f}(x,z)$ as
$$
  \overline{f}(x,z)
  = ((-1)^{\frac{n}{M}}z^{\frac{k+m-n}{M}}
    +x^{\frac{k+m}{M}})^M 
    + \sum_{j=1}^{\max(m,m+k-n)} z^j \overline{f}_j(x)
$$
where $\deg_x \overline{f}_j(x) \geq 1$.
Then, when $m+k-n = 1$, $\tilde{C}_{\overline{f}}$ is smooth at $(0,0)$ since 
$\partial \overline{f}/\partial z|_{(0,0)} = (-1)^{n/M} \neq 0$.
When $m+k-n \geq 2$, the point $(0,0)$ is singular.
Define $K := \C(x)[z]/(\overline{f}(x,z))$.
We consider all valuations $v: K \twoheadrightarrow \Z$ on $K$ satisfying that $v(x) > 0$ and $v(z) > 0$.  
We shall show that such valuations are unique, 
from which the uniqueness of $P$ follows.
Define $\overline{h}_f(x,z):=((-1)^{\frac{n}{M}}z^{\frac{k+m-n}{M}}
    +x^{\frac{k+m}{M}})^M$ which is the part of 
$\overline{f}(x,z)$ consisting of monomials
lying on the lower hull of $N(\overline{f})$,
corresponding to the upper hull of $N(f)$. 
Then we see that 
$v(\overline{f}(x,z)) = v(\overline{h}_f(x,z)) =
M \cdot v(x^{\frac{k+m}{M}}
+ (-1)^{\frac{n}{M}}z^{\frac{k+m-n}{M}})$.
To be consistent with $v(\overline{f}(x,z)) = v(0)= \infty$,
the only choice we have is 
$v(x) = \frac{k+m-n}{M}$ and $v(z) = \frac{k+m}{M}$.

(ii) $n = k+m$: 
we can write $\overline{f}(x,z)$ as
$$
  \overline{f}(x,z)
  = (x-1)^n + \sum_{k=1}^m z^k \overline{f}_k(x-1)
$$
where $\overline{f}_k(x-1) \in \C[x-1]$.
Thus $P'$ is smooth point on $C_{\overline{f}}$ if $\overline{f}_1(0) \neq 0$.
Looking at the Newton polygon $N(f)$ and \eqref{eq:fc}, it follows that 
$\overline{f}_1(0) = \sum_{i=0}^{n-1} f_{i,n-1-i} = f_c \neq 0$.

(iii) $n >k+m$: this case is nearly the same as the case $n < k+m$.

\subsection{Proof of Proposition \ref{P:curve}}\label{subsec:P:curve}
The first statement follows from Theorem~\ref{thm:eta} in the next section.  
In the following, we prove the second statement.  

We first show that $\mathcal{V}$ contains a Zariski-dense and open subset of $V_{n,m,k}$.  A Zariski-open subset of $V_{n,m,k}$ consists of $f(x,y)$ with Newton polygon $N(f)$ given by Proposition \ref{prop:hull}.  This polygon is not a non-trivial Minkowski sum of two other polygons, so $f(x,y)$ is irreducible.  Similarly the conditions that $C'_f$ is smooth and $f_c \neq 0$ are Zariski-open conditions on $V_{n,m,k}$.  By Lemma \ref{lem:inf}, $f_c \neq 0$ implies that (6) in Definition \ref{def:P:curve} holds.  The calculations in the proofs of Lemmas \ref{lem:A} and \ref{lem:O} then imply that $\mathcal{V}$ contains a Zariski-open and dense subset of $V_{n,m,k}$.

It thus suffices to show that $\psi(\mM)$ contains a Zariski-dense subset of $V_{n,m,k}$.  We shall use the following result.

\begin{lem}\label{L:mMmL}
The set $\alpha(\mM)$ is Zariski-dense in $\mL$.
\end{lem}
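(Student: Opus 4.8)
The plan is to show that the map $\alpha : \mM \to \mL$, $Q=(Q_1,\ldots,Q_m) \mapsto Q_1(x)\cdots Q_m(x) P(x)^k$, is dominant by exhibiting a single point of $\mM$ at which the differential of $\alpha$ is surjective, so that the image contains a Zariski-open set. Since both $\mM$ and $\mL$ are identified with $\C^{mn}$ (the former via the $q_{ij}$, the latter via the $m n$ free entries $a_{ij}$ with $-m-k+1 \le j-i \le -k$), it suffices to find a point $q^\ast \in \mM$ where the Jacobian of $\alpha$ has full rank $mn$. Equivalently, one shows the field extension $\C(\mL) \subset \C(\mM)$ induced by $\alpha^\ast$ is such that the $mn$ coordinate functions $a_{ij}$ pull back to $mn$ algebraically independent elements of $\C(\mM)$; full rank of the Jacobian at one point is a convenient sufficient criterion.

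The first step is to use the combinatorial description of the entries of $L(x)$ already available from Lemma \ref{lem:entry}: the $(i,j)$-entry of $L(x)$ is a sum over highway paths from source $i$ to sink $j$ in $G'$, with weights products of the $q_{ij}$ and powers of $x$. Writing $L(A;x)$ in the form \eqref{eq:L-A}, the coefficient $a_{ij}$ (for $-m-k+1 \le j-i \le -k$) is a specific polynomial in the $q_{ij}$ obtained by collecting the $x^\ell$-coefficient of the appropriate matrix entry. A natural choice is the limiting/degenerate point where all $q_{ij}$ are small or where we track only lowest-degree contributions: one identifies, for each target entry $a_{ij}$, a highway path whose weight is a distinct squarefree monomial in the $q_{ij}$, and arranges that these $mn$ monomials form a triangular (or otherwise invertible) system. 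Then, evaluating the Jacobian $\partial a_{ij}/\partial q_{kl}$ at the point where all $q_{ij}$ equal some generic value — or better, at a point chosen so that the leading monomials dominate — yields a matrix that is (up to permutation) triangular with nonzero diagonal, hence invertible.

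Concretely, I would organize the highway paths by how many vertical wires and how many $x$-line crossings they use. For the band $-m-k+1 \le j-i \le -k$, each relevant contribution to $a_{ij}$ comes from a path that, roughly speaking, travels through a contiguous block of vertical wires (picking up a product $q_{i_0,*}q_{i_0+1,*}\cdots$) and makes a controlled number of $x$-line crossings. The key combinatorial input is that as $(i,j)$ ranges over the $mn$ band positions, one can select these blocks so that the resulting monomials are pairwise distinct and the full collection is ``triangularizable'' with respect to a suitable monomial order on the $q_{ij}$. This is exactly the kind of genericity statement that underlies similar density results for networks (cf.\ \cite{LP}), and the structure of $Q_i(x)$ in \eqref{eq:Q} — bidiagonal plus a single corner entry $x$ — makes the bookkeeping tractable.

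The main obstacle will be the careful combinatorial verification that the chosen highway paths give an invertible Jacobian: one must check that no two band entries $a_{ij}$ are forced to share the same leading monomial and that the corner-$x$ terms from $P(x)^k$ do not collapse distinct contributions. This is where the hypotheses $n \ge 2$, $1 \le k \le n$ enter, ensuring the torus actually has nontrivial shifts and the path families have enough room. Once the Jacobian is shown to have full rank at one point, $\alpha$ is dominant, its image contains a dense Zariski-open subset of $\mL$, and the lemma follows immediately; this is then fed back into the proof of Proposition \ref{P:curve} to conclude that $\psi(\mM) = \beta(\alpha(\mM))$ is Zariski-dense in $V_{n,m,k}$, and hence that $\oV = \mathcal V \cap \psi(\mM)$ is Zariski-dense in $\psi(\mM)$.
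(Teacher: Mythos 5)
Your overall strategy is legitimate and genuinely different from the paper's: the paper disposes of this lemma in one line by citing \cite[Proof of Theorem 4.1]{LPgeom}, where $\alpha:\mM\to\mL$ is shown to be generically an $m!$-to-$1$ map between two irreducible varieties of the same dimension $mn$, which immediately gives dominance and hence Zariski-density of the image. Proving dominance instead by exhibiting one point of $\mM$ where the Jacobian of $\alpha$ has rank $mn$ would be a perfectly acceptable, self-contained alternative.

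The problem is that you never actually carry out the step that constitutes the entire content of the lemma. Everything hinges on the claim that one can choose, for each of the $mn$ band entries $a_{ij}$ (with $-m-k+1\le j-i\le -k$), a highway path whose monomial serves as a ``leading term'' so that the resulting $mn\times mn$ Jacobian is triangular with nonzero diagonal; you assert this can be ``arranged'' and then explicitly defer it as ``the main obstacle.'' As written this is a plan, not a proof. Note also that the verification is not a formality: by Lemma \ref{lem:entry} (or direct multiplication), the entry on the diagonal $j-i=-k-r$ in row $i$ is a sum of $\binom{m}{r}$ degree-$(m-r)$ monomials of the form $\prod_{s\notin S}q_{s,\,i-\#\{s'\in S:\,s'<s\}}$ over $r$-subsets $S\subset[m]$, and distinct band entries share many variables, so distinctness of chosen monomials alone does not give invertibility — you need an explicit monomial order together with a bijection between band positions and leading monomials (or simply an explicit numerical point with nonvanishing Jacobian determinant, e.g. established by induction on $m$, the case $m=1$ being the identity map). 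Until one of these is supplied, the lemma is not established; alternatively, you could fall back on the paper's route and quote the generic $m!$-to-$1$ statement from \cite{LPgeom}, which already contains the needed genericity analysis.
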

\begin{proof}
This follows from \cite[Proof of Theorem 4.1]{LPgeom} where it is shown that the map $\alpha:\mM \to \mL$ is generically a $m!$ to $1$ map between two spaces of dimension $mn$.
\end{proof}

By the first statement of Proposition \ref{P:curve}, we have $\mathcal{V} \subset \beta(\mL)$.  By Lemma \ref{L:mMmL}, $\psi(\mM) = \beta(\alpha(\mM))$ contains a Zariski-dense subset of $\mathcal{V}$.  It follows that $\psi(\mM)$ is Zariski-dense in $V_{n,m,k}$.  This completes the proof of the second statement of Proposition \ref{P:curve}.

\begin{remark}
The nonconstant coefficients $f_{i,j}$ of $f(x,y)$ can be pulled back to functions on $\mM$ or $\mL$.  A consequence of our proof is that $\psi(\mM)$ is Zariski-dense in $V_{n,m,k}$, and therefore the functions $f_{i,j}$ are algebraically independent on $\mM$ or $\mL$.  It would be interesting to obtain a direct proof of this.
\end{remark}

\subsection{Proof of Theorem \ref{thm:dynamics}}
\label{sec:dynamics_proof}

 We employ the technique first introduced in \cite{LP}. 
 \begin{figure}[h!]
    \begin{center}
    \input{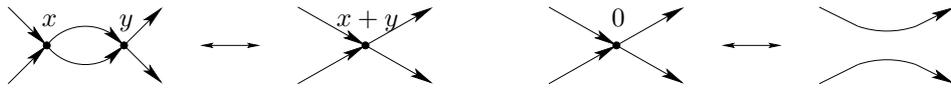}
    \end{center}
    \caption{Crossing merging and crossing removal moves with vertex weights shown.}
    \label{fig:wire11}
\end{figure}
 \begin{figure}[h!]
    \begin{center}
    \scalebox{.7}{\input{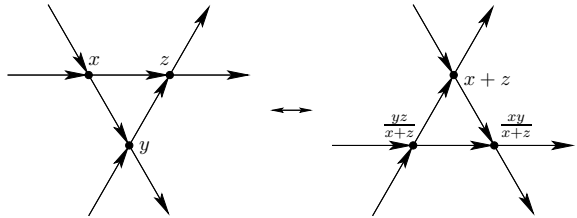}}
    \end{center}
    \caption{Yang-Baxter move with transformation of vertex weights shown.}
    \label{fig:wire8}
\end{figure}
Specifically, we realize the geometric $R$-matrix transformation, dubbed the {\it {whirl move}} in \cite{LP}, as a sequence of local transformations on our toric network. 
The transformations we shall employ are of three kinds: crossing merging/unmerging, crossing creation/removal, shown in Figure \ref{fig:wire11}, and Yang-Baxter move shown in Figure \ref{fig:wire8}.

The whirl move $R$ occurs between two parallel wires adjacent to each other and wrapping around a local part of the surface that is a cylinder. The way it is realized as a sequence of local moves is illustrated in 
Figure \ref{fig:wire20}. First a crossing is created with weight $0$, and split into two crossings of weight $p$ and $-p$. One of them is pushed through the wires crossing our two distinguished wires,
until it comes out on the other side. As it is proven in \cite[Theorem 6.2]{LP}, there is at most one non-zero value of $p$ for which the end result is again a pair of crossings of weights $p$ and $-p$,
and thus those two can be canceled out. The resulting action on the weights along two parallel wires is exactly the whirl move $R$, and does not depend on where the original auxiliary crossing was 
created. 
\begin{figure}[h!]
    \begin{center}
    \input{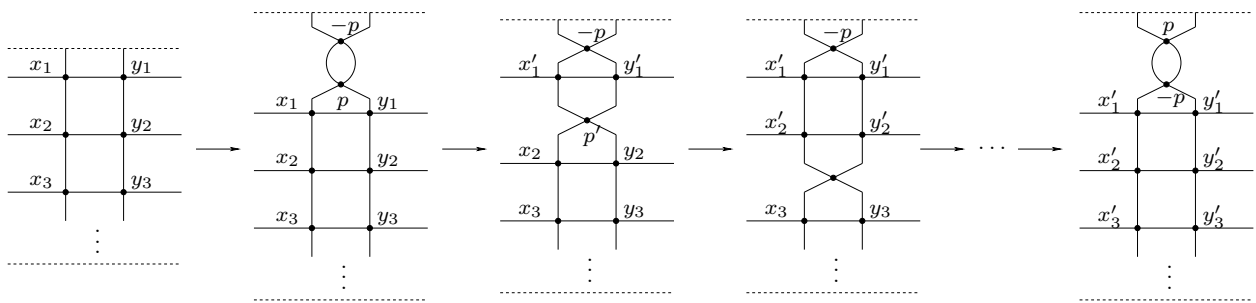}
    \end{center}
    \caption{For a unique choice of the weight $p$, the weight that comes out on the other side after passing through all horizontal wires is also $p$; the resulting transformation of $x$ and $y$ is exactly the whirl move.}
    \label{fig:wire20}
\end{figure}
The value of $p$ does depend on the location $j$ where the new crossings are created, and is given by 
$$p = \frac{\prod y_{i} - \prod x_{i}}
{E(\boldsymbol{x}^{(j)},\boldsymbol{y}^{(j)})}.$$
Here the variables $x_i$ and $y_i$ are weights along the two wires as 
in Figure \ref{fig:wire20}, and $E(\boldsymbol{x}^{(j)},\boldsymbol{y}^{(j)})$
is the energy of the cyclically shifted vectors $\boldsymbol{x}^{(j)} = (x_{j+1},x_{j+2},\ldots,x_{j})$ and 
$\boldsymbol{y}^{(j)} = (y_{j+1},y_{j+2},\ldots,y_{j})$ 
as introduced in \S~\ref{sec:Rmatrix}.

Now, the fact that $R$ satisfies the braid move, that is, $s_{\ell}  s_{\ell+1}  s_{\ell}  = s_{\ell+1}  s_{\ell}  s_{\ell+1}$ and $\tilde s_{\ell} \tilde s_{\ell+1} \tilde s_{\ell}
  =\tilde s_{\ell+1} \tilde s_{\ell} \tilde s_{\ell+1}$ is shown in \cite[Theorem 6.6]{LP}. Indeed, these relations happen at a local part of the surface (in our case, torus) that looks like a cylinder.
  
\begin{figure}[h!]
    \begin{center}
    \scalebox{0.6}{\input{wire25a.pstex_t}}
    \end{center}
    \caption{}
    \label{fig:wire25}
\end{figure}  
  
On the other hand, the commutativity of the $s_{\ell}$ and the $\tilde s_{\ell}$ does not follow from \cite[Theorem 12.2]{LP}. This is because in \cite{LP} we only considered the case when the pairs of parallel wires
intersect once. In our case on the torus however, it is common to have horizontal and vertical wires intersect more than once: this happens for any $\d \not = 1$. The proof in such a situation is essentially the same. 

Indeed, if we have two pairs of parallel wires crossing as in Figure \ref{fig:wire25}, but possibly more than once, we can realize each of the two corresponding $R$-moves by a sequence of local moves as above. 
It is a local check that performing one of the two sequences does not change 
the value of 
$p = \frac{\prod y_i - \prod x_i}{E(\boldsymbol{x}^{(j)},\boldsymbol{y}^{(j)})}$ 
one needs to perform the other, because each of the 
$\prod x_i$, $\prod y_i$ and $E(\boldsymbol{x}^{(j)},\boldsymbol{y}^{(j)})$ is not changed. 
It is also a local check that the two sequences commute once the parameters $p$ for each are chosen, this is \cite[Proposition 3.4]{LP}.
Thus, commutativity follows.

\section{Eigenvector map}
\label{sec:eigenvector}
In this section, we fix $f \in \mathcal{V}$ (see Definition~\ref{def:P:curve}) 
and consider the corresponding smooth curve $C_f$.  
For $j \in \Z$, 
we set $O_j := O_r$
if $j \equiv r$ mod $N$.
Similarly, for $j \in \Z$, 
we set $A_j := A_r$
if $j \equiv r$ mod $m$.

\subsection{Generalities}
A divisor $D = \sum_i n_i P_i$ on an algebraic curve $C$ 
is a finite formal integer linear combination of points $P_i$ on $C$.  
We write $D \geq 0$ if $n_i \geq 0$, and say that $D$ is {\it positive} in this case.  
The degree of $D$ is given by $\deg(D) = \sum_i n_i$.

Given $h$ a meromorphic function on $C$, we let 
$(h) = (h)_0 - (h)_\infty$ be the divisor of $h$.  Here, $(h)_0$ denotes the divisor of zeroes, and $(h)_\infty$ denotes the divisor of poles.
Two divisors $D_1, D_2$ are linearly equivalent, $D_1 \sim D_2$, 
if there exists a meromorphic function $h$ such that $(h) = D_1-D_2$.
We write $[D]$ for the equivalence class of $D$ with respect to linear equivalence.
The Picard group $\Pic(C)$ is the abelian group of divisors on $C$ modulo linear equivalence.
For $j \in \Z$, we write $\Pic^j(C)$ for the part of the Picard group of $C$ 
that has degree $j$, that is, 
$\Pic^j(C) := \{D : \text{ a divisor on $C$} ~|~ \deg(D) = j \}/\sim$.

To each divisor $D$ we associate a space of meromorphic functions 
$$\L(D) = \{f \mid (f) + D \geq 0\}.
$$
If $D = \sum_i n_i P_i \geq 0$, then in words, $\L(D)$ consists of meromorphic functions which are allowed to have poles of order $n_i$ at $P_i$.  We have $\dim(\L(D_1)) =  \dim(\L(D_2))$ if $D_1$ and $D_2$ are linearly equivalent.

\subsection{Positive divisors of a Lax matrix}\label{subsec:eigenmap}

A divisor $D \in \Pic^g(C_f)$ of degree $g$ is called {\it general} if $\dim \mathcal{L}(D)=1$.  
A divisor $D \in \Pic(C_f)$ is called {\it regular} with respect to the points $P$ and $O_j$ if $D$ is general and 
if $\dim \mathcal{L}(D+ kP-\sum_{j=n-k}^n O_j)=0$ for $k > 0$.  For brevity, we will sometimes just say that a divisor is regular when it is regular with respect to $P$ and the $O_j$.

Let us now fix $L(x) \in \beta^{-1}(f)$.  Let 
$$
\Delta_{i,j} := (-1)^{i+j} |L(x)-y|_{i,j}
$$ 
denote the (signed) $(i,j)$-th minor of the matrix $L(x) - y$.

\begin{thm}[cf. \cite{vMM}]\label{thm:double}
There exists a positive divisor $R$ of degree $2g+2$ supported on $S_{P,O} := \{P,O_1,\ldots,O_N\}$, and uniquely defined positive general divisors $D_1,D_2,\ldots,D_n,\bar D_1,\ldots,\bar D_n$ of degree $g$ such that for all $(i,j) \in [n]^2$, we have
$$
(\Delta_{i,j}) = D_j + \bar D_i + (j-i -1 )P + \sum_{r=j+1}^{i  -1} O_r - R.
$$
In addition, $D_1,\ldots,D_n$ have pairwise no common points, and $\bar D_1,\ldots,\bar D_n$ have pairwise no common points.
\end{thm}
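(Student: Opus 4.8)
\noindent\emph{Proof strategy (following van Moerbeke--Mumford \cite{vMM}).}
The first step is to exhibit the rank‑one structure of the cofactor matrix. On $C_f$ we have $\det(L(x)-y)=f(x,y)=0$; using the smoothness of $C'_f$ together with the analysis of the special points $P$, $O_r$, $A_r$ from \S\ref{sec:Lax}, one checks that $L(x)-y$ has rank exactly $n-1$ at every point of $C_f$. Hence $(\Delta_{i,j})$, the matrix of signed cofactors, has rank exactly one on $C_f$, so all its $2\times 2$ minors vanish there: $\Delta_{i,j}\Delta_{i',j'}=\Delta_{i,j'}\Delta_{i',j}$. Fixing $(i_0,j_0)$ with $\Delta_{i_0,j_0}\not\equiv 0$ gives $\Delta_{i,j}=\Delta_{i,j_0}\Delta_{i_0,j}/\Delta_{i_0,j_0}$ as rational functions on $C_f$, hence $(\Delta_{i,j})=(\Delta_{i,j_0})+(\Delta_{i_0,j})-(\Delta_{i_0,j_0})$. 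Equivalently: the vectors $(\Delta_{i_0,1},\dots,\Delta_{i_0,n})^{\top}$ are right null vectors and $(\Delta_{1,j_0},\dots,\Delta_{n,j_0})$ left null vectors of $L(x)-y$, and their proportionality as $i_0$ (resp.\ $j_0$) varies produces a meromorphic right eigenvector $\psi=(\psi_1,\dots,\psi_n)$ and left eigenvector $\bar\psi=(\bar\psi_1,\dots,\bar\psi_n)$ with $\Delta_{i,j}=w\,\psi_j\bar\psi_i$ for a fixed rational function $w$; the factor $\psi_j$ will carry $D_j$ and $\bar\psi_i$ will carry $\bar D_i$.

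The heart of the proof is the explicit computation of the divisors $(\psi_j)$ and $(\bar\psi_i)$. Each $\Delta_{i,j}$ is a polynomial in $x,y$ (the entries of $L(x)$ are polynomials in $x$ by \eqref{eq:L-A}), so it is regular on the affine curve $C'_f$ and all its poles on $C_f$ lie over the special points. Using the explicit form \eqref{eq:Q}, \eqref{eq:P} of $Q_i(x)$ and $P(x)$, the divisor relations $(x)=n'\sum_{i=1}^N O_i-nP$ and $(y)=k'\sum_{i=1}^N O_i+\sum_{i=1}^m A_i-(m+k)P$, and the local expansions at $O_r$ and at $P$ from the proofs of Lemmas \ref{lem:O} and \ref{lem:inf} (with $y$ a local coordinate at each $A_r$), one reads off the order of vanishing of $\Delta_{i,j}$ at each special point. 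The target statement is that the part of $(\Delta_{i,j})$ supported on $S_{P,O}=\{P,O_1,\dots,O_N\}$ equals, up to the combinatorial shift $(j-i-1)P+\sum_{r=j+1}^{i-1}O_r$, a divisor independent of $(i,j)$ — this fixed part is $-R$, with $R\ge 0$ supported on $S_{P,O}$ — while the remaining zeros away from the special points assemble into $D_j+\bar D_i$. The highway‑path expansion of the minors (Lemma \ref{lem:entry}, Theorem \ref{thm:mit}) is what makes these leading‑coefficient computations tractable. Positivity of $R,D_j,\bar D_i$ comes out of the computation, and the degrees ($\deg D_j=\deg\bar D_i=g$, and $\deg R$) are then forced by $\deg(\Delta_{i,j})=0$ on the complete curve together with the genus formula \eqref{eq:genus}.

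It remains to handle generality, uniqueness and disjointness. If $\dim\L(D_j)\ge 2$ one could build from the $j$‑th component of the eigenvector a second null vector of $L(x)-y$ independent of the first, contradicting $\mathrm{rank}(L(x)-y)=n-1$; hence $\dim\L(D_j)=1$, and symmetrically for $\bar D_i$. Uniqueness of $D_j$ and $\bar D_i$ is then automatic, since a general effective divisor is the unique effective representative of its linear equivalence class and the splitting of the first step fixes these classes independently of the choice of $(i_0,j_0)$. For the pairwise disjointness of $D_1,\dots,D_n$ (and of $\bar D_1,\dots,\bar D_n$) I would exploit the banded shape $Q_i(x)=\mathrm{diag}(q_{i1},\dots,q_{in})+P(x)$: a common point $p$ of $D_j$ and $D_{j'}$ would force two components of the globally defined right eigenvector to vanish at $p$, and the recursion coming from the subdiagonal ones of the $Q_i(x)$ and the cyclic shift $P(x)$ propagates this to all components, which is impossible since the null space at $p$ is one‑dimensional; equivalently, the eigenvector morphism $C_f\to\P^{n-1}$ is nondegenerate and, for $f\in\mathcal V$, its image avoids the codimension‑two coordinate subspaces $\{X_j=X_{j'}=0\}$.

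The step I expect to fight hardest with is the second one: pinning down the exact orders and leading coefficients of the $\Delta_{i,j}$ at $P$ and at the $O_r$, and verifying that after removing the shift $(j-i-1)P+\sum_{r=j+1}^{i-1}O_r$ the remaining contribution at the special points genuinely ceases to depend on $(i,j)$. The rank‑one reduction and the Riemann--Roch arguments for generality and uniqueness are comparatively soft once that local bookkeeping is carried out.
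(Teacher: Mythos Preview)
Your outline follows the same van Moerbeke--Mumford strategy as the paper, and the rank-one cofactor reduction, the generality argument via Riemann--Roch, and the disjointness argument are all in the right spirit. However, you are missing the two specific technical ingredients that the paper uses to carry out precisely the step you flag as the hardest.

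First, rather than computing the orders of every $\Delta_{i,j}$ at $P$ and the $O_r$ directly, the paper isolates the behavior of the \emph{ratios} $g_j/g_{j+1}$: Proposition~\ref{prop:Q} shows each such ratio has a simple zero at $O_{j+1}$ and a simple pole at $P$. This is proved using the path-counting description of minors (Theorem~\ref{thm:minor}) together with a shift argument on $\mL$, and it plays the role of \cite[Lemma~2]{vMM}. It is exactly the statement that produces the combinatorial shift $(j-i-1)P+\sum_{r=j+1}^{i-1}O_r$ in the final formula, without having to compute absolute orders.

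Second, and this is the main point you are missing: the residual divisor $R$ is not extracted by brute-force bookkeeping but is identified explicitly as the divisor of the holomorphic differential
\[
\zeta=\frac{x^{k-1}\,dy}{\partial f/\partial x},
\]
whose divisor is computed in Lemma~\ref{lem:zeta} to be supported on $S_{P,O}$. The paper then invokes \cite[Proposition~1]{vMM} in the form $(\Delta_{i,j}\,\zeta)=D_j+\bar D_i+(j-i-1)P+\sum_{r=j+1}^{i-1}O_r$, so that $R=(\zeta)$. This is what makes ``the remaining contribution at the special points ceases to depend on $(i,j)$'' automatic rather than a computation: the $(i,j)$-independence is baked into multiplying by a fixed differential. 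The divisors $D_j$ and $\bar D_i$ themselves are obtained by applying the eigenvector construction to the shifted and transposed Lax matrices, respectively, not by separately analyzing each column and row of the cofactor matrix.

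With these two inputs in hand the rest of your outline goes through essentially as you wrote it; without them, the local analysis at $P$ and the $O_r$ that you anticipate fighting with would indeed be unpleasant.
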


See \S~\ref{pf:thm:double} for the proof.
Note that $\sum_{r=j}^i O_r$ is to be interpreted in a signed way: 
$$
  \sum_{r=j}^i O_r := \sum_{r=j}^\infty O_r - \sum_{r=i+1}^\infty O_r
  = 
  \begin{cases}
    \sum_{r=j}^i O_r & i \geq j,
    \\ 
    0 & i = j-1,
    \\
    -\sum_{r=i+1}^{j-1} O_r & i \leq j-2. 
  \end{cases}
$$

For $L(x) \in \beta^{-1}(f)$, define $g_i:=\Delta_{n,i}$.
Since $L(x)-y$ is singular
along $C_f$, the vector $g = (g_1,g_2,\ldots,g_n)^\bot$ (thought of as a vector with entries that are rational functions on $C_f$) is an eigenvector of $L(x)-y$. 
We define $g_i$ for $i \in \Z$ by $g_{i+n} = x^{-1}g_i$.  We also define $h_i := g_i/g_n$ for $i \in \Z$.  Thus $h_n = 1$ and $h_{i+n} = x^{-1}h_i$.  The vector $(h_1,h_2,\ldots,h_n)^\bot$ is also an eigenvector of $L(x)-y$.

\begin{definition}\label{def:D}
For $L(x) \in \beta^{-1}(f)$, define the divisor $\D = \D(L(x))$ on $C_f$ to be the minimum positive divisor satisfying
$$
  (h_i) + \D \geq \sum_{j=i+1}^n O_j - (n-i) P,
$$ 
that is, $h_i \in \mL(\D + (n-i) P - \sum_{j=i+1}^n O_j)$,
for $i=1,\ldots,n-1$.
\end{definition}

It follows from Theorem \ref{thm:double} that $\D = D_n$ from the theorem and that this divisor is uniquely determined by Definition \ref{def:D}.  In particular, $\D$ is a positive regular divisor of degree $g$ with respect to the points 
$P$ and $O_j$, and we have
\begin{align}\label{eq:hdiv}
(h_i) = D_i - \D - (n-i)P + \sum_{j=i+1}^n O_j.
\end{align}

\subsection{The eigenvector map}\label{subsec:eigen}

Let $R_A$ (resp. $R_O$) denote the set of orderings of the $m$ points $A_1,\ldots,A_m$ (resp. $O_1,\ldots,O_N$):
$$
R_A := \{\nu (A_1,\ldots,A_m) ~|~ \nu \in \mathfrak{S}_m \},
\qquad 
R_O := \{\tilde \nu(O_1,\ldots,O_N) ~|~ \tilde\nu \in \mathfrak{S}_N \}.
$$ 
By our assumption that $f \in \mathcal{V}$, the points $A_r$ (resp. $O_r$) are distinct, so $R_A$ has cardinality $m!$ and $R_O$ has cardinality $N!$.

We write $\nu_r ~(r=1,\ldots,m-1)$ (resp. $\tilde \nu_r ~(r=1,\ldots,N-1)$) 
for the generator of $\mathfrak{S}_m$ (resp. $\mathfrak{S}_N$),
which permutes the $r$-th and the $(r+1)$-st entry of 
$A \in R_A$ (resp. $O \in R_O$). 

By our assumption that $f \in \mathcal{V}$, we have $f_c \neq 0$.  Define $\mathcal{S}_f \subset  (\C^\ast)^M$ by
$$
  \mathcal{S}_f 
  = 
  \left\{(x_1,\ldots,x_M) \in (\C^\ast)^M ~|~ 
  \prod_{\ell=1}^M x_\ell = f_c \right\}.
$$ 

For $\ell=1, \ldots, M$, define $c_\ell$ to be the coefficient of the maximal power of $x$ in the $(\ell, \ell+1)$ entry of $L(x)^{n/M}$.  (Here, maximal power is the maximal power for a generic $L(x) \in \mL$.)  An explicit formula of 
$c_\ell$ for $L(x) \in \alpha(\mM)$ is given in Lemma \ref{lem:c-snake}.

We now define the {\it eigenvector map} $\eta_f$ for $f \in \mathcal{V}$ by
\begin{align}
  \label{eq:def-eta}
  \eta_f : \beta^{-1}(f) &\longrightarrow \Pic^g(C_f) \times \mathcal{S}_f \times R_O\\
          L(x) &\longmapsto ([\mathcal{D}],(c_1,\ldots,c_M),O),
\end{align}
where the ordering $O = (O_1,\ldots,O_N)$ is uniquely determined from $(\Delta_{i,j})$ by Theorem \ref{thm:double}.  For $f \in \oV$ we define the map $\phi_f$ by
\begin{align}
  \label{eq:def-phi}
  \phi_f : \psi^{-1}(f) &\longrightarrow \Pic^g(C_f) \times \mathcal{S}_f  \times R_O \times R_A \\
~ Q = (Q_1,\ldots,Q_m) &\longmapsto ([\mathcal{D}],(c_1,\ldots,c_M),O, A),
\end{align}
where $A = (((-1)^{n}\epsilon_1,0),\ldots,((-1)^{n}\epsilon_m,0))$, and $\epsilon_r(q)$ is given by \eqref{def:epsilon_r}.  That $(c_1,\ldots,c_M)$ lies in $\mathcal{S}_f $ is the content of Lemma \ref{lem:c-f} below.
We will omit the subscripts of $\eta_f$ and $\phi_f$ and just write 
$\eta$ and $\phi$ when no ambiguity will arise.

The following theorem should be compared to Theorem 1 in \cite{vMM}.
 
\begin{thm}\label{thm:eta}
We have a one-to-one correspondence between $L(x) \in \beta^{-1}(\mathcal{V})$
and the following data:
\begin{enumerate}
\item[(a)]
$f \in \mathcal{V}$,

\item[(b)]
$([\D], c, O) \in \Pic^g(C_f) \times \mathcal{S}_f \times R_O$ where
$\D$ is a positive divisor on $C_f$ of degree $g$,
and regular with respect to $P$ and the $O_j$.
\end{enumerate}
\end{thm}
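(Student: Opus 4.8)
I would prove Theorem~\ref{thm:eta} by the inverse-spectral method of van Moerbeke and Mumford, adapted to the banded normal form defining $\mL$. The forward map is the eigenvector map $\eta_f$ already in place: from $L(x)\in\beta^{-1}(\mathcal V)$ one takes $f=\beta(L(x))$, the eigenvector $g_i=\Delta_{n,i}$, the divisor $\D=\D(L(x))$ of Definition~\ref{def:D}, and the remaining data $c=(c_1,\dots,c_M)$, $O$. That this lands in the target of (a)--(b) is Theorem~\ref{thm:double} (it gives $\D=D_n$ positive of degree $g$ and regular, and the ordering $O$) together with Lemma~\ref{lem:c-f} (it gives $c\in\mathcal S_f$). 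So the substance is to show that $L(x)$ can be reconstructed from $(f,[\D],c,O)$ and that the two constructions are mutually inverse.

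\textbf{Reconstructing the normalized eigenvector.} Fix $f\in\mathcal V$ and data $([\D],c,O)$ as in (b). The ordering $O\in R_O$ fixes the labeling $O_j:=O_r$ for $j\equiv r\bmod N$, hence fixes all the divisors $\sum_{j=i+1}^n O_j$. For $i=1,\dots,n-1$ I would set $h_i$ to be a generator of $\L\bigl(\D+(n-i)P-\sum_{j=i+1}^n O_j\bigr)$ and put $h_n=1$; then extend to all $i\in\Z$ by $h_{i+n}=x^{-1}h_i$. Here the regularity of $\D$ is exactly what guarantees that the Riemann--Roch spaces entering the construction have the expected dimension, so that each $h_i$ is determined up to one scalar; that scalar is fixed by matching the prescribed leading coefficients $c_\ell$, which by Lemma~\ref{lem:c-snake} are read off from the local expansions of the $h_i$ at $P$ and the $O_j$. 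On the forward side \eqref{eq:hdiv} shows this recipe reproduces the eigenvector attached to a given $L(x)$; thus $L(x)\mapsto h$ is injective, and it remains to recover $L(x)$ from $h$.

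\textbf{Reconstructing and testing $L(x)$.} From the periodicity $h_{i+n}=x^{-1}h_i$ one obtains $y\,h_i=\sum_{j\in\Z} a_{ij}h_j$ once this expansion is known to exist with constant coefficients and only finitely many nonzero terms; existence, finiteness, and the precise shape all follow from the divisors $(x)=n'\sum_i O_i-nP$, $(y)=k'\sum_i O_i+\sum_i A_i-(m+k)P$ and \eqref{eq:hdiv}, which bound the pole orders of $y\,h_i$ at $P$, the $O_j$, and the $A_i$; the expansion is unique because the $h_j$, $j\in\Z$, have pairwise distinct orders at $P$ and are therefore $\C$-linearly independent. These estimates force $a_{ij}=0$ outside the band $-m-k+1\le j-i\le -k$ and $a_{i,i-m-k}=1$, i.e. $(a_{ij})\in\mL$. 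Since $h\neq0$ is a null vector of $L(x)-y$ at every point of $C_f$, $\det(L(x)-y)$ vanishes on $C_f$, hence is a multiple of $f$; a Newton-polygon and degree comparison (Proposition~\ref{prop:hull}, Lemma~\ref{lem:Q}) upgrades this to $\det(L(x)-y)=f$, so $L(x)\in\beta^{-1}(f)$. Finally, by construction and the uniqueness clause in Theorem~\ref{thm:double}, $\eta_f(L(x))=([\D],c,O)$, so the two maps are mutually inverse; letting $f$ range over $\mathcal V$ gives the asserted one-to-one correspondence.

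\textbf{Main obstacle.} The heart of the matter is the divisor bookkeeping that pins down $(a_{ij})\in\mL$: that every out-of-band coefficient vanishes identically and that the extreme diagonal is normalized to $1$. The vanishing is precisely where the regularity hypothesis $\dim\L\bigl(\D+kP-\sum_{j=n-k}^n O_j\bigr)=0$ for $k>0$ must be invoked — a nonzero out-of-band coefficient would exhibit a nonzero function in such a space — while the normalization of the extreme diagonal is where the constraint $\prod_{\ell}c_\ell=f_c$ enters, via the identification of the $c_\ell$ with the leading snake-path coefficients of $L(x)^{n/M}$ in Lemma~\ref{lem:c-snake}. Carrying these estimates out uniformly in $i$, handling the signed sums $\sum_{r=j}^i O_r$ carefully, and checking that all $N!$ labelings in $R_O$ are genuinely realized and distinguished by the construction, is the bulk of the work; the rest is formal once the eigenvector has been reconstructed.
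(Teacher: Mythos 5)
Your proposal is correct and follows essentially the same route as the paper's proof: the forward map is handled by Theorem \ref{thm:double} and Lemma \ref{lem:c-f}, and the inverse is constructed exactly as in \S\ref{subsec:thm:eta}, by taking generators $h_i$ of the one-dimensional spaces $\L(\D+(n-i)P-\sum_{j=i+1}^n O_j)$ (Lemma \ref{lem:D}), normalizing their leading coefficients at $P$ via the $c_\ell$, and expanding $y\,h_i$ to produce a banded matrix in $\mL$, with the constraint $\prod_{\ell} c_\ell=f_c$ entering precisely where you indicate (the constancy and normalization of the extreme diagonal). The only real difference is that you make explicit the final verification $\det(L(x)-y)=f$ by divisibility plus a Newton-polygon comparison, a step the paper leaves implicit.
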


See \S \ref{subsec:thm:eta} for the proof.
It is shown in \cite[Proof of Theorem 4.1]{LPgeom} that the map $\alpha:\mM \to \mL$ is generically $m!$ to $1$.  Let $\mathring{\mM}$ denote the open subset of $\mM$ where (a) the map $\alpha$ is $m!$ to $1$ (that is, $\mathring{\mM} = \alpha^{-1}(\alpha(\mathring{\mM})) \to \alpha(\mathring{\mM})$ is a $m!$ to $1$ map), and (b) the finite symmetric subgroup $\mS_m$ acting via the $R$-matrix is well-defined on $\mathring{\mM}$.

\begin{thm}\label{thm:phi}
Fix $f \in \oV$. 
We have an injection from $\psi^{-1}(f) \cap \mathring{\mM}$
to the collection of data $([\D], c, O, A) \in \Pic^g(C_f) \times \mathcal{S}_f \times R_O \times R_A$ such that
\begin{enumerate}
\item[(a)]
$\D$ is a positive divisor on $C_f$ of degree $g$. 
It is regular with respect to $P$ and $O_j$-s.

\item[(b)]
$c = (c_1,\ldots,c_M) \in \mathcal{S}_f$.

\item[(c)]
$O \in R_O$.

\item[(d)]
$A \in R_A$.
\end{enumerate}
\end{thm}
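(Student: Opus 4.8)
The plan is to deduce the injectivity of $\phi_f$ on $\psi^{-1}(f) \cap \mathring{\mM}$ from the bijectivity of $\eta_f$ on $\beta^{-1}(f)$ established in Theorem \ref{thm:eta}, together with the $m!$-to-$1$ structure of $\alpha$ recorded just before the statement. First I would factor $\phi_f$ through $\eta_f$: since $\psi = \beta \circ \alpha$, for $Q \in \psi^{-1}(f) \cap \mathring{\mM}$ we have $L(x) := \alpha(Q) \in \beta^{-1}(f)$, and by construction $\phi_f(Q) = (\eta_f(L(x)), A)$ where $A = (((-1)^n\epsilon_1,0),\ldots,((-1)^n\epsilon_m,0)) \in R_A$ is the ordering of the special points $A_1,\ldots,A_m$ determined by the vertical-wire products $\epsilon_r(q)$ from \eqref{def:epsilon_r}. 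The first thing to verify is that the target data genuinely satisfies (a)--(d): (a) and (b), (c) are exactly the content of Theorem \ref{thm:eta} applied to $L(x) = \alpha(Q)$ (using Lemma \ref{lem:c-f} for the claim $c \in \mathcal{S}_f$), and (d) holds because the $\epsilon_r$ are distinct and nonzero by the hypothesis $f \in \oV \subset \mathcal{V}$, so the tuple $A$ is a genuine element of $R_A$.

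Next I would prove injectivity. Suppose $Q, Q' \in \psi^{-1}(f) \cap \mathring{\mM}$ have $\phi_f(Q) = \phi_f(Q')$. Comparing the first three coordinates gives $\eta_f(\alpha(Q)) = \eta_f(\alpha(Q'))$, so by the injectivity half of Theorem \ref{thm:eta} we get $\alpha(Q) = \alpha(Q') =: L(x)$. Thus $Q$ and $Q'$ lie in the same fiber $\alpha^{-1}(L(x))$, which on $\mathring{\mM}$ has exactly $m!$ elements and — by the definition of $\mathring{\mM}$ — is a single free orbit of the finite symmetric group $\mS_m \subset W$ acting via the $R$-matrix. Here is where the fourth coordinate $A$ enters: the key point is that the $\mS_m$-action permutes the vertical wires of the network, hence permutes the products $\epsilon_1,\ldots,\epsilon_m$ (equivalently, permutes the special points $A_1,\ldots,A_m$), and does so simply transitively on the $m!$ points of the fiber since the $\epsilon_r$ are distinct. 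Therefore the ordering $A$ is a faithful invariant distinguishing the elements of $\alpha^{-1}(L(x)) \cap \mathring{\mM}$, and $A = A'$ forces $Q = Q'$.

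The main obstacle I anticipate is the precise bookkeeping in the last step: showing that the $\mS_m$-action on $\psi^{-1}(f) \cap \mathring{\mM}$ really does induce the full permutation action on the ordered tuple $(\epsilon_1,\ldots,\epsilon_m)$, i.e. that the generator $\nu_r$ (swapping the $r$-th and $(r+1)$-st entries of $A$) corresponds under $\alpha$ to the $R$-matrix move on the $r$-th and $(r+1)$-st vertical wires, and that this move exchanges $\epsilon_r \leftrightarrow \epsilon_{r+1}$ while fixing the other $\epsilon_j$. The first assertion is essentially the compatibility of the network picture of \S\ref{sec:netw} with the Lax-matrix factorization $L(x) = Q_1(x)\cdots Q_m(x) P(x)^k$; the second follows from \eqref{eq:q1q2}, which says the $R$-matrix preserves the products along each of the two wires it acts on, so the multiset $\{\epsilon_r,\epsilon_{r+1}\}$ is preserved and — being generically two distinct values which the $R$-matrix swaps because it permutes the wires — the ordered pair is transposed. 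Granting these local facts, the rest of the argument is the formal two-step factorization above, and identifying the image of $\phi_f$ (the ``in fact'' remark after the statement of the theorem in \S\ref{subsec:main}) follows by running the same correspondence in reverse, using surjectivity of $\eta_f$ from Theorem \ref{thm:eta} on the $\alpha(\mathring{\mM})$ part of $\beta^{-1}(f)$.
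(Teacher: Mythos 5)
Your proposal is correct and follows essentially the same route as the paper: recover $L(x)$ from the first three coordinates via Theorem \ref{thm:eta}, then use the $m!$-to-$1$ structure of $\alpha$ on $\mathring{\mM}$ together with the fact that the $\mS_m$ $R$-matrix action preserves $L(x)$ and, by \eqref{eq:q1q2}, permutes the distinct quantities $\epsilon_1,\ldots,\epsilon_m$, so that the ordering $A$ separates the points of the fiber $\alpha^{-1}(L)$. The only cosmetic difference is that the paper derives ``$\alpha^{-1}(L)=\mS_m\cdot Q$'' by the cardinality count $|\mS_m\cdot Q|=m!=|\alpha^{-1}(L)|$ rather than reading it off the definition of $\mathring{\mM}$, but your argument contains the same count implicitly.
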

See \S \ref{subsec:thm:phi} for the proof.
We will usually just write $\psi^{-1}(f)$ instead of $\psi^{-1}(f) \cap \mathring{\mM}$ when no confusion arises (for example, when discussing the action of the $R$-matrix on the spectral data).

Let us perform a quick dimension count.  The dimension of $\mM$ or $\mL$ is equal to $mn$.  The dimension of $\Pic^g(C_f) \times \mathcal{S}_f$ is equal to $g+M-1$.  
The number of lattice points in the convex full of $\{(k,0),(0,n),(k+m,0) \}$ is equal to $g+N+m+M$.  Thus the dimension of $V_{n,m,k}$ \eqref{eq:Vnmk} 
is equal to $g+N+m-1$.  Using \eqref{eq:genus} we obtain $\dim(\mM) = \dim(\Pic^g(C_f) \times \mathcal{S}_f) + \dim(V_{n,m,k})$, consistent with Theorems \ref{thm:eta} and \ref{thm:phi}.

\section{Proofs from Section \ref{sec:eigenvector}}
\label{sec:proofeigenvector}

The first three subsections are devoted to introduce key lemmas for Theorem~\ref{thm:double} and \ref{thm:eta}.

\subsection{Special zeroes and poles of the eigenvector}

\begin{prop} \label{prop:Q}
The eigenvector $g = (g_i)_i^T$ of $L(x) \in \beta^{-1}(f)$ satisfies the following. 
\begin{enumerate}
\item[(i)] For any $j \in \Z$, 
the rational function $g_j/g_{j+1}$ on $C_f$ has a zero of order one at $O_{j+1}$.
\item[(ii)]
For any $j \in \Z$, 
the rational function $g_j/g_{j+1}$ on $C_f$ has a pole of order one at $P$.
\end{enumerate}
\end{prop}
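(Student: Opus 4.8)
The plan is to prove both statements by a local study of the cofactor eigenvector near the special points $O_{j+1}$ and $P$, in the spirit of van Moerbeke--Mumford \cite{vMM}, using only the normal forms of $C_f$ recorded in \S\ref{sec:Lax} and the explicit shape of $L(x)$. First a reduction: since $g_{i+n}=x^{-1}g_i$ by definition, $g_j/g_{j+1}=g_{j+n}/g_{j+1+n}$ as meromorphic functions on $C_f$, while $O_{j+1+n}=O_{j+1}$ because $N\mid n$; so it is enough to treat $j\in\{1,\dots,n\}$. Recall the local models for $f\in\mathcal V$: near $O_r$ a uniformizer $u$ gives $x=u^{n'}$, $y=-(-\sigma_r)^{1/n'}u^{k'}+O(u^{k'+1})$ with $\sigma_r\neq0$; near the unique point $P$ at infinity one has $(x)=n'\sum_i O_i-nP$ and $(y)=k'\sum_i O_i+\sum_i A_i-(m+k)P$, so $x$ and $y$ have poles at $P$ of orders $n$ and $m+k$ respectively.

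\emph{Part (ii).} Each $g_i=\Delta_{n,i}$ is a polynomial in $x$ and $y$ (the entries of $L(x)$ are polynomials in $x$, and $-y$ appears only on the diagonal), so its order at $P$ is governed by the part of its Newton polygon facing the direction of $P$, i.e.\ by the leading term of its Laurent expansion along the branch at $P$. I would identify this leading term using the snake-path picture behind Lemma~\ref{lem:Q} and Theorem~\ref{thm:mit}: the dominant monomials of $\det(L(x)-y)$ and of its cofactors come from the noncrossing path family built from snake paths, and passing from the minor omitting column $j$ to the one omitting column $j+1$ shifts this extremal family by one unit, changing $\mathrm{ord}_P$ by exactly $1$. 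Hence $\mathrm{ord}_P(g_j)-\mathrm{ord}_P(g_{j+1})=-1$, i.e.\ $g_j/g_{j+1}$ has a simple pole at $P$. The input needed so that the leading term does not cancel along the branch is exactly the condition $f_c\neq0$ contained in $f\in\mathcal V$, used just as in the proof of Lemma~\ref{lem:inf}. (Consistency check: $\prod_{j=1}^n(g_j/g_{j+1})=g_1/g_{1+n}=x$, and $\sum_{j=1}^n\mathrm{ord}_P(g_j/g_{j+1})=\mathrm{ord}_P(x)=-n$, matching $n$ simple poles.)

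\emph{Part (i).} Write $g_i(u)=\gamma_i u^{e_i}(1+o(1))$ with $\gamma_i\neq0$ near $O_r$ and substitute into the eigenvector equation $(L(u^{n'})-y(u))g(u)=0$, balancing lowest-order terms row by row to pin down the differences $e_i-e_{i+1}$. The structure of $L(0)$ drives the analysis: its nonzero entries lie on the sub-diagonals $k,\dots,k+m$, so columns $n-k+1,\dots,n$ vanish and $L(0)$ is far from invertible; consequently the leading matrix alone does not determine $g$, and one must balance $L(0)$, the $u^{n'}$-corrections to $L(x)$, and the $y\sim u^{k'}$ term simultaneously (note $k'\le n'$, so the last is never negligible). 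The upshot should be $e_i-e_{i+1}\in\{0,1\}$, with $e_i-e_{i+1}=1$ precisely when $i+1\equiv r\pmod N$, i.e.\ when $O_{i+1}=O_r$; equivalently $\mathrm{ord}_{O_{j+1}}(g_j)=\mathrm{ord}_{O_{j+1}}(g_{j+1})+1$, which is the claim. (Consistency check: $\sum_{j=1}^n\mathrm{ord}_{O_r}(g_j/g_{j+1})=\mathrm{ord}_{O_r}(x)=n'$, accounted for by one simple zero per block of $N$ indices.)

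The main obstacle is getting the \emph{exact} drop in part (i): the lower bound $\mathrm{ord}_{O_{j+1}}(g_j/g_{j+1})\ge 1$ follows fairly directly from the shape of $L(0)$, but ruling out a larger zero requires controlling several terms of the $u$-expansion of $L(x)-y$ at once and showing that no accidental cancellation occurs; this is where genericity $f\in\mathcal V$ — in particular that $\sigma_1,\dots,\sigma_N$ are distinct and nonzero, so the $N$ local branches over $(0,0)$ stay separated — must be used. An alternative, more combinatorial route to part (i) is to express $\mathrm{ord}_{O_r}(g_j)$ via Lemma~\ref{lem:entry} and Theorem~\ref{thm:mit} as the minimum of $n'\cdot\#\{x\text{-line crossings}\}+k'\cdot\#\{\text{abrupt paths}\}$ over the path families contributing to $\Delta_{n,j}$, and to compare the minors for columns $j$ and $j+1$ by a local surgery on path families near the corresponding sinks; there again distinctness of the $\sigma_r$ is what prevents cancellation of the minimal-order terms.
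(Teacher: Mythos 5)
Your write-up is an honest plan, but at the two places where the proposition actually has content it stops at an assertion. For part (i) you yourself flag the gap: you obtain the easy lower bound $\mathrm{ord}_{O_{j+1}}(g_j/g_{j+1})\ge 1$ (or "$e_i-e_{i+1}\in\{0,1\}$ \ldots should be") but do not carry out the cancellation analysis that pins the order down, and neither of your two proposed routes is executed. The device that makes this tractable in the paper is one you never use: since $L(x)-y$ has corank one along $C_f$, the cofactor vector built from \emph{any} row is proportional to $g$, so one is free to compute $g_j/g_{j+1}$ as $|L(x)-y|_{j,j}/|L(x)-y|_{j,j+1}$, i.e.\ with the deleted row matched to the index $j$ under study. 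With that choice the coil decomposition from the proof of Lemma~\ref{lem:Q} shows that the lowest-order part of $|L(x)-y|_{j,j}$ at $O_{j+1}$ carries the factor $\sigma_{j+1}x^{k'}+(-y)^{n'}$, which vanishes on the branch at $O_{j+1}$, whereas the lowest-order part of $|L(x)-y|_{j,j+1}$ is $a(q)\,x^{\alpha}y^{\beta}\prod_{t\neq j,j+1}\bigl(\sigma_t x^{k'}+(-y)^{n'}\bigr)$, which does not vanish there because the $\sigma_t$ are distinct and nonzero; exactness of the order then follows from $\mathrm{ord}_{O_r}(g_j/g_{j+n})=n'$. Your fixed-row-$n$ "surgery" or the row-by-row balancing of $(L-y)g=0$ would have to reproduce precisely this non-cancellation statement, and that is the missing step, not a routine verification.

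For part (ii) your route is genuinely different from the paper's, which avoids leading-term computations altogether: conjugation by $P(x)$ (the shift $\varsigma^\ast$) preserves the curve and the point $P$ and sends $g_i/g_{i+1}$ to $g_{i-1}/g_i$, so for generic $L$ all $n$ ratios have the same pole order at $P$; since their product is $x$, of pole order $n$, each order is $1$. Your direct argument has a real gap: at $P$ the branch satisfies $(-y_0)^{n/M}=-x_0^{(k+m)/M}$, so \emph{all} monomials of a cofactor lying on the extremal edge of its Newton polygon contribute at the same order in the local parameter and may cancel — this is exactly the degeneration that forced the second-order analysis in Lemma~\ref{lem:inf} for $f$ itself. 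Your claim that the non-cancellation for the minors $\Delta_{n,j}$ is "exactly" the condition $f_c\neq 0$ is not justified: the extremal path families for a minor involve incomplete snake cycles, and identifying the surviving leading coefficient requires an argument you have not given (the paper performs a computation of this flavour only later, in the lemma for \eqref{eq:sneke-d}, and there the pole orders are already known from Theorem~\ref{thm:double}, which itself depends on the present proposition — so that material cannot be imported here without circularity). In short, both halves need the decisive non-cancellation/exactness analysis supplied, either by the row-switching-plus-coil argument for (i) and the shift-symmetry argument for (ii), or by a completed version of your Newton-polygon estimates.
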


To prove this proposition, we shall need the following generalization of Theorem \ref{thm:mit}, whose proof is essentially the same as that of Theorem \ref{thm:mit}. Suppose $\p = \{p_1,p_2,\ldots,p_{n-1}\}$ is an unordered noncrossing family of $n$ paths in $G'$ using all the sources except $i$, and all the sinks except $j$.  Identifying $[n] \setminus i \simeq [n-1] \simeq [n] \setminus j$ via the order-preserving bijection, we have that the non-abrupt paths in $\p$ induce a bijection of a subset $S \subset [n-1]$ with itself.  We let $\sign(\p)$ denote the sign of this permutation.

Let $|(L(x)-y)|_{i,j}$ denote the maximal minor of $(L(x)-y)$ complementary to the $(i,j)$-th entry. 

\begin{thm} \label{thm:minor}
We have
$$
|(L(x)-y)|_{i,j}= \sum_{\p = \{p_1,p_2,\ldots,p_{n-1}\}} \sign(\p) \wt(p_1) \wt(p_2) \cdots \wt(p_{n-1}),
$$
where the summation is over noncrossing (unordered) families of $n-1$ paths in $G'$ using all the sources except $i$, and all the sinks except $j$.  In other words, the coefficient of $x^a y^b$ in $|(L(x)-y)|_{i,j}$ counts (with weights) families of highway paths that 
\begin{itemize}
 \item start at all sources but $i$ and end at all sinks but $j$;
 \item do not cross each other;
 \item cross the $x$-line exactly $a$ times;
 \item contain exactly $b$ abrupt paths and $n-b-1$ non-abrupt paths.
\end{itemize}
\end{thm}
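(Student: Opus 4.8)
\medskip
\noindent\textbf{Proof strategy for Theorem~\ref{thm:minor}.}
The plan is to mimic the proof of Theorem~\ref{thm:mit} (equivalently, the Lindström--Gessel--Viennot argument of \cite[Theorem 3.5]{TP}), replacing the full determinant by the complementary maximal minor. First I would expand $|(L(x)-y)|_{i,j}$ via the Leibniz formula for the $(n-1)\times(n-1)$ matrix obtained from $L(x)-y$ by deleting row $i$ and column $j$: using the order-preserving identifications $[n]\setminus i \simeq [n-1] \simeq [n]\setminus j$, this writes the minor as $\sum_\sigma \sign(\sigma)\prod_{a}(L(x)-y)_{a,\sigma(a)}$, the sum over bijections $\sigma\colon [n]\setminus i \to [n]\setminus j$. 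Next I would substitute the path expansion of each entry supplied by Lemma~\ref{lem:entry}, which is stated precisely for the matrix $L(x)-y$ (including the abrupt paths of weight $-y$ that account for the subtracted diagonal). This produces a signed sum $\sum_\p \sign(\sigma_\p)\,\wt(p_1)\cdots\wt(p_{n-1})$ over all ordered families of highway paths in $G'$ that use exactly the sources $[n]\setminus i$ and exactly the sinks $[n]\setminus j$, where $\sigma_\p$ is the bijection of $[n-1]$ determined by $\p$.

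The core step is the sign-reversing involution. On the subset of families that are \emph{not} noncrossing, choose by a fixed rule the first repeated edge and swap the tails of the two paths passing through it; this preserves the multiset of edges used, hence the product of weights, while it alters $\sign(\sigma_\p)$ since the induced bijection changes by a transposition, exactly as in the proof of Theorem~\ref{thm:mit}. Thus all families with a repeated edge cancel in pairs, and only noncrossing families survive. For a noncrossing family $\p=\{p_1,\dots,p_{n-1}\}$, the bijection $\sigma_\p$, and therefore the quantity $\sign(\p)$ defined just before the theorem via the sub-permutation of non-abrupt paths, is determined by $\p$ itself, and each such unordered family contributes exactly once; this gives the first displayed identity.

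Finally I would read off the coefficient statement. A highway path picks up a factor $x$ each time it crosses the $x$-line and picks up $-y$ precisely when it is abrupt, so the monomial $x^ay^b$ collects exactly the noncrossing families that cross the $x$-line $a$ times and contain $b$ abrupt (hence $n-1-b$ non-abrupt) paths, with the signs and the $(-y)^b$ factors folded into the weights and $\sign(\p)$.

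The main obstacle I expect is the bookkeeping for $\sign(\sigma_\p)$ under the two \emph{distinct} order-preserving relabelings of sources and sinks (as $i\neq j$ in general): I must verify that the tail-swap still corresponds to post-composition by a transposition after these relabelings, and that for a noncrossing family $\sign(\sigma_\p)$ really agrees with $\sign(\p)$; this is where the phrase ``essentially the same as Theorem~\ref{thm:mit}'' has to be checked rather than merely invoked, together with the point that $G'$ lives on a cylinder, so ``noncrossing'' must be interpreted as edge-disjointness and not as planar non-crossing.
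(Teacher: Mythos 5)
Your proposal is correct and is essentially the paper's approach: the paper proves this result simply by noting the argument is "essentially the same" as that of Theorem \ref{thm:mit} (the Lindstr\"om--Gessel--Viennot-style expansion with a weight-preserving, sign-reversing tail-swap involution cancelling edge-sharing families), and your write-up is a faithful spelling-out of exactly that argument, including the correct caveats about relabeling sources/sinks and interpreting "noncrossing" as edge-disjointness on the cylinder.
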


\begin{example}
 Let $(n,m,k)=(3,3,1)$. The Lax matrix is given as follows: $L(x)=$
$$
\left(\begin{array}{ccc} 
q_{1,1}x+q_{2,3}x+q_{3,2}x & q_{1,1}q_{2,1}x+ q_{1,1}q_{3,3}x+q_{2,3}q_{3,3}x & q_{1,1}q_{2,1}q_{3,1}x+x^2\\
q_{1,2}q_{2,2}q_{3,2}+x & q_{1,2}x+q_{2,1}x+q_{3,3}x & q_{1,2}q_{2,2}x+q_{1,2}q_{3,1}x+q_{2,1}q_{3,1}x\\
q_{1,3}q_{2,3}+q_{1,3}q_{3,2}+q_{2,2}q_{3,2} & q_{1,3}q_{2,3}q_{3,3}+x & q_{1,3}x +q_{2,2}x+q_{3,1}x
\end{array} \right)
$$
Consider the minor $|(L(x)-y)|_{1,2}=$  with rows $2,3$ and columns $1,3$. Here are some terms that appear in it:
$$|(L(x)-y)|_{1,2}= q_{2,2}x^2-q_{1,3}q_{2,3}q_{1,2}q_{2,2}x-xy - \cdots.$$
 The term $q_{2,2}x^2$ for example is formed by two paths: one starting at source $3$, turning at $q_{1,3}$, turning at $q_{1,2}$, going through at $q_{2,2}$, turning at $q_{3,2}$, turning at $q_{3,1}$ and thus finishing in
 sink $3$; the other a staircase path starting at source $2$, turning at each of $q_{1,2}$, $q_{1,1}$, $q_{2,1}$, $q_{2,3}$, $q_{3,3}$, $q_{3,2}$ and finishing at sink $1$. The first path contributes weight $q_{2,2}x$,
 while the second contributes weight $x$. Note that source $1$ and sink $2$ remain unused, as they should according to the theorem. Also note the the paths give bijection $2 \mapsto 1$ and $3 \mapsto 3$ between the 
 used sources and sinks. The induced permutation is the identity permutation, and that is why we have sign $+$ in front of this term. 
 
The term $-xy$ corresponds to one abrupt path of weight $-y$ from source $3$ to sink $3$, and one staircase path, the same as for the previous term. The term $-q_{1,3}q_{2,3}q_{1,2}q_{2,2}x$ corresponds to two paths 
 that induce the map $2 \mapsto 3$ and $3 \mapsto 1$ on sources and sinks, of weights $q_{1,2}q_{2,2}x$ and $q_{1,3}q_{2,3}$ respectively. The minus sign arises since the induced permutation in $S_2$ is a transposition. 
\end{example}

Let us prove Proposition~\ref{prop:Q}.
(i) If $N = 1$, the result is easy: $g_j/g_{j+n}$ vanishes to order $n$ at $O_1$.  We also have a shift automorphism $s: \mL \to \mL$ (see the proof of Proposition \ref{prop:Q}(ii)) which sends $O_1$ to $O_1$ and pulls $g_j/g_{j+1}$ back to $g_{j+1}/g_{j+2}$.  We will thus assume $N >1$.

 For each $i$, define
$$v^{(i)} := \left((-1)^1 |L(x)-y|_{i,1}, \ldots, (-1)^n |L(x)-y|_{i,n}\right)^T.$$
We shall think of $v^{(i)}$ as a vector whose entries lie in the coordinate ring of the affine plane curve $\tilde C_f$.  Like the vector $g = (g_1,g_2,\ldots,g_n)^T$, the vectors $v^{(i)}$ are (nonzero) eigenvectors of the matrix $L(x)$.  The matrix $L(x)$ is singular along the curve $\tilde C_f$, but generically it has rank $n-1$.  Thus for any $i$, the vectors $g$ and $v^{(i)}$ are multiples of each other.  More precisely, $g/v^{(i)}$ is a rational function on $C_f$.  To show that $g_j/g_{j+1}$ has a zero at $O_{j+1}$, we shall calculate using a convenient choice of $v^{(i)}$.

Choose $v = v^{(j)}$. Then $v_j =|L(x)-y|_{j,j}$ and $v_{j+1} =|L(x)-y|_{j,j+1}$. 
Thus, $v_j$ counts the families of paths that start at all sources but $j$ and end at all sinks but $j$,
as in Theorem \ref{thm:minor}.  

Let us make the substitution $(x,y) \sim (u^{n'},-(-\sigma_j)^{1/n'}u^{k'})$ inside $v_j$, and let $v'_j(x,y)$ denote the terms of in $v_j(x,y)$ that give the lowest degree in $u$ after the substitution.  Call this degree $d$.  The terms in $v_j(x,y)$ are obtained by either taking abrupt paths, or by taking coils.  In the proof of Lemma~\ref{lem:Q} we defined the $N$ coils in the network $G$, which we denote $C_1,C_2,\ldots,C_{N}$.  To obtain a path family $\p$ contributing to $v'_j$, instead of $C_j$, we include the $n'-1$ abrupt paths which use the vertices $j'$ where $j \neq j'$ but $j \equiv j' \mod N$.  For each of the coils $C_t$ where $t \neq j$, we can either include the coil (that is, include the $n'$ paths in $G'$) in $\p$, or we use the corresponding $n'$ abrupt paths instead.  In particular, considering $C_{j+1}$ we see that $v'_j$ has a factor of $(\sigma_{j+1}x^{k'} + (-y)^{n'})$. This factor vanishes under the substitution $(x,y) \sim (u^{n'},-(-\sigma_{j+1})^{1/n'}u^{k'})$, thus creating a zero at the point $O_{j+1}$ of order at least $d+1$.

For $v_{j+1}$, we count families of paths that start at all sources but $j$ and end at all sinks but $j+1$.  Let $v'_{j+1}(x,y)$ denote the terms of lowest degree in $v_{j+1}(x,y)$.  This lowest degree is again equal to $d$ as well (we caution that if $j = n$, then we take $v_{j+1}:=x^{-1} v_1$).  The calculation of $v'_{j+1}(x,y)$ is similar to that of $v_j(x,y)$, except that instead of a single ``incomplete'' coil with index $j$ modulo $N$, we have two incomplete coils $C_j$ and $C_{j+1}$ with indices $j$ and $j+1$ modulo $N$.  We obtain 
$$
v'_{j+1}(x,y) = a(q) x^\alpha y^\beta \prod_{t \neq j,j+1} (\sigma_{t}x^{k'} + (-y)^{n'})
$$
where $a(q)$ is some nonzero polynomial in $q_{ab}$-s, and $\alpha, \beta$ are nonnegative integers, and the product is over $t \in \{1,2,\ldots N\}$ not equal to $j$ or $j+1$ modulo $N$.  As a result, we see that $v_{j+1}$ vanishes at $O_{j+1}$ of order exactly $d$.

Thus $g_j/g_{j+1}$ has a zero at $O_{j+1}$.  The fact that $g_j/g_{j+1}$ vanishes to exactly order $1$ follows since we know that $g_j/g_{j+n}$ vanishes to the order of $n'$ at each $O_i$.

(ii) For $L(x) \in \beta^{-1}(f)$, we have 
rational functions $g_i/g_{i+1}$ on $C_f$.
To emphasize the dependence of $g_i/g_{i+1}$ on $L$,
we write $g_i/g_{i+1}(L)$. 
Let $\varsigma^\ast : \mL \to \mL$ be the shift map given by $L(x) \mapsto P(x) L(x) P(x)^{-1}$. (We will introduce the shift operator $\varsigma$ on $\mM$ in \S~\ref{sec:shift}.) Then $\beta(\varsigma^\ast(L(x))) = \beta(L(x))$ is invariant under $r$.
Let $Y$ be a Zariski dense subset of $\beta^{-1}(\oV)$
such that $\varsigma^\ast(Y) = Y$.
Then there exists a nonnegative integer $a_i$ given by
$$
a_i = \max\{ \mathrm{ord}_P(g_i/g_{i+1}(L))_\infty ~|~ L \in Y \},
$$
where $\mathrm{ord}_P(g_i/g_{i+1}(L))_\infty$ denotes the 
order of the pole of $g_i/g_{i+1}(L)$ at $P$. 
Since $g_i/g_{i+1}(\varsigma^\ast(L)) = g_{i-1}/g_{i}(L)$ as rational function on 
$C_{\beta(L)}$,
we conclude that for all $i$, we have 
$\mathrm{ord}_P(g_i/g_{i+1}(L))_\infty = a := \min_i\{a_i\}$.
But we know that $\mathrm{ord}_P(g_{i}/g_{i+n}(L))_\infty=n$
for all $i$ and $L \in \mL$.  Thus we must have $a=1$.
Since this value does not depend on the choice of $Y$, the claim follows.

\subsection{A holomorphic differential}
Let $\zeta$ be the differential form on the curve $C_f$ given by
$$
  \zeta = \frac{x^{k-1} dy}{\frac{\partial f}{\partial x}}.
$$

\begin{lem}\label{lem:zeta}
  The divisor of the differential form $\zeta$ is supported on $S_{P,O} = \{P,O_1,\ldots,O_N\}$: 
  $$
    (\zeta) = (k'-1) \sum_{i=1}^N O_i + \left((n-1)m-k-M \right) P.
  $$
  When $g \geq 1$, it is holomorphic.
\end{lem}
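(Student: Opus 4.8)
The plan is to compute the divisor $(\zeta)$ point by point on $C_f$. The first step is to observe that $\zeta$ has neither a zero nor a pole at any point lying over some $(x_0,y_0)\in\C^2$ with $x_0\neq0$. Indeed, since $f\in\mathcal V$ the affine curve $C'_f$ is smooth except at the origin, and $\{x=0\}\cap C'_f=\{(0,0)\}$ because $f(0,y)=(-1)^ny^n$ (the only lattice point of $N(f)$ on the $y$-axis is the vertex $(0,n)$, by Proposition~\ref{prop:hull}). At a smooth point with $x_0\neq0$, one of $\partial f/\partial x$, $\partial f/\partial y$ is nonzero, so $x$ or $y$ is a local coordinate there, and from $(\partial f/\partial x)\,dx+(\partial f/\partial y)\,dy=0$ on $C_f$ the form $\frac{dy}{\partial f/\partial x}=-\frac{dx}{\partial f/\partial y}$ is a nowhere-vanishing holomorphic differential near that point; since $x^{k-1}$ is a nonvanishing unit, $\zeta$ is holomorphic and nonvanishing over $\C^2\setminus\{x=0\}$, in particular at the points $A_r$. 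Thus $(\zeta)$ is supported on $\{P,O_1,\dots,O_N\}$, and I may write $(\zeta)=\sum_{r=1}^N a_r\,O_r+a_P\,P$.

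The main work is the order at $O_r$. Using the local coordinate $u$ with $(x,y)\sim(u^{n'},-(-\sigma_r)^{1/n'}u^{k'})$ one gets $\mathrm{ord}_{O_r}(x)=n'$ and $\mathrm{ord}_{O_r}(dy)=k'-1$ (recall $k'\geq1$), so the claim reduces to $\mathrm{ord}_{O_r}(\partial f/\partial x)=(k-1)n'$. Writing $f=\sum_i f_i(x)y^i$ and letting $v_i$ be the order of vanishing of $f_i$ at $x=0$ (so $f_n=(-1)^n$), Proposition~\ref{prop:hull} gives $n'v_i+k'i\geq Nn'k'$ for $i<n$ with equality exactly when $i=tn'$, and then Lemma~\ref{lem:Q} identifies the lowest-degree coefficient of $f_{tn'}$ with the coefficient of $x^{k'(N-t)}y^{tn'}$ in $g_f=\prod_{s=1}^N((-y)^{n'}+\sigma_s x^{k'})$, namely $(-1)^{tn'}e_{N-t}(\sigma)$, where $e_j(\sigma)=e_j(\sigma_1,\dots,\sigma_N)$. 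Substituting the parametrization, every monomial of $\partial f/\partial x$ acquires $u$-order $\geq(k-1)n'$, and the coefficient of $u^{(k-1)n'}$ equals $k'\sum_{s=1}^N s\,e_s(\sigma)(-\sigma_r)^{N-s}$. With $q(z)=\prod_{s=1}^N(z+\sigma_s)=\sum_s e_s(\sigma)z^{N-s}$, the identity $\sum_s s\,e_s(\sigma)z^{N-s}=Nq(z)-zq'(z)$ evaluated at $z=-\sigma_r$ gives $\sigma_r\,q'(-\sigma_r)=\sigma_r\prod_{s\neq r}(\sigma_s-\sigma_r)$, which is nonzero since the $\sigma_r$ are distinct and nonzero (if some $e_{N-t}(\sigma)$ vanishes, the corresponding term of $f_{tn'}$ merely has larger order and this evaluation is unaffected). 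Hence $\mathrm{ord}_{O_r}(\partial f/\partial x)=(k-1)n'$ and $a_r=(k-1)n'+(k'-1)-(k-1)n'=k'-1$ for every $r$.

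It then remains to find $a_P$ and to check holomorphy. Since $\partial f/\partial x\not\equiv0$ on $C_f$ (the monomials $x^k$ and $x^{k+m}$ occur) and $dy\not\equiv0$ (as $y$ has a pole at $P$), $\zeta$ is a nonzero meromorphic differential, so $\deg(\zeta)=2g-2=(n-1)m-M-N$ by \eqref{eq:genus}. Then $\sum_r a_r+a_P=2g-2$ together with $\sum_r a_r=N(k'-1)=k-N$ yields $a_P=(n-1)m-k-M$, which is the asserted formula. For the holomorphy statement, $a_r=k'-1\geq0$ always, so one needs only $a_P=(n-1)m-k-M\geq0$ when $g\geq1$; a failure $a_P<0$ would force $(n-1)m<M+k\leq 2n$, hence $m\leq3$, and a short case analysis on $m$ (for $m=1$ this reduces to $\gcd(n,k+1)=\gcd(n,n-k-1)\leq n-k-1$, since $g\geq1$ forces $k\leq n-2$; for $m=2,3$ the surviving triples all have $g=0$ or $a_P\geq0$) shows $g\geq1\Rightarrow a_P\geq0$. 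Hence $\zeta$ is holomorphic whenever $g\geq1$.

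The principal obstacle is the equality $\mathrm{ord}_{O_r}(\partial f/\partial x)=(k-1)n'$: besides bounding the $u$-order of each monomial of $\partial f/\partial x$ along the branch at $O_r$, one must exclude cancellation among the several leading terms, which is exactly where the $q(z)$-identity and the genericity hypotheses on the $\sigma_r$ encoded in $f\in\mathcal V$ are used.
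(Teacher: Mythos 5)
Your proof is correct, and while the heart of it (the local computation at the $O_r$) is the same as the paper's, you arrive at the coefficient of $P$ and at the holomorphy inequality by genuinely different routes. At $O_r$ the paper simply invokes ``local expansions of $f$ around $O_i$''; you make this precise via the lower hull of the Newton polygon and the identification $g_f=\prod_s((-y)^{n'}+\sigma_s x^{k'})$, and your non-cancellation argument (the coefficient of $u^{(k-1)n'}$ being $k'\sigma_r\prod_{s\neq r}(\sigma_s-\sigma_r)\neq 0$, with the parenthetical handling of possibly vanishing $e_{N-t}(\sigma)$) is exactly the point the paper leaves implicit; this is where the hypotheses $\sigma_r$ distinct and nonzero enter, as you say. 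For the order at $P$, the paper also expands locally at $P$ and records partial divisors of $\partial f/\partial x$ and $dy$ supported on $S_{P,O}$, whereas you bypass that computation entirely by using $\deg(\zeta)=2g-2$ together with \eqref{eq:genus}; this is legitimate (the genus formula is obtained independently from the lattice-point count, so there is no circularity) and arguably safer, since the expansion at $P$ is delicate: the upper-hull factor $(-y)^{n/M}+x^{(k+m)/M}$ vanishes at $P$, so the naive pole order of $\partial f/\partial x$ there is reduced, a subtlety your route never has to confront. Finally, for $(n-1)m-k-M\geq 0$ when $g\geq 1$, the paper counts lattice points in a parallelogram built on $N(f)$, while you reduce to $m\leq 3$ and a finite arithmetic check; your $m=2,3$ cases are stated very tersely, but they do close (the only triple with $a_P<0$ is $(n,m,k)=(2,2,2)$, which has $g=0$, and the $m=3$, $n=2$ cases give $a_P=0$), so the argument is complete.
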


\begin{proof}
By using local expansions of $f$ around $O_i$ and $P$, we get 
\begin{align*}
  &\left(\frac{\partial f}{\partial x}\right) 
  = (n'-k'n) \sum_{i=1}^N O_i + n(k+m-1) P,
  \\
  &(dy) = (k'-1) \sum_{i=1}^N O_i - (m+k+M)P.
\end{align*}
From these and $(x)$ in \S 4.3, we obtain $(\zeta)$.

Now we prove that if $g \geq 1$ then $(n-1)m-k-M \geq 0$.
Let $p$ be the number of integer points inside a parallelogram in $\R^2$,
whose vertices are $(0,n)$, $(k,0)$, $(k+m,0)$ and $(m,n)$.
We have $p = 2g + M-1$, since the parallelogram is composed of 
the Newton polygon $N(f)$ and its copy, sharing the upper hull of $N(f)$.
Then the claim is equivalent to that if $g \geq 1$, then $p + N-1 \geq M+k$.

When $k=n$, $N=n$ follows. Then we have $p+ N-1 - (M+k) = p - (M+1)$.
It reduces to $2g-2$, which is non-negative when $g \geq 1$. 
 
When $m+k=n$, $M=n$ follows. Then we have $g = \frac{1}{2}(n(m-1)-m-N+1)$
which is negative if $m = 1$. So we assume $m \geq 2$.
We have $p+N-1=m(n-1)$ and $M+k = 2n-m$. 
Thus, we obtain $p+ N-1 - (M+k) \geq 0$ when $m \geq 2$.

When $n > k$, 
we prove the claim by choosing $M+k$ different points from $p$ points 
in the parallelogram.
When $m+k < n$ (resp. $m+k > n$), we can choose $M-1$ points on the upper hull
of $N(f)$, $k$ points; $(i,n-i)$ for $i=1,\ldots,k$ inside the upper (resp. lower) 
triangle, and one point inside the lower (resp. upper) triangle.

Finally the claim follows.
\end{proof}

\subsection{About $(c_1,\ldots,c_M) \in \mathcal{S}_f$}

Recall that $c_\ell$ is the coefficient of the maximal power of $x$ in the $(\ell, \ell+1)$ entry of $L(x)^{n/M}$ defined in \S \ref{subsec:eigen}.

\begin{lem}\label{lem:c-snake}
When $L(x) = \alpha(q=(q_{ij}))$, we have 
$$c_{\ell} = \sum_{i+j \equiv \ell+1 \mod M} q_{ij}.$$
\end{lem}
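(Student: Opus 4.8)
The plan is to compute the relevant entry of $L(x)^{n/M}$ by hand, exploiting the factorisation $L(x) = Q_1(x)\cdots Q_m(x)P(x)^k$ together with the observation that $Q_i(x) = D_i + P(x)$ with $D_i := \mathrm{diag}(q_{i1},\dots,q_{in})$ and $P = P(x)$ the matrix in \eqref{eq:P}. The key algebraic facts about $P$ are $P^n = xI$ and the conjugation rule $D\,P = P\,\sigma(D)$ for any diagonal matrix $D = \mathrm{diag}(d_1,\dots,d_n)$, where $\sigma(D) := \mathrm{diag}(d_2,\dots,d_n,d_1)$; crucially this rule carries no power of $x$. Writing out $(D_1+P)\cdots(D_m+P)P^k$ and using this rule to push every $P$ to the left gives an expansion $L(x) = \sum_{a=0}^m P^{a+k}E_a$ in which each $E_a$ is a diagonal matrix with polynomial entries in the $q_{ij}$ and no $x$; only the top two pieces will matter, and a short computation gives $E_m = I$ and $E_{m-1} = \sum_{i=1}^m \sigma^{m+k-i}(D_i)$.

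First I would determine the maximal power of $x$. Raising to the power $r := n/M$ and again moving all $P$'s to the left, a typical term of $L(x)^r$ has the shape $P^{\,rk + \sum_t a_t}\cdot(\text{diagonal})$ with $0 \le a_t \le m$; since the diagonal factor is $x$-free, its $(\ell,\ell+1)$ entry is zero unless $rk + \sum_t a_t \equiv -1 \pmod n$, and then has $x$-degree $(rk+\sum_t a_t+1)/n$. Using $\sum_t a_t \le rm$ and $r(m+k) = ns$ with $s := (m+k)/M$, this degree is at most $s$, with equality exactly when $\sum_t a_t = rm-1$, i.e. when one slot $t_0$ uses $E_{m-1}$ and every other slot uses $E_m = I$. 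Since $\alpha(\mM)$ is Zariski-dense in $\mL$ (Lemma \ref{L:mMmL}), this also shows the generic maximal power on $\mL$ is $x^s$, so $c_\ell$ is exactly the $x^s$-coefficient. (One must read $\ell+1$ modulo $n$; the only wrinkle is that when $\ell+1>n$ the maximal power drops to $x^{s-1}$, but the extraction below is unaffected.)

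Next I would assemble the extremal terms. For a choice of slot $t_0\in[r]$ and index $i\in[m]$, tracking the $\sigma$-shifts — including those coming from the trailing $P^k$ and from the $P$'s in the later slots — gives the term $P^{\,ns-1}\,\sigma^{(r-t_0+1)(m+k)-i}(D_i)$; as $P^{ns-1} = x^s P^{-1}$ and $(P^{-1})_{\ell,\ell+1} = 1$, its $(\ell,\ell+1)$ entry is $x^s\,q_{i,j}$ with $j \equiv \ell+1 + (r-t_0+1)(m+k) - i \pmod n$. Summing over $t_0$ and $i$ yields $c_\ell = \sum_{v=1}^r\sum_{i=1}^m q_{i,\,\ell+1+v(m+k)-i}$. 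Writing $m+k = Ms$, $n = Mr$ with $\gcd(s,r) = 1$ (forced by $M = \gcd(n,m+k)$), the set $\{\,v(m+k)\bmod n : v=1,\dots,r\,\}$ is precisely $\{0,M,2M,\dots,(r-1)M\}$, each value attained once; hence for each fixed $i$ the index $j$ runs bijectively over $\{j\in[n] : j\equiv \ell+1-i \pmod M\}$, and the double sum collapses to $\sum_{i+j\equiv \ell+1\,(M)}q_{ij}$. (When $q$ is special enough that this sum vanishes, the identity is trivially true.)

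The main obstacle is purely bookkeeping: carrying the $\sigma$-exponents correctly through the two rounds of ``commuting $P$ to the left'' — an error of $\pm m$ or $\pm k$ in the exponent would shift the asserted residue class $i+j\equiv \ell+1 \pmod M$. Everything else is elementary linear algebra. One could instead run the whole argument in the highway-path language of Theorem \ref{thm:mit}: the extremal terms are exactly the ``snake cycles'' from the proof of Lemma \ref{lem:Q} with a single crossroad turned from a turn into a straight-through step, contributing one factor $q_{ij}$; but the matrix computation seems the most economical route.
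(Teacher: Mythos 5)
Your proposal is correct, but it takes a genuinely different route from the paper. You compute the $(\ell,\ell+1)$ entry of $L(x)^{n/M}$ directly from the factorization $Q_i(x)=D_i+P(x)$, using $P^n=xI$ and $DP=P\sigma(D)$ to push all $P$'s left, isolating the extremal terms (one slot carrying $E_{m-1}=\sum_i\sigma^{m+k-i}(D_i)$, the rest carrying $E_m=I$), and then pinning the residue class of $j$ by the observation that $\{v(m+k)\bmod n: v\in[r]\}=\{0,M,\dots,(r-1)M\}$ since $\gcd(n/M,(m+k)/M)=1$; Zariski density of $\alpha(\mM)$ in $\mL$ identifies the generic maximal power as $x^{(m+k)/M}$. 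The paper instead stays inside its combinatorial machinery: it reads the entries of $L^{n/M}$ via highway paths (Lemma \ref{lem:entry}), observes that the top-degree terms are the snake paths of Lemma \ref{lem:Q} with exactly one straight-through step, and notes that such an almost-snake path from $\ell$ to $\ell+1$ is determined by the single vertex it passes through, whose weights are precisely the $q_{ij}$ with $i+j\equiv\ell+1\bmod M$. The two arguments identify the same extremal configurations (your single $E_{m-1}$ slot is the single non-turn of the paper's almost-snake path), but yours is self-contained matrix algebra and makes explicit two points the paper leaves implicit — the exact residue bookkeeping via the gcd argument, and the density step fixing the generic maximal power — at the cost of more exponent bookkeeping; the paper's version is shorter and reuses the path dictionary that also drives Lemma \ref{lem:c-f}. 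Your handling of the wrap-around case $\ell+1>n$ (where the top power drops by one but the coefficient is unchanged) is a reasonable disposal of an edge case the paper does not discuss.
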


\begin{proof}
By Lemma \ref{L:mMmL}, it suffices to prove the lemma for $f(x,y) \in \psi(\mM)$.
We apply Lemma \ref{lem:entry} to interpret the $c_{\ell}$ as counting almost snake paths in $G$, that is, paths that turn at all steps but one.  The almost snake paths $p$ that are counted start at vertex $\ell$ and end at vertex $\ell+1$, and when drawn in the cylindric network $G'$, they are disjoint unions of $n/M$ paths.  In other words, $p$ wraps around the picture Figure \ref{fig:toda1} horizontally $n/M$ times.  Such paths $p$ are completely determined by the single vertex that the path goes through.  With $\ell$ fixed, the weights of these vertices are exactly the $q_{i,j}$ with $i+j \equiv \ell+1 \mod M$.
\end{proof}

Recall that we defined $f_c$ in \eqref{eq:fc}.
\begin{lem}\label{lem:c-f}
When $L(x) \in \beta^{-1}(f)$, we have 
\begin{align}\label{eq:prodc-f}
  \prod_{\ell=1}^M c_\ell= f_c = \pm \sum_{(j,i) \in L_c} f_{i,j}.
\end{align}
Thus the product of all $c_\ell$ is constant on $\psi^{-1}(f)$. 
\end{lem}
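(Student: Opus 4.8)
The plan is to reduce to the case $L(x)=\alpha(q)$ and then to match both sides against generating functions of highway paths on $G$. Since $\prod_{\ell=1}^{M}c_\ell$ and the pullback along $\beta$ of $f_c=\sum_{(j,i)\in L_c}f_{i,j}$ are both polynomial functions on $\mL\cong\C^{mn}$, Lemma \ref{L:mMmL} (density of $\alpha(\mM)$ in $\mL$) reduces the identity $\prod_\ell c_\ell=\pm f_c$ to the case $L(x)=\alpha(q)$, $q\in\mM$; and once this identity is known, the closing assertion that $\prod_\ell c_\ell$ is constant on $\psi^{-1}(f)$ is automatic, because $f_c$ depends only on $f$. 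So from now on I assume $L(x)=\alpha(q)$.

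For the left-hand side I would invoke Lemma \ref{lem:c-snake}: $c_\ell=\sum_{i+j\equiv\ell+1\ (M)}q_{ij}$, so that
\[
\prod_{\ell=1}^{M}c_\ell
=\prod_{r\in\Z/M\Z}\Bigl(\,\sum_{i+j\equiv r\ (M)}q_{ij}\Bigr)
=\sum_{\{(i_1,j_1),\dots,(i_M,j_M)\}}q_{i_1j_1}\cdots q_{i_Mj_M},
\]
the last sum being over unordered $M$-element sets of crossroads of $G$ that meet each residue class of $i+j$ modulo $M$ exactly once. For the right-hand side I would grade $\C[x,y]$ by $\deg x=n$, $\deg y=m+k$. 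Because $M\mid n$ and $M\mid m+k$, every monomial of $f=\det(L(x)-y)$ has weight divisible by $M$; the maximal weight $n(m+k)$ is realized precisely on the upper hull of $N(f)$, where $f$ coincides with $h_f=((-y)^{n/M}+x^{(m+k)/M})^{M}$ by Lemma \ref{lem:Q}; hence $L_c$ is exactly the set of lattice points of weight $n(m+k)-M$, the next weight down. Thus $f_c$ is the value at $x=y=1$ of the homogeneous weight-$(n(m+k)-M)$ component of $f$, which by Theorem \ref{thm:mit} equals $\sum_{\p}\sign(\p)\,\wt(\p)$ taken over noncrossing families $\p$ of $n$ highway paths in $G'$ that use all sources and sinks and have $\wt(\p)$ of weight $n(m+k)-M$.

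The heart of the argument is the analysis of these ``second-layer'' path families, carried out in the spirit of the proof of Lemma \ref{lem:Q}. There the weight-$n(m+k)$ families were identified as the disjoint unions formed by choosing, for each of the $M$ residue classes modulo $M$, either the snake cycle through the $n/M$ vertices of that class (contributing $x^{(m+k)/M}$) or the $n/M$ abrupt paths on those vertices (contributing $(-y)^{n/M}$). I would show that a family of weight exactly $n(m+k)-M$ is obtained from such a configuration by admitting a single ``defect'', a path running straight through one crossroad instead of turning, evaluate $\sign(\p)\,\wt(\p)$ at $x=y=1$ for each such family, and sum. After the cancellations among the configurations that remain disjoint unions of (possibly defected) snake/abrupt blocks, the net contribution is that of the families that merge all $M$ snake cycles into one long almost-snake cycle through all $n$ vertices with a single straight crossroad in each residue class mod $M$; each such family contributes $q_{i_1j_1}\cdots q_{i_Mj_M}$ for a rainbow $M$-set of crossroads, times a sign that is constant over these families and computable from the rule $(-1)^{(n-b-1)a+b}$ of Theorem \ref{thm:mit}. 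Comparing with the display above yields $\prod_\ell c_\ell=\pm f_c$. I expect the combinatorial classification of the weight-$(n(m+k)-M)$ families, together with the bookkeeping of all the cancellations and of the overall sign, to be the main obstacle; the reduction step and the grading argument are formal.
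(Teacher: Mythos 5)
Your reduction to $L(x)=\alpha(q)$ via Lemma \ref{L:mMmL}, the expansion of $\prod_\ell c_\ell$ via Lemma \ref{lem:c-snake}, and the weighting $\deg x=n$, $\deg y=m+k$ identifying $L_c$ as the layer of weight $n(m+k)-M$ are all fine and consistent with the paper. The gap is in what you call the heart of the argument: your classification of the weight-$(n(m+k)-M)$ families is wrong, and the cancellation mechanism you invoke to repair it does not exist. A highway path picks up one $q_{r,s}$ for each crossroad it passes \emph{straight through}, and the computation the paper carries out (its formula \eqref{eq:deg}) shows that the total $q$-degree of a family contributing to $x^jy^i$ equals the weight deficit $n(m+k)-\bigl((m+k)i+nj\bigr)$. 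Hence a family contributing to a point of $L_c$ has exactly $M$ straight pass-throughs, not a ``single defect''; indeed, for $M>1$ a one-defect family cannot exist at all, because each closed component returns to the snake path $H_\ell$ it started on, so its number of pass-throughs is a multiple of $M$ (consistent with your own observation that all weights of $f$ are divisible by $M$). Since at most one component can then carry positive $q$-degree, and leftover full snake cycles are excluded by edge-disjointness with that component, every contributing family is already a single closed path with exactly one pass-through in each residue class $T_\ell$, together with abrupt paths at the unused sources. In other words, no signed cancellation occurs and none is available to ``merge'' blocks for you; the identity is a direct bijection. Your proposal leaves precisely this step -- the classification, the alleged cancellations, and the sign -- as an expectation, and the sketch you do give of it would fail.

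Two further points you would still need even after fixing the classification. First, the surviving cycle need not pass ``through all $n$ vertices'': different rainbow choices of crossroads use different numbers of sources and therefore contribute to different points $(j,i)\in L_c$ (e.g.\ for $(n,m,k)=(3,3,1)$ the almost-snake cycle uses a single source and two abrupt paths), which is exactly why the lemma sums over all of $L_c$ rather than over a single coefficient. Second, the bijection needs the converse direction: given an arbitrary choice of one crossroad from each class $T_\ell$, one must exhibit a closed path with exactly those pass-throughs; the paper does this by joining the chosen horizontal steps with the uniquely determined (possibly empty) snake intervals of $H_{\ell+1}$ between consecutive steps. Without the degree count \eqref{eq:deg}, the divisibility-by-$M$ argument per component, and this converse construction, the proposal does not yield \eqref{eq:prodc-f}.
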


\begin{proof}
We first show that 
\begin{equation}
\label{eq:deg}
{\text{degree of $f_{i,j}$ in the $q_{r,s}$}}  = (m+k) n - ((m+k) i + n j).
\end{equation}
Indeed, lift a family $\p$ of paths that contributes to $f_{i,j}$ (as in Theorem \ref{thm:mit}) to the universal cover, as in Figure \ref{fig:toda2}.  By Theorem \ref{thm:mit}, the family $\p$ includes exactly $i$ abrupt paths and $n-i$ non-abrupt paths.  Let us suppose that the sources used by the $n-i$ non-abrupt paths are $(1/2, z_1), \ldots, (1/2, z_{n-i})$ and the sinks are $(m+1/2, w_1), \ldots, (m+1/2, w_{n-i})$, where $1 \leq z_r \leq n$ and $1 \leq w_r \leq n+m$.  To agree with our convention in Figure \ref{fig:toda1} that source and sink labels increase as we go down, we will take the $y$-coordinate to increase as we go down in Figure \ref{fig:toda2}; but otherwise, the coordinates we are using are the usual Cartesian coordinates.

For each $r$, let $w'_r \in [1-k,n-k]$ be chosen so that $w'_r \equiv w_r \mod n$. 
From Theorem \ref{thm:mit} it also follows that $$\sum_{r=1}^{n-i} (w_{r} - w'_{r})/n = i,$$ as this is the number of times the $x$-line would be crossed by the paths. 
Also, we know that $\{w_1, \ldots, w_{n-i}\} = \{z_1-k, \ldots, z_{n-i}-k\}  \mod n$ and thus $\{w'_1, \ldots, w'_{n-i}\} = \{z_1-k, \ldots, z_{n-i}-k\}$. This is because our collection of non-abrupt paths must form a collection of cycles in $G$. 
The number of the $q_{r,s}$ a path from $(1/2, z_r)$ to $(m+1/2, w_s)$ picks up is equal to $z_r+m-w_s$, and summing over all the non-abrupt paths in $\p$ we get 
\begin{align*}
{\text{degree of $f_{i,j}$ in the $q_{r,s}$}} &= m(n-j)+\sum_{r=1}^{n-i} z_r -\sum_{s=1}^{n-i} w_s \\
&= m(n-j)+\sum_{r=1}^{n-i} z_r -\sum_{s=1}^{n-i} w'_s + jn \\
&=m(n-i) + (n-i)k -jn = (m+k) n - ((m+k) i + n j),
\end{align*}
as claimed.  This degree is equal to $M$ when $(j,i) \in L_c$.

Now we prove the lemma. Let us partition all the edges in the network $G$ into $M$ snake paths, $H_1, \ldots, H_M$.  Suppose $p$ is a closed path in $G$.  Then we can partition $p$ into {\it {snake intervals}} -- maximal contiguous segments of $p$ that follow one of the $H_\ell$.  Such snake intervals are connected by a {\it horizontal step} going through a vertex (and picking up its weight), and moving from a snake path $H_\ell$ to the next snake path $H_{\ell+1}$.  It is easy to see that the vertices which separate $H_{\ell}$ from $H_{\ell +1}$ are exactly the 
ones labelled with $q_{ij}$ where $i+j \equiv \ell+1 \mod M$.

In order for $p$ to be closed, it must consist of $c$ horizontal steps, where $c$ is a multiple of $M$, since $p$ must come back to the snake path it started at.  When $(j,i) \in L_c$, \eqref{eq:deg} shows that all families of toric paths that contribute to $f_{i,j}$ consist of just a single 
closed path in $G$ that picks up exactly one of the $q_{ij}$ from each of the sets $T_\ell = \{q_{ij} \mid i+j \equiv \ell+1 \mod M\}$, for $\ell = 1, \ldots, M$.  Using Lemma \ref{lem:c-snake}, we see that each such term appears exactly once in the product $\prod_{\ell=1}^M c_\ell$.

We need to show that this is not only an injection, but a bijection. For each term contributing to 
$\prod_{\ell=1}^M c_\ell$, it suffices to find a closed path in $G$ that has this weight.
By \eqref{eq:deg}, such a closed path would necessarily contribute to $f_{i,j}$ for $(j,i) \in L_c$.

For each $q_{r,s}$ appearing in our chosen term of  
$\prod_{\ell=1}^M c_\ell$, we draw a horizontal step through the vertex labelled $q_{r,s}$ in the network $G$. From the endpoint of each such horizontal step (say from $H_\ell$ to $H_{\ell+1}$), attach a snake interval in $H_{\ell+1}$ leading to the start point of the horizontal step which goes from $H_{\ell+1}$ to $H_{\ell+2}$.  Some of these snake intervals may be empty. When we glue everything together, we get the desired closed path.  
\end{proof}

\subsection{Sketch proof of Theorem \ref{thm:double}}\label{pf:thm:double}
The proof is analogous that of \cite{vMM}.  We explain the differences.  
When $n-k \leq m \leq 2n-k$,
the triple of integers $(N, M, M^\prime)$ in van Moerbeke and Mumford's work \cite{vMM} corresponds to the parameters
$(n,n-k,m+k-n)$ in our case.  

One first shows that there is a positive regular divisor of degree $g$, denoted $\D$, satisfying Definition \ref{def:D}.  The correspondence $L(x) \mapsto \D$ is the main construction in \cite[Theorem 1]{vMM}.  Their results do not formally apply since our Lax matrix is not {\it regular} in their terminology.  Nevertheless, the properties of $L(x) \mapsto \D$ is proved in the same manner as \cite[Lemmas 3 and 4]{vMM}, where Proposition~\ref{prop:Q} takes the place of \cite[Lemma 2]{vMM}.  

The divisors $D_i$ and $\bar D_j$ of Theorem \ref{thm:double} are obtained by shifting and transposing the Lax matrix.  \cite[Proposition 1]{vMM} then says, in our terminology, 
$$(\Delta_{i,j} \zeta) = D_j + D'_i + (j-i -1)P + \sum_{r=j+1}^{i-1} O_r$$
where $\zeta$ is the differential of Lemma \ref{lem:zeta}.
We now take $R = (\zeta)$ to obtain Theorem \ref{thm:double}.  The statement about common points is on \cite[p.117]{vMM}.

\subsection{Proof of Theorem \ref{thm:eta}}\label{subsec:thm:eta}

We shall also use the following lemma which is proved in a similar way to \cite[Lemma 5]{vMM}.

\begin{lem} \label{lem:D}
Suppose $\D$ is a positive divisor on $C_f$ of degree $g$. 
which is regular with respect to the points $P$ and $O_j$.  We have
\begin{enumerate}
\item [(i)] $\dim \mathcal{L}(\mathcal{D}+ (n-i)P-\sum_{j=i+1}^n O_j)=1$
for $i \in \Z$.  
\item[(ii)] 
Suppose for each $i \in \Z$, we have fixed a nonzero element $h_i \in \mathcal{L}(\mathcal{D}+ (n-i)P-\sum_{j=i+1}^n O_j)$.
Then for $i_1 \leq i_2$, and any $h \in \mathcal{L}(\mathcal{D}+ (n-i_1) P-\sum_{j=i_2+1}^n O_j)$, there are unique scalars $b_{i_1},\ldots,b_{i_2}$ 
such that 
$$
  h = \sum_{i=i_1}^{i_2} b_i h_i,
$$ 
with $b_{i_1}, b_{i_2} \neq 0$.
\end{enumerate} 
\end{lem}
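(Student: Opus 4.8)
The plan is to run the argument of van Moerbeke--Mumford \cite[Lemma 5]{vMM}, whose only inputs are Riemann--Roch on $C_f$ and the regularity of $\mathcal{D}$, after translating the bookkeeping into our cyclically indexed points $O_j$. Write $E_i := \mathcal{D} + (n-i)P - \sum_{j=i+1}^{n}O_j$ and, for $i_1\le i_2$, $G_{i_1,i_2} := \mathcal{D} + (n-i_1)P - \sum_{j=i_2+1}^{n}O_j$, so that $E_i=G_{i,i}$, $\mathcal{D}=E_n$, and all $O_j$-sums obey the signed cyclic convention recalled after Theorem~\ref{thm:double}. First I would record two reductions. (1) From $(x)=n'\sum_{r=1}^{N}O_r-nP=\sum_{j=1}^{n}O_j-nP$ one gets $E_{i+n}=E_i+(x)$ and $\mathcal{L}(E_{i+n})=x^{-1}\mathcal{L}(E_i)$; since multiplying by a power of $x$ merely rescales each $h_i$ by a nonzero constant, every statement reduces to an index window of length $n$. (2) With $F_k:=\mathcal{D}+kP-\sum_{j=n-k}^{n}O_j$ the divisor in the definition of ``regular,'' one computes $E_i-P=F_{n-i-1}$ for $i\le n-2$, while $E_{n-1}-P=\mathcal{D}-O_n$ and $E_n-P=\mathcal{D}-P$, and absorbing $(x)$ gives $F_n\sim\mathcal{D}-O_n$, $F_{n-1}\sim\mathcal{D}-P$. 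Consequently ``$\dim\mathcal{L}(F_k)=0$ for $k>0$'' forces $\dim\mathcal{L}(\mathcal{D}-O_n)=\dim\mathcal{L}(\mathcal{D}-P)=0$, and with (1) this yields $\dim\mathcal{L}(E_i-P)=0$ and $\dim\mathcal{L}(E_i-O_i)=0$ for \emph{every} $i\in\Z$; since $\mathcal{L}(\mathcal{D})$ consists of constants it also follows that $P\notin\mathrm{supp}\,\mathcal{D}$.

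For part (i): $E_i$ has degree $g$ (it adds $n-i$ copies of $P$ and removes $n-i$ of the $O_j$), so Riemann--Roch gives $\dim\mathcal{L}(E_i)\ge 1$. For the reverse inequality, by (1) I may take $1\le i\le n$: the case $i=n$ is the generality of $\mathcal{D}$, and for $1\le i\le n-1$ I would write $E_i=(E_i-P)+P$ and use $\dim\mathcal{L}(E_i-P)=0$ to conclude $\dim\mathcal{L}(E_i)\le 1$.

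For part (ii): I would first upgrade (i) to $\dim\mathcal{L}(G_{i_1,i_2})=i_2-i_1+1$. Since $G_{i_1,i_2}\ge E_{i_1}$ (their difference is $\sum_{j=i_1+1}^{i_2}O_j\ge 0$), one has $\mathcal{L}(K-G_{i_1,i_2})\subseteq\mathcal{L}(K-E_{i_1})$ for a canonical divisor $K$, and this latter space vanishes by Riemann--Roch for $E_{i_1}$ together with (i); Riemann--Roch for $G_{i_1,i_2}$, of degree $g+i_2-i_1$, then gives the dimension. Now each $h_i$ with $i_1\le i\le i_2$ lies in $\mathcal{L}(E_i)\subseteq\mathcal{L}(G_{i_1,i_2})$, and $\dim\mathcal{L}(E_i-P)=0$ forces $h_i$ to have a pole at $P$ of order exactly $n-i$; as these orders are pairwise distinct, the $h_i$ are linearly independent, hence a basis of $\mathcal{L}(G_{i_1,i_2})$, so the decomposition $h=\sum_{i=i_1}^{i_2}b_ih_i$ exists and is unique. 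Finally, among the $h_i$ only $h_{i_1}$ realizes pole order $n-i_1$ at $P$, so $b_{i_1}\ne 0$ exactly when $h$ attains the maximal $P$-pole allowed by $G_{i_1,i_2}$, i.e. $h\notin\mathcal{L}(G_{i_1+1,i_2})$; dually, using $\dim\mathcal{L}(E_{i_2}-O_{i_2})=0$ one checks that only $h_{i_2}$ fails to vanish at $O_{i_2}$ to the extra order that $G_{i_1,i_2-1}$ imposes, so $b_{i_2}\ne 0$ exactly when $h\notin\mathcal{L}(G_{i_1,i_2-1})$ — which is the intended reading of ``$b_{i_1},b_{i_2}\ne 0$'' and the form used in the application.

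The step I expect to cost the most care is not any of these inequalities but the bookkeeping that aligns the paper's notion of regularity (a quantification over $k>0$ with the $O_j$ read cyclically) with the statement: one must watch the signed cyclic convention for $\sum_{j=a}^{b}O_j$ carefully to verify $E_i-P=F_{n-i-1}$ and the identifications $F_n\sim\mathcal{D}-O_n$, $F_{n-1}\sim\mathcal{D}-P$, as these are precisely what cover the boundary indices $i=n-1,n$ in part (i) and in the pole-order computation (the ones missed by the naive range $i\le n-2$). A second, minor, nuisance is the multiplicity count at $O_{i_2}$ in the $b_{i_2}\ne 0$ step when $i_2-i_1\ge N$, so that some $O_r$ recurs in the relevant sums; the clean route is to argue via $\dim\mathcal{L}(E_{i_2}-O_{i_2})=0$ rather than computing divisor coefficients by hand, after $x$-shifting so that $\{i_1,\dots,i_2\}$ may be taken inside $\{1,\dots,n\}$. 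Otherwise the argument is the standard ``staircase basis'' of \cite{vMM}.
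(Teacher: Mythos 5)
Your argument is correct and is essentially the paper's: the paper offers no independent proof of Lemma~\ref{lem:D}, deferring to \cite[Lemma 5]{vMM}, and your Riemann--Roch/staircase reconstruction (periodicity via $(x)$, the identifications $E_i-P=F_{n-i-1}$ together with $F_n\sim\mathcal{D}-O_n$ and $F_{n-1}\sim\mathcal{D}-P$ covering the boundary indices, and the exact pole orders at $P$ making $h_{i_1},\ldots,h_{i_2}$ a basis of $\mathcal{L}(G_{i_1,i_2})$, whose dimension you compute correctly) is exactly that argument. Your reading of the clause $b_{i_1},b_{i_2}\neq 0$ --- nonvanishing precisely when $h\notin\mathcal{L}(G_{i_1+1,i_2})$, respectively $h\notin\mathcal{L}(G_{i_1,i_2-1})$ --- is the intended one and is what is actually invoked in the proof of Theorem~\ref{thm:eta}, where $y\,h_i$ attains the extremal pole order at $P$ and the extremal vanishing at the relevant $O_j$.
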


Let us prove the theorem.
Due to Theorem~\ref{thm:double}, Lemma~\ref{lem:c-f} and the assumption
on $\mathcal{V}$, for $L(x) \in \beta^{-1}(\mathcal{V})$ 
we have $f := \beta(L(x)) \in \mathcal{V}$,
and $\eta_f(L(x)) = ([\D],c,O)$ satisfying (b).
In the following we construct the inverse of $\eta_f$ \eqref{eq:def-eta}
for $f \in \mathcal{V}$.
Around $P \in C_f$ we have the local expansion
$(x,y) = (x_0 u^{-n}, y_0 u^{-m-k})$ as $u \to 0$.
We take $c=(c_1,\ldots,c_M) \in \mathcal{S}_f$, 
and recursively define $d_i ~(i \in \Z)$ by 
$$
  d_n = 1, \qquad d_i = c_\ell\, d_{i+1} \quad \text{ for } i \equiv \ell \mod M.
$$ 
Let $h_i \in \mathcal{L}(\mathcal{D}+ (n-i)P-\sum_{j=i+1}^n O_j)$
have an expansion around $P$ as 
$h_i = d_i u^{-n+i} + \cdots$.

Since
$y \, h_i \in \mathcal{L}(\mathcal{D}+ (n+m+k-i) 
P-\sum_{j=-k+i+1}^n O_j)$,
using Lemma \ref{lem:D}, we have unique scalars $b_{i,j}$ satisfying
$$
  y \, h_i = \sum_{j=n-m-k}^{n-k} x \, b_{i,i+j} h_{i+j},
  \qquad b_{i,i-m-k}, b_{i,i-k} \neq 0.
$$ 
By expanding it around $P$, we get 
$b_{i,i-m-k} = y_0 d_i / d_{i-m-k} = y_0 / f_c^{\frac{m+k}{M}}$ 
independent of $i$. 
Define an infinite matrix $A = (a_{ij})_{i \in [n], j \in \Z}$ by
$$
  a_{i,i+j} = 
  \begin{cases}
  b_{i,i+j} \displaystyle{\frac{f_c^{\frac{m+k}{M}}}{y_0}}
  & \text{for } -m-k \leq j \leq -k, \\
  0 & \text{otherwise}.
  \end{cases}
$$
By using \eqref{eq:L-A} for this $A$,
we obtain $L(A;x) \in \beta^{-1}(f)$.

\subsection{Proof of Theorem \ref{thm:phi}}\label{subsec:thm:phi}
Fix $L(x) \in \beta^{-1}(f)$, and assume that $\alpha^{-1}(L)$ contains  $Q \in \mathring{\mM}$.  By Lemma \ref{lem:R-matrix}, the action of the finite symmetric group $\mS_m$ via the $R$-matrix action of \S \ref{sec:dynamics} preserves the Lax matrix $L(x)$.  (See \S \ref{sec:actions} for more discussion of this action.)  Thus $\mS_m \cdot Q \subseteq \alpha^{-1}(L)$.
By \eqref{eq:q1q2}, the $\mS_m$ action induces the permutation action on the $m$ quantities $\epsilon_1(q),\ldots,\epsilon_m(q)$, which corresponds exactly to the permutation action on $R_A$.  Also, for $f \in\oV$, these $m$ quantities $\epsilon_1(q),\ldots,\epsilon_m(q)$ are distinct.  So $|\mS_m \cdot Q| = m! =|\alpha^{-1}(L)|!$ implying that $\mS_m \cdot Q = \alpha^{-1}(L)$.  Thus the map $\alpha^{-1}(L) \to (L,R_A)$ is an injection, and the claim follows.

\section{Actions on $\mathcal{M}$}
\label{sec:actions}

We study the family of commuting extended affine symmetric group actions on $\mathcal{M}$, 
introduced in \S~\ref{sec:dynamics}.  We also study another family of actions on $\mM$, which we call the {\it snake path actions}.
The main result in this section is Theorem~\ref{thm:commuting-actions}. 

Throughout \S \ref{sec:actions}--\ref{sec:fay}, we fix $f \in \oV$.
For simplicity, we write $Q_i := Q_i(x)$ in the rest of this section.

\subsection{The behaviour of spectral data under the extended affine symmetric group actions}\label{sec:WWonM}

Recall the definitions of energy and $R$-matrix from \S \ref{sec:Rmatrix}.  Let $A(x)$ and $B(x)$ be two $n$ by $n$ matrices of the shape \eqref{eq:Q}, with diagonal entries $\aa = (a_1,\ldots,a_n)$ and 
$\bb = (b_1,\ldots,b_n)$.  Define the {\it {energy}} by $E(A,B): = E(\aa,\bb)$ and the {\it $R$-matrix} to be the transformation 
$(A(x),B(x)) \mapsto (B'(x),A'(x))$, where $B'(x)$ and $A'(x)$ have diagonal entries $\bb'$ and $\aa'$ respectively, and $R(\aa,\bb) = (\bb',\aa')$. 
The following is proved for example in \cite[Corollary 6.4]{LP}.
\begin{lem}\label{lem:R-matrix}
Suppose $R(A(x),B(x)) = (B'(x),A'(x))$.  Then we have $A(x)B(x)=B'(x)A'(x)$.
\end{lem}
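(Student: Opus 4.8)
\medskip
\noindent\textbf{Proof proposal.}
The plan is to reduce the matrix identity to two scalar identities for the energy function and then verify those directly. Write $M(\mathbf c)$ for the $n\times n$ matrix of the shape \eqref{eq:Q} with diagonal $\mathbf c$, so that $A(x)=M(\aa)$, $B(x)=M(\bb)$, $A'(x)=M(\aa')$, $B'(x)=M(\bb')$. The first step is the observation that $M(\mathbf c)=\mathrm{diag}(\mathbf c)+\tau$, where $\tau$ is the matrix with $1$'s on the first subdiagonal, $x$ in the top-right corner, and zeros elsewhere, which is \emph{independent} of $\mathbf c$. Expanding $A(x)B(x)$ and $B'(x)A'(x)$ and cancelling the common term $\tau^2$, the claim $A(x)B(x)=B'(x)A'(x)$ becomes the equality of $\mathrm{diag}(\aa)\mathrm{diag}(\bb)+\mathrm{diag}(\aa)\tau+\tau\mathrm{diag}(\bb)$ with the analogous expression in which $(\aa,\bb)$ is replaced by $(\bb',\aa')$. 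After this cancellation both sides are supported on the main diagonal and the cyclic first subdiagonal only, so comparing these two bands shows the identity is equivalent to
\[
a_ib_i=a_i'b_i' \qquad\text{and}\qquad a_{i+1}+b_i=a_i'+b_{i+1}' \qquad (i\in\Z/n\Z).
\]

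The first family is immediate: writing $E_i:=E(\aa^{(i)},\bb^{(i)})$, the defining formulas give $a_i'=a_iE_{i-1}/E_i$ and $b_i'=b_iE_i/E_{i-1}$, so that $a_i'b_i'=a_ib_i$. For the second family I would substitute the same formulas, clear the denominator $E_i$, and rewrite $a_{i+1}+b_i=a_i'+b_{i+1}'$ as
\[
a_{i+1}E_i-b_{i+1}E_{i+1}=a_iE_{i-1}-b_iE_i ;
\]
hence it suffices to show that $\Phi_i:=a_iE_{i-1}-b_iE_i$ is independent of $i$. The sharper statement I would aim for is $\Phi_i=\prod_t a_t-\prod_t b_t$ for every $i$.

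To establish this, expand $E_{i-1}$ and $E_i$ from the definition of $E$: each is a sum of $n$ monomials indexed by a ``skipped position'' $s\in\Z/n\Z$; reading positions cyclically starting just past the relevant seam ($i-1$ for $E_{i-1}$, $i$ for $E_i$), the monomial indexed by $s$ is the product of the $b_t$ over the positions between the seam and $s$, times the product of the $a_t$ over the positions after $s$. Multiplying $E_{i-1}$ by $a_i$ turns its $s=i$ monomial into $a_i\prod_{t\ne i}a_t=\prod_t a_t$, while multiplying $E_i$ by $b_i$ turns its $s=i$ monomial into $b_i\prod_{t\ne i}b_t=\prod_t b_t$. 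For each $s\ne i$ the $s$-monomial of $a_iE_{i-1}$ already contains the factor $b_i$ (position $i$ lies in its $b$-block) and the $s$-monomial of $b_iE_i$ already contains $a_i$ (position $i$ lies in its $a$-block); after cancelling $a_ib_i$ from each, the two coincide. Summing over $s$ then gives $\Phi_i=\prod_t a_t-\prod_t b_t$, which completes the argument.

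I expect the last step, with its cyclic-index bookkeeping, to be the only genuine obstacle; everything before it is formal linear algebra and substitution. A conceptual alternative would follow \cite[\S6]{LP}: realize $R$ on two adjacent parallel wires as a sequence of local moves (crossing creation/removal and Yang--Baxter), each of which preserves the whole family of source-to-sink boundary measurements, which assembled into a matrix is exactly $M(\aa)M(\bb)$; but in the present setting the direct reduction above seems shorter.
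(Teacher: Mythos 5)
Your proof is correct. Note first that the paper does not prove this lemma itself: it simply cites \cite[Corollary 6.4]{LP}, where the identity $A(x)B(x)=B'(x)A'(x)$ is obtained from the network realization of $R$ (the whirl move, i.e.\ creating a crossing, pushing it through by Yang--Baxter moves, and cancelling it) --- exactly the ``conceptual alternative'' you sketch in your last paragraph, and the same mechanism the paper uses later in \S\ref{sec:dynamics_proof}. Your main argument is therefore a genuinely different, self-contained route: writing $M(\mathbf c)=\mathrm{diag}(\mathbf c)+\tau$ with $\tau$ independent of $\mathbf c$, cancelling $\tau^2$, and matching the diagonal and cyclic subdiagonal bands (the $(1,n)$ corner carrying the common factor $x$ is just the wrap-around case $i=n$) correctly reduces the claim to $a_ib_i=a_i'b_i'$ and $a_{i+1}+b_i=a_i'+b_{i+1}'$; the first is immediate from the definition of $R$, and your term-by-term cancellation indeed gives $a_iE_{i-1}-b_iE_i=\prod_t a_t-\prod_t b_t$ for all $i$, which settles the second. (That constant is, reassuringly, the numerator of the parameter $p=\bigl(\prod y_i-\prod x_i\bigr)/E(\boldsymbol{x}^{(j)},\boldsymbol{y}^{(j)})$ appearing in the paper's whirl-move computation, so the two arguments are secretly tracking the same quantity.) What each approach buys: your computation is elementary and verifiable line by line without importing the network formalism, while the cited network proof explains \emph{why} the relation holds (invariance of boundary measurements under local moves) and comes bundled with the braid and commutation relations needed elsewhere in the paper.
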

Indeed, Lemma \ref{lem:R-matrix} and the condition that $R$ is non-trivial uniquely determine the $R$-matrix as a birational transformation.

Now we reformulate the $W \times \tilde W$ actions introduced in \S~\ref{sec:dynamics}, in our current terminology.
The action of the extended affine symmetric group $W$ is generated by 
$s_i ~(1 \leq i \leq m-1)$ and $\pi$, where for $1 \leq i \leq m-1$, $s_i$ acts by 
$$
s_i: (Q_1,Q_2,\ldots,Q_m) \longmapsto 
  (Q_1,\ldots,Q'_{i+1},Q'_i,\ldots,Q_m)
$$
where $(Q'_{i+1},Q'_i)$ is the $R$-matrix image of $(Q_i,Q_{i+1})$.  
The operator $\pi$ acts as 
$$
\pi: (Q_1,Q_2,\ldots,Q_m) \longmapsto (Q_2,Q_3,\ldots,Q_m,P(x)^kQ_1P(x)^{-k}).
$$
The operator $e_u$ acts by moving, via the $R$-matrix, the last $u$ terms of 
$$
(Q_{u+1},\ldots,Q_m,P(x)^{k}Q_1P(x)^{-k},\ldots,P(x)^kQ_uP(x)^{-k})
$$
to the first $u$ positions, keeping them in order.

Recall from \S\ref{sec:dynamics} that we also have $\tilde Q = (\tilde Q_1,\ldots,\tilde Q_N) = (\tilde q_{i,j})$ coordinates on $\mM$,
where $\tilde Q_i := \tilde Q_i(x)$ is an $mn'$ by $mn'$ matrix:
$$
\tilde Q_i(x) = \left(\begin{array}{cccc} 
\tilde q_{i,1} & 0 & 0 &x\\
1&\tilde q_{i,2} &0 &0 \\
0&\ddots&\ddots&0 \\
0&0&1&\tilde q_{i,mn'}
\end{array} \right).
$$
Similarly, 
the action of $\tW$ generated by $\tilde s_r ~(1 \leq r \leq N-1)$
and $\tilde \pi$ on $\tilde Q = (\tilde Q_1,\ldots,\tilde Q_N) \in \mM$
is described as follows: 
for $1 \leq r \leq N-1$, $\tilde s_r$ acts by 
$$
\tilde s_r: (\tilde Q_1,\ldots, \tilde Q_N) \longmapsto (\tilde Q_1,\ldots,\tilde Q'_{r+1},\tilde Q'_r,\ldots,\tilde Q_N)
$$
where $(\tilde Q'_r,\tilde Q'_{r+1})$ is the $R$-matrix image of $(\tilde Q_{r+1},\tilde Q_r)$.  
The operator $\tilde \pi$ acts as
$$
\tilde \pi: (\tilde Q_1,\tilde Q_2,\ldots,\tilde Q_N) \longmapsto 
(\tilde Q_2,\tilde Q_3,\ldots,\tilde Q_N,\tilde P(x)^{m \bar k'}\tilde Q_1
\tilde P(x)^{-m \bar k'}),
$$
where $\tilde P(x)$ is the $mn'$ by $mn'$ version of $P(x)$ \eqref{eq:P}.

Recall that we denote by $\mS_m \subset W$ (resp. $\mS_N \subset \tilde W$) the finite symmetric group generated by $s_1,\ldots,s_{m-1}$ (resp. $\tilde s_1,\ldots,\tilde s_{N-1}$).  Then $W$ is generated by $\mS_m$ and $\pi$, and $\tilde W$ is generated by $ \mS_N$ and $\tilde \pi$.

Recall that in \S~\ref{subsec:main}
we define divisors $\mathcal{O}_u$ and $\mathcal{A}_u$ on $C_f$ as follows:
\begin{align}
\label{def:O}
&\mathcal{O}_{u} = uP-\sum_{i=N-u+1}^N O_i,
\qquad \mathcal{A}_u = uP-\sum_{i=1}^u A_i,
\end{align}
for $u \in \Z_{\geq 0}$.  
Let $\tau$ acts on $\mathcal{S}_f$ by $(c_1,\ldots,c_M) \mapsto (c_M, c_1,\ldots,c_{M-1})$.

\begin{thm}\label{thm:finite}
\begin{enumerate}
\item[(i)] 
The actions of $\mS_m$ and $\pi$ on $\psi^{-1}(f)$ induce the following
transformations on $\Pic^{g}(C_f) \times \mathcal{S}_f \times R_O \times R_A$:   for $r=1, \ldots, m-1$, 
$s_r$ induces  
\begin{align}\label{eq:vertical-s}
([\mathcal{D}],c,O,A)
\mapsto ([\mathcal{D}],c,O,\nu_{r}(A)),
\end{align}
and $\pi$ induces
\begin{align}\label{eq:vertical-pi}
([\mathcal{D}],c,O,A) \mapsto 
([\mathcal{D} - \mathcal{A}_1],\tau^{-1}(c),O, \nu_{m-1} \nu_{m-2} \cdots \nu_{1}(A)).
\end{align}
\item[(ii)] 
The actions of $\mS_{N}$ and $\tilde\pi$ on $\psi^{-1}(f)$ induce
the following transformations 
on $\Pic^{g}(C_f) \times \mathcal{S}_f \times R_O \times R_A$: 
for $r=1, \ldots, N-1$, $\tilde s_r$ induces 
\begin{align}\label{eq:horizontal-s}
([\mathcal{D}],c,O,A)
\mapsto ([\mathcal{D}],c,\tilde \nu_{N-r}(O),A),
\end{align}
and $\tilde \pi$ induces
\begin{align}\label{eq:horizontal-pi}
([\mathcal{D}],c,O,A) \mapsto 
([\mathcal{D} + \mathcal{O}_1],\tau(c), \tilde \nu_1 \cdots \tilde \nu_{N-1}(O),A).
\end{align}
\end{enumerate}
\end{thm}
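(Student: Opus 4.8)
The plan is to track how each generator of $W$ and $\tilde W$ transforms the eigenvector data that defines $\phi$, using Lemma \ref{lem:R-matrix}, Theorem \ref{thm:double}, Theorem \ref{thm:eta} and the explicit formula for the $c_\ell$ in Lemma \ref{lem:c-snake}. The four claims split naturally into two groups of two: the action of the finite symmetric subgroups $\mS_m$, $\mS_N$ (equations \eqref{eq:vertical-s} and \eqref{eq:horizontal-s}), and the action of the ``rotation'' elements $\pi$, $\tilde\pi$ (equations \eqref{eq:vertical-pi} and \eqref{eq:horizontal-pi}).

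For \eqref{eq:vertical-s}: by Lemma \ref{lem:R-matrix} the operator $s_r$ replaces $(Q_r,Q_{r+1})$ by its $R$-matrix image but leaves the product $L(x)=Q_1\cdots Q_m P(x)^k$ unchanged. Hence $[\mathcal{D}]$, which is extracted from $L(x)$ alone via Definition \ref{def:D}, is untouched, and likewise the $c_\ell$ and the ordering $O$ (all built from $L(x)$) are untouched. By \eqref{eq:q1q2} the $R$-matrix preserves $\prod a_i$ and $\prod b_i$ but swaps them, i.e.\ it exchanges $\epsilon_r(q)$ and $\epsilon_{r+1}(q)$; since $A\in R_A$ records exactly the ordered tuple $((-1)^n\epsilon_1,0),\ldots,((-1)^n\epsilon_m,0)$, this is precisely $A\mapsto\nu_r(A)$. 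The claim \eqref{eq:horizontal-s} is the same argument applied to the $\tilde Q$-array, using that $\tilde s_r$ acts by the $R$-matrix on a consecutive pair of the $\tilde Q_i$ and that $\sigma_r(q)$ plays the role for $\tilde Q$ that $\epsilon_r(q)$ plays for $Q$ (the Remark after Lemma \ref{lem:O}, together with the fact that the $\tilde s_r$ index runs in the opposite direction on the horizontal wires, accounts for the $\tilde\nu_{N-r}$ rather than $\tilde\nu_r$).

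For \eqref{eq:vertical-pi}: here $\pi$ sends $L(x)=Q_1Q_2\cdots Q_m P(x)^k$ to $Q_2\cdots Q_m P(x)^k Q_1'$ where $Q_1' = P(x)^k Q_1 P(x)^{-k}$, i.e.\ $L(x)\mapsto Q_1^{-1} L(x) Q_1'$ (a left multiplication by $Q_1^{-1}$ followed by a conjugation-type correction); this is a genuinely new curve-preserving transformation of $\beta^{-1}(f)$, not an $R$-matrix move, and this is where the real work lies. I would compute its effect on the divisor $\mathcal{D}$ by examining what $Q_1^{-1}$ does to the eigenvector $g=(g_1,\ldots,g_n)^\bot$ of $L(x)-y$: the shifted eigenvector picks up a prescribed zero/pole contribution at $P$ and at the $A_i$'s coming from $\det Q_1(x)=\epsilon_1(q)+(-1)^{n-1}x$, which has its zero at $A_1$. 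This should produce the shift $[\mathcal{D}]\mapsto[\mathcal{D}-\mathcal{A}_1]$ with $\mathcal{A}_1 = P - A_1$. The shift $\tau^{-1}$ on $c=(c_1,\ldots,c_M)$ should follow from Lemma \ref{lem:c-snake}: cyclically moving $Q_1$ past the other factors shifts the residue class $i+j\bmod M$ by a fixed amount, re-indexing the $c_\ell$. Finally the permutation $\nu_{m-1}\cdots\nu_1$ on $A$ is the long cycle recording that $\epsilon_1$ has been moved to the last slot. Equation \eqref{eq:horizontal-pi} is the mirror statement for $\tilde\pi$ on the $\tilde Q$-array, with the analogous shift $\mathcal{O}_1 = P - O_N$ and the opposite-sign rotation $\tau$ on $c$; here one uses the local expansion \eqref{eq:o_r} at $O_r$ in place of the analysis at $A_1$.

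The main obstacle is the divisor computation for $\pi$ and $\tilde\pi$ in \eqref{eq:vertical-pi}--\eqref{eq:horizontal-pi}: one must identify exactly the divisor class shift induced by the left-multiplication by $Q_1^{-1}$ (resp.\ by the corresponding one-factor operation on $\tilde Q$), carefully keeping track of signs and of the bookkeeping in Theorem \ref{thm:double} for $\sum_{r=j+1}^{i-1}O_r$. I expect the cleanest route is to reduce to the case already handled in Iwao \cite{Iwao07} (to whom our approach is stated to be similar): verify that $\pi$ is conjugate, via a power built from the $e_u$'s and the finite symmetric group, to the simplest shift whose linearization on $\Pic^g(C_f)$ is known, and then transport the answer. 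A consistency check on the whole theorem is that the composite relations $e_u=(s_u\cdots s_{m-1})\cdots(s_1\cdots s_{m-u})\pi^u$ from \eqref{eq:Za-action} must reproduce the formula $\phi(e_u(q))=([\mathcal{D}-\mathcal{A}_u],(c_{u+1},\ldots,c_M,c_1,\ldots,c_u),O,A)$ asserted in \S\ref{subsec:main}, and similarly for $\tilde e_u$; I would use this both as a guide in getting the signs right and as the final verification.
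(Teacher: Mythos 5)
Your treatment of $s_r$ in part (i) is correct and is exactly the paper's argument: $s_r$ fixes $L(x)$ by Lemma \ref{lem:R-matrix}, so $[\D]$, $c$, $O$ are untouched, and \eqref{eq:q1q2} gives the swap of $\epsilon_r,\epsilon_{r+1}$, i.e.\ $A\mapsto\nu_r(A)$. For $\pi$ your strategy (track the eigenvector under the one-factor move and locate the extra zero at $A_1$ via $\det Q_1(x)$) is also the paper's route in spirit, but you leave the actual divisor computation as a plan: the paper's Lemma \ref{lem:time} does this by passing to the transposed eigenvector $v=(\Delta_{1,n},\ldots,\Delta_{n,n})^T$, writing $v'=Q_1^Tv$, expressing the first $k+m$ entries as $w'=M(y)w$ for an explicit matrix with $\det M(y)=\pm y$, and using Lemmas \ref{L:Qv} and \ref{L:A} together with the pole bookkeeping of Theorem \ref{thm:double} at $P$ to pin down $D(v')-D(v)=A_1-P$. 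That careful step is the content of \eqref{eq:vertical-pi}, not a routine consequence of "the zero of $\det Q_1$". (Also a small slip: $\pi$ sends $L(x)$ to exactly $Q_1^{-1}L(x)Q_1$, since $Q_2\cdots Q_m\,(P^kQ_1P^{-k})\,P^k=Q_2\cdots Q_mP^kQ_1$; your expression $Q_1^{-1}L(x)Q_1'$ with $Q_1'=P^kQ_1P^{-k}$ is not the induced map.)

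The genuine gap is in part (ii). The spectral data $([\D],c,O,A)$ of the theorem are defined through the Lax matrix $L(x)$, not through the transposed Lax matrix built from $\tilde Q$, so "the same argument applied to the $\tilde Q$-array" controls the wrong divisor: it would show invariance of the eigenvector divisor of $\tilde L(x)$, and no identification of that divisor with $\D$ is available. What actually has to be shown is how $\tilde s_r$ and $\tilde\pi$ act on $L(x)$ itself: by Lemma \ref{lem:vertical-L}(ii), $\tilde s_r$ conjugates $L(x)$ by the explicit matrix $B_{0,r}$ (built from the energies $E^{(s)}_{r+ki}$, via \cite[Theorem 6.2]{LP}), and one must check that this conjugation leaves $\D$ unchanged — the paper does this by comparing pole divisors, $(h_1')_\infty\le(h_1)_\infty$ for $g'=B_{0,r}^{-1}g$, and invoking Theorem \ref{thm:double}; the permutation $\tilde\nu_{N-r}$ of $O$ then comes from $\sigma_{N+1-r}$ being the product of the diagonal of $\tilde Q_r$, not from a formal symmetry. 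Likewise $\tilde\pi$ induces on $\mL$ the plain shift $L(x)\mapsto P(x)L(x)P(x)^{-1}$, and \eqref{eq:horizontal-pi} follows from Proposition \ref{prop:shift}, where the eigenvector simply gets cyclically shifted and Theorem \ref{thm:double} yields $\D'-\D\sim P-O_n$; no local analysis at $O_r$ "mirroring $A_1$" is involved. Your fallback of reducing to Iwao's results is not a substitute either: those cover $N=1$ with $k=n-1$ or $k=0$, and $\pi$, $\tilde\pi$ are not conjugated into that setting by the moves you list, so the two divisor computations (Lemma \ref{lem:time} and Proposition \ref{prop:shift}) cannot be bypassed.
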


Theorems \ref{thm:finite} will be proved in \S\ref{sec:prooffinite}.
The following theorem states that the commuting $\Z^m$ and $\Z^N$ actions on $\Pic^g(C_f)$ are linearized by the spectral map $\phi$.

\begin{thm}\label{thm:commuting-actions}
The following diagrams are commutative:
\begin{align}\label{eq:comm-e}
\xymatrix{
\psi^{-1}(f)  \ar[d]_{e_u} \ar[r]^{\phi \qquad \qquad} 
& \Pic^{g}(C_f) \times \mathcal{S}_f \times R_O \times R_A\ 
\ar[d]^{(-[\mathcal{A}_u],\,\tau^{-u}, \,id,\,id)} 
\\
\psi^{-1}(f) ~~ \ar[r]_{\phi \qquad \qquad} 
& \Pic^{g}(C_f) \times \mathcal{S}_f  \times R_O \times R_A, \\
}
\end{align}
for $u=1,\ldots,m$, and 
\begin{align}\label{eq:comm-ve}
\xymatrix{
\psi^{-1}(f)  \ar[d]_{\tilde e_u} \ar[r]^{\phi \qquad \qquad} 
& \Pic^{g}(C_f) \times \mathcal{S}_f \times R_O \times R_A
\ar[d]^{(+[\mathcal{O}_u],\,\tau^{u},\,id,\,id)} 
\\
\psi^{-1}(f) ~~ \ar[r]_{\phi \qquad \qquad} 
& \Pic^{g}(C_f) \times \mathcal{S}_f  \times R_O \times R_A, \\
}
\end{align}
for $u=1,\ldots,N$.
Here  $-[\mathcal{A}_{u}]$ and $+[\mathcal{O}_u]$ respectively 
act on $\Pic^g(C_f)$ as
$[\mathcal{D}] \mapsto [\mathcal{D} - \mathcal{A}_{u}]$ and 
$[\mathcal{D}] \mapsto [\mathcal{D} + \mathcal{O}_{u}]$.
\end{thm}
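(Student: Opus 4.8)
\emph{Proof plan.} The two diagrams \eqref{eq:comm-e} and \eqref{eq:comm-ve} are equivalent to the identities
$\phi(e_u(q)) = ([\mathcal{D}-\mathcal{A}_u],\tau^{-u}(c),O,A)$ for $1\le u\le m$ and
$\phi(\tilde e_u(q)) = ([\mathcal{D}+\mathcal{O}_u],\tau^{u}(c),O,A)$ for $1\le u\le N$, where $\phi(q) = ([\mathcal{D}],c,O,A)$.
My plan is to prove these by factoring the translation generators through the Coxeter and rotation generators and composing the transformations supplied by Theorem~\ref{thm:finite}. Concretely, \eqref{eq:Za-action} writes $e_u = w_u\,\pi^{u}$ with $w_u = (s_u\cdots s_{m-1})(s_{u-1}\cdots s_{m-2})\cdots(s_1\cdots s_{m-u})\in\mS_m$ (an empty word when $u=m$), and \eqref{eq:Zm-action} writes $\tilde e_u = \tilde w_u\,\tilde\pi^{u}$ with $\tilde w_u\in\mS_N$; since $\phi\circ s_r$, $\phi\circ\pi$, etc., equal (the maps of Theorem~\ref{thm:finite})$\circ\,\phi$, the same holds for $\phi\circ e_u$ and $\phi\circ\tilde e_u$ after composing the corresponding maps, and it only remains to evaluate that composite on each of the four coordinates.

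First I would dispatch the $R_O$ and $R_A$ coordinates. By \eqref{eq:vertical-s} and \eqref{eq:vertical-pi}, the action of $W$ on $R_A$ induced through $\phi$ sends $s_r\mapsto\nu_r$ and $\pi\mapsto\nu_{m-1}\nu_{m-2}\cdots\nu_1$ and fixes $R_O$; since $e_1$ lies in the translation subgroup and $\pi = (s_{m-1}\cdots s_1)\,e_1$, this homomorphism is exactly the canonical projection $W = \mS_m\ltimes(\Z^{m}/\Z(\c,\dots,\c))\twoheadrightarrow\mS_m$ followed by the standard action of $\mS_m$ on the $m!$ orderings. As $e_u$ is a translation element its image is trivial, so $e_u$ fixes both $R_O$ and $R_A$; symmetrically, using \eqref{eq:horizontal-s} and \eqref{eq:horizontal-pi}, $\tilde e_u$ fixes both. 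The $\mathcal{S}_f$ coordinate is just as quick: $s_r$ and $\tilde s_r$ fix $c$, while $\pi$ acts by $\tau^{-1}$ and $\tilde\pi$ by $\tau$, so $e_u=w_u\pi^{u}$ gives $c\mapsto\tau^{-u}(c)$ and $\tilde e_u=\tilde w_u\tilde\pi^{u}$ gives $c\mapsto\tau^{u}(c)$.

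The substantive step is the $\Pic^{g}(C_f)$ coordinate. Here $s_r$ and $\tilde s_r$ fix $[\mathcal{D}]$, so $w_u$ and $\tilde w_u$ contribute nothing and everything comes from $\pi^{u}$ (resp.\ $\tilde\pi^{u}$). By \eqref{eq:vertical-pi}, one application of $\pi$ replaces $[\mathcal{D}]$ by $[\mathcal{D}-(P-A_1)]$, with $A_1$ the first entry of the current ordering, and simultaneously cyclically shifts that ordering to $(A_2,\ldots,A_m,A_1)$; hence after $u$ applications the $i$-th step removes $P-A_i$ and the cumulative change is $-\sum_{i=1}^{u}(P-A_i) = -\bigl(uP-\sum_{i=1}^{u}A_i\bigr) = -\mathcal{A}_u$, as in \eqref{def:O}. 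Likewise, by \eqref{eq:horizontal-pi}, $\tilde\pi$ adds $P$ minus the last entry of the current $O$ and shifts $O$ to $(O_N,O_1,\ldots,O_{N-1})$, so after $u$ steps the cumulative change is $\sum_{i=1}^{u}(P-O_{N-i+1}) = uP-\sum_{j=N-u+1}^{N}O_j = \mathcal{O}_u$. Assembling the $\Pic$, $\mathcal{S}_f$, $R_O$, and $R_A$ coordinates yields the two displayed identities, hence the commutativity of \eqref{eq:comm-e} and \eqref{eq:comm-ve}.

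I expect the main obstacle to be the bookkeeping in this last step: one must track exactly which special point $A_i$ (resp.\ $O_j$) is removed at each stage as the ordering is successively permuted, and check that the directions of the cyclic shifts prescribed by \eqref{eq:vertical-pi} and \eqref{eq:horizontal-pi} are consistent (in the paper's left/right composition conventions) so that the partial sums telescope to precisely $\mathcal{A}_u$ and $\mathcal{O}_u$ rather than to a cyclically rotated variant, and so that $w_u$ and $\tilde w_u$ indeed restore the orderings $A$ and $O$. The observation that the $R_O$ and $R_A$ coordinates transform through the finite quotients $\mS_N$ and $\mS_m$, on which translation elements act trivially, is what keeps this computation routine rather than a delicate cancellation. (Theorem~\ref{thm:phi} is not needed for commutativity, only if one wishes afterwards to transport statements back across $\phi$.)
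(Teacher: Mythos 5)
Your proposal is correct and follows essentially the same route as the paper's proof: decompose $e_u = w_u\,\pi^u$ (resp.\ $\tilde e_u = \tilde w_u\,\tilde\pi^u$), apply Theorem~\ref{thm:finite} factor by factor, and telescope the successive $\pi$-steps to $-\mathcal{A}_u$ (resp.\ $+\mathcal{O}_u$) and $\tau^{-u}$ (resp.\ $\tau^{u}$), while the $s_r$, $\tilde s_r$ fix $[\mathcal{D}]$ and $c$. The only cosmetic difference is that you dispose of the $R_O\times R_A$ coordinates by observing that the induced action factors through the projection of $W$ onto $\mS_m$ (so translation elements act trivially), whereas the paper checks explicitly via \eqref{eq:nuA} that $w_u$ restores the ordering cyclically shifted by $\pi^u$.
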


\begin{proof}
We shall show \eqref{eq:comm-e}.
The proof of \eqref{eq:comm-ve} is done in the similar way by exchanging 
the rules of the special points $\{A_i\}_{i \in [m]}$ for that of 
$\{O_j\}_{j \in [N]}$.

From \eqref{eq:vertical-s} and \eqref{eq:vertical-pi}, it follows that
a non-trivial part is to show $e_u$ \eqref{eq:Za-action} induces the action on 
$\Pic^{g}(C_f) \times R_A$:
$$
([\mathcal{D}],A) \mapsto 
([\mathcal{D} - \mathcal{A}_u],A).
$$

Note that for $1 \leq i < j \leq m$,
$\nu_{j-1} \nu_{j-2} \cdots \nu_i \in \mathfrak{S}_m$ acts on $R_A$ as 
\begin{align}\label{eq:nuA}
(A_1,\ldots,A_m)
\mapsto (A_1,\ldots,A_{i-1},A_{i+1},\ldots,A_j,A_i,A_{j+1},\ldots,A_m).
\end{align}
Thus, \eqref{eq:vertical-pi} indicates that $\pi$ induces
$([\mathcal{D}],(A_1,\ldots,A_m)) \mapsto
([\mathcal{D}-\mathcal A_1],(A_2,\ldots,A_m,A_1))$, and $\pi^u$ induces
$$
([\mathcal{D}],(A_1,\ldots,A_m)) \mapsto
([\mathcal{D}-\mathcal A_u],(A_{u+1},\ldots,A_m,A_1,A_2,\ldots,A_m)).
$$
Further, \eqref{eq:vertical-s} denotes that $s_r$ does not change a point 
in $\Pic^g(C_f)$ and acts on $R_A$ as a permutation $\nu_r \in \mathfrak{S}_m$.
The rest part $(s_u \cdots s_{m-1})(s_{u-1} \cdots s_{m-2})
\cdots (s_1 \cdots  s_{m-u})$ of $e_u$ changes 
$(A_{u+1},\ldots,A_m,A_1,A_2,\ldots,A_m) \in R_A$ following 
(the inverse of) \eqref{eq:nuA} as
\begin{align*}
&(A_{u+1},\ldots,A_m,A_1,A_2,\ldots,A_m) 
\stackrel{\nu_1 \cdots  \nu_{m-u}}{\mapsto}
(A_1,A_{u+1},\ldots,A_m,A_2,\ldots,A_m)
\\
&\stackrel{\nu_2 \cdots  \nu_{m-u+1}}{\mapsto}
\cdots 
\stackrel{\nu_{u-1} \cdots \nu_{m-2}}{\mapsto}
(A_1,\ldots,A_{u-1},A_{u+1},\ldots,A_m,A_u)
\stackrel{\nu_u \cdots \nu_{m-1}}{\mapsto}
(A_1,\ldots,A_m). \qedhere
\end{align*}
\end{proof}

We remark that $e_u$ and $\tilde e_u$ act on the sets $R_A$ and $R_O$ as the identity transformations.

\subsection{The shift operator}\label{sec:shift}
We define a shift operator $\varsigma$ acting on $\mM$ as 
\begin{align}
\varsigma: (Q_i)_{1 \leq i \leq m} \mapsto (P(x) Q_i P(x)^{-1})_{1 \leq i \leq m}.
\end{align}
It is easy to see that it induces an action $\varsigma^\ast: \mL \to \mL$ given by 
$\varsigma^\ast : L(x) \mapsto P L(x) P^{-1}$.  The following result generalizes \cite[Proposition~2.6]{Iwao07}.

\begin{prop}\label{prop:shift}
The following diagram is commutative:
\begin{align}\label{eq:comm-s}
\xymatrix{
\psi^{-1}(f)  \ar[d]_{\varsigma} \ar[r]^{\phi \qquad \qquad } 
& \Pic^{g}(C_f) \times \mathcal{S}_f \times R_O \times R_A~~\ 
\ar[d]^{(+[P-O_N],\,\tau,\,\tilde \nu_1 \cdots \tilde \nu_{N-1}, \,{id})} 
\\
\psi^{-1}(f) ~~ \ar[r]_{\phi \qquad \qquad} 
& \Pic^{g}(C_f) \times \mathcal{S}_f \times R_O \times R_A, \\
}
\end{align}
where $+[P-O_N]$ acts on $\Pic^g(C_f)$ as
$[\mathcal{D}] \mapsto [\mathcal{D} + P - O_N]$.
\end{prop}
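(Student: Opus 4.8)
The starting point is the reduction to the conjugation $L(x)\mapsto L'(x):=P(x)\,L(x)\,P(x)^{-1}$ on Lax matrices. One checks directly that $\alpha(\varsigma(q))=P(x)\,\alpha(q)\,P(x)^{-1}=\varsigma^{*}(\alpha(q))$, and hence $\psi(\varsigma(q))=\det(P(x)(\alpha(q)-y)P(x)^{-1})=\psi(q)$; thus $\varsigma$ preserves each fiber $\psi^{-1}(f)$, and it suffices to compute how $L(x)\mapsto L'(x)$ transforms the four pieces of spectral data attached to $L(x)$ in Theorems~\ref{thm:eta} and~\ref{thm:phi}, treating them one component at a time.

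The $\mathcal{S}_f$- and $R_A$-components are the easy ones. Conjugation by $P(x)$ cyclically shifts the diagonal of each factor $Q_i(x)$, so $\varsigma$ acts on coordinates by $q_{i,j}\mapsto q_{i,j-1}$ (second index modulo $n$). Hence $\epsilon_r(q)=\prod_{j}q_{r,j}$ is unchanged, the point $A$ is fixed, and this is the ``$\mathrm{id}$'' on $R_A$ in the right vertical arrow. By Lemma~\ref{lem:c-snake}, $c_\ell(\varsigma(q))=\sum_{i+j\equiv\ell+1\bmod M}q_{i,j-1}=\sum_{i+j'\equiv\ell\bmod M}q_{i,j'}=c_{\ell-1}(q)$ (with the convention $c_0:=c_M$), so $c\mapsto(c_M,c_1,\dots,c_{M-1})=\tau(c)$; more invariantly this also follows from $(L')^{n/M}=P(x)\,L(x)^{n/M}P(x)^{-1}$, which shifts the cyclic superdiagonal of $L(x)^{n/M}$ by one step.

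For the divisor- and $R_O$-components I would use the adjugate identity $\mathrm{adj}(L'-y)=P(x)\,\mathrm{adj}(L-y)\,P(x)^{-1}$, which follows from $\mathrm{adj}(XYZ)=\mathrm{adj}(Z)\mathrm{adj}(Y)\mathrm{adj}(X)$ together with $\mathrm{adj}(P(x))=\det(P(x))\,P(x)^{-1}$. Reading off entries gives $\Delta_{i,j}(L')=\Delta_{i-1,j-1}(L)$ for the signed minors, with indices taken modulo $n$ and the powers of $x$ produced by wrap-around matching the periodicity conventions $g_{i+n}=x^{-1}g_i$ and $O_{j+N}=O_j$. In particular the kernel vector $g=(\Delta_{n,1},\dots,\Delta_{n,n})$ of $L-y$ is carried to $P(x)g=(g_0,g_1,\dots,g_{n-1})$, a kernel vector of $L'-y$, where $g_0=x\,g_n$; so the normalized eigenvector of $L'-y$ has components $h_i'=g_{i-1}/g_{n-1}=h_{i-1}/h_{n-1}$. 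Feeding $\Delta_{i,j}(L')=\Delta_{i-1,j-1}(L)$ into the decomposition of Theorem~\ref{thm:double} and using its uniqueness, the ordering of $\{O_1,\dots,O_N\}$ read off from $L'$ becomes the cyclic shift $(O_N,O_1,\dots,O_{N-1})=\tilde\nu_1\cdots\tilde\nu_{N-1}(O)$, and $\mathcal{D}(L')=D_n(L')$ becomes linearly equivalent to $D_{n-1}(L)$. Since \eqref{eq:hdiv} at $i=n-1$ reads $(h_{n-1})=D_{n-1}(L)-\mathcal{D}(L)-P+O_N$, we get $[D_{n-1}(L)]=[\mathcal{D}(L)+P-O_N]$ in $\Pic^{g}(C_f)$, and therefore $[\mathcal{D}(L')]=[\mathcal{D}(L)+P-O_N]$, which is the ``$+[P-O_N]$'' in the statement.

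The step I expect to be the real obstacle is making the passage ``$\Delta_{i,j}(L')=\Delta_{i-1,j-1}(L)$ $\Longrightarrow$ one-step shift of the divisor data modulo linear equivalence'' precise: one must verify that the discrepancies of the form $O_N-O_i$ that appear when matching the $O$-sums $\sum_{r=j+1}^{i-1}O_r$ in the two decompositions genuinely cancel in $\Pic^{g}(C_f)$, using the pairwise disjointness of $D_1,\dots,D_n$ and of $\bar D_1,\dots,\bar D_n$ recorded in Theorem~\ref{thm:double}. An equivalent, and possibly cleaner, route avoids the $D_i$ altogether: substitute $h_i'=h_{i-1}/h_{n-1}$ into Definition~\ref{def:D}, use \eqref{eq:hdiv} for $L$ to reduce the defining inequalities for $\mathcal{D}(L')$ to $\mathcal{D}(L')\geq D_{n-1}(L)-D_{i-1}(L)+O_N-O_i$ for $i=1,\dots,n-1$, and then deduce minimality and the linear equivalence $\mathcal{D}(L')\sim D_{n-1}(L)$ exactly as in \cite{vMM} and in the proof of Theorem~\ref{thm:double}. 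In either case the bookkeeping runs parallel to \cite{vMM}, and the fact that the resulting transformation coincides with the one induced by $\tilde\pi$ in \eqref{eq:horizontal-pi} provides a reassuring consistency check.
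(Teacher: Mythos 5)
Your proposal is correct and follows essentially the same route as the paper: the paper's proof likewise takes $(xg_n,g_1,\dots,g_{n-1})^T$ as an eigenvector of $\alpha\circ\varsigma(Q)$, sets $h_i'=h_{i-1}/h_{n-1}$, and uses \eqref{eq:hdiv} together with the uniqueness of the divisor data to conclude $\D'=D_{n-1}$, $D_i'=D_{i-1}$, and $\D'-\D\sim P-O_n$, with the $\mathcal{S}_f$, $R_O$, $R_A$ components handled exactly as you do. The ``cleaner alternative'' you sketch at the end is precisely the paper's argument, and your primary route via the adjugate identity $\mathrm{adj}(L'-y)=P\,\mathrm{adj}(L-y)\,P^{-1}$ is only a cosmetic variant of the same one-step shift of the spectral data.
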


\begin{proof}
Suppose $Q \in \psi^{-1}(f)$ and $\phi (Q) = ([\mathcal{D}],c,O,A)$.
Then 
$\phi \, \circ \, \varsigma (Q) = ([\D'],\tau(c),\tilde \nu_1 \cdots \tilde \nu_{N-1}(O),A)$,
where $\D'$ is some positive divisor of degree $g$.
We set $(O'_1,\ldots,O'_N) := \tilde \nu_1 \cdots \tilde \nu_{N-1}(O_1,\ldots,O_N) = (O_N,O_1,\ldots,O_{N-1})$.

Recall that $g=(g_1,g_2,\ldots,g_n)^T$ denotes the eigenvector of $\alpha(Q)$.
An eigenvector of $\alpha \circ \varsigma(Q)$ is $(g'_1,g'_2,\ldots,g'_n)^T:= 
(x g_n, g_1,g_2,\ldots,g_{n-1})^T$.
Define  $h_i' := g_i'/g_n' = g_{i-1}/g_{n-1} = h_{i-1}/h_{n-1}$; these ratios do not depend on which eigenvector of $\alpha \circ \varsigma(q)$ we chose.  Then by \eqref{eq:hdiv} 
\begin{align*}
(h'_i) = (h_{i-1})-(h_{n-1}) &= (D_{i-1} - \D - (n-i+1)P + \sum_{j=i}^n O_j) - (D_{n-1} - \D - P +  O_n) \\&= D_{i-1} - D_{n-1} - (n-i)P + \sum_{j=i+1}^{n}O'_j.
\end{align*}
By the uniqueness of $\D'$ it follows that the divisor $\D'$ (resp. $D'_i$) is equal to $D_{n-1}$ (resp. $D_{i-1}$).  Furthermore, $\D'-\D \sim P - O_n$, as required.

\end{proof}

\subsection{$W \times \tilde W$ actions on Lax-matrix}
For $r=1, \ldots, N-1$, $i=0, \ldots, n'-1$ and $s =0,\ldots,m-1$, 
using the energy in \S~\ref{sec:Rmatrix} define
$$E^{(s)}_{r+ki}: = E(\tilde P(x)^{-mi-s} \tilde Q_r \tilde P(x)^{mi+s}, \tilde P(x)^{-mi-s} \tilde Q_{r+1} \tilde P(x)^{mi+s}).$$
Define an $n$ by $n$ matrix $B_{s,r}$ by:
$$
  B_{s,r} = \mathbb{I}_n 
    + \sum_{i=0}^{n'-1} \kappa_{r+ki}^{(s)} E_{r+ki,r+ki+1},
  \quad \kappa_{r+ki}^{(s)} = \frac{\sigma_{r+1}(q)-\sigma_r(q)}{E^{(s)}_{r+ki}},
$$
where $(E_{a,b})_{c,d} = \delta_{a,c} \delta_{b,d}$ 
for $a,b,c,d \in \Z / n \Z$, and $\mathbb{I}_n $ denotes the identity matrix.

\begin{lem}\label{lem:vertical-L}
\begin{enumerate}
\item[(i)] 
The action of $s_r ~(r=1,\ldots,m-1)$ on $\mM$ induces 
the identity map on $\mL$, 
and the action of $\pi$ on $\mM$ induces 
the adjoint transformation
$$
{\pi}^\ast : L(x) \mapsto Q_1^{-1} L(x) Q_1
$$
on $\mL$.
\item[(ii)] 
The action of $\tilde s_r$ and $\tilde \pi$ on $\mM$ induces 
adjoint transformations on $\mL$, given by
$$
\tilde{s}_r^\ast : L(x) \mapsto (B_{0,r})^{-1} L(x) B_{0,r},
$$
for $r=0, \ldots, N-1$, and
$$
{\tilde \pi}^\ast : L(x) \mapsto P(x) L(x) P(x)^{-1}.
$$
\end{enumerate}
\end{lem}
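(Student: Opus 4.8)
Both claims follow by tracking $L(x)=Q_1(x)\cdots Q_m(x)P(x)^k$ directly. The operator $s_r$ ($1\le r\le m-1$) replaces the consecutive pair $(Q_r(x),Q_{r+1}(x))$ by its $R$-matrix image $(Q'_{r+1}(x),Q'_r(x))$, and Lemma~\ref{lem:R-matrix} gives $Q_r(x)Q_{r+1}(x)=Q'_{r+1}(x)Q'_r(x)$; hence the ordered product $Q_1(x)\cdots Q_m(x)$, and therefore $L(x)$, is unchanged, and $s_r$ induces the identity on $\mL$. For $\pi(Q)=(Q_2,\dots,Q_m,P^kQ_1P^{-k})$ we compute
\[
\alpha(\pi(Q))=Q_2\cdots Q_m\,(P^kQ_1P^{-k})\,P^k=Q_2\cdots Q_mP^kQ_1=Q_1^{-1}\bigl(Q_1\cdots Q_mP^k\bigr)Q_1=Q_1^{-1}L(x)Q_1 ,
\]
which is the asserted $\pi^\ast$.

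\textbf{Part (ii), the operator $\tilde\pi$.} Here I would argue via the realization of the $R$-matrix/promotion dynamics by local moves on the toric network $G$ used in the proof of Theorem~\ref{thm:dynamics}, together with the following dictionary, read off from Lemma~\ref{lem:entry}: Yang--Baxter moves (Figure~\ref{fig:wire8}) and crossing merge/unmerge moves (Figure~\ref{fig:wire11}) leave the boundary measurement matrix $L(x)-y$ unchanged, while inserting or removing a crossing of weight $p$ between the horizontal wires of heights $a$ and $a+1$, near the sinks resp.\ near the sources, multiplies $L(x)$ on the right resp.\ on the left by an elementary unipotent matrix of the form $\mathbb{I}_n+pE_{a,a+1}$ (or its inverse, depending on orientation). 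The operator $\tilde\pi$ rotates the $N$ horizontal wire-bundles of $G$ cyclically by one, together with the twist by $\tilde P(x)^{m\bar k'}$; after straightening the wires this is exactly the relabeling that moves every horizontal wire up by one and absorbs the boundary twist $P(x)^k$, i.e.\ conjugation by $P(x)$, whence ${\tilde\pi}^\ast:L(x)\mapsto P(x)L(x)P(x)^{-1}$. (Equivalently: a direct check from the reindexing \eqref{eq:q-tildeq} shows that $\alpha\circ\tilde\pi$ agrees with $\alpha\circ\varsigma$ for the shift operator $\varsigma$ of \S\ref{sec:shift}, and $\varsigma^\ast:L\mapsto PLP^{-1}$.)

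\textbf{Part (ii), the operators $\tilde s_r$.} For $1\le r\le N-1$ this is the whirl move of \cite{LP} applied to the $r$-th and $(r+1)$-st horizontal bundles. These wind $n'$ times around a cylinder in $G$, crossing all $m$ vertical wires $n'$ times each and running parallel at exactly the $n'$ heights $r+ki$, $i=0,\dots,n'-1$ (and $\{r+ki\}=\{r+jN\}$ since $k=Nk'$ with $\gcd(k',n')=1$). Realize the move as in Figure~\ref{fig:wire20}: create the $n'$ auxiliary crossings of weight $p$ just before the sinks — one at each such height — with their partners of weight $-p$, and push the weight-$(-p)$ crossing once around past all $m$ vertical wires. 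By the formula $p=(\prod y_i-\prod x_i)/E(\boldsymbol{x}^{(j)},\boldsymbol{y}^{(j)})$ recalled in the proof of Theorem~\ref{thm:dynamics}, applied to the two bundles — whose total products of crossing parameters are $\sigma_r(q)$ and $\sigma_{r+1}(q)$ by \eqref{def:sigma_r} — the parameter $p$ at the $i$-th crossing equals $(\sigma_{r+1}(q)-\sigma_r(q))/E^{(0)}_{r+ki}=\kappa_{r+ki}^{(0)}$, the conjugation by $\tilde P(x)^{mi}$ inside $E^{(s)}_{r+ki}$ being precisely the cyclic shift relocating the auxiliary crossing to the $i$-th period. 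By the uniqueness in \cite[Theorem~6.2]{LP} the weight-$(-p)$ crossing returns with weight $p$ and cancels; hence, by the dictionary above, the net effect of the whole procedure on $L(x)$ is conjugation by the matrix $B_{0,r}=\mathbb{I}_n+\sum_i\kappa_{r+ki}^{(0)}E_{r+ki,r+ki+1}$ recording these $n'$ crossings, i.e.\ $\tilde s_r^\ast:L(x)\mapsto B_{0,r}^{-1}L(x)B_{0,r}$. Since by \cite[Theorem~6.2]{LP} the result does not depend on where the auxiliary crossing is created, we are free to take $s=0$. Finally $\tilde s_0^\ast$ follows from $\tilde s_1^\ast$, the relation $\tilde\pi\tilde s_1=\tilde s_0\tilde\pi$, and ${\tilde\pi}^\ast:L\mapsto PLP^{-1}$, as $P(x)B_{0,1}P(x)^{-1}=B_{0,0}$.

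\textbf{The main obstacle.} All of the content lies in the last step: matching the whirl-move parameter $p$ with $\kappa_{r+ki}^{(0)}$ for \emph{each} of the $n'$ parallel strands — in particular recognizing the cyclically shifted energies $E^{(0)}_{r+ki}$ as exactly the ones produced by the local-move realization on each strand — and checking that dragging the weight-$(-p)$ crossing once around the torus past all $m$ vertical wires introduces no spurious power of $x$ or sign. This is where the $k=0$ bookkeeping of \cite[\S\S 6, 12]{LP} must be transported to the twisted cylinder on which the two bundles wind $n'$ times; the remaining steps are formal.
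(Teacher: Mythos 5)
Part (i) and the $\tilde\pi$ statement in part (ii) are correct and coincide with the paper's argument: $s_r$ preserves $Q_1\cdots Q_m$ by Lemma \ref{lem:R-matrix}, $\pi$ gives $Q_2\cdots Q_mP^kQ_1=Q_1^{-1}L(x)Q_1$, and $\tilde\pi$ just shifts the second index of the array $Q$, i.e.\ conjugates each $Q_i$ (hence $L$) by $P(x)$.

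For $\tilde s_r$, however, your proposal has a genuine gap, and it is exactly the step you yourself label ``the main obstacle.'' The paper does not rederive the whirl move from local moves here; it quotes from \cite[Theorem 6.2]{LP} the \emph{factorwise} formula \eqref{eq:actionB}, $(Q_s)_s\mapsto\bigl((B_{s,r})^{-1}Q_s\,B_{s+1,r}\bigr)_s$, so that the product telescopes, and then finishes with the periodicity $E^{(s+m)}_{r+ki}=E^{(s)}_{r+k(i-1)}$, equivalently $B_{s+m,r}=P(x)^kB_{s,r}P(x)^{-k}$, which converts the leftover factor $B_{m,r}P^k$ into $P^kB_{0,r}$ and yields conjugation by $B_{0,r}$. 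Your sketch never produces the intermediate matrices $B_{s,r}$ for $0<s\le m$ (the data that makes a column-by-column argument possible), and the two facts you defer --- that the parameter of the pushed crossing at vertical position $s$ and height $r+ki$ is exactly $\kappa^{(s)}_{r+ki}=(\sigma_{r+1}-\sigma_r)/E^{(s)}_{r+ki}$, and that after passing the $m$ columns the crossing created at height $r+ki$ cancels the one at height $r+k(i+1)$ with no extra power of $x$ or sign --- are precisely the content of \eqref{eq:actionB} together with the relation $E^{(s+m)}_{r+ki}=E^{(s)}_{r+k(i-1)}$; everything else is, as you say, formal. Moreover your appeal to the uniqueness statement of \cite[Theorem 6.2]{LP} does not by itself cover this situation: that theorem concerns two wires on an honest cylinder crossing the transversal wires once, whereas here the two horizontal bundles wind $n'$ times around the torus, which is exactly the circumstance the paper flags in the proof of Theorem \ref{thm:dynamics} as requiring a separate (local) verification. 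So either carry out that bookkeeping explicitly (establishing the $B_{s,r}$ and the periodicity relation on the twisted cylinder), or do as the paper does and invoke the factorwise formula of \cite[Theorem 6.2]{LP} directly; as written, the conjugation by $B_{0,r}$ is asserted rather than proved. (The case $r=0$ is then a routine consequence of $\tilde\pi\tilde s_1=\tilde s_0\tilde\pi$, as you indicate.)
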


\begin{proof}
(i) Due to Lemma~\ref{lem:R-matrix}, we have $\alpha \circ s_r(Q) = \alpha(Q)$ for $1 \leq r \leq m-1$.
The induced action of $\pi$ is 
\begin{align*}
  \pi^\ast (Q_1 \cdots Q_m P(x)^k) 
  &= Q_2 \cdots Q_m P(x)^k Q_1, 
\end{align*}
and the claim follows. 
\\
(ii) By \cite[Theorem 6.2]{LP},
the action of $\tilde s_r$ on $\mM$ is given by
\begin{align}\label{eq:actionB}
  (Q_s)_{s=1,\ldots,m} \mapsto 
  ((B_{s,r})^{-1} Q_s \,B_{s+1,r})_{s=1,\ldots,m}.
\end{align}
By definition $E^{(s)}_{r+ki}$ satisfies
$E^{(s+m)}_{r+ki} = E^{(s)}_{r+k(i-1)}$,
and we have $B_{s+m,r} = P(x)^k B_{s,r} P(x)^{-k}$.
The claim follows.

The second part is easy to see, as the transformation $\tilde \pi$ just cycles 
the indices inside the $Q_i$ (Definition \ref{def:action}). 
\end{proof}

Since similar matrices have the same
characteristic polynomial, we obtain the following: 
 
\begin{cor}\label{cor:L-WW}
For each $q \in \mM$, $\psi(q)$ is invariant under the action of 
$W \times \tilde W$. In particular, the coefficients of $f(x,y) = \psi(q)$ are conserved quantities for $W \times \tilde W$.
\end{cor}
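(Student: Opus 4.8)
The plan is to reduce the corollary directly to Lemma~\ref{lem:vertical-L}, using only the elementary fact that conjugate matrices have the same characteristic polynomial. Recall that $\psi = \beta \circ \alpha$, where $\beta$ sends a Lax matrix $L(x)$ to $\det(L(x)-y)$; thus $\psi(q)$ depends on $L(x) = \alpha(q)$ only through its $\C(x)$-conjugacy class. Since $W = \langle \mS_m, \pi\rangle$ and $\tilde W = \langle \mS_N, \tilde\pi\rangle$, the product $W \times \tilde W$, acting on $\mM$, is generated by $s_1,\ldots,s_{m-1},\pi,\tilde s_1,\ldots,\tilde s_{N-1},\tilde\pi$, so it is enough to check that each of these generators preserves $\psi$.

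First I would handle the generators of $W$ via Lemma~\ref{lem:vertical-L}(i). Since $\alpha\circ s_r = \alpha$ on $\mM$ for $1\le r\le m-1$, we get $\psi\circ s_r=\psi$ at once; and since $\alpha\circ\pi(q) = Q_1^{-1}\,\alpha(q)\,Q_1$ with $Q_1=Q_1(x)$ invertible over $\C(x)$ (its determinant $\epsilon_1(q)+(-1)^{n-1}x$ is a nonzero polynomial in $x$), the matrices $\alpha\circ\pi(q)$ and $\alpha(q)$ are conjugate, so $\psi\circ\pi=\psi$. Next I would handle the generators of $\tilde W$ via Lemma~\ref{lem:vertical-L}(ii) in exactly the same manner: $\alpha\circ\tilde s_r(q) = (B_{0,r})^{-1}\,\alpha(q)\,B_{0,r}$ with $B_{0,r}$ invertible (it is $\mathbb{I}_n$ plus a nilpotent matrix), and $\alpha\circ\tilde\pi(q) = P(x)\,\alpha(q)\,P(x)^{-1}$ with $P(x)$ invertible over $\C(x)$ since $\det P(x)=\pm x$. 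In each case $\alpha$ is conjugated, so $\beta\circ\alpha = \psi$ is unchanged.

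Iterating over words then shows that the whole group $W\times\tilde W$ fixes $\psi$, i.e.\ each $W\times\tilde W$-orbit lies in a single fiber $\psi^{-1}(f)$. Since the commuting time evolutions $e_u$ and $\tilde e_u$ of \eqref{eq:Za-action} and \eqref{eq:Zm-action} are elements of $W\times\tilde W$, they preserve $\psi$, so every coefficient of $f(x,y)=\psi(q)$ is a conserved quantity for the $\Z^m\times\Z^N$ dynamics. I do not anticipate any genuine difficulty: all of the real content — namely that the $R$-matrix moves and the rotations $\pi,\tilde\pi$ act on $\mL$ by conjugation — is already contained in Lemma~\ref{lem:vertical-L}, and the only fussy point is that $s_r$ and $\tilde s_r$ act birationally, so these equalities of regular functions on $\mM$ are to be read as identities on the dense open locus where the maps are defined (equivalently, as identities of rational functions on $\mM$).
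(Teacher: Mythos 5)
Your argument is exactly the paper's: the corollary is deduced from Lemma~\ref{lem:vertical-L} together with the fact that similar matrices over $\C(x)$ have the same characteristic polynomial, so each generator of $W\times\tilde W$ preserves $\psi$. Your extra remarks on the invertibility of $Q_1$, $B_{0,r}$, $P(x)$ and on birationality are fine but add nothing beyond the paper's one-line proof.
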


\begin{lem}\label{lem:pi-c}
Each $c \in \mathcal{S}_f$ is invariant under actions of $s_i$ and $\tilde s_i$. 
The cyclic shift $\pi$
acts on $\mathcal{S}_f$ via 
$$\tau^{-1}: (c_1, \ldots, c_M) \mapsto (c_2, \ldots, c_M, c_1),$$
and $\tilde \pi$ acts on $\mathcal{S}_f$ via 
$$\tau: (c_1, \ldots, c_M) \mapsto (c_M, c_1, \ldots, c_{M-1}).$$
\end{lem}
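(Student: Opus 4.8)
The plan is to read off the action on $\mathcal{S}_f$ directly from the adjoint transformations on $\mL$ computed in Lemma \ref{lem:vertical-L}, using the definition of $c_\ell$ as the coefficient of the maximal power of $x$ in the $(\ell,\ell+1)$ entry of $L(x)^{n/M}$. The first point is that $c_\ell$ is manifestly a rational function of $L(x) \in \mL$ (not just of $f$), so it makes sense to track how it transforms.

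First I would treat $s_i$ and $\tilde s_i$. By Lemma \ref{lem:vertical-L}(i), $s_i$ induces the identity on $\mL$, so $c_\ell$ is literally unchanged. By Lemma \ref{lem:vertical-L}(ii), $\tilde s_r$ induces the conjugation $L(x) \mapsto (B_{0,r})^{-1}L(x)B_{0,r}$; since $B_{0,r} = \mathbb{I}_n + (\text{strictly upper-triangular-type terms } E_{a,a+1})$ is a unipotent matrix independent of $x$ (its entries $\kappa^{(0)}_{r+ki}$ involve only the $q_{ij}$), conjugation by it does not change the top-degree-in-$x$ coefficient of any entry on the superdiagonal. Concretely, $(B_{0,r}^{-1} L^{n/M} B_{0,r})_{\ell,\ell+1}$ differs from $(L^{n/M})_{\ell,\ell+1}$ only by terms that are $x$-degree-strictly-smaller, because the matrix $L(x)$ has its highest $x$-powers controlled by the snake-path structure (Theorem \ref{thm:mit}, Lemma \ref{lem:c-snake}) and the unipotent conjugation only mixes in lower-degree contributions. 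I would make this precise by the degree bookkeeping already used in the proof of Lemma \ref{lem:c-snake}: the maximal power of $x$ in the $(\ell,\ell+1)$ entry of $L(x)^{n/M}$ comes from almost-snake paths, and pre/post-multiplying by a degree-zero unipotent factor cannot increase that power nor alter its leading coefficient. Hence $\tilde s_r$ fixes every $c_\ell$.

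Next I would handle $\pi$. By Lemma \ref{lem:vertical-L}(i), $\pi^\ast: L(x) \mapsto Q_1^{-1}L(x)Q_1$. The cleanest route is to use the explicit formula from Lemma \ref{lem:c-snake}: when $L(x) = \alpha(q)$ we have $c_\ell = \sum_{i+j\equiv \ell+1 \bmod M} q_{ij}$. The operator $e_u$ (and in particular, unwinding the definition, $\pi$ itself) acts on the $Q_i$ by cyclically shifting $(Q_1,\ldots,Q_m)\mapsto(Q_2,\ldots,Q_m,P^kQ_1P^{-k})$, which on the array $(q_{ij})$ shifts the first index $i\mapsto i-1$ and simultaneously shifts the horizontal index of the wrapped-around factor by $k$ (conjugation by $P^k$). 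The net effect on a variable $q_{ij}$ contributing to $c_\ell$ via $i+j\equiv \ell+1$ is that it now contributes to $c_{\ell'}$ with $\ell' \equiv \ell - 1 \bmod M$ (using $k \equiv -(m) \bmod M$ since $M = \gcd(n,m+k)$, so the shift $i\mapsto i-1$ combined with the $P^k$-twist on the wrapped factor consistently decrements $\ell$ by one modulo $M$). Therefore $\pi$ sends $(c_1,\ldots,c_M)\mapsto(c_2,\ldots,c_M,c_1) = \tau^{-1}(c)$. For $\tilde\pi$, Lemma \ref{lem:vertical-L}(ii) gives $\tilde\pi^\ast: L(x)\mapsto P(x)L(x)P(x)^{-1}$, which is conjugation by the cyclic permutation matrix $P(x)$ and simply relabels $(\ell,\ell+1)\mapsto(\ell+1,\ell+2)$ on the superdiagonal of $L(x)^{n/M}$ (with the $x$-twist accounting for wraparound), so $c_\ell \mapsto c_{\ell-1}$, i.e. $(c_1,\ldots,c_M)\mapsto(c_M,c_1,\ldots,c_{M-1}) = \tau(c)$. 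Since by Lemma \ref{lem:c-f} the product $\prod_\ell c_\ell = f_c$ is fixed and $\tau,\tau^{-1}$ are permutations of the coordinates, the image stays in $\mathcal{S}_f$, and everything is consistent on the Zariski-dense set $\psi(\mM)\cap\mathcal{V}$, hence on all of $\beta^{-1}(f)$ by continuity.

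The main obstacle I anticipate is the bookkeeping in the $\pi$ case: one must carefully check that the combination of the index shift $i\mapsto i-1$ on the $Q_i$ and the conjugation by $P(x)^k$ on the wrapped-around factor produces exactly a shift of $\ell$ by $-1$ modulo $M$, rather than by some other amount — this hinges on the congruence $m + k \equiv 0 \bmod M$ and on matching the ``$+1$'' offset in $i+j\equiv\ell+1$. Verifying the $\tilde s_r$ invariance rigorously (that unipotent conjugation truly cannot touch the leading $x$-coefficient on the superdiagonal) is the other place where one has to be a little careful, but it follows from the same degree estimates established in the proof of Lemma \ref{lem:c-snake}.
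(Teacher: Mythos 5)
Your conclusions all agree with the lemma, and for $\pi$ your argument is essentially the paper's: the paper proves the lemma in one stroke by combining the explicit formula of Lemma \ref{lem:c-snake} with the coordinate description of the cyclic shifts, $\tilde\pi(Q)_{ij}=q_{i,j-1}$ and $\pi(Q)_{ij}=q_{i+1,j}$, so that each $c_\ell=\sum_{i+j\equiv\ell+1\bmod M}q_{ij}$ is visibly sent to a neighbouring $c_{\ell\mp1}$; the congruence $m+k\equiv 0\bmod M$ plays exactly the role you assign it for the wrapped-around factor. You instead treat $\tilde\pi$ and $\tilde s_r$ on the Lax side through the conjugations of Lemma \ref{lem:vertical-L}(ii). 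For $\tilde\pi$ (conjugation by $P(x)$, which relabels the superdiagonal of $L(x)^{n/M}$, with the $x$-twist at the wraparound) this is a perfectly good alternative to the paper's coordinate computation. Note also that the paper's printed proof only treats $\pi$ and $\tilde\pi$ and leaves the $s_i$, $\tilde s_i$ invariance implicit, so spelling that part out as you do is a reasonable addition; the $s_i$ case is indeed immediate since $s_i$ fixes $L(x)$.

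The one place where your reasoning, as written, does not go through is the $\tilde s_r$ invariance, and you rightly flag it as the delicate step. It is not true that conjugating by the unipotent matrix $B_{0,r}$ ``only mixes in lower-degree contributions'': with $K:=L(x)^{n/M}$ and $\ell\equiv r\bmod N$ one finds
$$
\bigl(B_{0,r}^{-1}KB_{0,r}\bigr)_{\ell,\ell+1}
=K_{\ell,\ell+1}+\kappa^{(0)}_{\ell}\bigl(K_{\ell,\ell}-K_{\ell+1,\ell+1}\bigr)-\bigl(\kappa^{(0)}_{\ell}\bigr)^{2}K_{\ell+1,\ell},
$$
and the diagonal entries $K_{\ell,\ell}$ have the \emph{same} maximal $x$-degree $(m+k)/M$ as $K_{\ell,\ell+1}$, so degree bookkeeping alone does not protect the leading coefficient. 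What rescues the argument is a cancellation: the coefficient of $x^{(m+k)/M}$ in every diagonal entry of $L(x)^{n/M}$ equals $1$ (it comes from the lowest unfolded diagonal of $L$, which consists entirely of $1$'s; combinatorially, from complete closed snake paths carrying no $q$-weight), so $K_{\ell,\ell}-K_{\ell+1,\ell+1}$ has strictly smaller $x$-degree, while $K_{\ell+1,\ell}$ has degree at most $(m+k)/M-1$. Once this observation is inserted, your proof of the $\tilde s_r$ statement is complete, and the rest of the proposal stands.
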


\begin{proof}
From the definition of the cyclic group action \eqref{eq:pi},
it follows that $\tilde \pi(Q)_{ij} = q_{i,j-1}$ and $\pi(Q)_{ij} = q_{i+1,j}$. Thus, using Lemma~\ref{lem:c-snake}, we see that
$\tilde \pi$ induces 
$$
  c_\ell = \sum_{i+j \equiv \ell+1 \mod M} q_{i,j}
  ~\mapsto \sum_{i+j \equiv \ell+1 \mod M} q_{i,j-1}
  =  \sum_{i+j \equiv \ell \mod M} q_{i,j} = c_{\ell -1},
$$  
and that $\pi$ induces $c_{\ell} \mapsto c_{\ell+1}$ similarly.
\end{proof}

\subsection{Eigenvector change under conjugation}
For a vector $v = (v_1,v_2,\ldots,v_n)$ of rational functions on $C_f$, define
$$
D(v) = \text{(common zeroes of $v_i$)} - \text{(common poles of $v_i$)} + R + O_n
$$
where $R$ is the divisor supported on $S_{P,O} = \{P,O_1,\ldots,O_N\}$,
which appears in Theorem \ref{thm:double}. (See Lemma~\ref{lem:zeta}
for the explicit formula of $R$.)
The following result is immediate.

\begin{lem}\label{L:scale}
Let $f$ be a rational function on $C$.  Then $D((fv_1,\ldots,fv_n))$ is linearly equivalent to $D((v_1,\ldots,v_n))$.
\end{lem}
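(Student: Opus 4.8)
The statement asserts that scaling all coordinates of a vector $v = (v_1,\ldots,v_n)$ of rational functions on $C_f$ by a single rational function $f$ does not change the linear-equivalence class of the divisor
$$
D(v) = (\text{common zeroes of } v_i) - (\text{common poles of } v_i) + R + O_n.
$$
Here $R$ is the fixed (positive, degree $2g+2$) divisor supported on $S_{P,O}$ from Theorem \ref{thm:double}, and $O_n$ is interpreted as $O_r$ with $r \equiv n \bmod N$. Since $R$ and $O_n$ are the same divisor for $v$ and for $fv$, the claim reduces to showing that
$$
(\text{common zeroes of } fv_i) - (\text{common poles of } fv_i) \sim (\text{common zeroes of } v_i) - (\text{common poles of } v_i).
$$

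The plan is to prove directly that the left-hand side minus the right-hand side equals the principal divisor $(f)$ of $f$. First I would introduce, for a point $Q \in C_f$, the quantity $\mu_Q := \min_i \operatorname{ord}_Q(v_i)$, the minimal order of vanishing of the coordinates at $Q$ (with the usual convention that a pole contributes a negative order, and $\mu_Q \le 0$ at a common pole, $\mu_Q \ge 1$ at a common zero, $\mu_Q = 0$ generically). Then the "common zeroes minus common poles" divisor of $v$ is precisely $\sum_Q \mu_Q \cdot Q$: at a common zero this records the largest common order of vanishing, at a common pole (where every $v_i$ has a pole but with possibly different orders) it records $-(\text{least pole order among the } v_i)$, and elsewhere it is $0$. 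The key local observation is then that for each $Q$,
$$
\min_i \operatorname{ord}_Q(f v_i) = \operatorname{ord}_Q(f) + \min_i \operatorname{ord}_Q(v_i),
$$
since $\operatorname{ord}_Q(f)$ is a fixed shift applied uniformly to every coordinate. Summing over all $Q \in C_f$ gives
$$
\sum_Q \min_i \operatorname{ord}_Q(f v_i)\, Q = (f) + \sum_Q \min_i \operatorname{ord}_Q(v_i)\, Q,
$$
which is exactly the desired statement that the two "common zeroes minus common poles" divisors differ by the principal divisor $(f)$, hence are linearly equivalent. Adding back the common term $R + O_n$ yields $D((fv_1,\ldots,fv_n)) = D((v_1,\ldots,v_n)) + (f)$, so the two are linearly equivalent.

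There is no serious obstacle here; the only point requiring a little care is the bookkeeping of signs and conventions, namely checking that "(common zeroes) $-$ (common poles)" of $v$ really coincides with $\sum_Q (\min_i \operatorname{ord}_Q v_i) Q$ in all three cases (common zero, common pole, neither), and that this sum is a finite (well-defined) divisor because $\mu_Q = 0$ for all but finitely many $Q$. Once that identification is in place, the multiplicativity $\operatorname{ord}_Q(fv_i) = \operatorname{ord}_Q(f) + \operatorname{ord}_Q(v_i)$ of the valuation makes the rest a one-line computation, and linear equivalence follows from the definition $[D_1] = [D_2]$ iff $D_1 - D_2 = (h)$ for some meromorphic $h$, here with $h = f$.
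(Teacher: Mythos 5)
Your proof is correct in substance, and it is exactly the ``immediate'' argument the paper intends (the paper offers no written proof of this lemma): since $R$ and $O_n$ are the same for $v$ and $fv$, everything reduces to the observation that multiplication by $f$ shifts every order $\mathrm{ord}_Q(v_i)$ uniformly by $\mathrm{ord}_Q(f)$, so the divisor $\sum_Q \min_i \mathrm{ord}_Q(v_i)\,Q$ changes by exactly the principal divisor $(f)$, giving linear equivalence. One correction, though, to the bookkeeping step you yourself flagged as the delicate point: at a common pole, $\min_i \mathrm{ord}_Q(v_i)$ equals minus the \emph{greatest} pole order among the $v_i$, not minus the least; and at a point where some but not all of the $v_i$ have poles (so the point is neither a common zero nor a common pole in the literal sense), the literal reading of ``common zeroes minus common poles'' gives coefficient $0$ while $\min_i \mathrm{ord}_Q(v_i)<0$, so your claimed identity ``common zeroes minus common poles $=\sum_Q \min_i \mathrm{ord}_Q(v_i)\,Q$'' is not literally true at such points. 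This does not damage the argument: the reading of $D(v)$ that the paper actually uses --- and the only one under which the lemma and its later applications (Lemmas \ref{L:Qv} and \ref{lem:time}, where multiplicities at $P$ are compared coordinatewise) are correct --- is the gcd divisor $\sum_Q \min_i \mathrm{ord}_Q(v_i)\,Q$ together with $R+O_n$, and for that divisor your one-line shift computation is complete and is the intended proof.
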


\begin{lem}\label{L:D}
The positive general divisor $\D = \D(L(x))$ of degree $g$ associated to $L(x)$ is equal to $D((\Delta_{1,n},\ldots,\Delta_{n,n}))$.
\end{lem}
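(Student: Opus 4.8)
The plan is to read the divisor of each entry of the vector directly off Theorem~\ref{thm:double} and to recognise the part common to all entries as $\D$.

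First I would recall that the discussion right after Definition~\ref{def:D} already identifies $\D=\D(L(x))$ with the divisor $D_n$ produced by Theorem~\ref{thm:double}, so it suffices to prove $D((\Delta_{1,n},\dots,\Delta_{n,n}))=D_n$. Applying Theorem~\ref{thm:double} with $j=n$ gives, for every $i\in[n]$,
$$
(\Delta_{i,n})=D_n+\bar D_i+(n-i-1)P+\sum_{r=n+1}^{i-1}O_r-R ,
$$
where, as in that construction, $D_n$ and the $\bar D_i$ are positive divisors of degree $g$ avoiding $S_{P,O}=\{P,O_1,\dots,O_N\}$ (the special-point contributions having been gathered into the $P$, $O_r$ and $R$ terms), and $\bar D_1,\dots,\bar D_n$ have pairwise no common point.

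Next I would compute $D((\Delta_{1,n},\dots,\Delta_{n,n}))$ one point at a time. Over $C_f\setminus S_{P,O}$ the function $\Delta_{i,n}$ has the effective divisor $D_n+\bar D_i$, so the entries have no common pole there; and since $n\ge 2$ and the $\bar D_i$ share no point, $\min_i\mathrm{ord}_Q(\Delta_{i,n})=D_n(Q)$ at every $Q\notin S_{P,O}$. Thus the restriction to $C_f\setminus S_{P,O}$ of $(\text{common zeroes})-(\text{common poles})$ equals $D_n$. At a point of $S_{P,O}$ the orders $\mathrm{ord}_Q(\Delta_{i,n})$ are read straight from $(n-i-1)P+\sum_{r=n+1}^{i-1}O_r-R$, and with the explicit formula $R=(\zeta)=(k'-1)\sum_{r=1}^{N}O_r+((n-1)m-k-M)P$ of Lemma~\ref{lem:zeta} one checks that the contribution of the common zeroes and poles over $S_{P,O}$ is cancelled exactly by the correction term $R+O_n$ in the definition of $D(\,\cdot\,)$. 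Combining the two ranges gives $D((\Delta_{1,n},\dots,\Delta_{n,n}))=D_n=\D$.

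The step I expect to be the real obstacle is precisely this last verification at $P$ and at the $O_r$: it amounts to matching the local orders of the maximal minors of $L(x)-y$ at the special points against the explicit divisor $R$ of Lemma~\ref{lem:zeta}, which is the same kind of local analysis that goes into Theorem~\ref{thm:double} and into \cite{vMM} (cf.\ Proposition~\ref{prop:Q}); it is here that the exact shape of the correction $R+O_n$ in the definition of $D(\,\cdot\,)$ is used.
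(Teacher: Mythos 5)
Your proposal is correct and follows the paper's own argument: the paper's proof of this lemma consists precisely of your two observations — by Theorem \ref{thm:double} (with $j=n$) the divisor $D_n=\D$ appears in every $(\Delta_{i,n})$ and hence lies in the common zeroes, while $\bar D_1,\ldots,\bar D_n$ have pairwise no common points and therefore contribute nothing. The special-point bookkeeping at $P$ and the $O_r$ that you flag as the remaining obstacle is not carried out in the paper either; it is absorbed into the normalizing term $R+O_n$ in the definition of $D(\cdot)$, so your write-up is, if anything, more explicit than the published two-line proof.
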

\begin{proof}
By Theorem \ref{thm:double}, $\D = D_n$ belongs to the common zeroes.  Also by Theorem \ref{thm:double}, $\bar D_1,\ldots,\bar D_n$ have no common points, so they do not contribute to $D((\Delta_{1,n},\ldots,\Delta_{n,n}))$.  
\end{proof}

\begin{lem}\label{L:Qv}
Suppose $M = M(x,y)$ is a $n \times n$ matrix whose entries are polynomials in $\C[x,y]$, thought of as rational functions on $C_f$ with poles supported at $P$.  Let $D(M \cdot v) - D(v) = D_+ - D_-$, where $D_+$ and $D_-$ are
positive divisors. 
Then restricted to $ C_f \setminus P$, we have that $(\det M)_0 - D_+$ is a positive divisor.  Also, $D_-$ is supported at $P$.
\end{lem}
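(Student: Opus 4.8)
The plan is to analyze the map $v \mapsto M \cdot v$ pointwise on $C_f \setminus P$, using the fact that $M(x,y)$ has polynomial entries and hence no poles away from $P$. Let me write $w = M \cdot v$, so $w_i = \sum_j M_{ij} v_j$. Since the $M_{ij}$ are regular on $C_f \setminus P$, at any point $Q \in C_f \setminus P$ the order of vanishing satisfies $\mathrm{ord}_Q(w_i) \geq \min_j \mathrm{ord}_Q(v_j)$, so the common-zero divisor of $w$ on $C_f \setminus P$ is $\geq$ the common-zero divisor of $v$; this already shows $D_- $ is supported at $P$, since away from $P$ the divisor $D(w) - D(v)$ can only acquire extra zeros, not poles (the terms $R + O_n$ cancel in the difference). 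So the substantive claim is the inequality $(\det M)_0 - D_+ \geq 0$ on $C_f \setminus P$.

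The key step is a local linear-algebra estimate. Fix a point $Q \in C_f \setminus P$ with local parameter $t$. Write $a_j := \mathrm{ord}_Q(v_j)$ and let $a := \min_j a_j$, so the common-zero multiplicity of $v$ at $Q$ is $a$ (interpreting $a = +\infty$ only if $v \equiv 0$, which does not happen since $v$ comes from an eigenvector). Similarly let $b := \min_i \mathrm{ord}_Q(w_i)$, so the contribution of $Q$ to $D_+$ is $b - a$ (which is $\geq 0$ by the previous paragraph). I want to show $\mathrm{ord}_Q(\det M) \geq b - a$. Factor $v = t^a \hat v$ where $\hat v$ has at least one unit coordinate at $Q$, i.e. $\hat v(Q) \neq 0$ as a vector. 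Then $w = t^a (M \hat v)$, and $\mathrm{ord}_Q(M \hat v) = b - a$. Now $M(Q) \cdot \hat v(Q)$ vanishes to order $b - a$; if $b - a = 0$ there is nothing to prove, so assume $b - a \geq 1$, which forces $M(Q) \hat v(Q) = 0$ with $\hat v(Q) \neq 0$, hence $\det M(Q) = 0$. To get the precise order, expand $M(t) = M(Q) + t M_1 + \cdots$ and $\hat v(t) = \hat v(Q) + t \hat v_1 + \cdots$ in power series in $t$; the condition that $M \hat v$ vanishes to order $b-a$ gives a chain of $b-a$ linear relations which, by a standard argument (essentially looking at the Smith normal form of $M(t)$ over the local ring $\mathcal{O}_{C_f, Q} \cong \C[[t]]$, or invoking that the invariant factors of $M$ control the module $\mathrm{coker}(M)$), forces $\sum$ (invariant factor orders) $= \mathrm{ord}_Q(\det M) \geq b - a$. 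Summing this local inequality over all $Q \in C_f \setminus P$ yields $(\det M)_0 - D_+ \geq 0$ on $C_f \setminus P$, as claimed.

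The cleanest way to run the local step is via Smith normal form: over the PID $\C[[t]]$ we can write $M = U \,\mathrm{diag}(t^{e_1}, \ldots, t^{e_n})\, V$ with $U, V$ invertible over $\C[[t]]$ and $e_1 \leq \cdots \leq e_n$, so that $\mathrm{ord}_Q(\det M) = \sum_i e_i$. Then $M \hat v = U\, \mathrm{diag}(t^{e_i})\, (V\hat v)$, and since $U, V$ are invertible at $Q$, $\mathrm{ord}_Q(M\hat v) = \mathrm{ord}_Q(\mathrm{diag}(t^{e_i}) V\hat v) = \min_i (e_i + \mathrm{ord}_Q((V\hat v)_i))$. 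As $V\hat v$ has a unit coordinate at $Q$ (because $\hat v$ does and $V$ is a unit), $\min_i \mathrm{ord}_Q((V\hat v)_i) = 0$, so $\mathrm{ord}_Q(M\hat v) = \min_i e_i = e_1 \leq \sum_i e_i = \mathrm{ord}_Q(\det M)$. Hence $b - a = \mathrm{ord}_Q(M\hat v) \leq \mathrm{ord}_Q(\det M)$, which is exactly what is needed. I expect the main obstacle — really a bookkeeping point rather than a genuine difficulty — to be making sure that the auxiliary divisor terms $R$ and $O_n$ in the definition of $D(\cdot)$ genuinely cancel in $D(M\cdot v) - D(v)$ and contribute nothing near $P$ to the statement being proved, so that the inequality $(\det M)_0 - D_+ \geq 0$ is being asserted only on $C_f \setminus P$, where $\det M$ is indeed regular; once one restricts attention to $C_f \setminus P$ the Smith normal form argument is completely local and routine.
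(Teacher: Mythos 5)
Your proof is correct, but it takes a genuinely different route from the paper's. The paper's argument is the short adjugate (Cramer) computation: writing $v = M^{-1}(Mv) = \frac{1}{\det M}\,\mathrm{adj}(M)\,(Mv)$ and noting that the cofactors $|M|_{i,j}$ are regular away from $P$, one reads off $\mult_p D(v) \geq \mult_p D(Mv) - \mult_p(\det M)$ at every $p \in C_f \setminus P$, which together with the (shared) observation $\mult_p D(Mv) \geq \mult_p D(v)$ yields both claims at once. You instead localize at each $Q \in C_f \setminus P$ and use the Smith normal form of $M$ over $\C[[t]]$; this is valid and somewhat more structural -- it bounds the local jump $b-a$ by a single invariant factor rather than by all of $\mathrm{ord}_Q(\det M)$, so it is in principle sharper -- at the cost of invoking the classification of matrices over a discrete valuation ring, which the adjugate identity lets the paper avoid. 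One small slip in your write-up: from the existence of a unit coordinate of $V\hat v$ you conclude $\mathrm{ord}_Q(\mathrm{diag}(t^{e_i})\,V\hat v) = \min_i e_i$, which is false in general (take $V\hat v = (t,1)^T$ and $(e_1,e_2)=(0,5)$); the correct statement is $\mathrm{ord}_Q(\mathrm{diag}(t^{e_i})\,V\hat v) \leq e_{i_0}$ where $i_0$ is an index at which $(V\hat v)_{i_0}$ is a unit, and since all $e_i \geq 0$ this still gives $\mathrm{ord}_Q(M\hat v) \leq e_{i_0} \leq \sum_i e_i = \mathrm{ord}_Q(\det M)$, so your conclusion stands. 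Your handling of the cancellation of $R + O_n$ in the difference $D(Mv)-D(v)$ and of the claim that $D_-$ is supported at $P$ agrees with the paper's.
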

\begin{proof}
Let $p \in C_f \setminus P$.  Then the entries of $M$ are regular at $p$.  We shall show that the multiplicity of $p$ in $D_+$ is less than the multiplicity of $p$ as a zero in $\det M$.  Since $v'_i = \sum_{j} M_{ij}(x,y) v_j$, and $M_{ij}$ is regular at $p$, it is clear that $\mult_p D(Mv) \geq \mult_p D(v)$, where $\mult_p$ denotes the multiplicity of a divisor at a point $p$.  We also have $$v_i = \sum_{j} (M^{-1})_{ij}(x,y) v'_j = \dfrac{1}{(\det M)(x,y)}\sum_j \pm |M|_{i,j}(x,y) v'_j.$$
Since $|M|_{i,j}(x,y)$ is regular at $p$, we have $\mult_p D(v) \geq \mult_p(Mv) - \mult_p(\det M)$.  Both claims follow.
\end{proof}

Suppose $L'(x) = Q_1^{-1}L(x)Q_1$ and $\D' = \D(L'(x))$.  We shall compute $\D' - \D$ up to linear equivalence.  Note that $v = (\Delta_{1,n},\ldots,\Delta_{n,n})^T$ is an eigenvector of $L(x)^T-y$, and $L'(x)^T = Q_1^TL(x)^T(Q_1^{-1})^T$, so an eigenvector of $L'(x)^T-y$ is equal to $v' = Q_1^Tv$, where
$$
Q_1^T= Q_1^T(x) = 
\left(\begin{array}{cccc} 
q_{1,1} & 1 & 0 &0\\
0&q_{1,2} &1 &0 \\
0&\ddots&\ddots&1 \\
x&0&0&q_{1,n}
\end{array} \right).
$$
Extend the vector $v$ into an infinite vector $\tv$ by $v_{i+n} = xv_i$.  As in \S\ref{sec:Lax}, let $A = (a_{i,j})_{i \in [n], j \in \Z}$ be the infinite ``unfolded" version of $L^T(x)$, satisfying $(L^T)_{i,j}(x) = \sum_\ell {a}_{i,j+\ell n} x^\ell$.  (Note that $A$ is a matrix of scalars, but $\tv = (\ldots,v_{-1},v_0,v_1,\ldots)^T$ is a matrix of functions.)  Then we have 
\begin{equation}\label{E:unfolded}
A \cdot \tv = y \tv.
\end{equation} 

Define $w = (v_1,v_2,\ldots,v_{k+m})^T$ and $w' = (v'_1,v'_2,\ldots,v'_{k+m})^T$.  We claim that there exists a $(k+m) \times (k+m)$ matrix $M = M(y)$ such that $w' = M(y) \cdot w$.  Since $v' = Q_1^T v$, we have $v'_i = q_i v_i + v_{i+1}$ (where $q_i$ is extended periodically if $i \geq n$).  We now write $v_{k+m+1}$ in terms of $v_1,\ldots,v_{k+m}$ using \eqref{E:unfolded}.  The matrix $A$ is supported on the $m+1$ diagonals $k,k+1,\ldots,k+m$.  The matrix $A - y$ is supported on the $(k+m+1)$ diagonals $0,1,\ldots,k+m$.  Thus \eqref{E:unfolded} gives
\begin{equation}\label{E:km1}
a_{1,k+1}v_{k+1} + \cdots + a_{1,k+m+1} v_{k+m+1} = y v_{1}.
\end{equation}
Also note that $a_{1,k+m+1} = 1$.  So
$$
M(y) = \left(\begin{array}{ccccccc} 
q_{1,1} & 1 & 0 & \cdots & \cdots&0&0\\
0&q_{1,2} &1 &0&0&0&0 \\
\vdots & & \ddots & \ddots & \\
0&\cdots && q_{1,k+1} & 1\\
\vdots &  && & \ddots & \ddots \\
0 &\cdots && & &q_{k+m-1}&1 \\
y&0&\cdots&-a_{1,k+1}& \cdots & -a_{1,k+m-1} &q_{1,k+m}-a_{1,k+m}
\end{array} \right).
$$

\begin{lem}\label{lem:time}
With the above conventions, we have $$\D' \sim \D +A_1 - P.$$
\end{lem}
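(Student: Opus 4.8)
The plan is to identify $\D'$ with the divisor $D(v')$ of the eigenvector $v' := Q_1^T v$ of $L'(x)^T-y$, and then to compute $D(v')-D(v)$ on the nose. First I would note that, since $L'(x)^T-y = Q_1^T\,(L(x)^T-y)\,(Q_1^T)^{-1}$ and $v$ generates the kernel of $L(x)^T-y$ over $C_f$, the vector $v' = Q_1^T v$ generates the kernel of $L'(x)^T-y$; in particular the vector of $n$-th column cofactors of $L'(x)-y$ is a rational multiple of $v'$, so Lemma~\ref{L:D} and Lemma~\ref{L:scale} give $\D' \sim D(v')$. Since $D(v)=\D$ by Lemma~\ref{L:D}, it remains to prove the sharper statement $D(v')-D(v)=A_1-P$.

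Next I would apply Lemma~\ref{L:Qv}, taking for $M$ the $(k+m)\times(k+m)$ matrix $M(y)$ constructed above (all the $v_i$ being recoverable from $w=(v_1,\dots,v_{k+m})^T$ through \eqref{E:unfolded}, whose leading coefficient is $1$). This writes $D(v')-D(v)=D_+-D_-$ with $D_+,D_-\ge 0$ of disjoint support, $D_-$ supported at $P$, and $D_+\le(\det M)_0$ on $C_f\setminus P$; moreover $\deg D_+=\deg D_-$ because $\deg\D'=\deg\D=g$. To locate $D_+$, take a generic $p\in C_f\setminus P$ with $v(p)\ne 0$: then $p$ can be in the support of $D_+$ only if $v'(p)=Q_1^T(p)v(p)=0$, hence $\det Q_1(p)=\epsilon_1+(-1)^{n-1}x(p)=0$, i.e. $x(p)=(-1)^n\epsilon_1$; writing $L(x)^T=(P(x)^k)^TQ_m(x)^T\cdots Q_1(x)^T$, with the factors $(P^k)^T,Q_m^T,\dots,Q_2^T$ invertible at $p$ (true for $f$ generic, so that $Q_1(p)$ has rank $n-1$ as well), any $0\ne w\in\ker Q_1^T(p)$ satisfies $(L(p)^T-y(p))w=-y(p)w$, and since $v(p)$ is a null vector of $L(p)^T-y(p)$ this forces $y(p)=0$. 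The only point of $C_f$ with $x=(-1)^n\epsilon_1$ and $y=0$ is $A_1$, so $D_+$ is supported at $A_1$.

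Finally I would pin down the multiplicity: combining $D_+\le(\det M)_0$ with the explicit form of $M(y)$ one checks that $\det M$ vanishes to order exactly $1$ at $A_1$ — equivalently, in a local parameter $t$ at $A_1$ (in which $x-(-1)^n\epsilon_1$ and $y$ vanish simply and $\ker Q_1^T((-1)^n\epsilon_1)\ni v(A_1)$), the expansion $v'(t)=Q_1^T(x(t))v(t)$ has vanishing constant term but non-vanishing linear term. Hence $D_+=A_1$, so $\deg D_-=1$ and, being supported at $P$, $D_-=P$. Therefore $D(v')-D(v)=A_1-P$, which together with the first paragraph gives $\D'\sim\D+A_1-P$. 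The main obstacle is exactly this last step: carrying out the determinant computation for $M(y)$ (tracking the entries inherited from the unfolded Lax matrix $A$ via Theorem~\ref{thm:minor}, and handling the cases $k+m\le n$ and $k+m>n$) and verifying the order of vanishing at $A_1$. Everything else is formal once one invokes the genericity afforded by $f\in\oV$ — so that $A_1\notin\D$, $x$ is unramified at $A_1$, and $Q_2,\dots,Q_m,P^k$ are invertible at the points with $x=(-1)^n\epsilon_1$.
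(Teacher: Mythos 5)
Your overall frame is the same as the paper's: reduce to showing $D(v')-D(v)=A_1-P$ via Lemmas~\ref{L:scale}, \ref{L:D}, \ref{L:Qv}. Where you genuinely differ is the location of $D_+$: your kernel argument (if $v'(p)=Q_1^T(p)v(p)=0$ with $v(p)\neq 0$ then $\det Q_1(p)=0$, and the factorization $L^T=(P^k)^TQ_m^T\cdots Q_1^T$ gives $L(p)^Tv(p)=0$, hence $y(p)v(p)=0$ and $y(p)=0$, so $p=A_1$) is a clean alternative to the paper's route, which instead shows $D_+$ lies on $\{\det Q_1^T=0\}\cap\{\det M(y)=0\}$ and computes $\det M(y)=\pm y$ in Lemma~\ref{L:A}. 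As written your argument only treats points with $v(p)\neq 0$, i.e.\ points off $\mathrm{supp}\,\D$, but it extends: at $p\in\mathrm{supp}\,\D$ write $v=t^au$ with $u(p)\neq 0$ in a local parameter $t$; since $(L^T-y)u=0$ as well, the same reasoning shows the common-zero order cannot jump at $p$ unless $p=A_1$.

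The genuine gap is the multiplicity and degree bookkeeping --- precisely the step you flag as ``the main obstacle.'' You want $D_+=A_1$ exactly, either from $\det M(y)$ vanishing to order one at $A_1$ (but that is the content of Lemma~\ref{L:A}, a nontrivial path-counting identity you defer, and the bound from Lemma~\ref{L:Qv} applied to $w'=M(y)w$ must additionally be transferred from $(w,w')$ back to $(v,v')$, which is only valid away from $S_{P,O}$), or from a local expansion of $v'=Q_1^Tv$ at $A_1$, which requires $v(A_1)\neq 0$, i.e.\ $A_1\notin\D$, and $x$ unramified at $A_1$. Neither condition follows from $f\in\oV$, contrary to your closing claim: $\D$ varies over the whole fiber $\psi^{-1}(f)$ (by Theorem~\ref{thm:eta} essentially every regular positive divisor of degree $g$ occurs), and the lemma is used in Theorem~\ref{thm:finite} for arbitrary points of that fiber, so $A_1\in\mathrm{supp}\,\D$ cannot be excluded. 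The paper sidesteps all of this by computing $D_-$ first: Theorem~\ref{thm:double} gives $\mult_P(v_i)=C-i$ exactly, hence $\mult_P(v'_i)=C-i-1$ and $D_-=P$ with multiplicity one; degree balance then makes $D_+$ a single simple point, and only its location remains to be determined. For the same reason, the assertion that $D_-$ is supported at $P$ should come from Lemma~\ref{L:Qv} applied with $M=Q_1^T$ (entries polynomial in $x,y$), not from the application with $M(y)$, which by itself would not exclude negative contributions at the $O_i$. If you graft the paper's $P$-side computation onto your kernel argument, you obtain a complete proof that avoids Lemma~\ref{L:A} altogether; as the proposal stands, the exact multiplicities are not established.
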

\begin{proof}
We shall show that $D(v') - D(v) = A_1 - P$.  This suffices by Lemmas \ref{L:scale} and \ref{L:D}.  
From Lemma \ref{L:Qv}, we have $D(v') - D(v) = D_+ - D_-$ where $D_{\pm}$ are positive divisors, with $D_+$ when restricted to $C_f \setminus P$ supported on $\{(\det Q^T)(x) = 0\}$, and $D_-$ supported at $P$.  Since $D(v')$ is a positive divisor of degree $g$
by the construction of $L'(x)^T$, we have that $D_+$ and $D_-$ have the same degree. 

We now calculate $D_-$.  By Theorem \ref{thm:double}, we have that $\mult_P(v_i) = C - i$ for some integer $C$ (we use the convention that negative multiplicity is a pole), and this formula still holds for the infinite vector $(\ldots,v_{-1},v_0,v_1,\ldots)$.  Since $v'_i = q_i v_i + v_{i+1}$, we have $\mult_P(v'_i) = C - (i+1)$.  Thus $D_- = P$.  It follows that $D_+$ is a single point in $C_f \setminus P$.

We shall show that $D_+$ must be supported on $A_1$.
By Theorem \ref{thm:double}, 
the common zeros of $v_i$ and $v_j$ for any $i \neq j$ are only $D_n$ except for 
the points in $S_{P,O}$. Thus we have $D(w) = D_n + R'$ where $R'$ is
supported on $S_{P,O}$.  
But $\{(\det Q^T)(x) = 0\}$ does not intersect $S_{P,O}$.  Using Lemma \ref{L:Qv} now applied to $w' = M(y)w$, we see that $D_+$ must be supported on $\{\det M(y) = 0\}$.

By Lemma \ref{L:A} below, we conclude that $D_+$ is a multiple of $A_1$.  Since $D(v') - D(v)$ is a divisor of degree 0, we must have
$$
D(v') - D(v) = A_1 - P,
$$
as required.
\end{proof}

\begin{lem}\label{L:A}
The intersection of $(\det Q^T)(x) = 0$ and $(\det M(y)) = 0$ is the single point $A_1$.  
\end{lem}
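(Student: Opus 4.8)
The plan is to describe both divisors of Lemma~\ref{L:A} concretely and check that they meet only at $A_1$. First, since the transpose does not change the determinant, $(\det Q^T)(x)=\det Q_1(x)=\epsilon_1(q)+(-1)^{n-1}x$, so the zero locus of $(\det Q^T)(x)$ on $C_f$ is exactly the set of points with $x=(-1)^n\epsilon_1(q)$. Next I would analyze $\det M(y)$. The variable $y$ appears in $M(y)$ only in the lower-left corner, so expanding along that entry gives $\det M(y)=\det M(0)+(-1)^{k+m+1}\,y\cdot m_{k+m,1}$, where $m_{k+m,1}$ is the minor of $M(y)$ obtained by deleting the last row and first column. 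That minor is lower bidiagonal with $1$'s on the diagonal, hence has determinant $1$. Thus $\det M(y)=\det M(0)+(-1)^{k+m+1}y$ is affine-linear in $y$ with unit leading coefficient, so its zero locus on $C_f$ is the set of points with $y$ equal to the single value $y_0:=(-1)^{k+m}\det M(0)$.

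The crux is that $y_0=0$, i.e.\ $\det M(0)=0$. Expanding $\det M(0)$ repeatedly along the first column peels off the factors $q_{1,1},\dots,q_{1,k}$ (the last row of $M(0)$ has zeros in columns $2,\dots,k$), leaving $\det M(0)=\bigl(\prod_{j=1}^{k}q_{1,j}\bigr)\det\hat M$, where $\hat M$ is the $m\times m$ matrix with diagonal $q_{1,k+1},\dots,q_{1,k+m-1},q_{1,k+m}-a_{1,k+m}$, superdiagonal all $1$, and last row $(-a_{1,k+1},\dots,-a_{1,k+m-1},q_{1,k+m}-a_{1,k+m})$. I would show $\hat M$ is singular by exhibiting a kernel vector: the first $m-1$ rows of $\hat M u=0$ force $u_\ell=(-1)^{\ell-1}\prod_{j=1}^{\ell-1}q_{1,k+j}$, and the last-row equation then reduces to the scalar identity $\sum_{\ell=1}^{m}(-1)^{\ell-1}\bigl(\prod_{j=1}^{\ell-1}q_{1,k+j}\bigr)a_{1,k+\ell}=(-1)^{m-1}\prod_{j=1}^{m}q_{1,k+j}$. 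Comparing the first rows of the unfolded matrices in the factorization $L=Q_1(x)\cdot\bigl(Q_2(x)\cdots Q_m(x)P(x)^k\bigr)$ gives $a_{1,k+\ell}=r_\ell q_{1,k+\ell}+r_{\ell-1}$, where the $r_\ell$ are the first-row entries of the unfolded transpose of $Q_2(x)\cdots Q_m(x)P(x)^k$, with $r_m=1$ (leading entry) and $r_0=0$; substituting this, the left side telescopes to the right side. Hence $\det\hat M=0$, so $\det M(0)=0$ and $y_0=0$. (Conceptually, this reflects the fact that at $A_1$ the matrix $Q_1(x)$ is singular while $Q_2(x)\cdots Q_m(x)P(x)^k$ is invertible, as $x(A_1)=(-1)^n\epsilon_1$ is distinct from $0$ and from the other roots $(-1)^n\epsilon_i$; so the eigenvector $v$ of $L^T-y$ at $A_1$ lies in $\ker Q_1^T(x(A_1))$, forcing $v'=Q_1^T v$ and hence $w'$ to vanish there.)

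With $y_0=0$ the intersection in question is the set of points of $C_f$ with $x=(-1)^n\epsilon_1$ and $y=0$, i.e.\ the points lying over $\bigl((-1)^n\epsilon_1,0\bigr)\in C'_f$. This plane point lies on $C'_f$ because $f(x,0)=\det P(x)^k\prod_i\det Q_i(x)$ vanishes there, and it is not $(0,0)$ since $\epsilon_1\neq0$; thus $C'_f$ is smooth there because $f\in\mathcal V$. Moreover, the $\epsilon_i$ being distinct and nonzero, $x=(-1)^n\epsilon_1$ is a simple root of $f(x,0)$, so the line $y=0$ meets $C'_f$ transversally at this point and exactly one point of $C_f$ lies above it — namely $A_1$. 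Therefore the intersection of the two divisors equals $\{A_1\}$, which is the assertion.

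I expect the main obstacle to be the vanishing $\det M(0)=0$: one must correctly extract the recursion $a_{1,k+\ell}=r_\ell q_{1,k+\ell}+r_{\ell-1}$ for the Lax-matrix entries from the factorization $L=Q_1\cdot(Q_2\cdots Q_mP^k)$ (keeping track of the boundary values $r_0=0$, $r_m=1$ and of the cyclic indexing of the $q_{1,\cdot}$), and then carry the telescoping sum through carefully; the rest — the shape of $\det M(y)$, the column expansion producing $\hat M$, and the transversality at $A_1$ — is routine.
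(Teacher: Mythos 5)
Your proof is correct, and its skeleton coincides with the paper's: both reduce the lemma to showing $\det M(y)=\pm y$, peel off the factor $q_{1,1}\cdots q_{1,k}$ to reduce to the vanishing of the same $m\times m$ determinant, and then reduce that vanishing to the alternating-sum identity \eqref{eq:detM=0} (your kernel vector is precisely the paper's column relation). The one genuine divergence is how that identity is proved: the paper argues combinatorially on the network, splitting each generating function $l_{k+i,1}$ according to whether the path picks up $q_{1,k+i}$ and exhibiting the correspondence $q_{1,k+i}\,l''_{k+i+1,1}=l'_{k+i,1}$, whereas you extract the recursion $a_{1,k+\ell}=r_\ell q_{1,k+\ell}+r_{\ell-1}$ (with $r_0=0$, $r_m=1$) from the unfolded factorization $L=Q_1\cdot(Q_2\cdots Q_mP^k)$ and telescope. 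These are really the same recursion -- the paper's path-splitting is the combinatorial reading of multiplying the first column of the unfolded $Q_2\cdots Q_mP^k$ by the unfolded $Q_1$ -- but your algebraic version is arguably cleaner and bypasses the path model, at the mild cost of silently using that unfolding is multiplicative (routine for these band matrices). You also spell out the concluding step, that $\{x=(-1)^n\epsilon_1\}\cap\{y=0\}$ on $C_f$ is exactly $\{A_1\}$, which the paper leaves implicit; this can be said even more directly from the divisor $(y)=k'\sum_i O_i+\sum_i A_i-(m+k)P$ together with the distinctness and nonvanishing of the $\epsilon_i$ guaranteed by Definition \ref{def:P:curve}. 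A minor point in your favor: the $y$-cofactor sign is indeed $(-1)^{k+m+1}$ as you state (the paper's displayed $(-1)^{m+1}$ is immaterial, since only $\det M(y)=\pm y$ is ever used).
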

\begin{proof}
We check that $\det M(y) = \pm y$.  
It is clear that 
$$
  \det M(y) = (-1)^{m+1} y + q_{1,1} \cdots q_{1,k}
  \det \begin{pmatrix}
        q_{1,k+1} & 1 \\
        & q_{1,k+2} & 1 \\
        & & \ddots & \quad \ddots \\
        & & & q_{1,k+m-1} & 1\\
        -\tl_{1,k+1} & -\tl_{1,k+2} & \cdots & -\tl_{1,k+m-1} & q_{1,k+m} - \tl_{1,k+m} 
       \end{pmatrix}. 
$$
Write $u_i~(i=1,\ldots,m)$ for the $m$ columns of the above $m$ by $m$ matrix. We show that they are linearly dependent as
$$
  u_1 - q_{1,k+1} \big(u_2 - q_{1,k+2} (u_3 - \cdots (u_{m-1} - q_{1,k+m-1} u_m) \cdots ) \big) = 0,
$$
which reduces to
\begin{align}\label{eq:detM=0}
&-\tl_{1,k+1} + q_{k+1} \tl_{1,k+2} - q_{1,k+1} q_{1,k+2} \tl_{1,k+3} + 
\cdots \\
&+(-1)^{m} q_{k+1}q_{k+2} \cdots q_{k+m-1} \tl_{1,k+m} 
+ (-1)^{m+1} q_{k+1}q_{k+2} \cdots q_{k+m} = 0. \nonumber
\end{align}
By the definition of $A$, we have 
$$
  \tl_{1,j} = \begin{cases} 
              l_{j,1} & j=k+1,\ldots,n, \\
              l_{j-n,1} & j=n+1,\ldots,n+k, \\
              \end{cases}
$$ 
where $L = (l_{ij})_{i \in [n], j \in \Z}$ is the infinite unfolded version of $L(x)$.
On the network $G'$, $l_{k+i,1}$ is the weight generating function of paths 
from the source $k+i$ to the sink $k+1$. 
Note that each weight in $l_{k+i,1}$ is of degree $m+1-i$ in $q$-s.
We divide $l_{k+i,1}$ into two parts:
$$
  l_{k+i,1} = l_{k+i,1}' + l_{k+i,1}'',
$$
where $l_{k+i,1}'$ is the sum of the weights {\em with} $q_{1,k+i}$,
and $l_{k+i,1}'$ is the sum of the weights {\em without} $q_{1,k+i}$.
We claim that there is one-to-one correspondence between  
the paths contributing to $l_{k+i,1}'$ 
and the paths contributing to $l_{k+i+1,1}''$. 
Precisely, we have $q_{1,k+i} l_{k+i+1,1}'' = l_{k+i,1}'$.
Combining with $l_{k+1,1}'' = 0$ and $l_{k+m,1}' = q_{1,k+m}$, 
we obtain \eqref{eq:detM=0}.
\end{proof}

\subsection{Proof of Theorem \ref{thm:finite}}\label{sec:prooffinite}

(i) Due to the definition of $s_r$, Lemma~\ref{lem:vertical-L}(i) and  
Lemma~\ref{lem:pi-c},  
for $Q=(Q_1,\ldots,Q_m) \in \psi^{-1}(f)$ with 
$\phi (Q) = ([\mathcal{D}],c,O,(A_1,\ldots,A_m))$, we have
$$
\phi \circ s_r(Q_1,\ldots,Q_m) = ([\mathcal{D}],c,O,(A_1,\ldots,A_{r-1},
A_{r}(Q'),A_{r+1}(Q'),A_{r+2},\ldots,A_m)),
$$  
where $Q' = s_r(Q)$. From \eqref{eq:q1q2} we see that 
$\epsilon_{r}(q') = \epsilon_{r+1}(q)$ and $\epsilon_{r+1}(q') = \epsilon_{r}(q)$,
and \eqref{eq:vertical-s} follows.

It is easy to see that for $L(x) = \alpha(Q)$ we have 
$\eta \circ \pi(L(x)) = ([\D'], \tau^{-1}(c),O')$,
where $\D'$ is some divisor.  By Lemma \ref{lem:time}, we have $[\D] = [\D' + A_1 - P]$.   The claim follows.
\\
(ii) First we prove \eqref{eq:horizontal-s}.
Let $g = (g_1,\ldots,g_n)^t$ be the eigenvector of $L(x) \in \psi^{-1}(f)$,
and set $(g_1',\ldots,g_n')^t := (B_{0,r})^{-1} g$.
Define $h_i' := g_i' / g_n'$ for $i=1,\ldots,n-1$. 
Then we have 
$$g_j' 
= \begin{cases}
  g_{r+ki} - \kappa^{(0)}_{r+ki} g_{r+ki+1} & j \equiv r+ki \mod n   
  \\
  g_j & \text{otherwise}.
  \end{cases}
$$
We must show that $(h_j')_{\infty} = (h_j)_{\infty}$ 
for $r=1,\ldots,N-1$ and $j=1,\ldots,n-1$.  It is enough to consider the case of $r=1$ and $j=1$.
Due to Theorem \ref{thm:double} and \eqref{eq:hdiv}, there are positive divisors
$\mathcal{D}, \mathcal{D}'$
of degree $g$ such that
$(h_1)_\infty = (n-1) P + \mathcal{D}$,  
$(h_1')_\infty = (n-1) P + \mathcal{D}'$.
Since $h_1' = h_1 - \kappa^{(0)}_{1} h_2$ and $(h_1)_\infty > (\kappa^{(0)}_{1} h_2)_\infty = (h_2)_\infty$,
we get $(h_1')_\infty \leq (h_1)_\infty$.
Thus it follows that $\mathcal{D}' = \mathcal{D}$.
From \eqref{eq:actionB} and Lemma~\ref{lem:pi-c}
it follows that both $A \in R_A$ and $c \in \C^M$ are not changed by 
$\tilde s_r$. 
From the facts that $\sigma_{N+1-r}(q)$ is the product of all 
diagonal elements of $\tilde Q_r$, and that 
the $R$-matrix action changes $(\tilde Q_r, \tilde Q_{r+1})$ 
to $(\tilde Q_{r+1}',\tilde Q_r')$,
it follows that $\tilde s_r$ exchanges the $(N-r)$-th and the $(N-r+1)$-th
elements of $O \in R_O$.
Then we get \eqref{eq:horizontal-s}.

Since $\tilde{\pi}$ induces the shift $\varsigma^\ast$ of $L(x)$,
\eqref{eq:horizontal-pi} follows from Proposition~\ref{prop:shift}.

\subsection{Snake path actions}
Recall that $M=\gcd(n,k+m)$. 
The {\em snake path actions} are torus actions on $\mM$, considered in \cite{LP}.
Recall from \S~\ref{subsec:lem:Q} 
that a snake path is a closed path on $G$ that turns at every vertex.  
Thus it alternates between going up or going right.  
For $1 \leq s \leq M$ and $t \in \C^\ast$,
the action $T_s := T_s(t)$ on $\mathcal{M}$ is given by
\begin{align}
  \label{eq:snake-q}
  T_s(q)_{ij} = 
  \begin{cases}
  t \, q_{ij}  & \text{ if } j \equiv s-i+1 \mod M\\
  \displaystyle{\frac{1}{t}}\, q_{ij}  & \text{ if } j \equiv s-i \mod M\\
  q_{ij} & \text{ otherwise}.
  \end{cases}
\end{align}
Informally, $T_s(t)$ multiplies all left turns on a snake path by $t$, and all right turns by $1/t$.  

\begin{lem}\label{lem:snake-h}
The induced action $T^\ast_s ~(1 \leq s \leq M)$ on $\mL$ is given by
$$
T^\ast_s: ~ L(x) \mapsto D_s(t) L(x) D_s(t)^{-1},
$$  
where 
$$
  D_s(t) = \mathrm{diag}(d_i)_{1 \leq i \leq n},
  \quad 
  d_i = \begin{cases}
        t & \text{ if $i \equiv s \mod M$} \\
        1 & \text{ otherwise }.  
        \end{cases}
$$
\end{lem}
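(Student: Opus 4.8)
The plan is to realise the conjugation by $D_s(t)$ as a telescoping product along the factorisation $L(x) = Q_1(x)\cdots Q_m(x)P(x)^k$. For $i = 0,1,\ldots,m$ set $D^{(i)} := \mathrm{diag}(d^{(i)}_1,\ldots,d^{(i)}_n)$ with $d^{(i)}_a = t$ if $a \equiv s-i \bmod M$ and $d^{(i)}_a = 1$ otherwise; here subscripts may be read modulo $n$, which is harmless since $M \mid n$. Note that $D^{(0)} = D_s(t)$. (The case $M = 1$ is degenerate, $D_s(t) = t\,\mathbb{I}_n$ acting trivially; we assume $M \geq 2$.) The proof consists of checking two entrywise identities and then multiplying them together.

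The first identity is
\begin{align*}
  D^{(i-1)} Q_i(x) = Q'_i(x)\, D^{(i)} \qquad (i = 1,\ldots,m),
\end{align*}
where $Q'_i(x)$ is the matrix of the form \eqref{eq:Q} whose diagonal entries are $T_s(q)_{i,1},\ldots,T_s(q)_{i,n}$. Since $s-(i-1) \equiv (s-i)+1 \bmod M$ we have $d^{(i-1)}_a = d^{(i)}_{a-1}$, so the conjugation $D^{(i-1)}Q_i(x)(D^{(i)})^{-1}$ fixes the subdiagonal $1$'s and the corner entry $x$ of $Q_i(x)$, and multiplies the $(j,j)$-entry $q_{i,j}$ by $d^{(i-1)}_j/d^{(i)}_j = d^{(i)}_{j-1}/d^{(i)}_j$; for $M\geq 2$ this ratio is $t$ when $j \equiv s-i+1 \bmod M$, is $1/t$ when $j \equiv s-i \bmod M$, and is $1$ otherwise, which is precisely \eqref{eq:snake-q}. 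The second identity is $D^{(m)}P(x)^k = P(x)^k D^{(0)}$. The matrix $P(x)^k$ has a single nonzero entry in each column $a$, located in row $a+k \bmod n$, so this reduces to $d^{(m)}_{a+k} = d^{(0)}_a$ for all $a$; and indeed $d^{(m)}_{a+k}=t$ iff $a+k\equiv s-m \bmod M$, which by $m+k\equiv 0 \bmod M$ is equivalent to $a\equiv s\bmod M$, i.e.\ to $d^{(0)}_a=t$.

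Combining these and telescoping,
\begin{align*}
  D_s(t)\, L(x)\, D_s(t)^{-1}
  &= \bigl(D^{(0)}Q_1(x)(D^{(1)})^{-1}\bigr)\cdots\bigl(D^{(m-1)}Q_m(x)(D^{(m)})^{-1}\bigr)\bigl(D^{(m)}P(x)^k(D^{(0)})^{-1}\bigr)\\
  &= Q'_1(x)\cdots Q'_m(x)\,P(x)^k = \alpha(T_s(q)).
\end{align*}
Since the first line depends only on $L(x)=\alpha(q)$, this computation simultaneously shows that the $T_s$-action descends to $\mL$ and that the induced map $T^\ast_s$ is conjugation by $D_s(t)$, proving the lemma. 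The one delicate point is the closing-up of the telescope across the factor $P(x)^k$: the cyclic shift by $k$ built into $P(x)^k$ must carry the period-$M$ pattern of the $D^{(i)}$ back to itself, and this is exactly what $M = \gcd(n,m+k)\mid m+k$ guarantees; every other verification is a routine entrywise comparison.
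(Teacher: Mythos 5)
Your proof is correct and follows essentially the same route as the paper: the paper rewrites $D_s L(x) D_s^{-1}$ as the telescoping product $\prod_{i=1}^{m}(P^{-i+1}D_sP^{i-1})\,Q_i\,(P^{-i}D_sP^{i})^{-1}\cdot(P^{-m}D_sP^{m})P^kD_s^{-1}$, and your matrices $D^{(i)}$ are exactly these conjugates $P^{-i}D_sP^{i}$ written out as explicit diagonal matrices. The only difference is that you spell out the entrywise verifications (that each middle factor reproduces \eqref{eq:snake-q} and that the pattern closes up across $P(x)^k$ because $M\mid m+k$), which the paper leaves implicit.
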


\begin{proof}
For simplicity we write $D_s := D_s(t)$ and $P := P(x)$.
We rewrite $D_s L(x) D_s^{-1}$ as
\begin{align}\label{eq:DLD}
  D_s L(x) D_s^{-1}
  &=
  \left( \prod_{i=1}^{m} (P^{-i+1} D_s P^{i-1}) Q_i (P^{-i} D_s P^{i})^{-1} 
  \right)
  \cdot (P^{-m} D_s P^{m}) P^k D_s^{-1}.
\end{align}
This is equal to $\alpha(T_s(Q))$ since $(P^{-m} D_s P^{m}) \cdot P^k D_s^{-1} = P^k$. 
\end{proof}

\begin{prop}\label{prop:snake-dj}
We have the following commutative diagram:
$$
\xymatrix{
\mM ~~\ar[d]_{{e}_u} \ar[r]^{T_s} 
& ~~\mM \ar[d]^{{e}_u} \\
\mM ~~~ \ar[r]_{\varsigma^{-u} \circ T_s \circ \varsigma^{u}} & ~~~\mM .\\
}
$$
\end{prop}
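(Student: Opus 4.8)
The plan is to prove the equivalent identity of rational self-maps of $\mM$,
$$
e_u\circ T_s=\varsigma^{-u}\circ T_s\circ\varsigma^u\circ e_u ,
$$
by reducing everything to conjugation by the diagonal matrices $D_s=D_s(t)$ of Lemma~\ref{lem:snake-h} and by powers of $P(x)$. Extend the index of $D_s(t)$ to all $s\in\Z$, so that $D_s$ depends only on $s\bmod M$. I would first record two elementary facts. Unwinding the proof of Lemma~\ref{lem:snake-h}, the snake path action on $\mM$ itself has the clean form
$$
T_s(Q)_i=D_{s-i+1}\,Q_i\,D_{s-i}^{-1}\qquad(i=1,\dots,m);
$$
and a direct computation gives $P(x)\,D_a\,P(x)^{-1}=D_{a+1}$, hence $P(x)^{-u}D_aP(x)^{u}=D_{a-u}$, and, since $M\mid n$ and $M\mid(m+k)$, also $P(x)^{k}D_aP(x)^{-k}=D_{a+k}=D_{a-m}$ as operators depending on residues mod $M$.

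I would then establish three commutation relations. (i) $\varsigma^{-u}\circ T_s\circ\varsigma^u=T_{s-u}$: immediate from $\varsigma^u(Q)_i=P(x)^{u}Q_iP(x)^{-u}$ and the conjugation rules, since $\bigl(P(x)^{-u}D_{s-i+1}P(x)^{u}\bigr)Q_i\bigl(P(x)^{-u}D_{s-i}^{-1}P(x)^{u}\bigr)=D_{(s-u)-i+1}Q_iD_{(s-u)-i}^{-1}$. (ii) $\pi\circ T_s=T_{s-1}\circ\pi$: for the first $m-1$ slots this is a direct index shift, and for the wrapped slot one computes $P(x)^{k}\bigl(D_sQ_1D_{s-1}^{-1}\bigr)P(x)^{-k}=D_{s+k}\bigl(P(x)^{k}Q_1P(x)^{-k}\bigr)D_{s+k-1}^{-1}=D_{s-m}\bigl(P(x)^{k}Q_1P(x)^{-k}\bigr)D_{s-m-1}^{-1}$, the last equality precisely because $m+k\equiv0\bmod M$; this matches the wrapped slot of $T_{s-1}(\pi(Q))$, and iterating gives $\pi^u\circ T_s=T_{s-u}\circ\pi^u$. (iii) $s_r\circ T_s=T_s\circ s_r$ for every $r$: writing $R(Q_r,Q_{r+1})=(Q'_{r+1},Q'_r)$ we have $Q_rQ_{r+1}=Q'_{r+1}Q'_r$ by Lemma~\ref{lem:R-matrix}, so inserting $D_{s-r}^{-1}D_{s-r}$ in the middle exhibits $T_s(Q)_r\,T_s(Q)_{r+1}=\bigl(D_{s-r+1}Q_rD_{s-r}^{-1}\bigr)\bigl(D_{s-r}Q_{r+1}D_{s-r-1}^{-1}\bigr)$ also as the product $\bigl(D_{s-r+1}Q'_{r+1}D_{s-r}^{-1}\bigr)\bigl(D_{s-r}Q'_rD_{s-r-1}^{-1}\bigr)$. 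The key observation is that conjugating a matrix of the shape \eqref{eq:Q} by $D_a$ on the left and $D_{a-1}^{-1}$ on the right again has the shape \eqref{eq:Q} — the sub-diagonal $1$'s and the corner entry $x$ are preserved exactly because the relevant residues mod $M$ agree, using $M\mid n$. Hence all four factors above have the shape \eqref{eq:Q}, and for generic $Q$ this second factorization is non-trivial; by Lemma~\ref{lem:R-matrix} and the uniqueness of the geometric $R$-matrix among non-trivial product-preserving maps it must equal $R\bigl(D_{s-r+1}Q_rD_{s-r}^{-1},\,D_{s-r}Q_{r+1}D_{s-r-1}^{-1}\bigr)$, which is precisely the identity $s_r\circ T_s=T_s\circ s_r$.

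To finish, I would use $e_u=(s_u\cdots s_{m-1})(s_{u-1}\cdots s_{m-2})\cdots(s_1\cdots s_{m-u})\,\pi^u$ from \eqref{eq:Za-action}: relations (ii) and (iii) give $e_u\circ T_s=(s_u\cdots s_{m-1})\cdots(s_1\cdots s_{m-u})\circ\pi^u\circ T_s=(s_u\cdots s_{m-1})\cdots(s_1\cdots s_{m-u})\circ T_{s-u}\circ\pi^u=T_{s-u}\circ e_u$, and then (i) rewrites $T_{s-u}$ as $\varsigma^{-u}\circ T_s\circ\varsigma^u$, giving the proposition. Since every map involved is a rational map $\mM\dashrightarrow\mM$, it suffices to verify the identities on a dense open subset, which is where the uniqueness characterization of $R$ is used. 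The step I expect to be the main obstacle is (iii): one has to check carefully that the three diagonal conjugations respect the bidiagonal-plus-corner shape \eqref{eq:Q} and then invoke the uniqueness of the geometric $R$-matrix; relations (i), (ii) and the final assembly are routine bookkeeping.
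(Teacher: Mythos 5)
Your proposal is correct, but it takes a genuinely different route from the paper's. The paper proves the identity at the level of Lax matrices: it passes to $\mL$ (asserting this suffices), writes $D_sL(x)D_s^{-1}$ in the factored form \eqref{eq:DLD}, and observes that cyclically moving the conjugated factors $Q_i'=(P^{-i+1}D_sP^{i-1})Q_i(P^{-i}D_sP^{i})^{-1}$ past $P^k$ --- which is all that $e_u^\ast$ does --- gives the same result whether one conjugates before or after; the $R$-matrix never enters, because $s_r$ induces the identity on $\mL$ (Lemma \ref{lem:vertical-L}(i)). You instead work directly on $\mM$: your slotwise formula $T_s(Q)_i=D_{s-i+1}Q_iD_{s-i}^{-1}$ is the same algebraic fact that underlies \eqref{eq:DLD}, and together with $P(x)D_aP(x)^{-1}=D_{a+1}$ it reduces everything to the three relations $\varsigma^{-u}T_s\varsigma^{u}=T_{s-u}$, $\pi T_s=T_{s-1}\pi$ and $s_rT_s=T_ss_r$, of which the last is the genuinely new ingredient: equivariance of the geometric $R$-matrix under the simultaneous diagonal rescaling, obtained from the uniqueness characterization of $R$ stated after Lemma \ref{lem:R-matrix}. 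Your route buys more than the proposition --- it shows $T_s$ commutes with every generator $s_r$ and intertwines $\pi$ up to an index shift, hence with the whole $W$-action on phase space --- and it sidesteps the slightly delicate reduction ``it is enough to prove this on $\mL$'' (since $\alpha$ is generically $m!$ to $1$ by Lemma \ref{L:mMmL}, equality of induced maps on $\mL$ does not formally give equality on $\mM$; the paper's computation compensates by tracking the individual factors). One presentational suggestion for your step (iii): rather than arguing pointwise that your exhibited nontrivial factorization must be the $R$-image (which tacitly uses that a generic product of two matrices of shape \eqref{eq:Q} has exactly two such factorizations), invoke the uniqueness statement as stated for birational maps: with $\Lambda(A,B)=(D_{a+1}AD_a^{-1},\,D_aBD_{a-1}^{-1})$, which by your shape check (using $M\mid n$) is a well-defined automorphism of the space of pairs of matrices of shape \eqref{eq:Q}, the map $\Lambda^{-1}\circ R\circ\Lambda$ is a non-trivial birational transformation satisfying the product condition of Lemma \ref{lem:R-matrix}, hence equals $R$; this identity is exactly $s_rT_s=T_ss_r$.
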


\begin{proof}
It is enough to prove
$((\varsigma^\ast)^{-u} \circ T^\ast_s \circ (\varsigma^\ast)^{u}) \circ e^\ast_u
= e^\ast_u \circ T^\ast_s$ acting on $\mL$.
For $L(x) = Q_1 \cdots Q_m P^k \in \mathcal{L}$, we have 
\begin{align*}
  L(x) &\stackrel{e^\ast _u}{\longmapsto} 
  Q_{u+1}\cdots Q_m P^k Q_1 \cdots Q_u
  \\ 
  &\stackrel{(\varsigma^\ast)^{-u} \circ \tilde{T}_s \circ (\varsigma^\ast)^{u}}{\longmapsto}
  (P^{-u} D_s P^u)Q_{u+1}\cdots Q_m P^k Q_1 \cdots Q_u(P^{-u} D_s P^u)^{-1}
  \\
  &= \prod_{i=u+1}^{m} (P^{-i+1} D_s P^{i-1}) Q_i (P^{-i} D_s P^{i})^{-1}
     \cdot (P^{-m} D_s P^{m}) P^k D_s^{-1} 
  \\
  & \qquad \qquad 
     \cdot
     \prod_{i=1}^{u} (P^{-i+1} D_s P^{i-1}) Q_i (P^{-i} D_s P^{i})^{-1}
  \\
  &= Q_{u+1}'\cdots Q_m' P^k Q_1' \cdots Q_u',
\end{align*} 
where $Q_i'= (P^{-i+1} D_s P^{i-1}) Q_i (P^{-i} D_s P^{i})^{-1}$.
On the other hand we have
\begin{align*}
  L(x) &\stackrel{T^\ast_s}{\longmapsto}
  D_s L(x) D_s^{-1}
  \stackrel{\eqref{eq:DLD}}{=} Q_1'\cdots Q_m' P^k
  \\
  &\stackrel{e^\ast_u}{\longmapsto} 
  Q_{u+1}' \cdots Q_m' P^k Q_1' \cdots Q_u'. 
\end{align*}
Thus the claim is obtained.
\end{proof}

From Lemma~\ref{lem:c-snake} and \ref{lem:snake-h} we obtain
 
\begin{prop}\label{prop:snake}
The following diagram is commutative:
\begin{align}\label{eq:comm-T}
\xymatrix{
\psi^{-1}(f)  \ar[d]_{T_s} \ar[r]^{\phi \qquad \qquad} 
& \Pic^{g}(C_f) \times \mathcal{S}_f \times R_O \times R_A~~\ \ar[d]^{(id,\,t_s,\,id, \,id)} 
\\
\psi^{-1}(f) ~~ \ar[r]_{\phi \qquad \qquad} 
& \Pic^{g}(C_f) \times \mathcal{S}_f \times R_O \times R_A, \\
}
\end{align}
where $t_s := t_s(t)$ acts on $\mathcal{S}_f$ by
\begin{align}
  \label{eq:snake-c}
  t_s(c_{\ell}) = 
  \begin{cases}
  t \, c_{\ell} & \text{ if } s=\ell \\
  \displaystyle{\frac{1}{t}}\, c_{\ell}  & \text{ if } s=\ell +1\\
  c_{\ell} & \text{ otherwise}.
  \end{cases}
\end{align}
\end{prop}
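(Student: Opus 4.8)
The plan is to combine the two cited lemmas and do one short computation. By Lemma~\ref{lem:snake-h} the induced map on $\mL$ is conjugation $L(x)\mapsto D_s(t)\,L(x)\,D_s(t)^{-1}$ by a diagonal matrix $D_s(t)=\mathrm{diag}(d_1,\dots,d_n)$ whose entries are constants, independent of $x$ and $y$. First I would record the immediate consequences: $\psi\circ T_s=\psi$, since the characteristic polynomial is conjugation-invariant, so $T_s$ preserves the fibre $\psi^{-1}(f)$ and the diagram \eqref{eq:comm-T} is well posed, and moreover $C_f$ is literally the same curve.

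Next I would check that the divisor class $[\D]$ and the orderings $O$ and $A$ are all fixed. Conjugation by $D_s(t)$ rescales each signed minor by a constant, $\Delta_{i,j}\mapsto (d_j/d_i)\,\Delta_{i,j}$, so the principal divisors $(\Delta_{i,j})$ do not move; hence the divisors $D_j,\bar D_i$ of Theorem~\ref{thm:double}, the ordering $O$ they determine, and in particular $\D=D_n$ are unchanged. (Equivalently, an eigenvector $g$ of $L(x)-y$ is replaced by $D_s(t)g$, so each $h_i=g_i/g_n$ is only rescaled by a nonzero constant, which leaves the divisor of Definition~\ref{def:D} untouched.) For $A$ it suffices to see $\epsilon_r(T_s(q))=\epsilon_r(q)$: in the product $\epsilon_r(q)=\prod_{j=1}^n q_{rj}$ exactly $n/M$ factors have $j\equiv s-r+1\bmod M$ and exactly $n/M$ have $j\equiv s-r\bmod M$, so by \eqref{eq:snake-q} the factors $t$ and $t^{-1}$ cancel.

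The only genuinely computational point is the behaviour of $c=(c_1,\dots,c_M)$, and here I would use the explicit formula $c_\ell(q)=\sum_{i+j\equiv \ell+1\bmod M}q_{ij}$ of Lemma~\ref{lem:c-snake}. A summand $q_{ij}$ of $c_\ell$ satisfies $i+j\equiv\ell+1\bmod M$; by \eqref{eq:snake-q} it is multiplied by $t$ precisely when $i+j\equiv s+1\bmod M$, i.e.\ $\ell\equiv s\bmod M$, and by $t^{-1}$ precisely when $i+j\equiv s\bmod M$, i.e.\ $\ell+1\equiv s\bmod M$; in either case every summand of that particular $c_\ell$ is scaled by the same factor. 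Reading the indices of $c$ cyclically, this is exactly the rule \eqref{eq:snake-c} defining $t_s$, and nothing else is affected, so $\phi\circ T_s=(\mathrm{id},t_s,\mathrm{id},\mathrm{id})\circ\phi$.

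There is no real obstacle in this argument; the two things to be careful about are (i) confirming that every rescaling appearing above is by a constant that is independent of the point of $C_f$, so that no divisor actually moves, and (ii) the elementary modular-arithmetic bookkeeping that matches the residue conditions of Lemma~\ref{lem:c-snake} and \eqref{eq:snake-q} to the shift pattern in \eqref{eq:snake-c}.
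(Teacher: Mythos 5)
Your argument is correct and uses exactly the ingredients the paper relies on (Lemma~\ref{lem:snake-h} for the conjugation $L(x)\mapsto D_s(t)L(x)D_s(t)^{-1}$, hence invariance of the curve, of the divisor class, and of $O$ and $A$, and Lemma~\ref{lem:c-snake} for the residue bookkeeping giving the rule \eqref{eq:snake-c} on $c$); the paper simply states the proposition as an immediate consequence of those two lemmas, so you have spelled out the same proof in detail.
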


Thus the snake path actions $T_s(t)$ act transitively on $\mathcal{S}_f$.

\section{Theta function solution to initial value problem}
\label{sec:theta}

\subsection{Riemann theta function}
Fix $f \in \oV$. 
We fix a universal cover of $C_f$ and $P_0 \in C_f$, 
and define the Abel-Jacobi map $\iota$ by
$$
  \iota: C_f \to \C^g; ~ X \mapsto \left(\int_{P_0}^{X} \omega_1,\ldots,\int_{P_0}^{X} \omega_g \right),
$$  
where $\omega_1,\ldots,\omega_g$ is a basis of holomorphic differentials on $C_f$.
We also write $\iota$ for the induced map $\mathrm{Div}^0(C_f) \to \C^g$.
Let $\Omega$ be the period matrix of $C_f$, and write 
$\Theta(z ) := \Theta(z;\Omega)$ for the Riemann theta function:
$$
  \Theta(z;\Omega) 
  = 
  \sum_{{\bf m}\in \Z^g} 
  \exp \left(\pi \sqrt{-1} 
  {\bf m} \cdot (\Omega {\bf m}+ 2 z)\right),
  \quad z \in \C^g.  
$$
It is known to satisfy the quasi-periodicity:
$$
  \Theta(z + {\bf m} + {\bf n} \Omega)
  =  \exp \left(- \pi \sqrt{-1}\, {\bf n} \cdot (\Omega {\bf n} + 2z) \right)
     \Theta(z),
$$
for ${\bf m}, {\bf n} \in \Z^g$.

Let $\mathcal{D}_0 \in \mathrm{Div}^{g-1}(C_f)$ be the Riemann characteristic, 
that is, $2 \mathcal{D}_0$ is linearly equivalent to the 
canonical divisor $K_{C_f}$. 
Then the well known properties of the Riemann theta function gives

\begin{lem}
For any point $Y \in C_f$, the function 
$$
X \mapsto \Theta(\iota(X-Y)) 
$$
is a section of the line bundle $\OO(\D_0+Y)$ which has zeroes exactly at $\D_0$ and at $Y$, and no poles. For any positive divisor $\D$ of degree $g$, the function
$$
X \mapsto \Theta(\iota(X - \D + \D_0))
$$
is a section of the line bundle $\OO(\D)$ which has zeroes exactly at $\D$ and no poles.
\end{lem}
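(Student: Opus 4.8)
\emph{Plan.} This lemma is the specialization to $C_f$ of the Riemann vanishing theorem, so the plan is to reduce it to that classical statement (as packaged, e.g., in \cite{Fay73}) while carefully tracking the automorphy factors and the Riemann constant. \textbf{Step 1: the expression defines a holomorphic section of a degree-$g$ line bundle.} First I would lift everything to the universal cover of $C_f$, where $\iota$ is single-valued; then $X \mapsto \Theta(\iota(X-Y))$ is a composition of holomorphic maps, hence holomorphic, so it has no poles. Continuing $\iota$ around a loop $\gamma$ in $C_f$ changes its value by the associated period $\mathbf{m}+\mathbf{n}\Omega \in \Z^g+\Omega\Z^g$, and the quasi-periodicity of $\Theta$ recalled above multiplies $\Theta(\iota(X-Y))$ by the nowhere-vanishing factor $\exp(-\pi\sqrt{-1}\,\mathbf{n}\cdot(\Omega\mathbf{n}+2\iota(X-Y)))$. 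Thus $X \mapsto \Theta(\iota(X-Y))$ is a holomorphic section of a line bundle $\mathcal{N}$ on $C_f$ whose transition data is exactly this automorphy system; the standard winding-number computation (integrating $d\log$ of the cocycle around a fundamental $4g$-gon, equivalently intersecting $\iota(C_f)$ with the theta divisor in $J(C_f)$) gives $\deg\mathcal{N}=g$. The identical argument applies to $X \mapsto \Theta(\iota(X-\D+\D_0))$, noting that $X-\D+\D_0$ has degree $0$ so that $\iota$ is again being applied to an element of $\mathrm{Div}^0(C_f)$.

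\textbf{Step 2: identify the divisor of zeros.} By the Riemann vanishing theorem, for fixed $e\in\C^g$ the function $X\mapsto\Theta(\iota(X)-e)$ is either identically zero --- which happens exactly when $e$ lies in a fixed translate of the codimension-one subvariety $W_{g-1}\subset J(C_f)$ of effective classes of degree $g-1$ --- or it vanishes at a positive divisor $D$ of degree $g$ with $\iota(D)=e-\kappa$, where $\kappa$ is the vector of Riemann constants based at $P_0$. The defining property of the Riemann characteristic is $\iota(\D_0)=-\kappa$ (compatibly with $2\D_0\sim K_{C_f}$ and $2\kappa\equiv-\iota(K_{C_f})$). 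Taking $e=\iota(Y)$, the zero divisor $D$ of $X\mapsto\Theta(\iota(X-Y))$ then satisfies $\iota(D)=\iota(Y)+\iota(\D_0)=\iota(Y+\D_0)$; since $\deg D=g=\deg(Y+\D_0)$ and $\OO(D)\cong\mathcal{N}$, we conclude $\mathcal{N}\cong\OO(Y+\D_0)$ and $D\sim Y+\D_0$. To see that $Y$ itself is a zero, so that the zero divisor is literally $\D_0+Y$ once $\D_0$ is taken to be an effective representative --- which is the case in the applications of the lemma --- one evaluates at $X=Y$: the argument becomes $\iota(\D_0)=-\kappa$, and since the class $[\D_0]$ is then effective of degree $g-1$ it lies in $W_{g-1}$, so $\Theta$ vanishes there; the remaining $g-1$ zeros account for $\D_0$ by the equivalence $D\sim Y+\D_0$ just established. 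For the second function one repeats with $e=\iota(\D-\D_0)$, obtaining $\iota(D)=\iota(\D)$, hence $D\sim\D$; and since $\D=p_1+\cdots+p_g$ is a sum of points, each $p_i$ is manifestly a zero because $\D-p_i\geq 0$ has degree $g-1$ so $\iota(\D-p_i)\in W_{g-1}$, whence $D=\D$ exactly and $\OO(D)\cong\OO(\D)$.

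\textbf{Main difficulty.} There is no conceptual obstacle here: the statement is classical. The only thing that really needs care is the sign and normalization bookkeeping --- the choice of branch of $\iota$, the evenness $\Theta(-z)=\Theta(z)$, and above all the precise relation tying $\D_0$, $K_{C_f}$ and the Riemann constant $\kappa$ together --- so that the identity $\iota(D)=\iota(Y)+\iota(\D_0)$ comes out with the correct sign and the evaluation-at-$Y$ step is valid. I would pin down the convention for $\kappa$ at the outset (via $2\kappa\equiv-\iota(K_{C_f})$ together with the characterization of $\kappa$ as the translate making the theta divisor equal to $\kappa+W_{g-1}$) and let the remaining identities follow; alternatively, one may simply invoke \cite[Ch.~I--II]{Fay73}, where both assertions appear in exactly this form.
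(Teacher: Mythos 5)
The paper gives no proof of this lemma at all — it is asserted as a consequence of ``well known properties of the Riemann theta function'' — and your argument is precisely the standard derivation of those properties (automorphy factors giving a section of a degree-$g$ bundle, then Riemann's vanishing theorem with the Riemann constant, as in Fay), so in approach you coincide with the paper's implicit one, and the skeleton of your argument is sound.

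Two points in the write-up, however, deserve flagging. First, your stated criterion for identical vanishing is wrong: $X\mapsto\Theta(\iota(X)-e)$ does \emph{not} vanish identically whenever $e$ lies in a translate of the theta divisor $W_{g-1}$ (a codimension-one condition); it vanishes identically exactly when the degree-$g$ class of the would-be zero divisor is \emph{special}, i.e. lies in $W^1_g$, a locus of codimension at least two. Moreover you never verify, for the specific values $e=\iota(Y)$ and $e=\iota(\D-\D_0)$, that you are in the nondegenerate branch; for the second function this is exactly the hypothesis $\dim\L(\D)=1$ (the paper's ``general'' divisors), which is also what turns $D\sim\D$ into $D=\D$ — your ``each $p_i$ is a zero'' step does not control multiplicities, and without uniqueness of the effective representative the equality can fail. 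Second, the evaluation at $X=Y$ is garbled as written: the argument of $\Theta$ there is $0$, and $\Theta(0)=0$ holds precisely when the Riemann class $\D_0$ is effective (a vanishing theta-null). So the literal conclusion ``zeroes exactly at $\D_0$ and at $Y$'' presupposes an effective $\D_0$ with $h^0(\D_0+Y)=1$; you sense this and assume it, but attributing it to ``the applications of the lemma'' is not justified — the paper nowhere guarantees effectivity of $\D_0$, and what the applications genuinely need is the divisor-class statement (the section lies in $\OO(\D_0+Y)$, resp. $\OO(\D)$, with zero divisor the effective representative of that class) together with the genericity of $\D$. In short: right approach and essentially the paper's own citation-level proof, but the dichotomy in Riemann's theorem is resolved by assumption rather than argument, and the identically-zero criterion you state is incorrect.
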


\subsection{The inverse of $\phi$}

For a positive divisor $\D$ of degree $g$, define the function $\psi_i ~(i \in \Z)$ on $C_f$ by
$$
\psi_i(X) 
= \frac{\Theta(\beta - \iota(\OO_{n-i}) + \iota(X - O_n)) 
  \prod_{\ell=i+1}^n \Theta(\iota(X - O_\ell))}{\Theta(\beta + \iota(X-O_n)) 
  \Theta(\iota(X-P))^{n-i}},
$$
where $\beta = \iota(\D_0 + O_n - \D)$.

\begin{lem}\label{lem:psi-h}
$\psi_i(X)$ is a meromorphic function on $C_f$.
When $\D$ is the positive divisor defined by Definition~\ref{def:D}, we have 
\begin{align}\label{eq:h-div}
  (\psi_i) = (h_i) 
  = 
  \mathcal{D}_i + \sum_{j=i+1}^n O_j 
  - \mathcal{D} - (n-i) P.
\end{align}
Thus $\psi_i(X)$ is equal to $h_i(X)$ up to a scalar.
\end{lem}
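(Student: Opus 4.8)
The plan is to rewrite $\psi_i$ as a quotient of the two standard theta sections treated in the lemma on theta line bundles above, read off the zero divisor of each factor, and then compare the resulting line bundles on $C_f$.

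First I would record a combinatorial identity among the special points. Since $O_j$ depends only on $j \bmod N$ and $n = Nn' \equiv 0 \pmod{N}$, the $n-i$ consecutive integers $i+1,\dots,n$ and the $n-i$ consecutive integers $N-(n-i)+1,\dots,N$ begin at congruent residues modulo $N$, so $\sum_{j=i+1}^{n}O_j = \sum_{j=N-(n-i)+1}^{N}O_j$ as divisors, and hence $\mathcal{O}_{n-i} = (n-i)P - \sum_{j=i+1}^{n}O_j$. Comparing with \eqref{eq:hdiv}, which gives $\mathcal{D}_i \sim \mathcal{D} + (n-i)P - \sum_{j=i+1}^{n}O_j$, yields the linear equivalence $\mathcal{D}_i \sim \mathcal{D} + \mathcal{O}_{n-i}$. (Here $\mathcal{D}_i = D_i$ is the general positive divisor of degree $g$ from Theorem~\ref{thm:double}; for an arbitrary general positive $\mathcal{D}$ one takes $\mathcal{D}_i$ to be the effective divisor in the class $[\mathcal{D}+\mathcal{O}_{n-i}]$.)

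Next I would simplify the two theta arguments containing $\beta = \iota(\mathcal{D}_0 + O_n - \mathcal{D})$: one gets $\beta + \iota(X - O_n) = \iota(X - \mathcal{D} + \mathcal{D}_0)$, and, using $\mathcal{D}_i \sim \mathcal{D}+\mathcal{O}_{n-i}$ and Abel's theorem, $\beta - \iota(\mathcal{O}_{n-i}) + \iota(X - O_n) = \iota(X) + \iota(\mathcal{D}_0 - \mathcal{D}_i) = \iota(X - \mathcal{D}_i + \mathcal{D}_0)$ modulo the period lattice. Since $\mathcal{D}$ and $\mathcal{D}_i$ are positive divisors of degree $g$ that are general (Definition~\ref{def:D}, Theorem~\ref{thm:double}), the lemma on theta line bundles applies factor by factor: $\Theta(\iota(X-\mathcal{D}+\mathcal{D}_0))$ is a holomorphic section of $\mathcal{O}(\mathcal{D})$ vanishing exactly on $\mathcal{D}$; $\Theta(\iota(X-\mathcal{D}_i+\mathcal{D}_0))$ is a holomorphic section of $\mathcal{O}(\mathcal{D}_i)$ vanishing exactly on $\mathcal{D}_i$; and each of $\Theta(\iota(X-O_\ell))$ and $\Theta(\iota(X-P))$ is a holomorphic section of $\mathcal{O}(\mathcal{D}_0+O_\ell)$, resp.\ $\mathcal{O}(\mathcal{D}_0+P)$, vanishing exactly on $\mathcal{D}_0+O_\ell$, resp.\ $\mathcal{D}_0+P$. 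In particular none of these sections vanishes identically, precisely because $\mathcal{D}$ and $\mathcal{D}_i$ are general and $O_\ell$, $P$ are single points, so the divisor bookkeeping is legitimate. Multiplying out, the numerator of $\psi_i$ is a holomorphic section of $\mathcal{O}\bigl(\mathcal{D}_i + (n-i)\mathcal{D}_0 + \sum_{j=i+1}^{n}O_j\bigr)$ vanishing on that divisor, and the denominator a holomorphic section of $\mathcal{O}\bigl(\mathcal{D} + (n-i)\mathcal{D}_0 + (n-i)P\bigr)$ vanishing on that divisor.

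Finally, the difference of these two divisors is $\mathcal{D}_i + \sum_{j=i+1}^{n}O_j - \mathcal{D} - (n-i)P$, which by \eqref{eq:hdiv} equals $(h_i)$ and is therefore principal; hence the two line bundles are isomorphic, $\psi_i$ descends to a genuine meromorphic function on $C_f$, and $(\psi_i)$ is this difference, i.e.\ $(\psi_i) = (h_i)$. Then $\psi_i/h_i$ is a meromorphic function on the compact curve $C_f$ with no zeros and no poles, hence a nonzero constant, which gives the last assertion. The one step that is not routine bookkeeping is the divisor identity $\sum_{j=i+1}^{n}O_j = \sum_{j=N-(n-i)+1}^{N}O_j$ --- equivalently, putting $\mathcal{O}_{n-i}$ into the shape matching \eqref{eq:hdiv} --- together with the easy but essential check that the relevant theta sections are not identically zero; everything else follows formally from Theorem~\ref{thm:double}, \eqref{eq:hdiv}, Riemann's theorem, and the preceding lemma.
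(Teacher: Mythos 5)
Your divisor bookkeeping is correct and in substance follows the paper's own proof: you identify the zero divisor of each theta factor (the first numerator factor vanishing exactly on $\mathcal{D}_i$, via quasi-periodicity, generality of $\mathcal{D}_i$ from Theorem \ref{thm:double}, and the linear equivalence $\mathcal{D}_i\sim\mathcal{D}+\mathcal{O}_{n-i}$; the remaining factors on $\mathcal{D}_0+O_\ell$ and $\mathcal{D}_0+P$), cancel the $(n-i)\mathcal{D}_0$, and match the difference with \eqref{eq:hdiv}; concluding $\psi_i=c\,h_i$ from equality of divisors on the compact curve is fine, and is if anything slightly more explicit than the paper, which instead observes $\psi_i\in\mathcal{L}(\mathcal{D}+(n-i)P-\sum_{j>i}O_j)$ and invokes one-dimensionality. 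Your preliminary identity $\sum_{j=i+1}^{n}O_j=\sum_{j=N-(n-i)+1}^{N}O_j$, hence $\mathcal{O}_{n-i}=(n-i)P-\sum_{j=i+1}^{n}O_j$, is also correct, since both sums run over $n-i$ consecutive indices ending at an index divisible by $N$.

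The one step that is not sound as written is the descent argument: ``the difference of the two zero divisors is principal, hence the two line bundles are isomorphic, hence $\psi_i$ descends to a meromorphic function on $C_f$.'' Isomorphism of the bundles (equivalently, linear equivalence of the zero divisors) is necessary but not sufficient for a concrete quotient of theta products to be single-valued: if the sums of the theta arguments in numerator and denominator agreed only modulo the period lattice, the quotient would pick up a nontrivial exponential automorphy factor under deck transformations — a quotient $\prod_j\Theta(\iota(X)+a_j)/\prod_j\Theta(\iota(X)+b_j)$ with equally many factors is single-valued only when $\sum_j a_j=\sum_j b_j$ exactly, not merely modulo the lattice. What actually makes $\psi_i$ well defined is the paper's observation that numerator and denominator each consist of $n-i+1$ theta factors whose arguments have literally the same sum; this exact cancellation is guaranteed by your step-one divisor identity $\mathcal{O}_{n-i}=(n-i)P-\sum_{j=i+1}^{n}O_j$ (an equality of divisors, hence of Abel–Jacobi images on the nose), not by the induced linear equivalence $\mathcal{D}_i\sim\mathcal{D}+\mathcal{O}_{n-i}$, which only holds modulo periods. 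So the gap is easily repaired with material you already have: replace the line-bundle inference by the quasi-periodicity computation showing that the automorphy factors of numerator and denominator cancel, and the rest of your argument stands.
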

\begin{proof}
To check that $\psi_i(X)$ is a meromorphic function on $C_f$ we use the functional equation for $\Theta$, and note that the multiset of points (with signs and multiplicities) that appear in the numerator is equal to the same for the denominator.  
This multiset in additive notation is $\beta - (n-i)P + (n-i+1)X - O_n$.
For the second statement, we note that $\psi_i(X)$ has zeroes at $O_{i+1},O_{i+2},\ldots,O_{n}$, a pole of order $n-i$ at $P$, and also poles at $\D$.  By \eqref{eq:hdiv}, we have $(\psi_i) = (h_i)$.
\end{proof}
 
Now we study the inverse of $\phi$.  
We focus on the $\Z^m$ action. 
For $t=(t_1,t_2,\ldots,t_m) \in \Z^m$, 
let $q^t := (q_{ji}^t)_{ji} \in \psi^{-1}(f)$ be a configuration at time $t$.
For $L^t(x) := \alpha(q^t)$, let $g^t = (g^t_1,\ldots, g^t_n)^\bot$ be 
its eigenvector and set $h^t_i := g^t_i/g^t_n$.
Let $\phi(q^t)$ be 
$$
  (\D^t, (c^t_1,\cdots,c^t_M),(A_1,\ldots,A_m),(O_1,\ldots,O_N)),
$$
where $\D^t = \D - \sum_{j=1}^m t_j \mathcal{A}_j$ 
for some positive divisor $\D$ 
of degree $g$.
Define
$\be_j := (\underbrace{1,\ldots,1}_{j},\underbrace{0,\ldots,0}_{m-j})$
for $j=0,\ldots,m-1$, and recursively set 
$\be_{j+m} = (1,\ldots,1) + \be_j$.
Define functions $\theta^t_i$ and $\Psi_{i}^{t}$ on $C_f$ by
$$
\theta^{t}_{i}(X) = \Theta(\beta^t + \iota(X-O_n 
- \OO_{n-i})),
$$
where $\beta^t = \iota(\D_0 + O_n - \D^t)$, and by 
$$
\Psi_{i}^{t+\be_j}(X) 
= 
\frac{\theta_{i-N}^{t+\be_{j-1}}(X) \,\theta_{i}^{t+\be_{j}}(X)}
{\theta_{i}^{t+\be_{j-1}}(X) \, \theta_{i-N}^{t+\be_{j}}(X)}. 
$$
By Lemma \ref{lem:psi-h}, there is a constant $b_i^{t+\be_j}$ depending on $C_f$, such that
we have
$$
  \Psi_{i}^{t+\be_j}(X) 
  = 
  b_i^{t+\be_j} 
  \frac{h_{i-N}^{t+\be_{j-1}}(X) \, h_{i}^{t+\be_{j}}(X)}
  {h_{i}^{t+\be_{j-1}}(X) \, h_{i-N}^{t+\be_{j}}(X)}.
$$

\begin{lem}(Cf. Lemma~3.1 \cite{Iwao10})\label{lem:Psi-OP}
We have 
\begin{align}
  \label{eq:Psi-O}
  &\Psi_{i-1}^{t+\be_j}(P) = \Psi_{i}^{t+\be_j}(O_i) 
  = b_i^{t+\be_j} \frac{q_{j,i-N}^t}{q_{j,i}^t},
  \\
  \label{eq:Psi-P}
  &\Psi_{i}^{t+\be_j}(P) 
  = 
  b_i^{t+\be_j} 
  \frac{d_{i-N-1}^{t+\be_{j}} \, d_{i}^{t+\be_{j}}}
   {d_{i-1}^{t+\be_{j}} \, d_{i-N}^{t+\be_{j}}},
\end{align}
where $d_{i}^{t}$ is a coefficient of the leading term of 
$h_{i}^{t}(X)$ around $X=P$.
\end{lem}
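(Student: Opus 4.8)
My plan is to prove the three equalities separately: the first is a translation identity among the theta--data, while the other two come from a single two--term recursion relating the eigenvector components at the neighbouring times $t+\be_{j-1}$ and $t+\be_j$. For the first equality I would begin from the elementary relation $\mathcal{O}_{u+1}=\mathcal{O}_u+P-O_{N-u}$, immediate from $\mathcal{O}_u=uP-\sum_{l=N-u+1}^N O_l$; taking $u=n-i$ and using $N\mid n$, so that $O_{N-(n-i)}=O_i$, gives $\mathcal{O}_{n-i+1}=\mathcal{O}_{n-i}+P-O_i$, whence for every $t$
$$
\theta_{i-1}^{\,t}(P)=\Theta\big(\beta^t+\iota(P-O_n-\mathcal{O}_{n-i+1})\big)=\Theta\big(\beta^t+\iota(O_i-O_n-\mathcal{O}_{n-i})\big)=\theta_i^{\,t}(O_i),
$$
and similarly $\theta_{i-1-N}^{\,t}(P)=\theta_{i-N}^{\,t}(O_{i-N})=\theta_{i-N}^{\,t}(O_i)$ since $O_{i-N}=O_i$. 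Substituting these identities (for the times $t+\be_{j-1}$ and $t+\be_j$) into the defining quotient for $\Psi_{i-1}^{t+\be_j}(P)$ turns it termwise into the defining quotient for $\Psi_i^{t+\be_j}(O_i)$, giving $\Psi_{i-1}^{t+\be_j}(P)=\Psi_i^{t+\be_j}(O_i)$.

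The heart of the matter is the assertion that $L^{t+\be_j}$ is the conjugate of $L^{t+\be_{j-1}}$ by the single bidiagonal matrix $Q_j^{\,t}$ of the form \eqref{eq:Q} whose diagonal is $(q^t_{j,1},\dots,q^t_{j,n})$. Indeed, the $s$--moves act trivially on $\L$ by Lemma~\ref{lem:R-matrix}, so $e_l$ acts on $\L$ as $\pi^l$, which by iterating Lemma~\ref{lem:vertical-L}(i) is conjugation by $Q_1^{\,t}\cdots Q_l^{\,t}$; comparing $l=j$ and $l=j-1$ yields $L^{t+\be_j}=(Q_j^{\,t})^{-1}L^{t+\be_{j-1}}Q_j^{\,t}$. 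Transporting the right eigenvector of $L(x)-y$ through this conjugation and normalising by the $n$--th component produces a scalar function $\rho$ on $C_f$, independent of $\ell$, with
$$
h_\ell^{t+\be_{j-1}}=\rho\,\big(q^t_{j,\ell}\,h_\ell^{t+\be_j}+h_{\ell-1}^{t+\be_j}\big)\qquad(\ell\in\Z),
$$
the analogue of the identity $v_i'=q_{1,i}v_i+v_{i+1}$ used in the proof of Lemma~\ref{lem:time} (and of Iwao's computation in \cite{Iwao10}). Inserting it into the stated expression $\Psi_i^{t+\be_j}=b_i^{t+\be_j}\,h_{i-N}^{t+\be_{j-1}}h_i^{t+\be_j}\big/\big(h_i^{t+\be_{j-1}}h_{i-N}^{t+\be_j}\big)$ and cancelling $\rho$ gives
$$
\Psi_i^{t+\be_j}(X)=b_i^{t+\be_j}\,\frac{\big(q^t_{j,i-N}\,h_{i-N}^{t+\be_j}(X)+h_{i-N-1}^{t+\be_j}(X)\big)\,h_i^{t+\be_j}(X)}{\big(q^t_{j,i}\,h_i^{t+\be_j}(X)+h_{i-1}^{t+\be_j}(X)\big)\,h_{i-N}^{t+\be_j}(X)}.
$$

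It remains to evaluate at the two special points. By Proposition~\ref{prop:Q}(i) the functions $h_{i-1}/h_i$ and $h_{i-N-1}/h_{i-N}$ both vanish at $O_i$ (note $O_{i-N}=O_i$), so at $X=O_i$ the terms $h_{i-1}^{t+\be_j}$ and $h_{i-N-1}^{t+\be_j}$ are strictly subdominant in the two binomials; since $\prod_\ell q^t_{j,\ell}=\epsilon_j\neq 0$ because $f\in\oV$, the factors $h_i^{t+\be_j}$ and $h_{i-N}^{t+\be_j}$ then cancel and the quotient tends to $b_i^{t+\be_j}q^t_{j,i-N}/q^t_{j,i}$, which together with the first paragraph proves \eqref{eq:Psi-O}. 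For \eqref{eq:Psi-P} I would expand the same expression at $P$: by \eqref{eq:hdiv} and Proposition~\ref{prop:Q}(ii), $h_\ell^{t}$ has a pole of order $n-\ell$ at $P$ with leading coefficient $d_\ell^{t}$, so in the recursion the term $h_{\ell-1}^{t+\be_j}$ is now the dominant one; matching leading coefficients forces $\mathrm{ord}_P(\rho)=1$ and $d_\ell^{t+\be_{j-1}}=\rho_0\,d_{\ell-1}^{t+\be_j}$, where $\rho_0$ is the leading coefficient of $\rho$ at $P$ and is independent of $\ell$. Substituting $\ell=i$ and $\ell=i-N$ into $\Psi_i^{t+\be_j}(P)=b_i^{t+\be_j}\,d_{i-N}^{t+\be_{j-1}}d_i^{t+\be_j}\big/\big(d_i^{t+\be_{j-1}}d_{i-N}^{t+\be_j}\big)$ makes $\rho_0$ cancel and yields the right--hand side of \eqref{eq:Psi-P}. (Alternatively, \eqref{eq:Psi-P} follows from $d_\ell^{t}/d_{\ell+1}^{t}=c^{t}_{\ell\bmod M}$, as recorded in the proof of Theorem~\ref{thm:eta}, together with $c^{t+\be_j}=\tau^{-1}(c^{t+\be_{j-1}})$ from Lemma~\ref{lem:pi-c}.)

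The main obstacle is the recursion step: one must check with care that the conjugation carrying $L^{t+\be_{j-1}}$ to $L^{t+\be_j}$ really is effected by the single matrix $Q_j^{\,t}$ — whose diagonal is precisely the $j$--th column $q^t_{j,\bullet}$ of the configuration $q^t$ — and that the induced relation among the $h_\ell$, extended to all $\ell\in\Z$ via $h_{\ell+n}=x^{-1}h_\ell$, has exactly the clean two--term form above; this amounts to bookkeeping the $R$--moves and the cyclic $P^k$--conjugations hidden in $\pi$, just as in the discrete--Toda case treated in \cite{Iwao10}. Once this is established, both evaluations at $O_i$ and $P$ are routine local computations.
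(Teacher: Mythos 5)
Your proposal is correct and follows essentially the same route as the paper's proof: the first equality comes from the translation identity $\theta_{i-1}^t(P)=\theta_i^t(O_i)$ together with $O_{i-N}=O_i$, the key step is the eigenvector recursion $g^{t+\be_{j-1}}=Q_j^t\,g^{t+\be_j}$ (i.e.\ $L^{t+\be_j}=(Q_j^t)^{-1}L^{t+\be_{j-1}}Q_j^t$, which is exactly what Lemma~\ref{lem:vertical-L} supplies), and the evaluations at $O_i$ and $P$ are read off from the divisor structure \eqref{eq:hdiv} just as in the paper. Your extra bookkeeping (the scalar $\rho$, the nonvanishing of $q^t_{j,\ell}$ via $\epsilon_j\neq 0$, and the leading-coefficient relation $d_\ell^{t+\be_{j-1}}=\rho_0 d_{\ell-1}^{t+\be_j}$) only makes explicit what the paper leaves implicit, so no substantive difference.
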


\begin{proof}
The first equality of \eqref{eq:Psi-O} follows from
$\theta^t_i(O_i) = \theta^t_{i-1}(P)$ and $O_i = O_{i-N}$. 
By Lemma~\ref{lem:vertical-L} we have 
$g^{t+\be_{j-1}} = Q_j^t \, g^{t+\be_{j}}$ which gives
$$
  \Psi_{i}^{t+\be_j}(X) 
  = 
  b_i^{t+\be_j} 
  \frac{(h_{i-N-1}^{t+\be_{j}}(X) + q_{j,i-N} \, h_{i-N}^{t+\be_{j}}(X))  
  h_{i}^{t+\be_{j}}(X)}
  {(h_{i-1}^{t+\be_{j}}(X) + q_{j,i} \, h_{i}^{t+\be_{j}}(X))
   h_{i-N}^{t+\be_{j}}(X)}.
$$
Then, from \eqref{eq:h-div} the second equalities of \eqref{eq:Psi-O}
and \eqref{eq:Psi-P} follow.
\end{proof}

\begin{lem} We have 
\begin{align}\label{eq:sneke-d}
  \frac{d_{i}^{t+\be_{j}}}{d_{i+1}^{t+\be_{j}}}
  = 
  c^t_{i+j}, \qquad i+j \equiv \ell \mod M,
\end{align}  
where we extend $c_i^t$ to $i \in \Z$ by setting $c_i^t := c_\ell^t$ 
if $i \equiv \ell \mod M$ for $1 \leq \ell \leq M$. 
\end{lem}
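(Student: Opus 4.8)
The plan is to reduce $\eqref{eq:sneke-d}$ to the normalization that is built, via a recursion, into the reconstruction of a Lax matrix from its spectral data in the proof of Theorem~\ref{thm:eta}, and then to transport it along the time evolution $e_j$ using Theorem~\ref{thm:commuting-actions}.

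First I would isolate the ``intrinsic'' statement: for any $L(x)=\alpha(q)\in\alpha(\mathcal M)$ with $\beta(L(x))\in\oV$, letting $g=(g_1,\dots,g_n)^\bot$ be the eigenvector of $L(x)$, $h_i=g_i/g_n$, and $d_i$ the leading coefficient of $h_i$ at $P$ (so $h_i=d_iu^{-n+i}+\cdots$ in the local parameter $u$ with $(x,y)=(x_0u^{-n},y_0u^{-m-k})$, and $d_n=1$ since $h_n=1$), one has $d_i=c_\ell d_{i+1}$ for $i\equiv\ell\bmod M$, where $(c_1,\dots,c_M)$ is the $\mathcal S_f$-coordinate of $\eta(L(x))$. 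This is exactly the recursion used to define the $d_i$ in the construction of $\eta_f^{-1}$ in the proof of Theorem~\ref{thm:eta}: there one sets $d_n=1$, $d_i=c_\ell d_{i+1}$, chooses $h_i\in\mathcal L(\mathcal D+(n-i)P-\sum_{j=i+1}^n O_j)$ with leading coefficient $d_i$, and rebuilds $L(x)$. Since that construction is a two-sided inverse of $\eta_f$, each space $\mathcal L(\mathcal D+(n-i)P-\sum_{j=i+1}^n O_j)$ is one-dimensional (Lemma~\ref{lem:D}(i)), and the common proportionality constant is pinned to $1$ by $h_n=g_n/g_n=1=d_n$, the functions $h_i$ of the construction literally equal $g_i/g_n$, so the recursion holds for the genuine eigenvector. (If one prefers a self-contained argument for this step, one instead expands the $\ell$-th component of $L(x)^{n/M}g=y^{n/M}g$ near $P$ to the first two orders in $u$: the leading order forces $x_0^{(m+k)/M}=y_0^{n/M}$, and the next order, using that the coefficient of the maximal power of $x$ in the $(\ell,\ell+1)$ entry of $L(x)^{n/M}$ is $c_\ell$ by Lemma~\ref{lem:c-snake}, yields $d_\ell/d_{\ell+1}=c_\ell$; this is the heaviest step, as it requires fixing the local parameter $u$ beyond leading order.)

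Next I would apply the intrinsic statement to $L^{t+\be_j}(x)=\alpha(q^{t+\be_j})$, obtaining $d_i^{t+\be_j}/d_{i+1}^{t+\be_j}=c^{t+\be_j}_{i}$ in the periodic convention of the statement, where $(c_1^{t+\be_j},\dots,c_M^{t+\be_j})$ is the $\mathcal S_f$-coordinate of $\phi(q^{t+\be_j})$. Since $\be_j=e_j$, Theorem~\ref{thm:commuting-actions} (equivalently Lemma~\ref{lem:pi-c}) identifies this coordinate with $\tau^{-j}(c^t)=(c^t_{j+1},\dots,c^t_M,c^t_1,\dots,c^t_j)$, that is $c^{t+\be_j}_i=c^t_{i+j}$; combining gives $\eqref{eq:sneke-d}$. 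The only genuine work is bookkeeping: matching the concrete coefficients $d_i^t$ with the recursively-defined $d_i$ appearing (only implicitly) in the proof of Theorem~\ref{thm:eta}. As an alternative inductive skeleton, one can instead expand $h^{t+\be_{j-1}}_i=(h^{t+\be_j}_{i-1}+q^t_{j,i}h^{t+\be_j}_i)/(h^{t+\be_j}_{n-1}+q^t_{j,n})$ at $P$ (this identity is the one used in the proof of Lemma~\ref{lem:Psi-OP}), compare most singular coefficients to get $d^{t+\be_{j-1}}_i=d^{t+\be_j}_{i-1}/d^{t+\be_j}_{n-1}$, hence $d^{t+\be_{j-1}}_i/d^{t+\be_{j-1}}_{i+1}=d^{t+\be_j}_{i-1}/d^{t+\be_j}_i$, and induct down to $j=0$, reducing $\eqref{eq:sneke-d}$ to the intrinsic statement for $q^t$ itself.
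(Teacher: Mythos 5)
Your reduction of the general case to a single time slice ($j=0$, $t$ fixed) via Lemma \ref{lem:pi-c} (equivalently the $\mathcal S_f$-component of Theorem \ref{thm:commuting-actions}) is exactly the reduction the paper makes, and your index bookkeeping $c^{t+\be_j}_i=c^t_{i+j}$ is correct. The gap is in your main argument for the intrinsic statement $d_i/d_{i+1}=c_\ell$. You deduce it from the assertion that the construction in the proof of Theorem \ref{thm:eta} is a two-sided inverse of $\eta_f$, so that the functions $h_i$ built there from the recursion $d_i=c_\ell d_{i+1}$ ``literally equal'' $g_i/g_n$. But that proof only \emph{defines} the candidate inverse; it never verifies that the constructed $h_i$ coincide with the genuine normalized eigenvector components, i.e.\ that applying the construction to $\eta_f(L)$ returns $L$ itself, and verifying this is essentially the content of the present lemma. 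A priori the genuine ratios $d_i/d_{i+1}$ form some $M$-periodic sequence $(\gamma_\ell)$ (periodicity follows from $d_{i+n}=x_0^{-1}d_i$ and the leading-order extreme-band relation $d_{i-m-k}=y_0\,d_i$); if $(\gamma_\ell)\neq(c_\ell)$, the construction applied to $\eta_f(L)$ produces a nontrivial diagonal conjugate of $L$ rather than $L$, and nothing in the bare bijection statement of Theorem \ref{thm:eta} excludes this. So as written the main route is circular: it invokes precisely the compatibility between the eigenvector's leading coefficients at $P$ and the invariants $c_\ell$ that the lemma is meant to establish.

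Your parenthetical fallback --- expanding the $\ell$-th component of $L(x)^{n/M}g=y^{n/M}g$ at $P$ to two orders --- is the right kind of argument and is close in spirit to what the paper actually does, but it is left as a sketch with the hard point acknowledged rather than resolved: carried out naively it yields $d_\ell/d_{\ell+1}=\kappa\,c_\ell$ with an $\ell$-independent constant $\kappa$ coming from the second-order local data of $x,y$ at $P$, and pinning down $\kappa=1$ requires fixing the normalization of the local parameter (equivalently, combining $\prod_\ell c_\ell=f_c$ with $d_{i+n}=x_0^{-1}d_i$). The paper avoids the matrix-power manipulation altogether: it computes the leading coefficient of $g_i/g_{i+1}$ at $P$ directly from the minor expansion of Theorem \ref{thm:minor}, observing that at $P$ only the upper-hull (snake-path) families survive, that the minor giving $g_{i+1}$ contributes a constant (unions of closed snake paths carrying no $q$-weight), while the minor giving $g_i$ contains one almost-snake path whose weight is by definition $c_i$ (cf.\ Lemma \ref{lem:c-snake}); the ratio is then $c_i$ outright, which is the statement for $j=0$.
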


\begin{proof}
It is enough to show the case of $j=0$, $1 \leq i \leq M$.
Then the other cases follow from Lemma~\ref{lem:pi-c}.

We will show that the coefficient of the leading term in
$\frac{g_i}{g_{i+1}}$ at $X=P$ is equal to $c_i$.
To obtain the ratio $\frac{g_i}{g_{i+1}}$ we may choose to compute
maximal minors of $L(x)-y$ with respect to any row. Let us choose
row $i+1$. Then by Theorem \ref{thm:minor}, we are counting families
of paths that do not start at source $i+1$ and do not end at sink $i$
(for $g_i$) or sink $i+1$ (for $g_{i+1}$).

Since we are computing at $X=P$, the only terms that contribute are
the ones lying on the upper hull of the Newton polygon, that is, the
edge with slope $-n/(m+k)$.
In terms of path families this means that we only take paths that are
as close to a snake path as possible. In the case of $g_{i+1}$ this
means that we take a subset of closed snake paths. Each of them picks
up no weight, and thus overall we just get a constant.
In the case of $g_{i}$ one of the closed staircase paths skips one
term, and thus we get a long path starting at source $i$, winding
around the torus and ending at sink $i+1$.
Such path essentially by definition picks up weight $c_i$.
The other snake paths may or may not appear, thus creating only a
constant factor in front.
\end{proof}

Note that \eqref{eq:sneke-d} is compatible with the snake path
actions in Lemma~\ref{lem:snake-h}.

\begin{thm}\label{thm:N=1} 
When $N=1$, the inverse map of $\phi$ is given by
$$  
  q_{j,i}^t 
  =
  f_c^{-\frac{1}{M}} \cdot a_j \cdot c_{i+j-1}^t \cdot 
  \frac{\theta_{i}^{t+\be_{j}}(P) \, \theta_{i-1}^{t+\be_{j-1}}(P)}
  {\theta_{i}^{t+\be_{j-1}}(P) \, \theta_{i-1}^{t+\be_{j}}(P)},
$$
where
\begin{align}\label{eq:q-a}
a_j = x(A_j)^{\frac{1}{n}} \cdot 
\mathrm{exp}\left[\frac{2 \pi \sqrt{-1}}{n} \, b \cdot \iota(P-A_j)\right],
\end{align}
and 
$b \in \Z^g$ is determined by
$a,b \in \Z^g$ such that $a + b \Omega := \iota(\OO_n)$.
\end{thm}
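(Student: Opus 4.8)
The plan is to reconstruct the coordinates $q_{j,i}^t$ from the spectral data by inverting, step by step, the chain of identifications already established. First I would assemble the functions $h_i^t$ on $C_f$ from the theta-function building blocks: by Lemma~\ref{lem:psi-h} the function $\psi_i$ coincides with $h_i$ up to a scalar, and in the present $N=1$ setting the divisor $\D^t = \D - \sum_j t_j \mathcal{A}_j$ is regular, so Theorem~\ref{thm:eta} (together with Theorem~\ref{thm:double}) tells us that $L^t(x)$ is recovered exactly from $([\D^t],(c_1^t,\ldots,c_M^t),O)$. The key relation is the conjugation formula $g^{t+\be_{j-1}} = Q_j^t\, g^{t+\be_j}$ from Lemma~\ref{lem:vertical-L}(i): since $Q_j^t$ is bidiagonal with diagonal entry $q_{j,i}$ in row $i$, this reads componentwise as $h_{i-1}^{t+\be_{j-1}} = h_{i-N-1}^{t+\be_j} + q_{j,i-N}\, h_{i-N}^{t+\be_j}$ after normalizing, which is precisely what drives Lemma~\ref{lem:Psi-OP}. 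So the skeleton of the proof is: (1) express $q_{j,i}^t$ as a ratio of values of the $\Psi$-functions, (2) substitute the theta-function formulas for $\Psi$, (3) identify the remaining scalar constant.

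For step (1), evaluating the conjugation relation at the point $P$ and at the point $O_i$ gives, via \eqref{eq:Psi-O}, the identity $\Psi_i^{t+\be_j}(O_i) = \Psi_{i-1}^{t+\be_j}(P) = b_i^{t+\be_j}\, q_{j,i-N}^t/q_{j,i}^t$. Combined with \eqref{eq:Psi-P} and the computation of $d_i^{t+\be_j}/d_{i+1}^{t+\be_j} = c_{i+j}^t$ from \eqref{eq:sneke-d}, one can solve for $q_{j,i}^t$ itself rather than just the ratio $q_{j,i-N}^t/q_{j,i}^t$, once $N=1$ collapses the index shift. Writing $\Psi$ out in terms of $\theta$'s, the constants $b_i^{t+\be_j}$ cancel in the appropriate quotient and one is left with
$$
q_{j,i}^t = (\text{const}) \cdot c_{i+j-1}^t \cdot \frac{\theta_i^{t+\be_j}(P)\,\theta_{i-1}^{t+\be_{j-1}}(P)}{\theta_i^{t+\be_{j-1}}(P)\,\theta_{i-1}^{t+\be_j}(P)},
$$
where the constant absorbs the scalars relating $\psi_i$ to $h_i$ and the normalizations $d_i^t$.

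For step (3), the constant must be pinned down using the two global invariants that the spectral data remembers beyond the Jacobian point: the quantities $\epsilon_r$ (equivalently the $x$-coordinates $x(A_r) = (-1)^n\epsilon_r$ of the special points $A_r$) and the product $\prod_\ell c_\ell = f_c$ from Lemma~\ref{lem:c-f}. The identity $\prod_{j=1}^n q_{r,j}^t = \epsilon_r$ from \eqref{def:epsilon_r}, applied to the telescoping product of the proposed formula over $i$, forces the $A_j$-dependent factor; here the quasi-periodicity of $\Theta$ under translation by $\iota(\OO_n)$ enters, producing the exponential correction $\exp[\frac{2\pi\sqrt{-1}}{n} b\cdot\iota(P-A_j)]$ with $b$ as in the statement, and the $n$-th root $x(A_j)^{1/n}$. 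The factor $f_c^{-1/M}$ then comes from imposing $\prod_\ell c_\ell^t = f_c$ together with the normalization $d_n = 1$ built into Theorem~\ref{thm:eta}, which sets the overall scale of the $h_i$'s.

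The main obstacle I expect is step (3): correctly tracking the normalization constants and the quasi-periodicity factors. The functions $\theta_i^t$ are only defined up to the multiplicative ambiguity inherent in choosing a lift to the universal cover and in the definition of $\beta^t$, and the periodic extension $\be_{j+m} = (1,\ldots,1)+\be_j$, $h_{i+n} = x^{-1}h_i$ means that the formula must be checked for self-consistency under $i \mapsto i+n$ and $j \mapsto j+m$; this is where the constant $a_j$ and its exponential must have exactly the right form for the $x^{-1}$ shift to come out. Verifying that the proposed $q_{j,i}^t$ indeed satisfies $\alpha(q^t) = L^t(x)$ — i.e. that no spurious monodromy has been introduced — is essentially a bookkeeping exercise with the theta quasi-periodicity law, but it is the delicate part; by contrast, steps (1) and (2) are direct consequences of Lemmas~\ref{lem:Psi-OP} and \ref{lem:psi-h} already proved.
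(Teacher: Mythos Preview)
Your proposal is essentially the same approach as the paper's. The paper organizes the argument slightly more cleanly by introducing the auxiliary quantity
\[
  \alpha_j^t := \frac{p_{j,i}^t}{\Psi_i^{t+\be_j}(P)}\cdot\frac{d_i^{t+\be_j}}{d_{i-N}^{t+\be_j}},
\]
where $p_{j,i}^t = q_{j,i}^t q_{j,i-1}^t\cdots q_{j,i-N+1}^t$, and first showing via \eqref{eq:Psi-O} and \eqref{eq:Psi-P} that $\alpha_j^t$ is independent of $i$; then it pins down $\alpha_j^t = f_c^{-N/M}a_j^N$ by taking the product $\prod_{\ell=0}^{n'-1} p_{j,i+\ell N}^t = x(A_j)$ and invoking the quasi-periodicity relation \eqref{theta-quasi1}, exactly as you outline in your step~(3). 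One small slip in your write-up: the conjugation relation reads $h_i^{t+\be_{j-1}} \propto h_{i-1}^{t+\be_j} + q_{j,i}^t\, h_i^{t+\be_j}$, with no extra shift by $N$; the $N$-shift appears only through the definition of $\Psi$.
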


\begin{proof}
Let $p_{j,i}^t = q_{j,i}^t q_{j,i-1}^t \cdots q_{j,i-N+1}^t$.
By Lemma~\ref{lem:Psi-OP} we get the equations 
$$
  \alpha^t_j := \frac{p_{j,i}^t}{\Psi_{i}^{t+\be_{j}}(P)}
  \frac{d_{i}^{t+\be_{j}}}{d_{i-N}^{t+\be_{j}}}
  = 
  \frac{p_{j,i-1}^t}{\Psi_{i-1}^{t+\be_{j}}(P)}
  \frac{d_{i-1}^{t+\be_{j}}}{d_{i-N-1}^{t+\be_{j}}}
  = 
  \cdots.
$$
We show that $\alpha^t_j$ does not depend on $t$.
Consider the product 
$$
  x(A_j) = p_{j,i}^t p_{j,i+N}^t \cdots p_{j,i+(n'-1)N}^t,
$$
which is equal to 
$$
  (\alpha^t_j)^{n^\prime} 
  \frac{d_{i-N}^{t+\be_{j}}}{d_{i+n-N}^{t+\be_{j}}}\,
  \frac{\theta_{i-N}^{t+\be_{j-1}}(P) \,\theta_{i+n-N}^{t+\be_{j}}(P)}
  {\theta_{i+n-N}^{t+\be_{j-1}}(P) \,\theta_{i-N}^{t+\be_{j}}(P)}.
$$
Since $\OO_n$ is equivalent to $0$ in $\Pic^0(C_f)$,
there exists $a,b \in \Z^g$ such that 
$a + b \Omega = \iota(\OO_n)$.
Due to the quasi-periodicity of $\Theta(z)$, we have
\begin{align}\label{theta-quasi1}
  \theta_{i+n}^t(P) 
  = \exp \left[-2 \pi \sqrt{-1} b \cdot
    \left(\beta^t - \iota(\OO_{n-i}-\OO_1) + b \Omega / 2 \right)    
    \right]\cdot \theta_i^t(P).
\end{align} 
By using \eqref{eq:sneke-d} and \eqref{theta-quasi1} we obtain
$\alpha^t_j$ independent of $t$ as
$$
  \alpha_j^t
  = 
  f_c^{-\frac{N}{M}}
  a_j^N.
$$ 
Hence we get 
\begin{align}\label{eq:p-theta}
  p_{j,i}^t 
  =
  f_c^{-\frac{N}{M}} a_j^N
  \cdot \prod_{\ell=1}^N c_{i+j-\ell}^t \cdot
  \frac{\theta_{i}^{t+\be_{j}}(P) \, \theta_{i-N}^{t+\be_{j-1}}(P)}
  {\theta_{i}^{t+\be_{j-1}}(P) \, \theta_{i-N}^{t+\be_{j}}(P)}.
\end{align}   
When $N=1$, it is nothing but the claim.
\end{proof}

\subsection{Conditional solution for $N>1$}

When $N > 1$, by factorizing \eqref{eq:p-theta} we obtain 
\begin{align}\label{eq:q-gamma}
  q_{j,i}^t 
  =
  f_c^{-\frac{1}{M}} \cdot \gamma_{j,i} \cdot  a_j \cdot 
  c_{i+j-1}^t
  \frac{\theta_{i}^{t+\be_{j}}(P) \, \theta_{i-1}^{t+\be_{j-1}}(P)}
  {\theta_{i}^{t+\be_{j-1}}(P) \, \theta_{i-1}^{t+\be_{j}}(P)}.
\end{align}
Here $\gamma_{j,i}$ satisfies
\begin{align}\label{gamma-period}
  \prod_{\ell=1}^N \gamma_{j,i+\ell} = 1,
\end{align}
which denotes that $\gamma_{j,i} = \gamma_{j,i+N}$.

\begin{lem}
For $i$ such that $i \equiv r \mod N$ we have
\begin{align}\label{gamma-m}
  \prod_{j=1}^m \gamma_{j,i} 
  =
  \sigma_r^\frac{1}{n'} \cdot \prod_{j=1}^m a_j^{-1} \cdot 
  \mathrm{exp}\left(2 \pi \sqrt{-1} (b' - b \, \frac{k'}{n'}) 
  \cdot \iota(P-O_r)\right).
\end{align}
Here $b'$ is given by $a', b' \in \Z^g$ such that
$a' + b' \Omega := \iota(\mathcal{A}_m + \OO_k)$.
\end{lem}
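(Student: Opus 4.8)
The plan is to exploit the conservation of $\sigma_r$. By Lemma~\ref{lem:Q} the coefficients of $g_f$ — in particular each $\sigma_r$ — are determined by $f=\psi(q^t)$ and hence independent of $t$, and by \eqref{def:sigma_r} one has $\sigma_r=\prod_{j=1}^m\prod_{\ell=0}^{n'-1}q_{j,r+\ell N}^t$ for every $t$ (this is the product of the parameters along the $r$-th horizontal wire, the $n'$ entries in column $j$ sitting in rows $r,r+N,\dots,r+(n'-1)N$). I would substitute the formula \eqref{eq:q-gamma} into this product and collect the five types of factors. The powers of $f_c^{-1/M}$ give $f_c^{-mn'/M}$, and this is cancelled by $\prod_{j,\ell}c^t_{r+\ell N+j-1}=f_c^{mn'/M}$: by Lemma~\ref{lem:c-f} the $c_\ell$ over a full period multiply to $f_c$, and a short counting argument shows the multiset of indices $r+\ell N+j-1$ covers each residue class modulo $M$ exactly $mn'/M$ times, using that $M/\gcd(N,M)$ divides $n'$ and that $\gcd(N,M)$ divides $m$ (the latter because $\gcd(N,M)$ divides both $k$ and $m+k$). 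The $\gamma$-factors give $\big(\prod_{j=1}^m\gamma_{j,r}\big)^{n'}$ by the periodicity \eqref{gamma-period}, and the $a_j$-factors give $\big(\prod_{j=1}^m a_j\big)^{n'}$. One is left with
$$
\sigma_r=\Big(\prod_{j=1}^m\gamma_{j,r}\Big)^{\!n'}\Big(\prod_{j=1}^m a_j\Big)^{\!n'}\,\mathbb T,\qquad
\mathbb T:=\prod_{j=1}^m\prod_{\ell=0}^{n'-1}\frac{\theta_{r+\ell N}^{t+\be_j}(P)\,\theta_{r+\ell N-1}^{t+\be_{j-1}}(P)}{\theta_{r+\ell N}^{t+\be_{j-1}}(P)\,\theta_{r+\ell N-1}^{t+\be_j}(P)}.
$$

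Next I would evaluate $\mathbb T$. Writing the $j$-th factor as $R_j/R_{j-1}$ with $R_j=\theta_{r+\ell N}^{t+\be_j}(P)/\theta_{r+\ell N-1}^{t+\be_j}(P)$, the product over $j$ telescopes, leaving a product over $\ell$ of ratios $\theta^{s}_{r+\ell N}(P)/\theta^{s}_{r+\ell N-1}(P)$ for the two boundary times $s=t$ and $s=t+(1,\dots,1)$. This residual product is closed up exactly as the quantity $\alpha_j^t$ was in the proof of Theorem~\ref{thm:N=1}: there the vertical (column) products were closed up using the meromorphic function $x$, with $(x)=-\OO_n$ and $\iota(\OO_n)=a+b\Omega$; here the relevant function is $y$, whose divisor, from $(y)=k'\sum O_i+\sum A_i-(m+k)P$ and $k=Nk'$ (so $\OO_k=k'\OO_N$), equals $(y)=-(\mathcal A_m+\OO_k)$, whence $\iota(\mathcal A_m+\OO_k)=a'+b'\Omega$. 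Concretely, using $\theta_a^s(X)=\Theta(\beta^s+\iota(X-O_n-\OO_{n-a}))$, the local identities $\OO_{n-(a-1)}-\OO_{n-a}=P-O_a$ and $\OO_{n-r}-\OO_{n-(r+N)}=\OO_N$, the relation $n'\iota(\OO_N)=\iota(\OO_n)=a+b\Omega$, and the quasi-periodicity of $\Theta$, the $\ell$-product produces a single exponential carrying a term in $b\,k'/n'$; the dependence on the superscript collapses because $\beta^{t+(1,\dots,1)}-\beta^t$ is the image of a degree-$0$ divisor linearly equivalent to $-(\mathcal A_m+\OO_k)$, which contributes the $b'$-term. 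Assembling the exponents gives $\mathbb T=\exp\!\big(2\pi\sqrt{-1}\,(k'b-n'b')\cdot\iota(P-O_r)\big)$.

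Finally, substituting this value of $\mathbb T$ into the displayed identity and solving gives $\big(\prod_{j}\gamma_{j,r}\big)^{n'}=\sigma_r\big(\prod_j a_j\big)^{-n'}\exp\!\big(2\pi\sqrt{-1}\,(n'b'-k'b)\cdot\iota(P-O_r)\big)$, and taking the $n'$-th root yields \eqref{gamma-m} (the branch of the root being the one compatible with \eqref{gamma-period} and with the construction of the $\gamma_{j,i}$ by continuation from a reference point). Since $\gamma_{j,i}=\gamma_{j,i+N}$ and $O_i=O_r$ for $i\equiv r\pmod N$, both sides depend on $i$ only through $r$, which is exactly the asserted formula.

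The step I expect to be the main obstacle is the evaluation of $\mathbb T$. One must keep careful track of which representatives of the Abel--Jacobi images are used and of the quasi-periodicity factors picked up as the $\ell$-index winds once around (contributing $\iota(\OO_n)$, hence the $k'/n'$ weighting) and as the $j$-index winds once around (contributing $\iota(\mathcal A_m+\OO_k)$); in particular one must verify that the telescoped time shift, which a priori looks like the image of $\sum_{j=1}^m\mathcal A_j$, can be replaced — modulo the period lattice, after absorbing the corrections from the $\ell$-winding and using the precise relation between $q^{t+\be_j}$ and the generators $e_u$ — by $\iota(\mathcal A_m+\OO_k)$, so that the exponents combine into precisely $k'b-n'b'$. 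This is the horizontal-cycle analogue of the vertical-cycle computation already carried out in the proof of Theorem~\ref{thm:N=1}: the mechanism is identical, but the bookkeeping is heavier.
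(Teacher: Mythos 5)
Your proposal is correct and follows essentially the paper's own proof: substitute \eqref{eq:q-gamma} into $\sigma_r=\prod_{j=1}^m\prod_{\ell=0}^{n'-1}q^t_{j,r+\ell N}$, cancel the $f_c^{-1/M}$ powers against the $c$-product equal to $f_c^{mn'/M}$ (your uniform residue-count mod $M$ is valid and replaces the paper's two-case argument using the automorphism $p\mapsto p+n'-k'$ of $\{0,\dots,n'-1\}$), evaluate the telescoped theta ratio by quasi-periodicity with the two lattice vectors $\iota(\OO_n)=a+b\Omega$ and $\iota(\mathcal{A}_m+\OO_k)=a'+b'\Omega$, and then solve and extract the $n'$-th root. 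The step you flag as the main obstacle is handled in the paper by the single identity $\theta_i^{t+\be_m}(P)=\exp\bigl[-2\pi\sqrt{-1}\,b'\cdot\bigl(\beta^t-\iota(\OO_{n-i+k}-\OO_1)+b'\Omega/2\bigr)\bigr]\,\theta_{i-k}^t(P)$ together with \eqref{theta-quasi1}; since the shift $t\mapsto t+\be_m$ means one application of $e_m$, so that $\beta^{t+\be_m}-\beta^t=\iota(\mathcal{A}_m)$ exactly, the substitution of $\mathcal{A}_m+\OO_k$ for the apparent $\sum_j\mathcal{A}_j$ that worried you requires no period-lattice correction, and your final exponent $k'b-n'b'$ agrees with the paper's.
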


\begin{proof}
From \eqref{def:sigma_r} and \eqref{eq:q-gamma} it follows that 
\begin{align}\label{sigma-gamma}
\sigma_r = \left(\prod_{j=1}^m \gamma_{j,i} \,a_j \,f_c^{-\frac{1}{M}}\right)^{n'}
\cdot \prod_{j=1}^m \prod_{p=0}^{n'-1} c_{j+i+pN-1}^t \cdot
\prod_{j=0}^{n'-1}   
\frac{\theta^t_{i+jN-1}(P) \, \theta_{i+jN}^{t+\be_{m}}(P)}
  {\theta_{i+jN-1}^{t+\be_{m}}(P) \, \theta^t_{i+jN}(P)},
\end{align}
for $r=1,\ldots,N$ and $i \equiv r \mod N$.

First we show that the second factor of \eqref{sigma-gamma} is equal to
$f_c^{\frac{m n'}{M}}.$
It is enough to prove in the case of $i=1$, where the second factor is
$
\prod_{p=0}^{n'-1}(c_{1+pN} \cdots c_{m+pN}).
$  
When $n'=k'$, $M$ is a divisor of $m$ and we have 
$c_{1+pN} \cdots c_{m+pN} = f_c^{m/M}$.
When $n' \neq k'$, define an automorphism $\nu$ of 
$\mathcal{N}' := \{0,\ldots,n'-1\}$ by 
$\nu : p \mapsto p+n'-k' \mod n'$.
Since $n'$ and $k'$ are coprime, a set 
$\{\nu^\ell(1) ~|~ \ell \in \mathcal{N}' \}$ coincides with 
$\mathcal{N}'$.
Thus, from $M=\gcd(n,k+m)$ and $m+pN \equiv \nu(p) N \mod M$, 
it follows that
the product $\prod_{p=0}^{n'-1}(c_{1+pN} \cdots c_{m+pN})$  
is reordered to be $\prod_{j=1}^{mn'} c_j$. 
Further, since $M$ is a divisor of $mn'$, we obtain the claim.
Next, we can show that the third factor of \eqref{sigma-gamma} is equal to
$
\mathrm{exp}\left(2 \pi \sqrt{-1} (-b'n' + b k') \cdot \iota(P-O_r)\right),
$
by using \eqref{theta-quasi1} and 
$$
\theta_{i}^{t+\be_m}(P) = \exp \left[-2 \pi \sqrt{-1} b' \cdot \left(\beta^t 
- \iota(\OO_{n-i+k} - \OO_1) + b' \Omega / 2 \right) \right]\cdot \theta_{i-k}^t(P).
$$ 
Here $b'$ is given by $a', b' \in \Z^g$ such that
$a' + b' \Omega := \iota(\mathcal{A}_m + \OO_k)$.
Finally from \eqref{sigma-gamma} we obtain 
$$
  \left(\prod_{j=1}^m \gamma_{j,i}\right)^{n'} 
  = 
  \sigma_r \cdot \prod_{j=1}^m a_j^{-n'} \cdot
  \mathrm{exp}\left(2 \pi \sqrt{-1} (n'b' - b k') \cdot
   \iota(P-O_r)\right),
$$
and the claim follows.
\end{proof}

Unfortunately, we have not been able to obtain $\gamma_{j,i}$ from
\eqref{gamma-period} and \eqref{gamma-m}.
Nevertheless, if we assume that $\gamma_{j,i}$ is constant on $\Pic^g(C_f)$, then we obtain the following conditional result.

\begin{prop}\label{prop:N>1}
Suppose that $\gamma_{j,i}$ is a constant function on $\Pic^g(C_f)$. 
Then we have
\begin{align}\label{eq:gamma}
  \gamma_{j,i} 
  = 
  \sigma_r^\frac{1}{n'm} \cdot \prod_{j=1}^m a_j^{-\frac{1}{m}} 
  \cdot 
  \mathrm{exp}\left(\frac{2 \pi \sqrt{-1}}{n'm} (n'b' - k' b)
   \cdot \iota(P-O_r)\right).
\end{align}
In particular, the inverse of $\phi$ is given by
$$
  q_{j,i}^t 
  =
  Q \cdot a_j \cdot o_i \cdot c_{i+j-1}^t \, 
  \frac{\theta_{i}^{t+\be_{j}}(P) \, \theta_{i-1}^{t+\be_{j-1}}(P)}
  {\theta_{i}^{t+\be_{j-1}}(P) \, \theta_{i-1}^{t+\be_{j}}(P)},
$$
where $a_j$ is defined by \eqref{eq:q-a}, and 
\begin{align*}
&Q = f_c^{-\frac{1}{M}} \cdot \prod_{j=1}^m x(A_j)^{-\frac{1}{nm}},
\\
&o_i = \left((-1)^{n'+1} \frac{y(O_r)^{n'}}{x(O_r)^{k'}}
\right)^{\frac{1}{n'm}}
\cdot  
\mathrm{exp}\left[\frac{2 \pi \sqrt{-1}}{nm}
\left((n b'- kb) \cdot \iota(P-O_i) - b \cdot \iota(\mathcal{A}_m) \right) 
\right], 
\\ 
&\hspace*{12cm} i \equiv r \mod N.
\end{align*}
\end{prop}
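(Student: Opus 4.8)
The plan is to determine the gauge factor $\gamma_{j,i}$ appearing in \eqref{eq:q-gamma} under the constancy hypothesis, and then substitute it back. I would begin with the observation that \eqref{eq:gamma} is already forced to be consistent with the two relations in hand: raising \eqref{eq:gamma} to the $m$-th power and using $n=n'N$, $k=k'N$ reproduces \eqref{gamma-m}, while multiplying it over $r=1,\dots,N$ reproduces \eqref{gamma-period} (after expanding $\sum_r\iota(P-O_r)$ and invoking the defining relations of $b,b'$). Hence the real content of the proposition is twofold: (i) that $\gamma_{j,i}$ is independent of $j$; and (ii) that for fixed $f$ it depends on the rest of the spectral datum only through which point of $C_f$ carries the index $i \bmod N$ — equivalently, that it is unchanged by the snake actions $T_s$ (Proposition \ref{prop:snake}, which fix $\Pic^g(C_f)$, $R_O$, $R_A$ and rescale only the $c$'s) and transforms compatibly under the $\mathfrak S_m$, $\mathfrak S_N$ permutations of $R_A$, $R_O$. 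Claim (ii) is a direct check against \eqref{eq:q-gamma}; the substance lies in (i).

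For (i) the natural route is the transpose symmetry of \S\ref{sec:transpose}. Running the construction of \S\ref{sec:theta} on the transposed network $\tilde Q$, whose parameters $(mn',N,\bar k' m)$ have $\gcd(mn',\bar k' m)=m$ in the role of ``$N$'', produces a factorized formula of type \eqref{eq:p-theta} for the transposed time evolution, with its own gauge factor $\tilde\gamma_{j,i}$ obeying $\tilde\gamma_{j,i}=\tilde\gamma_{j,i+m}$ and the analogue of \eqref{gamma-m}. Re-expressing both formulas for the common configuration $q$ (say at time $0$) through the index dictionary \eqref{eq:q-tildeq} identifies $\gamma_{j,i}$ with $\tilde\gamma$ at the transposed indices; under the standing assumption that these gauge factors are constant on $\Pic^g$ of the common curve $C_f$, the periodicity $\tilde\gamma_{j,i}=\tilde\gamma_{j,i+m}$ translates into $j$-independence of $\gamma_{j,i}$. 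Writing $\gamma_{j,i}=\gamma_r$ for $i\equiv r\bmod N$, relation \eqref{gamma-m} then gives $\gamma_r^{\,m}$ explicitly, and the correct $m$-th root is selected by matching the leading coefficient of $h_i$ at $P$ — equivalently by requiring the resulting $q$ to recover $c=(c_1,\dots,c_M)$ via Lemma \ref{lem:c-snake} — which produces \eqref{eq:gamma}.

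The rest is bookkeeping: substitute \eqref{eq:gamma} into \eqref{eq:q-gamma}, use $\prod_j a_j = \big(\prod_j x(A_j)\big)^{1/n}\exp\!\big(\tfrac{2\pi\sqrt{-1}}{n}\,b\cdot\iota(\mathcal{A}_m)\big)$ (from \eqref{eq:q-a} and $\mathcal{A}_m=mP-\sum_j A_j$), the expansion \eqref{eq:o_r} which gives $\sigma_r=(-1)^{n'+1}y(O_r)^{n'}/x(O_r)^{k'}$, and the identity $\tfrac{1}{n'm}(n'b'-k'b)\cdot\iota(P-O_r)=\tfrac{1}{nm}(nb'-kb)\cdot\iota(P-O_r)$ coming again from $n=n'N$, $k=k'N$; collecting the $j$-factor into $a_j$, the $r$-factor into $o_i$, and the leftover curve-constant into $Q$ yields the displayed formula. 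The expected main obstacle is step (i): relations \eqref{gamma-period} and \eqref{gamma-m} by themselves leave an $(m-1)(N-1)$-parameter gauge freedom in $\gamma_{j,i}$, and collapsing it genuinely requires the constancy hypothesis (unproven in general, which is exactly why the statement is conditional) in order to make the transpose comparison — or, alternatively, the $\mathfrak S_m$-equivariance of \eqref{eq:q-gamma} — effective.
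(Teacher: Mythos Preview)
Your overall structure is right — first reduce $\gamma_{j,i}$ to a function on the finite data $R_A\times R_O$ via snake-action compatibility, then establish $j$-independence, then substitute back — and the bookkeeping in your final paragraph is accurate. But your main route to $j$-independence, the transpose symmetry of \S\ref{sec:transpose}, is much heavier than necessary and, as written, has a gap: you invoke constancy of the \emph{transposed} gauge factor $\tilde\gamma_{j,i}$ on $\Pic^g$, which is not part of the hypothesis (only $\gamma_{j,i}$ is assumed constant). You would need either to add this as an extra assumption or to derive it from the original one, and you do neither. Setting up and comparing the full theta-function machinery on the transposed side is also a substantial undertaking that you do not carry out.

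The paper's argument is the short one you mention only in passing. Having shown (via Proposition~\ref{prop:snake}) that compatibility of \eqref{eq:q-gamma} with the snake action forces $\gamma_{j,i}$ to be constant on $\mathcal{S}_f$, the hypothesis makes $\gamma_{j,i}$ a function on the finite set $R_A\times R_O$ alone. If it were not constant on $R_A$, its $j$-dependence could only enter through $a_j$, and \eqref{gamma-m} forces that dependence to be via $a_j^{-1}$; but then \eqref{gamma-period}, which is a product over $i$ with $j$ fixed, would give $\prod_{\ell=1}^N\gamma_{j,i+\ell}\propto a_j^{-N}\neq 1$ generically, a contradiction. So $\gamma_{j,i}$ is constant on $R_A$, and \eqref{eq:gamma} is read off directly from \eqref{gamma-m}. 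This bypasses the transpose entirely; your approach would at best give an alternative proof under a strictly stronger hypothesis.
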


\begin{proof}
To have \eqref{eq:q-gamma} compatible with the snake path action 
\eqref{eq:snake-q} and \eqref{eq:snake-c},
$\gamma_{i,j}$ has to be constant on $\mathcal{S}_f$.
So $\gamma_{j,i}$ is regarded as a function on $R_A \times R_O$.
If $\gamma_{j,i}$ is not constant on $R_A$, \eqref{gamma-m} implies
that $\gamma_{j,i}$ depends on $a_j^{-1}$, but this contradicts 
\eqref{gamma-period}. 
Thus $\gamma_{j,i}$ is constant on $R_A$, and we have \eqref{eq:gamma}
which fulfills \eqref{gamma-period}.
By substituting \eqref{eq:gamma} in \eqref{eq:q-gamma} and 
using \eqref{eq:o_r}, we obtain the final claim.
\end{proof}

\section{Octahedron recurrence}
\label{sec:fay}

We shall prove that the function $\theta_i^t(P)$ 
satisfies a family of octahedron recurrences \cite{Spe},
as a specialization of Fay's trisecant identity.
We follow the definitions in \S \ref{sec:theta}.
In the following we assume $N = \gcd(n,k) = 1$ and 
write $O$ for the unique special point $O_1$ over $(0,0) \in \tilde{C}_f$.

\subsection{Fay's trisecant identity}

We introduce Fay's trisecant identity in our setting.
For $\alpha, \,\beta \in \R^g$,
we define a generalization of the Riemann theta function:
\begin{align}\label{def:gen-theta}
  \Theta[\alpha,\beta](z) 
  :=
  \exp \left(\pi \sqrt{-1} \beta \cdot (\beta \Omega + 2 z + 2 \alpha
  \right)
  \cdot \Theta(z + \Omega \beta + \alpha).
\end{align}
We call $[\alpha,\beta]$ {\it a half period} when
$\alpha, \, \beta \in (\Z/2)^g$. Furthermore, a half period $[\alpha,\beta]$
is {\it odd} when  
$\Theta[\alpha,\beta](z)$ is an odd function of $z$,
that is, $\Theta[\alpha,\beta](-z) = - \Theta[\alpha,\beta](z)$.
We note that the Riemann theta function itself is an even function:
$\Theta(z) = \Theta(-z)$.
It is easy to check that a half period $[\alpha,\beta]$ is odd
if and only if $4 \alpha \cdot \beta \equiv 1 \mod 2$.

\begin{thm}\cite[\S II]{Fay73}
\label{thm:Fay}
For four points $P_1,P_2,P_3,P_4$ on the 
universal cover of $C_f$, $z\in \C^g$, and 
an odd half period $[\alpha,\beta]$, the formula
\begin{align*}
&\Theta(z+\iota(P_3-P_1)) \, \Theta(z+\iota(P_4-P_2)) \,
\Theta[\alpha,\beta](\iota(P_2-P_3)) \,\Theta[\alpha,\beta](\iota(P_4-P_1))\\
&+\Theta(z+\iota(P_3-P_2)) \, \Theta(z+\iota(P_4-P_1)) \,
\Theta[\alpha,\beta](\iota(P_1-P_3)) \, \Theta[\alpha,\beta](\iota(P_2-P_4))\\
&=
\Theta(z+\iota(P_3+P_4-P_1-P_2)) \, \Theta({\bf z}) \,
\Theta[\alpha,\beta](\iota(P_4-P_3)) \, \Theta[\alpha,\beta](\iota(P_2-P_1)).
\end{align*}
holds.
\end{thm}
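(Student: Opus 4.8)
This is Fay's trisecant identity, and the cleanest option is simply to quote \cite[\S II]{Fay73}; for completeness I indicate how one would prove it directly in the theta-function setup of \S\ref{sec:theta}. The plan is to fix $P_1,P_2,P_3$, the vector $z\in\C^g$, and the odd half-period $[\alpha,\beta]$, and to view the three products of theta-values in the asserted identity as functions of the remaining point $P_4=P$ on the universal cover of $C_f$.

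First I would verify that all three terms transform in the same way under the period lattice. In each term the dependence on $P$ enters through exactly one factor $\Theta(w_1+\iota(P))$ and one factor $\Theta[\alpha,\beta](w_2+\iota(P))$, and in every term $w_1+w_2 = z-\iota(P_1)-\iota(P_2)$; since, by the quasi-periodicity of $\Theta$ recalled before the statement and by \eqref{def:gen-theta}, $\Theta$ and $\Theta[\alpha,\beta]$ differ only by a character under a lattice translation of the argument, the automorphy factors of the three terms agree. Hence $\mathrm{LHS}-\mathrm{RHS}$, as a function of $P$, is a section of a single line bundle $\mathcal L$ on $C_f$, and counting zeros of the relevant theta factors gives $\deg\mathcal L = 2g$.

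Next I would evaluate both sides at $P\in\{P_1,P_2,P_3\}$. At $P=P_1$ (respectively $P=P_2$) the first (respectively second) term on the left vanishes because $\Theta[\alpha,\beta](0)=0$, and a direct substitution shows that what survives on the left equals the right-hand side; at $P=P_3$ both sides vanish, using on the left that $\Theta[\alpha,\beta](\iota(P_3-P_1))+\Theta[\alpha,\beta](\iota(P_1-P_3))=0$ by oddness of $[\alpha,\beta]$, and on the right that $\Theta[\alpha,\beta](0)=0$. To finish, I would divide $\mathrm{LHS}-\mathrm{RHS}$ by the section $\Theta(z+\iota(P_3+P-P_1-P_2))$, turning it into a meromorphic function whose only candidate poles are the $g$ zeros of that section together with the point $P_3$; the residues at $P_3$ cancel between the two surviving terms, again by oddness of the half-period, so the function has at most $g$ poles, while by the previous computation it takes the value $1$ at $P_1$ and $P_2$. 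Combining this with a dimension estimate for the associated linear system forces $\mathrm{LHS}-\mathrm{RHS}\equiv 0$.

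The delicate point is this last step: a naive ``equal divisors'' argument does not work directly because $\mathcal L$ has $g+1$ independent sections and we control the difference at only three points, so it is precisely the parity cancellation of the poles at $P_3$ — which lowers the effective pole degree below $g+1$ — that makes the values computed at $P_1,P_2$ decisive. For this reason, in the paper I would simply invoke \cite[\S II]{Fay73} rather than reproduce the argument.
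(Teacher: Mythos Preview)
The paper does not prove this theorem: it is stated with a citation to \cite[\S II]{Fay73} and used as a black box in \S\ref{sec:fay}. Your proposal to simply invoke \cite{Fay73} therefore matches the paper exactly.

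Your supplementary sketch follows the classical line of argument (fix three points, study sections as functions of $P_4$, match automorphy, check special values), and the verifications at $P_1,P_2,P_3$ are correct. However, the final paragraph is internally inconsistent: you first show that $\mathrm{LHS}-\mathrm{RHS}$ vanishes at $P_1$ and $P_2$, but then claim that the quotient $(\mathrm{LHS}-\mathrm{RHS})/\Theta(z+\iota(P_3+P-P_1-P_2))$ ``takes the value $1$'' there; in fact it takes the value $0$ (for generic $z$ the denominator does not vanish at $P_1,P_2$). Moreover, the appearance of $P_3$ as a candidate pole is unmotivated, since the denominator generically does not vanish at $P_3$ either. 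The honest finishing argument needs a bit more: one typically uses that for generic $z$ the divisor of zeros of the denominator is general, so the quotient lies in a one-dimensional $\mathcal L(D)$ and is forced to be constant, hence identically zero by the vanishing at $P_1$. Since you end by recommending citation of Fay anyway, this does not affect your actual proposal.
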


\subsection{$m=2$ case}

When $m=2$, the vertical actions $e_u ~(u=1,2)$ are written as 
difference equations expressed as 
\begin{align}
  \label{eq:evolm=2-1}
  &q_{2,i}^t q_{1,i-k}^t = q_{1,i}^{t+\be_1} q_{2,i}^{t+\be_1},
  \\
  \label{eq:evolm=2-2}
  &q_{2,i+1}^t + q_{1,i-k}^t = q_{1,i+1}^{t+\be_1} + q_{2,i}^{t+\be_1}, 
  \\
  \label{eq:evolm=2-3}
  &q_{j,i-k}^t = q_{j,i}^{t+\be_2} \qquad (j=1,2).
\end{align}

For simplicity we write $\theta_i^t$ for 
$\theta_i^t(P) = \Theta(\beta^t + \iota((n-i-1) (O-P)))$. 
By construction the theta function solution of $q_{j,i}^t$ 
(Theorem \ref{thm:N=1}),
\begin{align}\label{eq:theta-sol}
  q_{j,i}^t 
  =
  f_c^{-\frac{1}{M}} \cdot a_j \cdot c_{i+j-1}^t \cdot 
  \frac{\theta_{i}^{t+\be_{j}} \, \theta_{i-1}^{t+\be_{j-1}}}
  {\theta_{i}^{t+\be_{j-1}} \, \theta_{i-1}^{t+\be_{j}}},
\end{align}  
satisfies \eqref{eq:evolm=2-1}--\eqref{eq:evolm=2-3}.

\begin{thm}\label{thm:octahedron}
For any $t \in \Z^2$ and $i \in \Z$, the $\theta_i^t$ satisfy  
an octahedron recurrence relation, 
\begin{align}\label{eq:m=2-oct}
  a_2 \,\theta_{i+1}^{t+\be_2} \theta_i^{t+2 \be_1}
  - a_1 \,\theta_{i+1}^{t+2 \be_1} \theta_i^{t+\be_2}
  =
  c \,\theta_{i}^{t+\be_1 + \be_2} \theta_{i+1}^{t+\be_1}.
\end{align}  
Here $c$ is a constant given by
$$
  c = a_2 \, \frac{\Theta(p_0+\iota(A_1-A_2)) \Theta(p_0+\iota(O-P))}
                  {\Theta(p_0+\iota(O-A_2)) \Theta(p_0+\iota(A_1-P))},
$$ 
where $p_0$ is a zero of the Riemann theta function: $\Theta(p_0) = 0$. 
\end{thm}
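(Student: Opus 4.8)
The plan is to derive the octahedron recurrence \eqref{eq:m=2-oct} directly from Fay's trisecant identity (Theorem~\ref{thm:Fay}) by a careful choice of the four points $P_1,P_2,P_3,P_4$ and the vector $z$, together with an appropriate odd half period $[\alpha,\beta]$. First I would unwind what the three products $\theta_i^{t}$, $\theta_i^{t+\be_1}$, $\theta_i^{t+2\be_1}$, $\theta_i^{t+\be_2}$, $\theta_i^{t+\be_1+\be_2}$ mean as translates of a single theta value. Recall $\theta_i^t = \Theta(\beta^t + \iota((n-i-1)(O-P)))$ and $\beta^t = \iota(\D_0 + O - \D^t)$ with $\D^t = \D - \sum_j t_j \mathcal{A}_j$; since $m=2$ and $N=1$ we have $\mathcal{A}_1 = P - A_1$, $\mathcal{A}_2 = 2P - A_1 - A_2$, so moving from $t$ to $t+\be_1$ shifts the argument by $\iota(\mathcal{A}_1) = \iota(P-A_1)$, and to $t+\be_2$ by $\iota(\mathcal{A}_2)$. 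Thus all five theta factors in \eqref{eq:m=2-oct}, after the substitutions $i \mapsto i, i+1$, are values of $\Theta$ at points of the form $w + \iota(\text{divisor supported on } P, O, A_1, A_2)$ for a common base point $w$. This is exactly the shape of the left- and right-hand sides of Fay's identity once we identify $z = w$ (up to a harmless shift) and read off $P_1,\dots,P_4$ from among $\{P, O, A_1, A_2\}$.

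The key computational step is the bookkeeping: I would set $P_1, P_2, P_3, P_4$ to an appropriate ordering of the special points so that the three terms $\Theta(z+\iota(P_3-P_1))\Theta(z+\iota(P_4-P_2))$, $\Theta(z+\iota(P_3-P_2))\Theta(z+\iota(P_4-P_1))$, $\Theta(z+\iota(P_3+P_4-P_1-P_2))\Theta(z)$ match (up to the constant $\Theta[\alpha,\beta]$-factors) the three products $\theta_{i+1}^{t+\be_2}\theta_i^{t+2\be_1}$, $\theta_{i+1}^{t+2\be_1}\theta_i^{t+\be_2}$, $\theta_i^{t+\be_1+\be_2}\theta_{i+1}^{t+\be_1}$. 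Concretely, one expects that with $z = \beta^{t+\be_1} + \iota((n-i-1)(O-P)) + (\text{correction})$, the pair $(P_3 - P_1, P_4 - P_2)$ should realize the shift $\iota(\mathcal{A}_2 - \mathcal{A}_1) = \iota(P - A_2)$ applied to one index and $\iota(\mathcal{A}_1)$ applied after an index shift by $O - P$, so that $P_3, P_4 \in \{A_2, \dots\}$ and $P_1, P_2 \in \{P, O, A_1\}$ or similar. The quasi-periodicity of $\Theta$ is needed to absorb the index shift $i \mapsto i+1$, which corresponds to translating by $\iota(O-P)$; since $\iota(\mathcal{O}_1) = \iota(P - O)$ is the relevant class, any period-lattice correction that appears will multiply into the overall constant and into the definition of $a_1, a_2$ via \eqref{eq:q-a}. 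Once the three terms are matched, the two $\Theta[\alpha,\beta]$-factors on each term of Fay's identity are constants (values at fixed points $\iota(P_j - P_k)$), and collecting them yields precisely the coefficients $a_1, a_2$ and $c$ asserted in the statement, with $p_0 = \Omega\beta + \alpha$ a zero of $\Theta$ (so $\Theta[\alpha,\beta](0) = 0$ forces the combination of fixed-point values to take the stated quotient form).

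The final step is to check the identification of the coefficients. I would verify that the ratio of $\Theta[\alpha,\beta]$-factors from the first and second Fay terms produces exactly $a_2/a_1$, using \eqref{eq:q-a} which expresses $a_j$ in terms of $x(A_j)^{1/n}$ and $\exp[\frac{2\pi\sqrt{-1}}{n} b\cdot\iota(P-A_j)]$; here the quasi-periodicity relation \eqref{theta-quasi1} is the tool that converts period-lattice shifts into the exponential factors appearing in $a_j$. Similarly the third term's $\Theta[\alpha,\beta]$-factors, divided by the first's, give $c/a_2 = \frac{\Theta(p_0+\iota(A_1-A_2))\Theta(p_0+\iota(O-P))}{\Theta(p_0+\iota(O-A_2))\Theta(p_0+\iota(A_1-P))}$ after writing $\Theta[\alpha,\beta](\iota(Q))$ in terms of $\Theta(p_0 + \iota(Q))$ via \eqref{def:gen-theta}. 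The main obstacle I anticipate is purely organizational rather than conceptual: getting the assignment of $P_1,\dots,P_4$ and the value of $z$ exactly right so that \emph{all three} Fay terms line up simultaneously with the correct index shifts and the correct $\mathcal{A}_u$-translations, and then tracking every period-lattice correction through \eqref{theta-quasi1} into the constants $a_j$ and $c$ without sign or factor errors. A useful consistency check along the way is that the theta-function solution \eqref{eq:theta-sol} is already known to satisfy the evolution equations \eqref{eq:evolm=2-1}--\eqref{eq:evolm=2-3}, so \eqref{eq:m=2-oct} must be a formal consequence, and any candidate matching can be tested against that constraint before committing to the full Fay computation.
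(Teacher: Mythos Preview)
Your overall framework is correct and matches the paper: one applies Fay's trisecant identity (Theorem~\ref{thm:Fay}) with the four points chosen from $\{P,O,A_1,A_2\}$ --- specifically the paper takes $(P_1,P_2,P_3,P_4)=(A_2,O,P,A_1)$ --- and then specializes $z=\beta^t+\iota((n-i-1)(O-P)+2\mathcal{A}_1)$ to obtain a three-term relation
\[
  T_1\,\theta_i^{t+\be_1+\be_2}\theta_{i+1}^{t+\be_1}
  + T_2\,\theta_{i+1}^{t+2\be_1}\theta_i^{t+\be_2}
  = T_3\,\theta_{i+1}^{t+\be_2}\theta_i^{t+2\be_1},
\]
with $T_1,T_2,T_3$ the $\Theta[\alpha,\beta]$-factors. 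The bookkeeping you describe is exactly what is needed here, and the third term's ratio $T_1/T_3$ is indeed obtained by specializing Fay's identity a second time at $z=p_0+\iota(O-P)$, giving the formula for $c$.

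The gap is in your plan to identify $T_2/T_3=a_1/a_2$ ``using \eqref{eq:q-a} and quasi-periodicity \eqref{theta-quasi1}.'' The constants $T_2,T_3$ are values of $\Theta$ at arguments involving the odd half-period $p=\Omega\beta+\alpha$, while $a_j$ is built from $x(A_j)^{1/n}$ and an exponential in the period-lattice vector $b$ associated to $\iota(\mathcal{O}_n)$. There is no quasi-periodicity relation that converts one into the other directly; these live in different parts of the theory. The paper's proof (Lemma~\ref{lem:T23}) instead uses the dynamics in an essential way: one chooses $t_0$ so that $\beta^{t_0}=-p_0$ is a zero of $\Theta$, substitutes $z=p_0+\iota(A_2-P)$ into the Fay relation to rewrite $T_2/T_3$ as a ratio of $\theta$-values, converts that via \eqref{eq:theta-sol} into $(a_1/a_2)\cdot q_{2,n-1}^{t_0-\be_1}/q_{1,n-1}^{t_0}$, and finally invokes the evolution equations \eqref{eq:evolm=2-1}--\eqref{eq:evolm=2-2} at this degenerate time to show the $q$-ratio equals $1$. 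You mention the evolution equations only as a ``consistency check,'' but in fact they are the missing ingredient that pins down the coefficient; without them the identification of $T_2/T_3$ with $a_1/a_2$ remains unproved.
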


\begin{proof}
By setting $(P_1,P_2,P_3,P_4) = (A_2,O,P,A_1)$ and using \eqref{def:gen-theta},
we obtain:
\begin{align}\label{Fay-basic}
  \begin{split}
  &T_1 \, \Theta(z+\iota(P-A_2)) \, \Theta(z+\iota(A_1-O)) 
  + T_2 \, \Theta(z+\iota(P-O)) \, \Theta(z+\iota(A_1-A_2))\,
    \\
  &\qquad = T_3 \, \Theta(z+\iota(P+A_1-A_2-O)) \, \Theta(z) \,
  \end{split}
\end{align} 
where $p := \Omega \beta + \alpha$, and 
\begin{align}\label{eq:T}
  \begin{split}
  &T_1 := \Theta(p+\iota(O-P)) \, \Theta(p+\iota(A_1-A_2)),
  \\
  &T_2 := \mathrm{e}^{4 \pi \sqrt{-1} \beta \cdot \iota(A_2-A_1)} \,
     \Theta(p+\iota(A_2-P)) \, \Theta(p+\iota(O-A_1)),
  \\
  &T_3 := \Theta(p+\iota(A_1-P)) \, \Theta(p+\iota(O-A_2)).
  \end{split}
\end{align}
By setting $z = \beta^t + \iota((n-i-1)(O-P) + 2 \mathcal{A}_1)$, 
\eqref{Fay-basic} turns out to be
\begin{align}\label{eq:oct}
  T_1 \, \theta_i^{\be_1+\be_2} \, \theta_{i+1}^{\be_1}
  + T_2 \, \theta_{i+1}^{2\be_1} \, \theta_{i}^{\be_2}
  = 
  T_3 \, \theta_{i+1}^{\be_2} \, \theta_{i}^{2 \be_1}.
\end{align}

\begin{lem}\label{lem:T23}
We have 
$$
  \frac{T_2}{T_3} = \frac{a_1}{a_2}.
$$
\end{lem}

\begin{proof}
For $q^0 \in \psi^{-1}(f)$, 
take $t_0 \in \Z^m$ such that $\beta^{t_0} \in \C^g$ is a zero of 
the Riemann theta function,
which is always possible by choosing $q^0$ appropriately.
We write $-p_0$ for such $\beta^{t_0}$.
Then we have $\theta^{t_0}_{n-1} = \Theta(-p_0) = 0$, and 
$q_{1,n}^{t_0} = q_{2,n}^{t_0-\be_1} = 0$.
From \eqref{eq:evolm=2-1} and \eqref{eq:evolm=2-2},
we obtain 
\begin{align}\label{eq:rel-at-t0} 
  q_{2,n-1}^{t_0-\be_1} q_{1,n-1-k}^{t_0-\be_1} 
  = q_{1,n-1}^{t_0} q_{2,n-1}^{t_0},
  \qquad 
  q_{1,n-k-1}^{t_0-\be_1} = q_{2,n-1}^{t_0}.
\end{align}

On the other hand, 
when $z = p_0 + \iota(A_2-P)$, \eqref{Fay-basic} becomes
$$
  T_2 \, \Theta(p_0+\iota(A_2-O)) \, \Theta(p_0+\iota(A_1-P))
  = T_3 \, \Theta(p_0+\iota(A_1-O)) \, \Theta(p_0+\iota(A_2-P)). 
$$   
It is rewritten as (using that $\Theta(z)$ is even function) 
$$
  \frac{T_2}{T_3}
  = 
  \frac{\theta^{t_0 + \be_1}_{n-2} \, \theta^{t_0 + \be_2-\be_1}_{n-1}}
       {\theta^{t_0 + \be_2-\be_1}_{n-2} \, \theta^{t_0 + \be_1}_{n-1}}
  =
  \frac{a_1}{a_2} \cdot \frac{q_{2,n-1}^{t_0-\be_1}}{q_{1,n-1}^{t_0}},
$$
where we use \eqref{eq:theta-sol} to get the last equality.
It follows from \eqref{eq:rel-at-t0} that 
$q_{2,n-1}^{t_0-\be_1}/q_{1,n-1}^{t_0} = 1$, and we obtain the claim. 
\end{proof}

We continue the proof of the theorem.
By setting
$z = p_0 + \iota(O-P)$ in \eqref{Fay-basic}, we obtain
$$
  T_1 \, \Theta(p_0+\iota(O-A_2)) \, \Theta(p_0+\iota(A_1-P))
  = T_3 \, \Theta(p_0+\iota(A_1-A_2)) \, \Theta(p_0+\iota(O-P)). 
$$   
Using this and the above lemma, \eqref{eq:oct} is shown to be
\begin{equation*}
  c \, \theta_i^{\be_1+\be_2} \, \theta_{i+1}^{\be_1}
  + a_1 \, \theta_{i+1}^{2\be_1} \, \theta_{i}^{\be_2}
  = 
  a_2 \, \theta_{i+1}^{\be_2} \, \theta_{i}^{2 \be_1}. \qedhere
\end{equation*}
\end{proof}

Conversely, we have the following. 

\begin{prop}\label{prop:thetaq}
Suppose that $\theta_i^t$ satisfy \eqref{eq:m=2-oct}.  Then $q_{j,i}^t$ defined by \eqref{eq:theta-sol}
  satisfies \eqref{eq:evolm=2-1}--\eqref{eq:evolm=2-3}.
\end{prop}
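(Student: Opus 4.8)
This is the converse of Theorem~\ref{thm:octahedron}, and the plan is to substitute the theta solution \eqref{eq:theta-sol} into each of the three evolution equations \eqref{eq:evolm=2-1}--\eqref{eq:evolm=2-3} and check that what remains is a valid identity on $C_f$. The prefactor $f_c^{-1/M}$ cancels from every term. A preliminary step is to reconcile the coefficients $c_\ell^t$: since $N=1$ and $M=\gcd(n,m+k)$ divides $m+k$, Lemma~\ref{lem:pi-c} (describing how the cyclic shift acts on $(c_1,\dots,c_M)$) shows that all of the $c$-factors appearing in any one of the three equations collapse to a single common factor, which is divided out. After this, \eqref{eq:evolm=2-1} and \eqref{eq:evolm=2-3} become identities purely among the values $\theta_i^{s}(P)$.

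The first task is to prove the two multiplicative relations \eqref{eq:evolm=2-1} and \eqref{eq:evolm=2-3}; I expect these to hold for the theta solution \emph{without} invoking \eqref{eq:m=2-oct}, as consequences of the quasi-periodicity of $\Theta$ together with the linear equivalences $(x)=n(O-P)$ and $(y)=kO+\sum_i A_i-(m+k)P$ on $C_f$, which pin down the divisor classes of $\OO_n$ and $\mathcal{A}_1+\mathcal{A}_2$ modulo the period lattice. Concretely, after cancelling the theta factors common to the two sides of \eqref{eq:evolm=2-1} (these telescope: the values at time $t+2\be_1$ disappear, and so do $\theta_i^{t+\be_1}$ and $\theta_{i-1}^{t+\be_1}$), each side takes the form $\Theta(z+v)\,\Theta(z+w)\big/\bigl(\Theta(z)\,\Theta(z+v+w)\bigr)$ for suitable $v,w\in\C^g$, and the two values of $z$ are seen to agree modulo periods once the index shift contributed by Lemma~\ref{lem:pi-c} is accounted for. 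Equation \eqref{eq:evolm=2-3}, which asserts that shifting $t$ by $\be_2$ has the same effect on the quotient in \eqref{eq:theta-sol} as lowering $i$ by $k$, is handled in the same way. I expect this bookkeeping — matching each residual divisor class against the equivalences above, index class by index class modulo $M$ — to be the main obstacle; where quasi-periodicity alone leaves a gap, one can instead invoke a specialization of Fay's trisecant identity of the kind already used for $T_2/T_3$ and $T_1/T_3$ in the proof of Theorem~\ref{thm:octahedron}.

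With \eqref{eq:evolm=2-1} and \eqref{eq:evolm=2-3} established, the additive relation \eqref{eq:evolm=2-2} follows from the octahedron recurrence. Using \eqref{eq:evolm=2-3} to rewrite $q_{1,i-k}^{t}=q_{1,i}^{t+\be_2}$ and the $c$-reconciliation of the first step, one factors the left side of \eqref{eq:m=2-oct} out of the differences $q_{2,i+1}^{t}-q_{1,i+1}^{t+\be_1}$ and $q_{2,i}^{t+\be_1}-q_{1,i}^{t+\be_2}$, at index $i$ and at index $i-1$ respectively, and by hypothesis both reduce to
$$
q_{2,i+1}^{t}-q_{1,i+1}^{t+\be_1}
=q_{2,i}^{t+\be_1}-q_{1,i}^{t+\be_2}
=C'\,\frac{\theta_i^{t+\be_1}\,\theta_i^{t+\be_1+\be_2}}{\theta_i^{t+\be_2}\,\theta_i^{t+2\be_1}},
$$
where $C'$ is the common nonzero factor. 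Since the middle term equals $q_{2,i}^{t+\be_1}-q_{1,i-k}^{t}$, rearranging gives exactly \eqref{eq:evolm=2-2}. The only point needing care here is that the constant multiplying $\theta_i^{t+\be_1+\be_2}\theta_{i+1}^{t+\be_1}$ in this reduction is precisely the constant $c$ of Theorem~\ref{thm:octahedron}; this is the computation in the proof of Theorem~\ref{thm:octahedron} (in particular Lemma~\ref{lem:T23}) read in reverse, so it transcribes from there. Everything else is routine manipulation of theta quotients.
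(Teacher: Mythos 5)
Your argument is correct and essentially the paper's own: \eqref{eq:evolm=2-1} and \eqref{eq:evolm=2-3} are verified directly from the quasi-periodicity of $\Theta$ together with the periodicity $c_i^{t+\be_u}=c_{i+u}^t$ (no trisecant identity is needed for these), and \eqref{eq:evolm=2-2} follows by factoring the octahedron combination \eqref{eq:m=2-oct} out of the two differences $q_{2,i+1}^t-q_{1,i+1}^{t+\be_1}$ and $q_{2,i}^{t+\be_1}-q_{1,i}^{t+\be_2}$ at indices $i$ and $i-1$, which the paper phrases equivalently as showing that their ratio $f_i^t$ equals $1$. The only superfluous point is your appeal to Lemma~\ref{lem:T23}: the precise value of the constant $c$ in \eqref{eq:m=2-oct} is never needed, since the same $c$ multiplies both reduced differences and cancels.
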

\begin{proof}
It is very easy to see that \eqref{eq:theta-sol} satisfies 
\eqref{eq:evolm=2-1} and \eqref{eq:evolm=2-3}.
We check \eqref{eq:evolm=2-2}. 
We consider a ratio
$$
  f_i^t 
  := \frac{q_{2,i+1}^t -q_{1,i+1}^{t+\be_1}}
       {q_{2,i}^{t+\be_1} - q_{1,i}^{t+\be_2}}.
$$ 
By substituting \eqref{eq:theta-sol} in $f_i^t$ we obtain
\begin{align}
  f_i^t
  =
  \frac{
  a_2 c_{i+2} \frac{\theta_{i+1}^{\be_2} \, \theta_i^{\be_1}}
                 {\theta_{i+1}^{\be_1} \, \theta_i^{\be_2}}
  - a_1 c_{i+1}^{\be_1} \frac{\theta_{i+1}^{2 \be_1} \, \theta_{i}^{\be_1}}  
                 {\theta_{i+1}^{\be_1} \, \theta_{i}^{2 \be_1}}
  }
  {
  a_2 c_{i+1}^{\be_1} \frac{\theta_{i}^{\be_1+\be_2} \, \theta_{i-1}^{2 \be_1}}
                 {\theta_{i}^{2 \be_1} \, \theta_{i-1}^{\be_1+\be_2}}
  - a_1 c_{i}^{\be_2} \frac{\theta_{i}^{\be_1+\be_2} \, \theta_{i-1}^{\be_2}}
                 {\theta_{i}^{\be_2} \, \theta_{i-1}^{\be_1+\be_2}}
  }
  =
  \frac{
  \theta_i^{\be_1} \, \theta_{i-1}^{\be_1+\be_2}
  \left( a_2 \theta_{i+1}^{\be_2} \, \theta_{i}^{2 \be_1}
         - a_1 \theta_{i+1}^{2 \be_1} \, \theta_{i}^{\be_2} \right)
  }
  {
  \theta_i^{\be_1 + \be_2} \, \theta_{i+1}^{\be_1}
  \left( a_2 \theta_{i}^{\be_2} \, \theta_{i-1}^{2 \be_1}
         - a_1 \theta_{i}^{2 \be_1} \, \theta_{i-1}^{\be_2} \right)
  },
\end{align}  
where we omit the superscripts $t$ for simplicity.
At the second equality we have canceled all the $c_i$ 
using $c_i^{\be_u} = c_{i+u}$.
Due to \eqref{eq:m=2-oct}, we obtain $f_i^t = 1$.
Thus, using \eqref{eq:evolm=2-3}, we obtain \eqref{eq:evolm=2-2} .
\end{proof}

\subsection{General $m$ case}

The vertical actions $e_u ~(u=1,\ldots,m)$ are expressed as
matrix equations:
\begin{align}
  \label{Q-m-1}
  Q_1^{t+\be_{u}}\cdots Q_m^{t+\be_{u}}
  =
  Q_{u+1}^t \cdots Q_m^t P(x)^k Q_1^t \cdots Q_{u}^t P(x)^{-k}.
\end{align}
Among the family of difference equations we will use the following 
ones later:  
\begin{align}
  \label{eq:evol1}
  &\prod_{j=1}^m q_{j,i}^{t+\be_{u}} 
  = 
  \prod_{j=u+1}^m q_{j,i}^t \cdot \prod_{j=1}^u q_{j,i-k}^t,
  \\
  \label{eq:evol2}
  &\sum_{j=1}^m q_{1,i}^{t+\be_u} \cdots q_{j-1,i}^{t+\be_u} \,
   q_{j+1,i-1}^{t+\be_u} \cdots q_{m,i-1}^{t+\be_u}
  =
  \sum_{j=1}^m q_{s(1,i)}^{t} \cdots q_{s(j-1,i)}^{t} \,
   q_{s(j+1,i-1)}^{t} \cdots q_{s(m,i-1)}^{t}, 
\end{align}
for $u=1,\cdots,m$. Here we define
$$
  s(j,i) = \begin{cases} 
           (j+u,i) & j=1,\ldots m-u, \\
           (j+u-m,i-k) & j=m-u+1,\ldots,m.
           \end{cases}
$$

\begin{thm}\label{thm:octahedron-general}
For any $t \in \Z^m$, $i \in \Z$ and $1 \leq p < r \leq m$, 
the $\theta_i^t$ satisfy  
an octahedron recurrence relation, 
\begin{align}\label{eq:m-oct}
  \delta_{p,r} \,\theta_{i+1}^{t+\be_{p-1}+\be_{r-1}}
  \theta_{i}^{t+\be_{p}+\be_{r}} 
  + a_p \,\theta_{i+1}^{t+\be_{p}+\be_{r-1}} \theta_i^{t+\be_{p-1}+\be_{r}}
  =
  a_r \,\theta_{i+1}^{t+\be_{p-1}+\be_{r}} \theta_i^{t+\be_{p}+\be_{r-1}}.
\end{align}  
Here $\delta_{p.r}$ is a constant given by
$$
  \delta_{p,r} = a_r \, \frac{\Theta(p_0+\iota(A_p-A_r)) \Theta(p_0+\iota(O-P))}
                  {\Theta(p_0+\iota(O-A_r)) \Theta(p_0+\iota(A_p-P))},
$$ 
and $p_0$ is a zero of the Riemann theta function: $\Theta(p_0) = 0$. 
\end{thm}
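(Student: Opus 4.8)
The plan is to run the proof of Theorem~\ref{thm:octahedron} with the two distinguished wires $1,2$ replaced by the pair $p<r$, the only genuinely new ingredient being a version of Lemma~\ref{lem:T23} adapted to the general-$m$ evolution equations \eqref{eq:evol1}--\eqref{eq:evol2}. Since $N=\gcd(n,k)=1$, the point $O=O_1$ over $(0,0)$ is unique and $O_n=O$. First I would specialize Fay's trisecant identity (Theorem~\ref{thm:Fay}) to the four points $(P_1,P_2,P_3,P_4)=(A_r,O,P,A_p)$ and an odd half period; exactly as in \eqref{Fay-basic} this produces
\[
T_1\,\Theta(z+\iota(P-A_r))\,\Theta(z+\iota(A_p-O))+T_2\,\Theta(z+\iota(P-O))\,\Theta(z+\iota(A_p-A_r))=T_3\,\Theta(z+\iota(P+A_p-A_r-O))\,\Theta(z),
\]
with $T_1,T_2,T_3$ given by the formulas \eqref{eq:T} after replacing $A_1,A_2$ by $A_p,A_r$.

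Next I would substitute $z=\beta^{t}+\iota\bigl(\mathcal{A}_p+\mathcal{A}_{r-1}+(n-i-1)(O-P)\bigr)$, which for $(p,r)=(1,2)$ is precisely the choice $z=\beta^{t}+\iota((n-i-1)(O-P)+2\mathcal{A}_1)$ made for Theorem~\ref{thm:octahedron}. Using $A_u=P+\mathcal{A}_{u-1}-\mathcal{A}_u$, the identities $\beta^{t+\be_j}=\beta^{t}+\iota(\mathcal{A}_j)$ and $\theta^{s}_i(P)=\Theta(\beta^{s}+\iota((n-i-1)(O-P)))$, and the quasi-periodicity of $\Theta$, each of the six theta factors above collapses to one of the six $\theta$-values appearing in \eqref{eq:m-oct}, and the identity becomes
\[
T_1\,\theta^{t+\be_p+\be_r}_{i}\,\theta^{t+\be_{p-1}+\be_{r-1}}_{i+1}+T_2\,\theta^{t+\be_p+\be_{r-1}}_{i+1}\,\theta^{t+\be_{p-1}+\be_r}_{i}=T_3\,\theta^{t+\be_{p-1}+\be_r}_{i+1}\,\theta^{t+\be_p+\be_{r-1}}_{i}.
\]
Dividing by $T_3/a_r$, the claim \eqref{eq:m-oct} follows provided $T_2/T_3=a_p/a_r$ and $a_rT_1/T_3=\delta_{p,r}$. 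The second of these I would get by putting $z=p_0+\iota(O-P)$ with $\Theta(p_0)=0$ into the displayed Fay identity, exactly as in the last paragraph of the proof of Theorem~\ref{thm:octahedron}: the $T_2$-term drops out and one reads off $T_1/T_3=\dfrac{\Theta(p_0+\iota(A_p-A_r))\,\Theta(p_0+\iota(O-P))}{\Theta(p_0+\iota(O-A_r))\,\Theta(p_0+\iota(A_p-P))}$, hence the stated formula for $\delta_{p,r}$; for $(p,r)=(1,2)$ this is $c$, so Theorem~\ref{thm:octahedron} is the special case.

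The hard part will be the identity $T_2/T_3=a_p/a_r$, the analogue of Lemma~\ref{lem:T23}. Putting $z=p_0+\iota(A_r-P)$ into the Fay identity kills the $T_1$-term and, using that $\Theta$ is even together with the $\beta$-shift identities, rewrites $T_2/T_3$ as a ratio $\theta^{t_0+\be_p-\be_{p-1}}_{n-2}\theta^{t_0+\be_r-\be_{r-1}}_{n-1}\big/\bigl(\theta^{t_0+\be_r-\be_{r-1}}_{n-2}\theta^{t_0+\be_p-\be_{p-1}}_{n-1}\bigr)$, where $q^{0}$ and $t_0$ are chosen, as in the proof of Lemma~\ref{lem:T23}, so that $\beta^{t_0}=-p_0$. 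Feeding this back through the theta solution \eqref{eq:theta-sol} and cancelling the $c$-factors by $c^{\be_u}_i=c_{i+u}$, the assertion reduces to showing that a particular ratio $q^{t_1}_{r,n-1}/q^{t_2}_{p,n-1}$ of phase-space coordinates equals $1$. For $m=2$ this came from the vanishings $q^{t_0}_{1,n}=q^{t_0-\be_1}_{2,n}=0$ (forced by $\theta^{t_0}_{n-1}=\Theta(-p_0)=0$ through \eqref{eq:theta-sol}) fed into the two equations \eqref{eq:evolm=2-1}--\eqref{eq:evolm=2-2}; for general $m$ one must identify all coordinates annihilated by $\theta^{t_0}_{n-1}=0$ and then extract the needed ratio from the $m$-fold product relation \eqref{eq:evol1} and the $m$-term ``elementary symmetric'' relation \eqref{eq:evol2}, which is where the real bookkeeping lies. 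An alternative that sidesteps the dynamics is to express both sides in terms of Fay prime forms using the divisors $(x)=n(O-P)$ and $(y)=kO+\sum_{i=1}^m A_i-(m+k)P$ on $C_f$ and match prime-form cross-ratios and exponential prefactors directly; but the route through \eqref{eq:evol1}--\eqref{eq:evol2} is the one in the spirit of the $m=2$ argument, and is what I would carry out in detail.
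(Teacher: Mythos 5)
Your proposal follows essentially the same route as the paper's proof: the same Fay specialization at $(A_r,O,P,A_p)$, the same substitution $z=\beta^{t}+\iota(\mathcal{A}_p+\mathcal{A}_{r-1}+(n-i-1)(O-P))$, the evaluation at $z=p_0+\iota(O-P)$ to identify $\delta_{p,r}$, and the reduction of $T_2/T_3=a_p/a_r$ via $z=p_0+\iota(A_r-P)$ and the theta solution to showing $q_{r,n-1}^{t_0-\be_{r-1}}/q_{p,n-1}^{t_0-\be_{p-1}}=1$. The bookkeeping you defer is exactly how the paper concludes, and it is short: $\theta_{n-1}^{t_0}=0$ forces $q_{u,n}^{t_0-\be_{u-1}}=0$ for all $u=1,\ldots,m$, and then \eqref{eq:evol1} at $i=n-1$ together with \eqref{eq:evol2} at $i=n$ (where these vanishings kill all but one term on each side) yields $q_{p,n-1}^{t_0-\be_{p-1}}=q_{r,n-1}^{t_0-\be_{r-1}}=q_{1,n-1}^{t_0}$, as you anticipated.
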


\begin{proof}
The proof is similar to the $m=2$ case. We explain the outline.
In the case $(P_1,P_2,P_3,P_4) = (A_r,O,P,A_p)$ of Theorem~\ref{thm:Fay},
we obtain
\begin{align}\label{Fay-basic-m}
  \begin{split}
  &T_1 \, \Theta(z+\iota(P-A_r)) \, \Theta(z+\iota(A_p-O)) 
  + T_2 \, \Theta(z+\iota(P-O)) \, \Theta(z+\iota(A_p-A_r))\,
    \\
  &\qquad = T_3 \, \Theta(z+\iota(P+A_p-A_r-O)) \, \Theta(z) \,
  \end{split}
\end{align} 
where $T_1$, $T_2$ and $T_3$ are given by \eqref{eq:T}, but 
we replace $A_1$ (resp. $A_2$) with $A_p$ (resp. $A_r$).
By setting 
$z = \beta^t + \iota((n-i-1)(O-P) + \mathcal{A}_p + \mathcal{A}_{r-1})$ at
\eqref{Fay-basic-m}, we get
$$
  T_1 \,\theta_{i+1}^{t+\be_{p-1}+\be_{r-1}}
  \theta_{i}^{t+\be_{p}+\be_{r}} 
  + T_2 \,\theta_{i+1}^{t+\be_{p}+\be_{r-1}} \theta_i^{t+\be_{p-1}+\be_{r}}
  =
  T_3 \,\theta_{i+1}^{t+\be_{p-1}+\be_{r}} \theta_i^{t+\be_{p}+\be_{r-1}}.
$$
We take $t_0 \in \Z^m$ and define $p_0 := - \beta^{t_0} \in \C^g$ 
in the same manner as in the proof of Lemma~\ref{lem:T23}. 
Then $q_{u,n}^{t_0-\be_{u-1}} = 0$ holds for $u=1,\ldots,m$.
From \eqref{eq:evol1} (resp. \eqref{eq:evol2}) 
of $i=n-1$ (resp. $i=n$) and $u=p-1$ or $r-1$, it follows that
$$
  \frac{q_{p,n-1}^{t_0-\be_{p-1}}}{q_{1,n-1}^{t_0}} 
  = \frac{q_{r,n-1}^{t_0-\be_{r-1}}}{q_{1,n-1}^{t_0}} = 1.  
$$
On the other hand, 
by setting $z=p_0 + \iota(A_r-P)$ at \eqref{Fay-basic-m}, we obtain 
$$
  \frac{T_2}{T_3} 
  =
  \frac{\theta^{t_0 + \be_{p} - \be_{p-1}}_{n-2} \, \theta^{t_0 + \be_{r}-\be_{r-1}}_{n-1}}
       {\theta^{t_0 + \be_{r}-\be_{r-1}}_{n-2} \, \theta^{t_0 + \be_{p}-\be_{p-1}}_{n-1}} 
  =
  \frac{q_{r,n-1}^{t_0-\be_{r-1}}}{q_{p,n-1}^{t_0-\be_{p-1}}} \cdot \frac{a_p}{a_r}.
$$
From the above two relations, $T_2/T_3 = a_p/a_r$ follows.
Finally, by setting $z=p_0+\iota(O-P)$ at \eqref{Fay-basic-m}
we obtain the formula of $\delta_{p,r}$. 
\end{proof}

The following conjecture extends Proposition \ref{prop:thetaq}.
\begin{conjecture}
Suppose $\theta_i^t$ satisfy \eqref{eq:m-oct}.  Then $q_{j,i}^t$ defined via \eqref{eq:theta-sol}
satisfy all the difference equations \eqref{Q-m-1}.
\end{conjecture}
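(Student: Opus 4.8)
The plan is to extend the argument of Proposition~\ref{prop:thetaq} from $m=2$ to general $m$: decompose the matrix identities \eqref{Q-m-1} into scalar difference equations and check each of them against the octahedron system \eqref{eq:m-oct}. (One cannot simply invoke the bijectivity of $\phi$ together with Theorems~\ref{thm:N=1}--\ref{thm:commuting-actions}, since the $\theta_i^t$ are only assumed to satisfy \eqref{eq:m-oct}, not a priori to be theta values along a straight line in $\Pic^g(C_f)$.) The first step is the decomposition: comparing the $(j+\kappa,j)$-entries of the two sides of \eqref{Q-m-1}, for $\kappa=0,1,\dots,m$, produces a finite list of scalar equations. For $\kappa=0$ one gets the product relation \eqref{eq:evol1}, for $\kappa=1$ the alternating-sum relation \eqref{eq:evol2}, for $\kappa=m$ the trivial identity $1=1$, and for $2\le\kappa\le m-1$ analogues of \eqref{eq:evol2} involving partial products of $m-\kappa$ factors; the $x$-corner entries contribute the shift relation, whose $u=m$ instance reads $q_{j,i}^{t+\be_m}=q_{j,i-k}^t$. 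Thus \eqref{Q-m-1} for all $u$ is equivalent to this system, and it suffices to verify each member from \eqref{eq:theta-sol} and \eqref{eq:m-oct}.

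The easy members are \eqref{eq:evol1} and the shift relation, which I would verify directly from the explicit formula \eqref{eq:theta-sol}, exactly as \eqref{eq:evolm=2-1} and \eqref{eq:evolm=2-3} were in the proof of Proposition~\ref{prop:thetaq}: in $\prod_{j=1}^m q_{j,i}^{t+\be_u}$ the $\theta$-factors telescope to $\theta_i^{t+\be_u+\be_m}\,\theta_{i-1}^{t+\be_u}/(\theta_i^{t+\be_u}\,\theta_{i-1}^{t+\be_u+\be_m})$, and the quasi-periodicity of $\Theta$ together with $\OO_n\sim 0$ in $\Pic^0(C_f)$ (as used around \eqref{theta-quasi1}) turns this into the required column-shift by $k$, the constants $f_c^{-1/M}$, $a_j$ and $c^t_\bullet$ pairing off via $c_i^{\be_u}=c_{i+u}$ just as in Theorem~\ref{thm:N=1}.

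The hard part — and where I expect the real obstacle lies — is to derive the $\kappa\ge 1$ relations, above all \eqref{eq:evol2} and its higher-$\kappa$ analogues, from \eqref{eq:m-oct}. I would argue by induction on $u$. After substituting \eqref{eq:theta-sol} and clearing the $a$- and $c$-constants (which cancel by $c_i^{\be_u}=c_{i+u}$, as in the $m=2$ proof), each such relation becomes a polynomial identity among the values $\theta_i^{t+\varepsilon}$, with the $m$ summands on the left indexed by the consecutive pair of ``directions'' $\be_1,\dots,\be_m$ that a given monomial mixes; one then rewrites each summand by a single octahedron relation \eqref{eq:m-oct}, with $(p,r)$ chosen accordingly, so that the alternating sum telescopes to the right-hand side. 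For $m=2$ this is precisely the one-line computation of $f_i^t$ in Proposition~\ref{prop:thetaq} using the single relation \eqref{eq:m=2-oct}; for general $m$ it becomes a genuine $m$-term cancellation spread over the $\binom{m}{2}$ relations \eqref{eq:m-oct}, and the two places I expect to be delicate are (i) writing down and normalizing the $\kappa\ge 2$ scalar equations so that they too collapse under \eqref{eq:m-oct} — the excerpt records only $\kappa=0,1$, which happen to suffice when $m=2$ because the $(j+2,j)$-entries are then identically $1$ — and (ii) the induction step in $u$, which forces one to compose $u$ successive octahedron relations and to keep track of the accumulated products of the constants $\delta_{p,r}$ and $a_j$ and of the index shifts $s(j,i)$ of \eqref{eq:evol2}. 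It is exactly this multi-term telescoping that the $m=2$ argument avoids, and carrying it out uniformly in $m$ is the crux.
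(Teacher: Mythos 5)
There is an important mismatch of expectations here: the statement you were given is not proved in the paper at all — it is stated as a conjecture, with only the remark that the authors have verified it for $m\le 3$. So there is no proof of record, and your proposal does not close the gap either; it is a plan whose decisive step is explicitly deferred. The parts you do carry out are sound and consistent with the paper's own framework: expanding \eqref{Q-m-1} along diagonals does yield \eqref{eq:evol1} on the main diagonal, \eqref{eq:evol2} on the first subdiagonal, elementary-symmetric-type identities in $m-\kappa$ of the $q$'s for $2\le\kappa\le m-1$, and a trivial identity for $\kappa=m$; and checking \eqref{eq:evol1} together with the factorwise shift $q^{t+\be_m}_{j,i}=q^{t}_{j,i-k}$ directly from \eqref{eq:theta-sol}, via $c_i^{\be_u}=c_{i+u}$ and the quasi-periodicity relations of the type \eqref{theta-quasi1} (using $\iota(\mathcal{A}_m+\OO_k)\in\Z^g+\Omega\Z^g$), is the exact analogue of the ``easy to see'' part of the proof of Proposition \ref{prop:thetaq}. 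Your parenthetical point that one cannot simply quote Theorems \ref{thm:N=1} and \ref{thm:commuting-actions} is also a fair reading of what the conjecture is meant to assert.

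The genuine gap is precisely the step you label the crux and do not perform: deriving \eqref{eq:evol2} and, for $m\ge 3$, the higher-diagonal identities from the quadratic relations \eqref{eq:m-oct}. Asserting that ``each summand can be rewritten by a single octahedron relation so that the alternating sum telescopes'' is not an argument, and it is not clear such a one-relation-per-summand scheme exists: already for $\kappa=1$ and $m\ge 3$ each side of \eqref{eq:evol2} is a sum of $m$ monomials, each a product of $m-1$ four-term theta cross-ratios, so after clearing denominators one faces a polynomial identity among theta values whose degree grows with $m$, whereas each relation \eqref{eq:m-oct} is only quadratic; one expects to need iterated, Pl\"ucker/Dodgson-condensation-style combinations of the $\binom{m}{2}$ relations, and exhibiting these uniformly in $m$ is exactly what the authors were unable to do beyond $m\le 3$. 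Your proposed induction on $u$ has a further problem: the matrix identity \eqref{Q-m-1} for $u-1$ constrains only the product $Q_1^{t+\be_{u-1}}\cdots Q_m^{t+\be_{u-1}}$, not the individual factors, so the inductive step does not reduce to composing previously established cases and would itself require a new argument. In short, the reduction and the easy verifications are fine, but the mathematical content of the conjecture — the multi-term theta identity — remains unproved in your proposal, just as it does in the paper.
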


We have checked the conjecture for $m \leq 3$.

\section{The transposed network}
\label{sec:transpose}

\subsection{Transposed Lax matrix and spectral curve}
Corresponding to $\tilde Q$ of \eqref{eq:q-tildeq},
we also identify $q \in \mM$ with an $N$-tuple of $n'm \times n'm$ matrices
$\tilde Q := (\tilde Q_i(x))_{i \in [N]}$ as \S\ref{sec:WWonM}.
The matrices $\tilde Q_i(x)$ are given in terms of the $q_{ji}$ by 
\begin{align*}
&\tilde Q_{N+1-i}(x) := 
\tilde P(x) + \mathrm{diag}[q_{m,i},\ldots,q_{1,i},q_{m,i+k},\ldots,q_{1,i+k},\ldots,
                     q_{m,i-k},\ldots,q_{1,i-k}], 
\end{align*}
where $\tilde P(x)$ is the $mn' \times mn'$ matrix:
$$
\tilde P(x) := \left(\begin{array}{cccc} 
0 & 0 & 0 &x\\
1&0 &0 &0 \\
0&\ddots & \ddots&0 \\
0&0&1&0
\end{array} \right).
$$
We define $\tilde L(x) := \tilde Q_1(x) \cdots \tilde Q_N(x)
\tilde P(x)^{\bar k^\prime m}$, which is another Lax matrix.

Also define a map $\tilde \psi : \mM \to \C[x,y]$ given as a composition, 
$$
  \tilde Q \mapsto \tilde L(x) \mapsto \det(\tilde L(x) - y).
$$
Consequently, for $q \in \mM$
we have two affine plane curves $\tilde C_{\psi(q)}$ and 
$\tilde C_{\tilde \psi(q)}$in $\C^2$, given by 
the zeros of $\psi(q)$ and $\tilde \psi(q)$ respectively. 
The proof of Proposition \ref{prop:transposehull} is similar to that of Proposition \ref{prop:hull}.

\begin{prop} \label{prop:transposehull}
The Newton polygon $N(\tilde \psi(q))$ is the triangle with 
vertices $(0,m n')$, $(m \bar k',0)$ and $(m \bar k'+N,0)$,
where the lower hull (resp. upper hull)  
consists of one edge with vertices $(m \bar k',0)$ and $(0,m n')$
(resp. $(m \bar k'+N,0)$ to $(0,m n')$).
\end{prop}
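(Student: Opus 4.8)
The plan is to reduce Proposition~\ref{prop:transposehull} to Proposition~\ref{prop:hull} by invoking the symmetry $Q \longleftrightarrow \tilde Q$ of \S\ref{sec:netw}, which sends the triple $(n,m,k)$ to $(mn',N,m\bar k')$. The first step is to verify that the transposed Lax matrix $\tilde L(x) = \tilde Q_1(x)\cdots\tilde Q_N(x)\,\tilde P(x)^{m\bar k'}$ of \S\ref{sec:transpose} is literally the Lax matrix $\alpha(\tilde q)$ produced by the construction of \S\ref{sec:Lax} when applied to the transposed network with parameters $(mn',N,m\bar k')$: indeed each $\tilde Q_i(x)$ has the shape \eqref{eq:Q}, being $\tilde P(x)$ plus a diagonal matrix whose entries, by \eqref{eq:q-tildeq}, are exactly the entries $\tilde q_{i,j}$ of the array $\tilde Q$, and the shift power $m\bar k'$ is the ``$k$'' of the transposed system. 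Granting this identification, $\tilde\psi(q) = \det(\tilde L(x)-y)$ coincides with $\psi'(\tilde q)$, where $\psi'$ denotes the spectral map of the system with parameters $(mn',N,m\bar k')$. Since $q \mapsto \tilde q$ is a bijective relabeling of coordinates, genericity of $q$ corresponds to genericity of $\tilde q$, so Proposition~\ref{prop:hull} applied to $\psi'$ yields precisely that $N(\tilde\psi(q))$ is the triangle with vertices $(0,mn')$, $(m\bar k',0)$, $(m\bar k'+N,0)$, with lower hull the edge from $(m\bar k',0)$ to $(0,mn')$ and upper hull the edge from $(m\bar k'+N,0)$ to $(0,mn')$.

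Alternatively, and more in the spirit of the remark preceding the statement, one can simply rerun the argument of \S\ref{subsec:prop:hull} on the transposed network. One first establishes the analogue of Theorem~\ref{thm:mit}, writing $\det(\tilde L(x)-y)$ as a signed sum over noncrossing families of $mn'$ highway paths using all sources and all sinks, with the exponent of $x$ counting crossings of the transposed $x$-line and the exponent of $(-y)$ counting abrupt paths. One then reads such a family as a collection of noncrossing closed cycles on the transposed torus, lifts each cycle of length $\ell$ to the universal cover, and checks that it crosses the $x$-line at least $\ell\cdot\frac{m\bar k'}{mn'}$ and at most $\ell\cdot\frac{m\bar k'+N}{mn'}$ times, with the two extremes realized respectively by the horizontal (coil) cycle and the right-up staircase (snake) cycle. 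Summing the length contributions of a family that covers all $mn'$ vertices produces the two bounding lines through $(0,mn')$ with slopes matching the endpoints $(m\bar k',0)$ and $(m\bar k'+N,0)$, which is the asserted triangle together with its lower and upper hulls.

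I expect the main obstacle to be bookkeeping rather than genuine mathematical content. In the first route it is confirming that the explicitly written matrices $\tilde Q_{N+1-i}(x) = \tilde P(x) + \mathrm{diag}[q_{m,i},\ldots,q_{1,i},q_{m,i+k},\ldots]$, with their reversed and shifted indexing, really do reassemble the array $\tilde Q$ of \eqref{eq:q-tildeq}, and in handling degenerate parameter values such as $n' = 1$ (i.e.\ $k = n$) consistently with the conventions in force. In the second route the work is in setting up the correct highway-path dictionary on the transposed network and keeping track of the factor of $m$ in the number of horizontal wires. Since the underlying geometric argument is identical to that of \S\ref{subsec:prop:hull}, neither route poses a real difficulty, which justifies asserting the statement with the comment that its proof is similar to that of Proposition~\ref{prop:hull}.
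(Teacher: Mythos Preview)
Your proposal is correct and aligns with the paper's own treatment: the paper simply remarks that the proof is similar to that of Proposition~\ref{prop:hull}, which is precisely your second route, and your first route (reducing directly via the $Q \leftrightarrow \tilde Q$ symmetry and the parameter change $(n,m,k) \mapsto (mn',N,m\bar k')$) is an equally valid way to make that similarity precise. The bookkeeping concerns you flag are real but minor, and your final sentence captures exactly the level of justification the paper provides.
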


\begin{lem}
The affine transformation
\begin{align}\label{eq:aff-trans}
  \begin{pmatrix}
  i \\ j
  \end{pmatrix}
  \mapsto 
  \begin{pmatrix} \bar k'(k + m) \\ -n' k \end{pmatrix}
  +
  \begin{pmatrix} -\bar k^\prime & 
                  (1- k^\prime \bar k^\prime)/n'  \\ 
                  n' & k^\prime \end{pmatrix}
  \begin{pmatrix} i \\ j \end{pmatrix}
\end{align}
sends integer points of $N(\psi(q))$ into integer points
of $N(\tilde \psi(q))$.
\end{lem}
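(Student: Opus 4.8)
The plan is to recognize the stated affine map as a lattice automorphism of $\Z^2$ that carries the triangle $N(\psi(q))$ onto the triangle $N(\tilde\psi(q))$; the lemma then follows at once, since a lattice automorphism sends lattice points to lattice points and an affine map sends convex hulls of finite sets to convex hulls of images.

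First I would check that the affine map is a bijection of $\Z^2$. Its linear part is
\[
  M_0 := \begin{pmatrix} -\bar k^\prime & (1-k^\prime\bar k^\prime)/n' \\[2pt] n' & k^\prime \end{pmatrix}.
\]
The only entry whose integrality is in question is $(1-k^\prime\bar k^\prime)/n'$, and this is an integer precisely because $k^\prime\bar k^\prime \equiv 1 \bmod n'$ by the definition of $\bar k^\prime$. Moreover $\det M_0 = -\bar k^\prime k^\prime-(1-k^\prime\bar k^\prime) = -1$, so $M_0\in GL_2(\Z)$; together with the integrality of the translation vector $(\bar k^\prime(k+m),-n'k)$, this shows the whole map is an automorphism of the lattice $\Z^2$.

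Second I would substitute the three vertices. By Proposition~\ref{prop:hull} the vertices of $N(\psi(q))$ are $(k,0)$, $(k+m,0)$, $(0,n)$, and by Proposition~\ref{prop:transposehull} those of $N(\tilde\psi(q))$ are $(m\bar k^\prime,0)$, $(m\bar k^\prime+N,0)$, $(0,mn')$. A direct computation, using $n=Nn'$ and $k=Nk'$ (so that $\bar k^\prime k = Nk^\prime\bar k^\prime$ and $(1-k^\prime\bar k^\prime)n/n' = N(1-k^\prime\bar k^\prime)$), gives $(k,0)\mapsto(m\bar k^\prime,0)$, $(k+m,0)\mapsto(0,mn')$, and $(0,n)\mapsto(m\bar k^\prime+N,0)$. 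Hence the map sends the vertex set of $N(\psi(q))$ bijectively to that of $N(\tilde\psi(q))$, and therefore $N(\psi(q))$ onto $N(\tilde\psi(q))$; combined with the first step it restricts to a bijection between the lattice points of the two polygons, which proves the lemma (indeed a little more than stated).

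The argument has no genuine obstacle; the only point requiring care is the bookkeeping that matches the two indexing conventions, and in particular the verification that $M_0$ is integral and unimodular, which is exactly where the congruence $k^\prime\bar k^\prime\equiv 1 \bmod n'$ is used.
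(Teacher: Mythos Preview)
Your argument is correct and matches the paper's own proof almost exactly: the paper checks that the vertices are sent correctly, observes that $(1-k'\bar k')/n'$ is an integer by definition of $\bar k'$, and notes that the determinant $-1$ makes the inverse integral as well. You have simply carried out the vertex verifications and determinant computation in more detail than the paper does.
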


\begin{proof}
 It is easy to see that it sends the vertices correctly. By the
definition of 
$\bar k'$ we know $(1-k' \bar k')/n'$ is an integer. Thus
this transformation sends integer points to integer points.
 So does the inverse, as the determinant of the matrix involved is $-1$.
\end{proof}

\begin{example}
Let $(n,m,k) = (6,3,4)$.
The two Newton polygons $N(\psi(q))$ and $N(\tilde \psi(q))$ are illustrated in
Figure \ref{fig:mnk1}. Here $i$ labels the horizontal axis and $j$
labels the vertical axis.
\begin{figure}[ht]
    \begin{center}
\vspace{-.1in}
\input{mnk1.pstex_t}
\vspace{-.1in}
    \end{center}
    \caption{}
    \label{fig:mnk1}
\end{figure}
The dots of the same color show integer points inside the Newton polygon
that get sent to each other by the transformation \eqref{eq:aff-trans}. The formula
for the transformation in this case is
 $$
 \left(\begin{array}{c}
i \\
j
\end{array} \right)
\mapsto
 \left(\begin{array}{c}
 14 \\
-12
\end{array} \right)
+
 \left(\begin{array}{cc}
-2 & -1 \\
3 & 2
\end{array} \right)
 \left(\begin{array}{c}
i \\
j
\end{array} \right).
 $$
\end{example}

\begin{prop}\label{prop:two-polys}
For $q \in \mM$, the polynomials $\psi(q)$ and $\tilde \psi(q)$ 
coincide up to the monomial transformation induced by \eqref{eq:aff-trans}.
The signs of the new monomials are derived from the rule given in Theorem \ref{thm:mit}: the sign of $x^a y^b$ is $(-1)^{(mn'-b-1)a+b}$.
\end{prop}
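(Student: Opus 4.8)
The plan is to compare $\psi(q)$ and $\tilde\psi(q)$ term by term using the highway–path expansions of Theorem~\ref{thm:mit}. The first step is to record the transposed analogue of Theorem~\ref{thm:mit}, whose proof is identical to the one there but applied to $\tilde L(x)=\tilde Q_1(x)\cdots\tilde Q_N(x)\tilde P(x)^{\bar k' m}$: the coefficient of $x^{i'}y^{j'}$ in $\tilde\psi(q)$ is $\sum_{\tilde\p}\sign(\tilde\p)\,\wt(\tilde\p)$, the sum being over noncrossing families $\tilde\p$ of $mn'$ highway paths on the transposed cylindrical network $\tilde G'$ using all sources and all sinks, with $i'$ the number of crossings of the transposed $x$-line, $j'$ the number of abrupt paths, and overall monomial sign $(-1)^{(mn'-j'-1)i'+j'}$. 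This is exactly the sign rule of Theorem~\ref{thm:mit} with $n$ replaced by $mn'$; consequently, once we produce a weight-preserving correspondence of indexing families that shifts the exponents by \eqref{eq:aff-trans} and carries combinatorial families to combinatorial families, the sign statement in the proposition follows with no further work.

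\textbf{Key steps.} The heart is a weight-preserving bijection between indexing families. As in the proof of Proposition~\ref{prop:hull}, a noncrossing family of $n$ highway paths on $G'$ using all sources and sinks is the same datum as a noncrossing family $\mathcal C$ of closed highway cycles on the torus $G$, the abrupt paths being the trivial cycles sitting at the horizontal wires of $G$ not covered by a nontrivial cycle. Since the toric network underlying the transposed picture is the very same network $G$ viewed from the other side (the identification set up in Section~\ref{sec:netw}), a noncrossing family of $mn'$ highway paths on $\tilde G'$ is likewise a noncrossing family of closed highway cycles on $G$, now with the trivial cycles recording the transposed levels not covered. Thus one family $\mathcal C$ of closed cycles on the torus indexes exactly one term of $\psi(q)$ and one term of $\tilde\psi(q)$, and because the monomial $\prod q_{ij}$ picked up by $\mathcal C$ is determined by the set of crossroads through which some cycle of $\mathcal C$ passes straight — intrinsic to $\mathcal C$, independent of the cut — the two $q$-weights coincide. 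One must also check that the unsigned families form the same set in both pictures, i.e.\ that a family is highway for the $G$-orientations iff it is highway for the transposed ones; this is a direct consequence of the way the two systems of wire orientations are matched in Section~\ref{sec:netw}. It remains to identify the induced exponent map with \eqref{eq:aff-trans}. Given $\mathcal C$, both $(i,j)$ and $(i',j')$ are explicitly $\Z$-affine functions of the homology data of $\mathcal C$ on the torus: $j$ (resp.\ $j'$) equals $n$ (resp.\ $mn'$) minus the number of levels met by a nontrivial cycle, and $i$ (resp.\ $i'$) is the total algebraic intersection number of $\mathcal C$ with the $x$-line (resp.\ transposed $x$-line), whose homology classes are fixed by $n,m,k$ (including the $k$-shift in the staircase shape of the $x$-line). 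This bookkeeping shows $(i,j)\mapsto(i',j')$ extends to an affine map of $\R^2$; since \eqref{eq:aff-trans} is already known to send $N(\psi(q))\cap\Z^2$ bijectively onto $N(\tilde\psi(q))\cap\Z^2$, it then suffices to check agreement on three affinely independent vertices of $N(\psi(q))$ — the all-abrupt family at $(0,n)$, and the families realizing the lower and upper hulls, whose contributions are given explicitly in both pictures by Lemma~\ref{lem:Q} (coils for the lower hull, snakes for the upper hull). Matching on these three forces the two affine maps to coincide.

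\textbf{Main obstacle.} The delicate point is the last homological computation: keeping the $k$-shift in the shape of the $x$-line and the two wire-orientation conventions straight so that the intersection-number count returns exactly the representative \eqref{eq:aff-trans} rather than another $\mathrm{GL}_2(\Z)$-equivalent affine map, and verifying on the Newton-polygon vertices that the constant (translation) part is precisely $(\bar k'(k+m),-n'k)$. Everything else — the transposed Theorem~\ref{thm:mit}, the closed-cycle reinterpretation, and the equality of $q$-weights — is a routine transcription of arguments already in the paper.
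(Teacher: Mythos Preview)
Your proposed bijection is the wrong one, and this is a genuine gap. You assert that the \emph{same} family $\mathcal C$ of closed cycles on the torus serves as a highway family both for $G$ and for the transposed network, and that the $q$-weight it picks up is ``intrinsic to $\mathcal C$''. Neither claim holds. When you view $G$ from the other side of the torus, the roles of horizontal and vertical wires are swapped and the orientation of the surface is reversed; consequently a highway path on $\tilde G$ is, on $G$, an \emph{underway} path (the mirror-image convention: going straight \emph{vertically} picks up $q_{ij}$, going straight horizontally is forbidden). So a cycle that passes straight through a crossroad horizontally---exactly the move that earns $q_{ij}$ in the highway model on $G$---is an illegal move in the highway model on $\tilde G$. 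Your check that ``a family is highway for the $G$-orientations iff it is highway for the transposed ones'' therefore fails for every family except pure snake paths.

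The correct bijection, and the one the paper uses, is \emph{edge complementation}: to a closed highway family on $G$ one associates the complement of its edge set inside all edges of $G$. Locally around each crossroad where the original family goes straight horizontally, the complement goes straight vertically, so viewed on $\tilde G$ it is a legal highway family and picks up precisely the same $q_{ij}$'s. The homological/affine bookkeeping you outline (matching exponents via intersection numbers and verifying on Newton-polygon vertices) is a reasonable way to pin down the exponent map, but it has to be carried out for the complement, not for $\mathcal C$ itself; in particular $j'$ is not ``$mn'$ minus the number of transposed levels met by $\mathcal C$'' but rather the analogous count for the complementary family. Once you replace the identity by complementation, the rest of your outline can be made to work.
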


See \S~\ref{proof:two-polys} for the proof.

\subsection{Special points on the transposed curve}

We write $\tilde f(x,y)$ for the polynomial obtained from 
the fixed polynomial $f(x,y)$ in \S\ref{subsec:special-pts} via 
the affine transformation \eqref{eq:aff-trans}.
Let $C_{\tilde{f}}$ be the smooth compactification of the affine plane curve 
$\tilde{C}_{\tilde f}$ given by $\tilde f(x,y) = 0$.
As for $C_{f}$ (Lemma~\ref{lem:inf}), 
$C_{\tilde f}$ has a unique point $\tilde P$ lying over $\infty$. 
Due to this fact and Proposition~\ref{prop:two-polys}, the two curves $C_{f}$ and $C_{\tilde{f}}$ are isomorphic.
Let $\tau : C_{\tilde f} \to C_f$ be the birational isomorphism, which is given by
$$
  (x,y) \mapsto ( y^{n'} x^{k'}, x^\frac{1-k' \bar k'}{n'} y^{k'} ) 
$$
when $(x,y) \in C_{\tilde f} \cap (\C^\ast)^2$.
Besides $\tilde P$, on $C_{\tilde{f}}$ we have special points 
$\tilde O_r = ((-1)^{m n'} \sigma_r,0)$ for $r \in [N]$, and $\tilde A_i$ for $i \in [m]$,
where near $\tilde A_i$ there is a local coordinate $u$ such that
$$
  (x,y) \sim (u^{n'}, -(- \epsilon_i)^{\frac{1}{n'}} u^{\bar k'}).
$$
We see that $\tau (\tilde P) = P$, $\tau (\tilde A_i) = A_i$ and  
$\tau (\tilde O_r) = O_r$.

\subsection{Proof of Proposition~\ref{prop:two-polys}}
\label{proof:two-polys}

 The terms contributing to a particular coefficient of the spectral
curve are described by Theorem \ref{thm:mit}. We shall exhibit a bijection between
 the terms of the coefficient associated with the lattice point $(i,j)$
inside the Newton polygon $N(\psi(q))$ and the terms contributing to the 
coefficient associated with the image of that point in $N(\tilde \psi(q))$ 
 under the affine transformation \eqref{eq:aff-trans}.

An \emph{underway path} in the network $G$ is the mirror symmetric version of a highway path.  Thus the weights of an underway path are given by Figure \ref{fig:highway} with $0$ and $q_{ij}$ swapped.  The crucial observation is that families of highway paths on the
network associated with $\tilde Q$ are families of underway paths
on the original network,
 parsed in the opposite direction. This is because by the construction
of the transpose map between $Q$ and $\tilde Q$, the corresponding
toric networks are the same but are viewed
 from opposite sides of the torus (inside vs outside).

 Now, assume we have a closed family of highway paths contributing to
one of the coefficients of $\psi(q)$.
 Simply complement all edges that ended up on our family of closed
highway paths inside the set of all edges of the network. We claim
that the result can be parsed as the desired
 family of closed underway paths.

 Indeed, the original family can be viewed as a number of horizontal
``steps through'' picking up a weight $q_{ij}$ at some node,
connected by intervals of staircase paths that
 do not pick up any weight. We can interpret our procedure as
complementing used intervals of staircase paths, that is, making them not
used, and vice versa. As a result, locally around
 each weight $q_{ij}$ that was picked up the new path will look
like what is shown in Figure \ref{fig:mnk2}. Thus, it will be an underway path, and it will
pick up exactly such $q_{ij}$. In other words, the
 weight of the original highway family is the same as the weight of
this new underway family.

 \begin{figure}[ht]
    \begin{center}
\vspace{-.1in}
\input{mnk2.pstex_t}
\vspace{-.1in}
    \end{center}
    \caption{}
    \label{fig:mnk2}
\end{figure}

It is also easy to see that the new underway family is closed.
This completes the proof.

\begin{example}
 Let $(\a,\b,\c,\d)= (3,2,3,2)$ as in Example \ref{ex:3232}. Figure
\ref{fig:mnk3} shows an example of a family of paths contributing to
the purple term of the spectral curve
 as marked in Figure \ref{fig:mnk1}. The first step takes the
complement of edges of this family inside the set of all edges of this
network.
 \begin{figure}[ht]
    \begin{center}
\vspace{-.1in}
\input{mnk3.pstex_t}
\vspace{-.1in}
    \end{center}
    \caption{}
    \label{fig:mnk3}
\end{figure}
The second step does not change the network or the paths, it just
changes the point of view and reverses paths' directions.
\end{example}

\section{Relation to the dimer model}
\label{sec:cluster}

In this section, we give the explicit relation between 
the $R$-matrix dynamics on our toric network and
cluster transformations on the honeycomb dimer on a torus. 
See \cite{GonchaKenyon13} for background on the dimer model. 

\subsection{Cluster transformations on the honeycomb dimer}
\label{subsec:cluster-trans}
The calculation in this section is the dimer analogue of the highway network computation of the geometric $R$-matrix (cf. \cite[Theorem 6.2]{LP}), which was explained earlier in \S \ref{sec:dynamics_proof} (Figure \ref{fig:wire20}).

Fix a positive integer $L$, and consider a honeycomb bipartite graph on a cylinder as in
Figure~\ref{fig:honeycone-dimer},
where $\alpha_i, \beta_i, \gamma_i ~(i \in \Z/L \Z)$ are the weights of the faces in three cyclically consecutive rows.
We write $\alpha := (\alpha_i, \beta_i, \gamma_i)_{i \in \Z / L \Z}$.

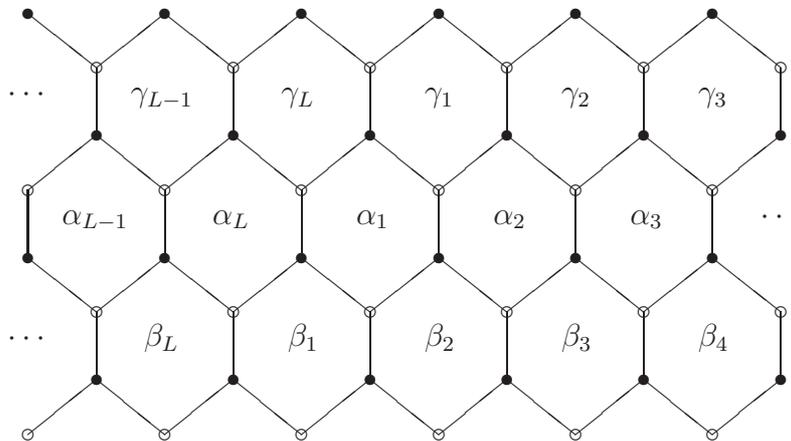
\begin{figure}[h]
\unitlength=0.9mm
\begin{picture}(100,80)(5,10)

\multiput(10,68)(20,0){6}{\line(0,-1){10}}
\multiput(0,50)(20,0){6}{\line(0,-1){10}}
\multiput(10,32)(20,0){6}{\line(0,-1){10}}

\multiput(0,76)(20,0){6}{\line(5,-4){10}}
\multiput(10,68)(20,0){5}{\line(5,4){10}}
\multiput(10,58)(20,0){5}{\line(5,-4){10}}
\multiput(0,50)(20,0){6}{\line(5,4){10}}
\multiput(0,40)(20,0){6}{\line(5,-4){10}}
\multiput(10,32)(20,0){5}{\line(5,4){10}}
\multiput(10,22)(20,0){5}{\line(5,-4){10}}
\multiput(0,14)(20,0){6}{\line(5,4){10}}

\multiput(0,76)(20,0){6}{\circle*{1.5}}
\multiput(10,68)(20,0){6}{\circle{1.5}}
\multiput(10,58)(20,0){6}{\circle*{1.5}}
\multiput(0,50)(20,0){6}{\circle{1.5}}
\multiput(0,40)(20,0){6}{\circle*{1.5}}
\multiput(10,32)(20,0){6}{\circle{1.5}}
\multiput(10,22)(20,0){6}{\circle*{1.5}}
\multiput(0,14)(20,0){6}{\circle{1.5}}

\put(-3,63){$\cdots$}
\put(15,63){$\gamma_{L-1}$}
\put(37,63){$\gamma_L$}
\put(58,63){$\gamma_1$}
\put(78,63){$\gamma_2$}
\put(98,63){$\gamma_3$}

\put(5,45){$\alpha_{L-1}$}
\put(27,45){$\alpha_L$}
\put(48,45){$\alpha_1$}
\put(68,45){$\alpha_2$}
\put(88,45){$\alpha_3$}
\put(107,45){$\cdots$}

\put(-3,27){$\cdots$}
\put(17,27){$\beta_L$}
\put(38,27){$\beta_1$}
\put(58,27){$\beta_2$}
\put(78,27){$\beta_3$}
\put(98,27){$\beta_4$}

\end{picture}
    \caption{Honeycomb dimer on a cylinder}
    \label{fig:honeycone-dimer}
\end{figure}

For the weights $\alpha$ of the honeycomb,
we define a transformation $R_\alpha$ of $\alpha$ in a following way:
first we split the $\alpha_1$-face into two, by inserting a digon 
of weight $-1$ as the top of Figure~\ref{fig:dimer-mutation}.  We thank R. Kenyon for explaining this operation to us.

Set the weights of the new two faces to be
$-c$ and $\frac{\alpha_1}{c}$, where $c$ is a nonzero parameter which 
will be determined.
Let $D$ be the quiver dual to the bipartite graph, drawn in blue in the figure. 
The weights of faces are to be regarded as {\em coefficient variables} 
associated to each vertex of $D$.
With $1,\ldots,L,a$ and $b$, 
we assign the vertices of $D$ in the middle row, as depicted.
Next, we apply the cluster mutations $\mu_1$, $\mu_2, \ldots, \mu_L$ to $(D,\alpha)$ 
in order, recursively defining $\omega_i ~(i=1,\ldots,L)$ by
$$
  \omega_1 := \frac{\alpha_1}{c}, \qquad \omega_i := \alpha_i(1+\omega_{i-1}).
$$  
The condition that the digon's weight is again $-1$ after the 
$L$ mutations gives an equation for $c$ as $-c (1+\omega_L) = -1$.
By solving it, $c$ is determined to be 
$$
  c = \frac{1-\prod_{s=1}^L \alpha_s}
{\sum_{t=0}^{L-1} \prod_{s=1}^t \alpha_{1-s}}, 
$$
and $\omega_i := \omega_i(\alpha_1,\ldots,\alpha_L)$ is obtained as 
\begin{align}\label{eq:omega}
  \omega_i(\alpha_1,\ldots,\alpha_L) 
  = \frac{\alpha_i \sum_{t=0}^{L-1} \prod_{s=1}^t \alpha_{i-s}}
                {1- \prod_{s=1}^L \alpha_s}.
\end{align}

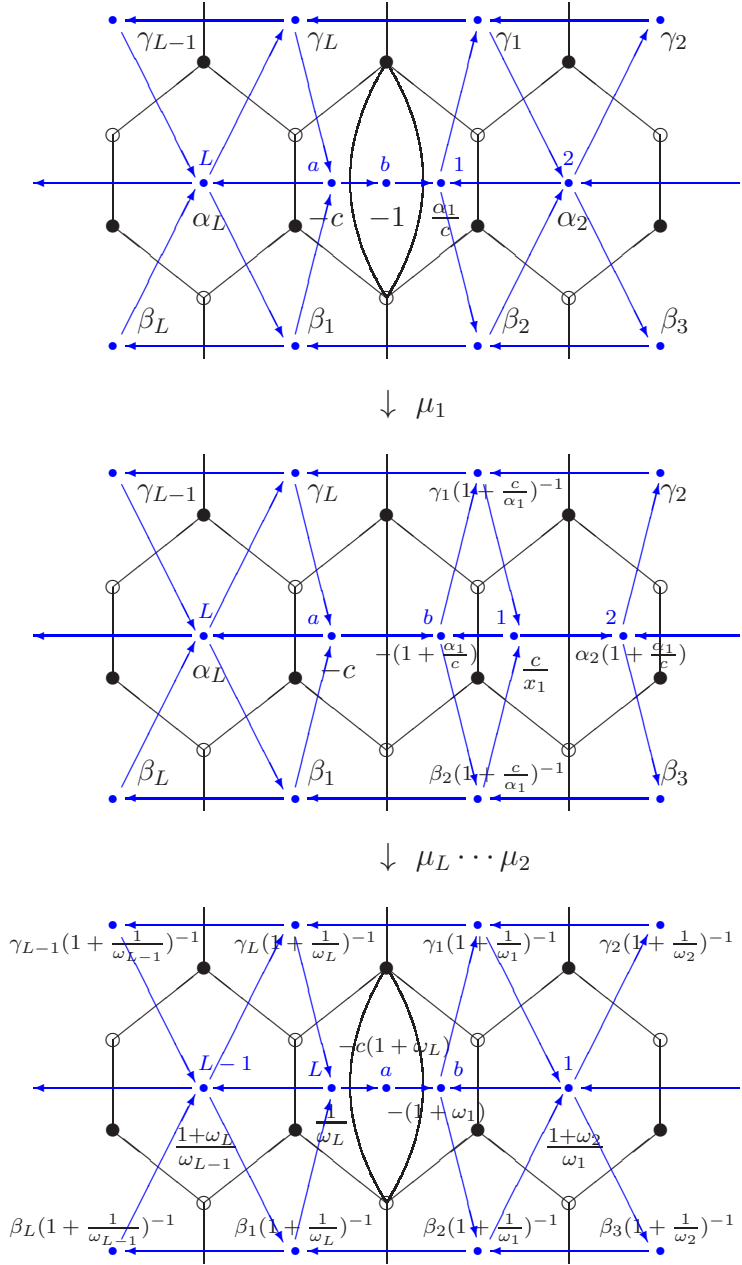
\begin{figure}
\unitlength=0.8mm
\begin{picture}(100,250)(5,-150)

\multiput(15,72)(30,0){3}{\line(0,-1){10}}
\multiput(0,50)(30,0){4}{\line(0,-1){15}}
\multiput(15,23)(30,0){3}{\line(0,-1){10}}

\multiput(0,50)(30,0){3}{\line(5,4){15}}
\multiput(30,50)(30,0){3}{\line(-5,4){15}}
\multiput(15,23)(30,0){3}{\line(5,4){15}}
\multiput(15,23)(30,0){3}{\line(-5,4){15}}

\multiput(15,62)(30,0){3}{\circle*{2}}
\multiput(0,50)(30,0){4}{\circle{2}}
\multiput(0,35)(30,0){4}{\circle*{2}}
\multiput(15,23)(30,0){3}{\circle{2}}

\qbezier(45,62)(33,42.5)(45,23)
\qbezier(45,62)(57,42.5)(45,23)

\put(13,35){\small $\alpha_L$}
\put(42,35){\small $-1$}
\put(32,35){\small $-c$}
\put(52,35){${\frac{\alpha_1}{c}}$}
\put(73,35){$\alpha_2$}

\put(4,65){\small $\gamma_{L-1}$}
\put(32,65){\small $\gamma_L$}
\put(63,65){\small $\gamma_1$}
\put(90,65){\small $\gamma_2$}

\put(4,18){\small $\beta_L$}
\put(32,18){\small $\beta_1$}
\put(64,18){\small $\beta_2$}
\put(90,18){\small $\beta_3$}

{\color{blue}

\multiput(0,69)(30,0){4}{\circle*{1.5}}
\multiput(15,42)(30,0){3}{\circle*{1.5}}
\multiput(36,42)(18,0){2}{\circle*{1.5}}
\multiput(0,15)(30,0){4}{\circle*{1.5}}

\put(32,44){\tiny $a$}
\put(44,44){\tiny $b$}
\put(56,44){\tiny $1$}
\put(74,45){\tiny $2$}
\put(14,45){\tiny $L$}

\multiput(28,69)(30,0){3}{\vector(-1,0){26}}
\multiput(28,15)(30,0){3}{\vector(-1,0){26}}
\put(37.5,42){\vector(1,0){6}}
\put(34.5,42){\vector(-1,0){18}}
\put(73.5,42){\vector(-1,0){18}}
\put(46.5,42){\vector(1,0){6}}
\put(13,42){\vector(-1,0){26}}
\put(103,42){\vector(-1,0){26}}
\put(1.5,67){\vector(1,-2){12}}
\multiput(16,43.5)(60,0){2}{\vector(1,2){12}}
\multiput(16,40.5)(60,0){2}{\vector(1,-2){12}}
\put(1.5,17){\vector(1,2){12}}
\put(30,67){\vector(1,-4){5.8}}
\put(54,44){\vector(1,4){5.8}}
\put(54,40.5){\vector(1,-4){5.8}}
\put(30,17){\vector(1,4){5.8}}
\put(62,67){\vector(1,-2){12}}
\put(62,17){\vector(1,2){12}}
}


\put(44,4){$\downarrow ~ \mu_1$}


\multiput(15,-3)(30,0){3}{\line(0,-1){10}}
\multiput(0,-25)(30,0){4}{\line(0,-1){15}}
\multiput(15,-52)(30,0){3}{\line(0,-1){10}}

\multiput(0,-25)(30,0){3}{\line(5,4){15}}
\multiput(30,-25)(30,0){3}{\line(-5,4){15}}
\multiput(15,-52)(30,0){3}{\line(5,4){15}}
\multiput(15,-52)(30,0){3}{\line(-5,4){15}}

\multiput(15,-13)(30,0){3}{\circle*{2}}
\multiput(0,-25)(30,0){4}{\circle{2}}
\multiput(0,-40)(30,0){4}{\circle*{2}}
\multiput(15,-52)(30,0){3}{\circle{2}}

\put(45,-13){\line(0,-1){39}}
\put(75,-13){\line(0,-1){39}}

\put(13,-40){\small $\alpha_L$}
\put(34,-40){\small $-c$}
\put(43,-37){\tiny{$-(1+\frac{\alpha_1}{c})$}}

\put(67,-40){${\frac{c}{x_1}}$}
\put(76,-37){\tiny{$\alpha_2(1+\frac{\alpha_1}{c})$}}

\put(4,-10){\small $\gamma_{L-1}$}
\put(32,-10){\small $\gamma_L$}
\put(52,-10){\tiny{$\gamma_1(1+\frac{c}{\alpha_1})^{-1}$}}
\put(90,-10){\small $\gamma_2$}

\put(4,-57){\small $\beta_L$}
\put(32,-57){\small $\beta_1$}
\put(52,-57){\tiny $\beta_2(1+\frac{c}{\alpha_1})^{-1}$}
\put(90,-57){\small $\beta_3$}

{\color{blue}

\multiput(0,-6)(30,0){4}{\circle*{1.5}}
\multiput(15,-33)(60,0){1}{\circle*{1.5}}
\multiput(36,-33)(18,0){2}{\circle*{1.5}}
\multiput(66,-33)(18,0){2}{\circle*{1.5}}
\multiput(0,-60)(30,0){4}{\circle*{1.5}}

\put(32,-31){\tiny $a$}
\put(51,-31){\tiny $b$}
\put(63,-31){\tiny $1$}
\put(81,-31){\tiny $2$}
\put(14,-30){\tiny $L$}

\multiput(28,-6)(30,0){3}{\vector(-1,0){26}}
\multiput(28,-60)(30,0){3}{\vector(-1,0){26}}
\put(37.5,-33){\vector(1,0){15}}
\put(34.5,-33){\vector(-1,0){18}}
\put(64,-33){\vector(-1,0){8}}
\put(67,-33){\vector(1,0){15}}
\put(46.5,42){\vector(1,0){6}}
\put(13,-33){\vector(-1,0){26}}
\put(103,-33){\vector(-1,0){17}}

\put(1.5,-8){\vector(1,-2){12}}
\multiput(16,-31.5)(60,0){1}{\vector(1,2){12}}
\multiput(16,-34.5)(60,0){1}{\vector(1,-2){12}}
\put(1.5,-58){\vector(1,2){12}}

\multiput(30,-8)(31,0){2}{\vector(1,-4){5.8}}
\multiput(54,-31)(30,0){2}{\vector(1,4){5.8}}
\multiput(54,-34.5)(30,0){2}{\vector(1,-4){5.8}}
\multiput(30,-58)(31,0){2}{\vector(1,4){5.8}}

}

\put(44,-71){$\downarrow ~ \mu_{L} \cdots \mu_2$}


\multiput(15,-78)(30,0){3}{\line(0,-1){10}}
\multiput(0,-100)(30,0){4}{\line(0,-1){15}}
\multiput(15,-127)(30,0){3}{\line(0,-1){10}}

\multiput(0,-100)(30,0){3}{\line(5,4){15}}
\multiput(30,-100)(30,0){3}{\line(-5,4){15}}
\multiput(15,-127)(30,0){3}{\line(5,4){15}}
\multiput(15,-127)(30,0){3}{\line(-5,4){15}}

\multiput(15,-88)(30,0){3}{\circle*{2}}
\multiput(0,-100)(30,0){4}{\circle{2}}
\multiput(0,-115)(30,0){4}{\circle*{2}}
\multiput(15,-127)(30,0){3}{\circle{2}}

\qbezier(45,-88)(33,-107.5)(45,-127)
\qbezier(45,-88)(57,-107.5)(45,-127)

\put(10,-119){$\frac{1+\omega_L}{\omega_{L-1}}$}
\put(33,-115){$\frac{1}{\omega_L}$}
\put(37,-102){\tiny $-c(1+\omega_L)$}
\put(45,-113){\tiny $-(1+\omega_1)$}
\put(71,-119){$\frac{1+\omega_2}{\omega_1}$}

\put(-17,-85){\tiny $\gamma_{L-1}(1+\frac{1}{\omega_{L-1}})^{-1}$}
\put(20,-85){\tiny $\gamma_L (1+\frac{1}{\omega_{L}})^{-1}$}
\put(51,-85){\tiny $\gamma_1 (1+\frac{1}{\omega_{1}})^{-1}$}
\put(80,-85){\tiny $\gamma_2 (1+\frac{1}{\omega_{2}})^{-1}$}

\put(-17,-132){\tiny $\beta_{L}(1+\frac{1}{\omega_{L-1}})^{-1}$}
\put(20,-132){\tiny $\beta_1 (1+\frac{1}{\omega_{L}})^{-1}$}
\put(51,-132){\tiny $\beta_2 (1+\frac{1}{\omega_{1}})^{-1}$}
\put(80,-132){\tiny $\beta_3 (1+\frac{1}{\omega_{2}})^{-1}$}

{\color{blue}

\multiput(0,-81)(30,0){4}{\circle*{1.5}}
\multiput(15,-108)(30,0){3}{\circle*{1.5}}
\multiput(36,-108)(18,0){2}{\circle*{1.5}}
\multiput(0,-135)(30,0){4}{\circle*{1.5}}

\put(32,-106){\tiny $L$}
\put(44,-106){\tiny $a$}
\put(56,-106){\tiny $b$}
\put(74,-105){\tiny $1$}
\put(14,-105){\tiny $L-1$}

\multiput(28,-81)(30,0){3}{\vector(-1,0){26}}
\multiput(28,-135)(30,0){3}{\vector(-1,0){26}}
\put(37.5,-108){\vector(1,0){6}}
\put(34.5,-108){\vector(-1,0){18}}
\put(73.5,-108){\vector(-1,0){18}}
\put(46.5,-108){\vector(1,0){6}}
\put(13,-108){\vector(-1,0){26}}
\put(103,-108){\vector(-1,0){26}}
\multiput(1.5,-83)(60,0){2}{\vector(1,-2){12}}
\multiput(16,-106.5)(60,0){2}{\vector(1,2){12}}
\multiput(16,-109.5)(60,0){2}{\vector(1,-2){12}}
\put(1.5,-133){\vector(1,2){12}}
\put(30,-83){\vector(1,-4){5.8}}
\put(54,-106){\vector(1,4){5.8}}
\put(54,-109.5){\vector(1,-4){5.8}}
\put(30,-133){\vector(1,4){5.8}}
\put(62,-133){\vector(1,2){12}}
}
\end{picture}
    \caption{Honeycomb dimer model on a cylinder.}
    \label{fig:dimer-mutation}
\end{figure}

\begin{definition}\label{prop:Rcluster}
Let $R_{\alpha}$ be the transformation of $\alpha$ given by
  \begin{align}\label{eq:Rtrans}
    (\alpha_i,\beta_i,\gamma_i) \mapsto 
    \left( \omega_{i-1}^{-1} (1+\omega_i),~
           \beta_i (1+\omega_{i-1}^{-1})^{-1},~
           \gamma_i (1+\omega_{i}^{-1})^{-1} \right),
  \end{align}
which is induced by 
the sequence  of mutations $\mu_L \mu_{L-1} \cdots \mu_1$,
\end{definition}

Due to the expression of $\omega_i$,
we see that  
$R_\alpha$ does not depend on which $\alpha_j$-face we split 
at the first stage. 

\begin{remark}
Note that our derivation of $R_\alpha$ is not entirely a cluster algebra computation since we began with the ``digon insertion" operation, which does not have a clear cluster algebra interpretation.  In an upcoming work \cite{ILP}, we plan to further clarify the cluster nature of the geometric $R$-matrix.
\end{remark}

\subsection{Relation with the $(n,m,k)$-network}

We consider the honeycomb bipartite graph on a torus as in Figure~\ref{fig:Toda-dimer}.
We assign each face (resp. edge) with a weight $x_{ji}$ 
(resp. $q_{ji}$ or $1$),
satisfying the periodicity conditions $x_{j,i+n} = x_{j,i}$ and $x_{m+j,i} = x_{j,i-k}$ 
(resp. $q_{j,i+n} = q_{j,i}$ and $q_{m+j,i} = q_{j,i-k}$).
In the figure we omit weights that are equal to $1$. 
The $x_{ji}$ and $q_{ji}$ are related by
\begin{align}\label{eq:x-q}
  x_{j,i} = \frac{q_{j,i+1}}{q_{j+1,i}} ~(j=1,\ldots,m-1),
  \qquad 
  x_{m,i} = \frac{q_{m,i+1}}{q_{1,i-k}},
\end{align}
hence the product of all the $x_{ji}$ is equal to $1$.

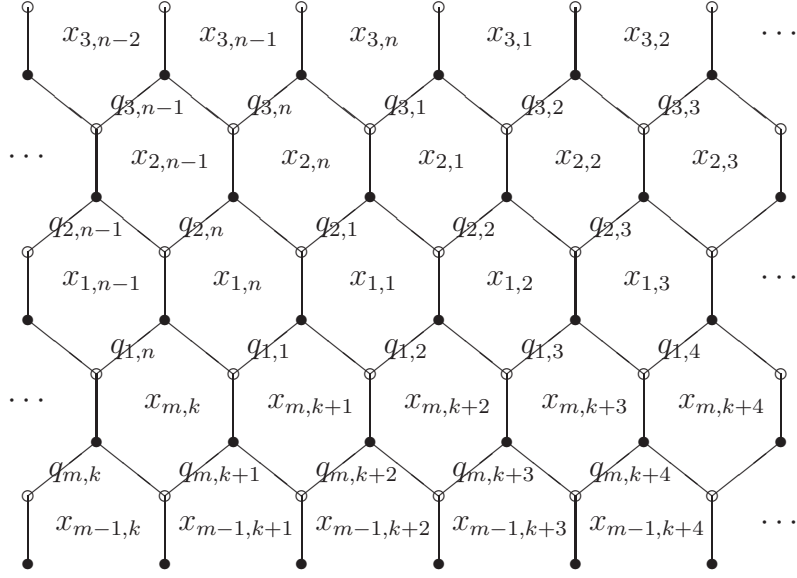
\begin{figure}
\unitlength=0.9mm
\begin{picture}(100,100)(5,0)

\multiput(0,86)(20,0){6}{\line(0,-1){10}}
\multiput(10,68)(20,0){6}{\line(0,-1){10}}
\multiput(0,50)(20,0){6}{\line(0,-1){10}}
\multiput(10,32)(20,0){6}{\line(0,-1){10}}
\multiput(0,14)(20,0){6}{\line(0,-1){10}}

\multiput(0,76)(20,0){6}{\line(5,-4){10}}
\multiput(10,68)(20,0){5}{\line(5,4){10}}
\multiput(10,58)(20,0){5}{\line(5,-4){10}}
\multiput(0,50)(20,0){6}{\line(5,4){10}}
\multiput(0,40)(20,0){6}{\line(5,-4){10}}
\multiput(10,32)(20,0){5}{\line(5,4){10}}
\multiput(10,22)(20,0){5}{\line(5,-4){10}}
\multiput(0,14)(20,0){6}{\line(5,4){10}}

\multiput(0,86)(20,0){6}{\circle{1.5}}
\multiput(0,76)(20,0){6}{\circle*{1.5}}
\multiput(10,68)(20,0){6}{\circle{1.5}}
\multiput(10,58)(20,0){6}{\circle*{1.5}}
\multiput(0,50)(20,0){6}{\circle{1.5}}
\multiput(0,40)(20,0){6}{\circle*{1.5}}
\multiput(10,32)(20,0){6}{\circle{1.5}}
\multiput(10,22)(20,0){6}{\circle*{1.5}}
\multiput(0,14)(20,0){6}{\circle{1.5}}
\multiput(0,4)(20,0){6}{\circle*{1.5}}

\put(5,81){$x_{3,n-2}$}
\put(25,81){$x_{3,n-1}$}
\put(47,81){$x_{3,n}$}
\put(67,81){$x_{3,1}$}
\put(87,81){$x_{3,2}$}
\put(107,81){$\cdots$}

\put(-3,63){$\cdots$}
\put(15,63){$x_{2,n-1}$}
\put(37,63){$x_{2,n}$}
\put(57,63){$x_{2,1}$}
\put(77,63){$x_{2,2}$}
\put(97,63){$x_{2,3}$}

\put(5,45){$x_{1,n-1}$}
\put(27,45){$x_{1,n}$}
\put(47,45){$x_{1,1}$}
\put(67,45){$x_{1,2}$}
\put(87,45){$x_{1,3}$}
\put(107,45){$\cdots$}

\put(-3,27){$\cdots$}
\put(17,27){$x_{m,k}$}
\put(35,27){$x_{m,k+1}$}
\put(55,27){$x_{m,k+2}$}
\put(75,27){$x_{m,k+3}$}
\put(95,27){$x_{m,k+4}$}

\put(4,9){$x_{m-1,k}$}
\put(22,9){$x_{m-1,k+1}$}
\put(42,9){$x_{m-1,k+2}$}
\put(62,9){$x_{m-1,k+3}$}
\put(82,9){$x_{m-1,k+4}$}
\put(107,9){$\cdots$}

\put(12,71){$q_{3,n-1}$}
\put(32,71){$q_{3,n}$}
\put(52,71){$q_{3,1}$}
\put(72,71){$q_{3,2}$}
\put(92,71){$q_{3,3}$}

\put(3,53){$q_{2,n-1}$}
\put(22,53){$q_{2,n}$}
\put(42,53){$q_{2,1}$}
\put(62,53){$q_{2,2}$}
\put(82,53){$q_{2,3}$}

\put(12,35){$q_{1,n}$}
\put(32,35){$q_{1,1}$}
\put(52,35){$q_{1,2}$}
\put(72,35){$q_{1,3}$}
\put(92,35){$q_{1,4}$}

\put(3,17){$q_{m,k}$}
\put(22,17){$q_{m,k+1}$}
\put(42,17){$q_{m,k+2}$}
\put(62,17){$q_{m,k+3}$}
\put(82,17){$q_{m,k+4}$}

\end{picture}
    \caption{$(n,m,k)$-dimer}
    \label{fig:Toda-dimer}
\end{figure}

Let $\mathcal{X} \simeq \C^{mn}$ be a space of the weights of faces whose 
coordinates are given by 
$\underline{x} := (x_{ji})_{j \in \Z/m \Z,~ i \in \Z/n\Z}$.
Let $\rho$ be an embedding map from $\C(\mathcal{X})$ to $\C(\mathcal{M})$
given by \eqref{eq:x-q}.

We define three types of actions $\bar R_j$, $\tilde R_i$ and 
$\hat R_i$ on $\mathcal{X}$, corresponding to the three directions that the honeycomb bipartite graph can be arranged into rows.
For $j =1,\ldots,m$, define 
$\bar x(j) := (x_{j,i}, x_{j-1,i}, x_{j+1,i})_{i \in \Z/ n \Z}$. 
Let $\bar R_j$ be the action on $\mathcal{X}$ induced by $R_{\bar x(j)}$ 
\eqref{eq:Rtrans} 
with $L=n$.  More precisely, $R_j(\underline{x}) = \underline{x}^\prime$ is given by
$$
  x_{li}^\prime = 
  \begin{cases}
    \omega_{i-1}^{-1}(1+\omega_i) & l = j,
    \\
    x_{j-1,i} (1+\omega_{i-1}^{-1})^{-1} & l=j-1,
    \\
    x_{j+1,i} (1+\omega_{i}^{-1})^{-1} & l=j+1,
    \\
    x_{li} & \text{otherwise},
  \end{cases}
$$
with $\omega_i := \omega_i(x_{j,1},\ldots,x_{j,n})$ \eqref{eq:omega}.
In a similar way,
for $i=1,\ldots,N$, define
$\tilde x(i) := (x_{m-j,N-i}, x_{m-j,N-i-1}, x_{m-j,N-i+1})_{j \in \Z/ m n'\Z}$ 
and let $\tilde R_i$ be the action on $\mathcal{X}$ given by $R_{\tilde x(i)}$
\eqref{eq:Rtrans} 
with $L=mn'$.
For $i =1,\ldots,M$, define
$\hat x(i) := (x_{j,i+1-j}, x_{j,i+2-j}, x_{j,i-j})_{j \in \Z/ \frac{mn}{M} \Z}$, and let $\hat R_i$ be the action on $\mathcal{X}$ 
given by $R_{\hat x(i)}$ \eqref{eq:Rtrans} 
with $L=\frac{mn}{M}$.

Recall that we also have three types of actions on $\mathcal{M}$ given by 
the elements $s_j ~(j=0,1,\ldots,m-1)$ of 
the extended symmetric group $W$,
the elements $\tilde s_i ~(i=0,1,\ldots,N-1)$ of 
the extended symmetric group $\tilde W$,
and the snake path action $T_s ~(s=1,\ldots,M)$.

\begin{prop}\
\begin{enumerate}
\item[(i)] 
The actions $s_j$ and $\tilde s_i$ are compatible with 
the actions $\bar R_j$ and $\tilde R_j$ respectively:
for $\underline{x} \in \mathcal{X}$, we have
\begin{align}
  \label{R-s-1}
  &\rho \circ \bar R_j (\underline{x}) 
  = s_j^\ast \circ \rho (\underline{x}),
  \quad j \in \Z / m \Z,
  \\
  \label{R-s-2}
  &\rho \circ \tilde R_i (\underline{x})
  = \tilde s_i^\ast \circ \rho (\underline{x}),
  \quad i \in \Z / N \Z.
\end{align}
\item[(ii)] For $\underline{x} \in \mathcal{X}$,
the action $\hat R_i$ satisfies
\begin{align}
  \label{R-T}
  &\rho \circ \hat R_i (\underline{x})= \rho (\underline{x}),
  \quad i=1,\ldots,M.
\end{align}
\item[(iii)] 
For $\underline{x} \in \mathcal{X}$, the snake path action $T_i^\ast$ satisfies
$$T_i^\ast \circ \rho(\underline{x}) = \rho(\underline{x}), \quad i=1,\ldots,M.$$
\end{enumerate}
\end{prop}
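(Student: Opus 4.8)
The plan is to establish the three statements in turn, relying on the explicit formula \eqref{eq:Rtrans} for $R_\alpha$ (equivalently the cluster-mutation description of \S\ref{subsec:cluster-trans}), the substitution \eqref{eq:x-q}, and the reformulation of the $W\times\tilde W$ actions in terms of the $R$-matrix on the $Q_i(x)$ given in \S\ref{sec:WWonM}. The key observation throughout is that the ``whirl move'' $R$ on parameters $(\aa,\bb)$ of two adjacent wires (the geometric $R$-matrix of \S\ref{sec:Rmatrix}) and the honeycomb dimer move $R_\alpha$ of \eqref{eq:Rtrans} are two incarnations of the same transformation, once the dictionary \eqref{eq:x-q} between face weights $x_{ji}$ and edge weights $q_{ji}$ is applied. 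This is precisely the content of \cite[Theorem 6.2]{LP}, as remarked at the start of \S\ref{subsec:cluster-trans}: the computation of the whirl move via pushing an auxiliary crossing of weight $p$ through the network (Figure \ref{fig:wire20}) is the network analogue of pushing a digon through the honeycomb.

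For part (i): First I would expand both sides of \eqref{R-s-1}. The left side is the substitution \eqref{eq:x-q} applied after the face transformation \eqref{eq:Rtrans} with $\omega_i=\omega_i(x_{j,1},\dots,x_{j,n})$; the right side is \eqref{eq:x-q} followed by the $R$-matrix swap $(Q_j,Q_{j+1})\mapsto(Q'_{j+1},Q'_j)$. Writing $x_{j,i}=q_{j,i+1}/q_{j+1,i}$, one computes $\omega_i(x_{j,1},\dots,x_{j,n})$ as a ratio of the energies $E(\aa^{(i)},\bb^{(i)})$ of the diagonal vectors $\aa=(q_{j,1},\dots)$, $\bb=(q_{j+1,1},\dots)$; this is a direct telescoping of the recursion $\omega_i=\alpha_i(1+\omega_{i-1})$ against the definition of $E$ in \S\ref{sec:Rmatrix}. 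Then the formula $b'_i=b_i E(\aa^{(i)},\bb^{(i)})/E(\aa^{(i-1)},\bb^{(i-1)})$ for the geometric $R$-matrix matches, term by term, the three cases of \eqref{eq:Rtrans} pushed through \eqref{eq:x-q}. The identity \eqref{R-s-2} is the same computation performed on the transposed array $\tilde Q$ with $L=mn'$, using $\tilde s_i$ as described in \S\ref{sec:WWonM} and the face labels $\tilde x(i)$; here one must be careful that the cyclic indexing of faces around the torus in the vertical-strip direction matches the cyclic indexing of the $\tilde Q$-coordinates, which is a bookkeeping check using \eqref{eq:q-tildeq}.

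For part (ii), the transformation $\hat R_i$ is the dimer move applied in the ``diagonal'' (snake-path) direction, with row length $L=mn/M$; the claim \eqref{R-T} says that after the substitution $\rho$ it acts trivially on $\mathcal M$. The reason is that a snake-direction whirl move permutes the $q$-weights that lie along a snake path but, because there are no edge-weights transverse to the snake direction to absorb a nontrivial change, the induced map on the $q_{ji}$ is the identity; concretely, in \eqref{eq:Rtrans} the ``$\beta$'' and ``$\gamma$'' rows are themselves further snake paths, and tracing \eqref{eq:x-q} one finds the putative changes cancel in pairs. Part (iii) then follows by combining (ii) with the snake-path action formula \eqref{eq:snake-q}: both $T_i^\ast$ and the trivial dimer move send $\rho(\underline x)$ to itself, so one only needs to check that $\rho$ intertwines $\hat R_i$ with $T_i$ up to this triviality, which is a short check comparing \eqref{eq:snake-q}, \eqref{eq:snake-c} with the $\omega_i$-free part of \eqref{eq:Rtrans}; alternatively one observes directly that $T_i^\ast$ multiplies left/right turns on a snake path by $t^{\pm1}$ while $\rho$, being built from ratios $q_{j,i+1}/q_{j+1,i}$, is insensitive to such a rescaling along a snake path since each $x_{ji}$ receives one factor $t$ and one factor $t^{-1}$.

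The main obstacle will be the index bookkeeping in part (i), especially \eqref{R-s-2}: matching the cyclic order in which faces appear around the torus (the $L=mn'$ cycle of $\tilde x(i)$) with the cyclic order of the diagonal entries of the matrices $\tilde Q_r(x)$ as dictated by \eqref{eq:q-tildeq}, and verifying that the ``$P(x)^k$-conjugation'' twists appearing in $\pi$ and $\tilde\pi$ (\S\ref{sec:WWonM}) are consistent with the periodicity conditions $x_{m+j,i}=x_{j,i-k}$, $q_{m+j,i}=q_{j,i-k}$ imposed on the torus dimer. Once the dictionary is set up correctly, the algebraic identities reduce to the single computation that the energy-ratio formula for the geometric $R$-matrix coincides with the $\omega_i$-formula \eqref{eq:omega}, which is a one-line telescoping argument; the rest is verifying that the passive $\beta,\gamma$-rows transform compatibly, which is immediate from \eqref{eq:x-q}.
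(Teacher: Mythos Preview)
Your plan for part (i) is correct and matches the paper's approach: one computes $\rho(\omega_i)$ and $\rho(1+\omega_i)$ explicitly in terms of the energies $E_i = E(P^{-i}Q_jP^i, P^{-i}Q_{j+1}P^i)$, and then checks the three cases of \eqref{eq:Rtrans} against the $R$-matrix formula for $s_j$.  Your direct argument for (iii) (the ``alternatively'' clause) is likewise exactly the paper's proof: since $q_{j,i+1}$ and $q_{j+1,i}$ have the same value of (first index $+$ second index), the rule \eqref{eq:snake-q} scales them by the same factor, so the ratio $\rho(x_{j,i}) = q_{j,i+1}/q_{j+1,i}$ is fixed by $T_s^\ast$.

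Your treatment of part (ii), however, does not identify the actual mechanism, and your first sentence for (iii) is not valid.  The heuristic ``there are no edge-weights transverse to the snake direction to absorb a nontrivial change, so the induced map on the $q_{ji}$ is the identity'' is not well-posed: $\rho$ is not invertible, so $\hat R_i$ does not induce any map on $\mathcal M$, and nothing ``cancels in pairs''.  What makes \eqref{R-T} hold is a specific algebraic degeneration.  Along a snake, \eqref{eq:x-q} gives $\rho(x_j) = q_j/q_{j+1}$ with $q_j := q_{j,3-j}$ taken cyclically of period $L = mn/M$, so that $\rho\bigl(\prod_{j=1}^L x_j\bigr) = 1$.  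Hence the denominator $1 - \prod_s \alpha_s$ in \eqref{eq:omega} vanishes under $\rho$ and each $\omega_j$ blows up; but the combinations entering \eqref{eq:Rtrans} stay finite, and one checks directly
\[
\rho\!\left(\frac{1+\omega_j}{\omega_{j-1}}\right) \;=\; \frac{q_j}{q_{j+1}} \;=\; \rho(x_j),
\qquad
\rho\!\left(\frac{\omega_j}{1+\omega_j}\right) \;=\; 1,
\]
so all three rows of \eqref{eq:Rtrans} are sent by $\rho$ to their original $\rho$-values.  This is the computation you should plan for, not a cancellation argument.  Finally, (iii) is logically independent of (ii): $\hat R_i$ acts on $\mathcal X$ while $T_i$ acts on $\mathcal M$, and there is no way to deduce one triviality from the other --- only your direct argument for (iii) survives.
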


\begin{proof}
(i) To show \eqref{R-s-1}, it is enough to prove the $j=1$ case.
We write $E_i$ for the energy $E(P^{-i} Q_1 P^i, P^{-i} Q_2 P^i)$.
The operator $s_1$ acts on $\mathcal{M}$ as 
$s_1(q) = q^\prime$: 
\begin{align}
  q_{1,i}^\prime 
  = 
  \displaystyle{q_{2,i} \,\frac{E_{i}}{E_{i-1}},}
  \qquad
  q_{2,i}^\prime 
  = 
  \displaystyle{q_{1,i} \, \frac{E_{i-1}}{E_{j,i}}},
\end{align}
and the other $q_{ji}$ do not change.
By definition, $R_1(\underline{x}) = \underline{x}^\prime$ is obtained as
$$
   x_{ji}^\prime
   = 
   \begin{cases} 
   \omega_{i-1}^{-1} (1+\omega_i) & j=1,
   \\
   x_{m,i+k} (1+\omega_{i-1}^{-1})^{-1} & j=m,
   \\
   x_{2,i} (1+\omega_{i}^{-1})^{-1} & j=2,
   \\
   x_{ji} & \text{otherwise}.
   \end{cases} 
$$
where $\omega_i := \omega_i(x_{1,1},\ldots,x_{1,n})$.
On the other hand, by direct computation we obtain
$$
  \rho(\omega_i) = \frac{q_{1,i+1} E_{i}}
  {\prod_{s=1}^n q_{2,s} - \prod_{s=1}^n q_{1,s}},
  \qquad
  \rho(1+\omega_i) = \frac{q_{2,i+1} E_{i+1}}
  {\prod_{s=1}^n q_{2,s} - \prod_{s=1}^n q_{1,s}}.
$$ 
Thus we get
\begin{align*}
  &\rho \left(\omega_{i-1}^{-1} (1+\omega_i)\right)
  = \frac{q_{2,i+1} E_{i+1}}{q_{1,i} E_{i-1}} 
  = s_1^\ast \left(\frac{q_{1,i+1}}{q_{2,i}}\right) 
  = s_1^\ast \circ \rho(x_{1,i}), 
  \\
  &\rho \left(x_{m,i+k} (1+\omega_{i-1}^{-1})^{-1}\right)
  = \frac{q_{m,i+k+1} E_{i-1}}{q_{2,i} E_{i}} 
  = s_1^\ast \left(\frac{q_{m,i+k+1}}{q_{1,i}}\right)
  = s_1^\ast \circ \rho(x_{m,i+k}), 
  \\
  &\rho \left(x_{2,i} (1+\omega_{i}^{-1})^{-1}\right)
  = \frac{q_{1,i+1} E_{i}}{q_{3,i} E_{i+1}} 
  = s_1^\ast \left(\frac{q_{2,i+1}}{q_{3,i}}\right)
  = s_1^\ast \circ \rho(x_{2,i}), 
\end{align*}
and \eqref{R-s-1} follows.

We can prove \eqref{R-s-2} in a similar manner,
by replacing $Q_i$ with $\tilde Q_i$, $P$ with $\tilde P$, and so on.

\noindent
(ii) Again, it is enough to prove the case of $i=1$.
For simplicity, we write $L$ for $\frac{mn}{M}$, and set 
$x_j := x_{j,2-j}$, $q_j := q_{j,3-j}$ for $j \in \Z /L \Z$. 
From \eqref{eq:x-q}, it follows that 
\begin{align}\label{eq:x-s-rho}
  \rho(x_j) = \frac{q_j}{q_{j+1}}, \qquad
  \rho\left(\prod_{j=1}^L x_j\right) = 1.
\end{align}
Define $\omega_j = \omega_j(x_1,\ldots,x_L)$ for $j=1,\ldots,L$.
Using the definition of $\omega_j$ and  \eqref{eq:x-s-rho},
we obtain
\begin{align}
  \rho\left(\frac{1 + \omega_j}{\omega_{j-1}}\right) 
  = \frac{q_j}{q_{j+1}},
  \qquad
  \rho\left(\frac{\omega_j}{1 + \omega_j}\right) = 1,
\end{align}
by the following calculation:
\begin{align*}
  \frac{1 + \omega_j}{\omega_{j-1}}
  &= 
  \frac{1 - \prod_{s=1}^L x_s + x_i \sum_{t=0}^{L-1} \prod_{s=1}^t x_{i-s}}
       {x_{i-1} \sum_{t=0}^{L-1} \prod_{s=1}^t x_{i-1-s}} 
  \\               
  &\stackrel{\rho}{\longmapsto}
  \frac{1 - 1 + \sum_{t=0}^{n-1} \frac{q_{i-t}}{q_{i+1}}}
       {\sum_{t=1}^{n} \frac{q_{i-t}}{q_{i}}}
  = \frac{q_i}{q_{i+1}},
  \\
  \frac{\omega_{j}}{1 + \omega_j}
  &= 
  \frac{x_{i} \sum_{t=0}^{L-1} \prod_{s=1}^t x_{i-s}} 
       {1 - \prod_{s=1}^L x_s + x_i \sum_{t=0}^{L-1} \prod_{s=1}^t x_{i-s}}
  \stackrel{\rho}{\longmapsto} 1.
\end{align*}
Thus we have 
$\rho \circ R_{\hat x(1)}(\hat x(1)) 
= \rho(\hat x(1))$, 
and \eqref{R-T} follows. 

\noindent
(iii) 
The snake path action $T_s$ 
changes $q_{j,i}$ and $q_{j^\prime,i^\prime}$ in the same way 
if $i+j \equiv i^\prime + j^\prime \mod M$.
This condition is satisfied by $q_{j,i+1}$ and $q_{j+1,i}$,
then the change is cancelled in $\rho(x_{j,i})$. 
Thus we see that $T_s \circ \rho(x_{j,i}) = \rho(x_{j,i})$.
\end{proof}



\begin{thebibliography}{xx}

\bibitem[BK]{BK} 
A.~Berenstein and D.~Kazhdan, Geometric and unipotent crystals,
Geom. Funct. Anal., Special Volume, Part I, 188--236 (2000). 

\bibitem[Eti]{Et}
P.~Etingof, Geometric crystals and set-theoretical solutions to the quantum Yang-Baxter
equation, Comm. Algebra, 31, no. 4, 1961--1973 (2003).

\bibitem[Fay]{Fay73}
J.~D.~Fay,
{\it Theta functions on Riemann surfaces},
Lecture notes in Math., \textbf{352} 
(Springer-Verlag, Berlin-New York, 1973).

\bibitem[FM]{FockMarsha14}
V.~V.~Fock and A.~Marshakov,
Loop groups, Clusters, Dimers and Integrable systems,
arXiv:1401.1606.

\bibitem[GSTV]{GSTV}
M.~Gekhtman, M.~Shapiro, S.~Tabachnikov, A.~Vainshtein, Integrable cluster dynamics of directed networks and pentagram maps, arXiv:1406.1883.

\bibitem[GK]{GonchaKenyon13}
A.~B.~Goncharov and R.~Kenyon, 
Dimers and cluster integrable systems,
Ann. Sci. \'Ec. Norm. Sup\'er. (4) 46, no. 5, 747--813 (2013).

\bibitem[HTI]{HTI}
R.~Hirota, S.~Tsujimoto and T.~Imai, 
Difference scheme of soliton equations,
in Future directions of nonlinear dynamics in physical and biological systems 
(Lyngby, 1992), NATO Adv. Sci. Inst. Ser. B Phys., 312, 7--15 
(Plenum Press, New York, 1993).

\bibitem[IKT]{IKT}
R.~Inoue, A.~Kuniba and T.~Takagi,
Integrable structure of box-ball systems: crystal, Bethe ansatz, ultradiscretization and tropical geometry, J. Phys. A \textbf{45}, no. 7, 073001, 64 pp (2012). 
\bibitem[IT]{InoueTakenawa}
R.~Inoue and T.~Takenawa, 
Tropical spectral curves and integrable cellular automata,
Int. Math. Res. Not. IMRN  \textbf{27} Art ID. rnn019, 27 pp (2008);
\\
---{}---{}---, 
A tropical analogue of Fay's trisecant identity and an ultradiscrete
periodic Toda lattice, Comm. Math. Phys., 289, 995--1021 (2009).

\bibitem[ILP]{ILP}
R.~Inoue, T.~Lam, and P.~Pylyavskyy, in preparation.

\bibitem[I08]{Iwao07}
S.~Iwao, 
Solution of the generalized periodic discrete Toda equation,
J. Phys. A 41, no. 11, 115201, 15 pp (2008).

\bibitem[I10]{Iwao10}
S.~Iwao, 
Solution of the generalized periodic discrete Toda equation. II. Theta function solution, J. Phys. A 43, no. 15, 155208, 9 pp (2010).

\bibitem[I09]{Iwao9}
S.~Iwao, 
Linearisation of the $(M,K)$-reduced non-autonomous discrete periodic 
KP equation, arXiv:0912.3333.

\bibitem[Kir]{Ki} A.N.~Kirillov, Introduction to tropical combinatorics, in ``Physics and Combinatorics
2000" (Eds. A. N. Kirillov and N. Liskova), Proceedings of the Nagoya 2000 International
Workshop, 82--150, World Scientific, 2001.

\bibitem[Kho]{Kho}
A.~Khovanskii, Three Lectures on Newton Polyhedra (Newton polyhedrons and Singularities). RIMS Kokyuroku, 1233, 1--17 (2001).


\bibitem[KKMMNN]{KKMMNN} S.-J.~Kang, M.~Kashiwara,  K.C.~Misra, T.~Miwa, T.~Nakashima, and A.~Nakayashiki,
Affine crystals and vertex models, in Infinite analysis Part A (Kyoto 1991), 449--484, {\sl Adv. Ser. Math. Phys.}, \textbf{16}, World Sci. Publishing, River Edge, NJ, 1992.

\bibitem[KN1]{KNO1}
M.~Kashiwara,  T.~Nakashima, and M.~Okado,
Affine geometric crystals and limit of perfect crystals. (English summary)
{\sl Trans. Amer. Math. Soc.} 360, no. 7, 3645--3686 (2008). 

\bibitem[KN2]{KNO2}
M.~Kashiwara,  T.~Nakashima, and M.~Okado,
Tropical $R$ maps and affine geometric crystals,  {\sl Represent. Theory} 14, 446--509 (2010).


\bibitem[KNY]{KNY} 
K.~Kajiwara,  M.~Noumi, and Y.~Yamada,
Discrete Dynamical Systems with $W(A^{(1)}_{m-1} \times A^{(1)}_{n-1})$ 
Symmetry,  Lett. Math. Phys. 60, no. 3, 211--219 (2002).


\bibitem[KT]{KT} 
T.~Kimijima and T.~Tokihiro, 
Initial-value problem of the discrete periodic Toda equation and its ultradiscretization, 
Inverse Problems, textbf{18}, no. 6, 1705--1732 (2002).
 
\bibitem[Las]{Las} 
A.~Lascoux,
Double crystal graphs.  Studies in memory of Issai Schur (Chevaleret/Rehovot, 2000),  95--114, Progr. Math., 210, Birkh\"{a}user Boston, Boston, MA, 2003.

\bibitem[LP11]{LPgeom}
T.~Lam and P.~Pylyavskyy,
Affine geometric crystals in unipotent loop groups,
Representation Theory 15, 719--728 (2011).

\bibitem[LP12a]{TP} 
T.~Lam and P.~Pylyavskyy,
Total positivity in loop groups, I: Whirls and curls. Adv. Math. 230, no. 3, 1222--1271 (2012).
 
\bibitem[LP12b]{LPElec} 
T.~Lam and P.~Pylyavskyy, Inverse problem in electrical cylindrical networks, SIAM J. of Applied Math., 72, 767--788 (2012).

\bibitem[LP13]{LP} 
T.~Lam and P.~Pylyavskyy,
Crystals and total positivity on orientable surfaces,
Selecta Math. (N.S.) \textbf{19}, no. 1, 173--235 (2013).


\bibitem[MIT]{MIT} 
J.~Mada, M.~Idzumi and T.~Tokihiro, 
Path description of conserved quantities of generalized periodic box-ball systems, J. Math. Phys. \textbf{46}, no. 2, 022701, 19 pp (2005). 

\bibitem[Rho]{Rh}
B.~Rhoades, 
Cyclic sieving, promotion, and representation theory,
 J. Combin. Theory Ser. A 117, no. 10, no. 1, 38--76 (2010).
 
 \bibitem[Sch]{Sch}
 M.-P.~Sch\"utzenberger, 
Promotion des morphismes d'ensembles ordonnes,
Discrete Math. no.2, 73--94 (1972).

\bibitem[Spe]{Spe}
D.~Speyer, Perfect matchings and the octahedron recurrence, J. Algebraic Combin. 25, no. 3, 309--348 (2007). 

\bibitem[TS]{TS}
D.~Takahashi and J.~Satsuma,
A soliton cellular automaton, J. Phys. Soc. Japan \textbf{59}, no. 10, 
3514--3519 (1990).
 
\bibitem[vMM]{vMM} P.~van Moerbeke and D.~Mumford,
The spectrum of difference operators and algebraic curves,
Acta Math. \textbf{143} (1-2) 94--154 (1979).


\end{thebibliography}
\end{document}